\documentclass[reqno,a4paper,10 pt]{amsart}
\usepackage{amsmath, amssymb, bbm}
\usepackage[english]{babel}
\usepackage[latin1]{inputenc}
\usepackage{braket}
\usepackage[numbers]{natbib}
\usepackage[colorinlistoftodos]{todonotes}
\usepackage[draft,authormarkup=none]{changes}
\usepackage{enumerate}
\usepackage{mathrsfs}
\usepackage{mathtools}
\usepackage{dsfont}
\usepackage[inline]{enumitem}
\usepackage[nice]{nicefrac}
\usepackage[toc,page]{appendix}
\usepackage[hidelinks,draft=false]{hyperref}
\usepackage[a4paper,top=2cm,bottom=2cm,left=2cm,right=2cm,bindingoffset=5mm]{geometry}
\usepackage{verbatim}
\usepackage{tikz}

\numberwithin{equation}{section}

\newcounter{todocounter}

\allowdisplaybreaks

\newcommand*\ccd[1]{\tikz[baseline=(char.base)]{
            \node[shape=circle,draw,inner sep=2pt] (char) {#1};}}

\newtheorem*{theorem*}{Theorem}
\newtheorem{theorem}{Theorem}
\newtheorem{proposition}[theorem]{Proposition}
\newtheorem{lemma}[theorem]{Lemma}

\newtheorem{definition}[theorem]{Definition}
\newtheorem{corollary}[theorem]{Corollary}
\newtheorem{remark}[theorem]{Remark}

\newtheorem{assumption}{Assumption}

\DeclareMathOperator{\Dom}{Dom}
\DeclareMathOperator{\tr}{Tr}
\DeclareMathOperator{\supp}{supp}

\newenvironment{system}
{\left\lbrace\begin{array}{@{}l@{}}}
{\end{array}\right.}
\DeclarePairedDelimiter{\abs}{\lvert}{\rvert}
\DeclarePairedDelimiter{\norm}{\lVert}{\rVert}
\DeclarePairedDelimiter{\br}{\lbrace}{\rbrace}

\setlength{\marginparwidth}{2cm}
\reversemarginpar

\newcommand{\R}{\mathds{R}}
\newcommand{\N}{\mathds{N}}
\newcommand{\E}{\mathds{E}}

\definecolor{capri}{rgb}{0.0, 0.75, 1.0}

\newcommand{\bE}{\mathds{E}}

\newcommand{\bN}{\mathds{N}\,}
\newcommand{\bP}{\mathds{P}}

\newcommand{\bR}{\mathds{R}}

\newcommand{\rJ}{\mathscr{J}}

\newcommand{\sA}{\mathcal{A}}
\newcommand{\sB}{\mathcal{B}}
\newcommand{\sC}{\mathcal{C}}
\newcommand{\sD}{\mathcal{D}}

\newcommand{\sF}{\mathcal{F}}

\newcommand{\sH}{\mathcal{H}}

\newcommand{\sJ}{\mathcal{J}}
\newcommand{\sK}{\mathcal{K}}
\newcommand{\sL}{\mathcal{L}}
\newcommand{\sM}{\mathcal{M}}

\newcommand{\sZ}{\mathcal{Z}}

\newcommand{\ud}{\,\mathrm{d}}

\newcommand{\ind}{\mathds{1}}

\newcommand{\VV}{\Vert}
\newcommand{\aC}{\overset{\curvearrowleft}{\sC}}
\newcommand{\dis}{\displaystyle}

\newcommand{\xphi}[2]{\left(\begin{smallmatrix}#1 \\ #2\end{smallmatrix}\right)}
\renewcommand{\epsilon}{\varepsilon}
\renewcommand{\phi}{\varphi}

\newcommand{\intt}{\int_{t}^s}
\newcommand{\etsa}{e^{(s-r)A}}

\def\<{\left\langle }
\def\>{\right\rangle }

\title{Semilinear Kolmogorov equations on the space of continuous functions via BSDEs}

\begin{document}

\author{Federica Masiero}
\address[F. Masiero]{Dipartimento di Matematica e Applicazioni, Universit\`a di Milano-Bicocca. via Cozzi 55, 20125 Milano, Italia}
\email{federica.masiero@unimib.it}

\author{Carlo Orrieri}
\address[C. Orrieri]{Dipartimento di Matematica, Universit\`a di Trento. Via Sommarive 14, 38123 Trento, Italia}
\email{carlo.orrieri@unitn.it}

\author{Gianmario Tessitore}
\address[G. Tessitore]{Dipartimento di Matematica e Applicazioni, Universit\`a di Milano-Bicocca. via Cozzi 55, 20125 Milano, Italia}
\email{gianmario.tessitore@unimib.it}

\author{Giovanni Zanco}
\address[G. Zanco]{Dipartimento di Economia e Finanza, LUISS Guido Carli. Viale Romania 32, 00197 Roma, Italia}
\email{gzanco@luiss.it}

\begin{abstract}
  We deal with a class of semilinear parabolic PDEs on the space of continuous functions
  %, extending the approach developed by Flandoli and Zanco in \cite{flandoli2016infinite} for the linear case. 
%Such PDEs 
that arise, for example, as Kolmogorov equations associated to the infinite-dimensional lifting of path-dependent SDEs. 
  We investigate existence of smooth solutions through their representation via forward-backward stochastic systems, for which we provide the necessary regularity theory. 
  Because of the lack of smoothing properties of the parabolic operators at hand, solutions in general will only share the same regularity as the coefficients of the equation.
To conclude we exhibit an application to Hamilton-Jacobi-Bellman equations associated to suitable optimal control problems.
\end{abstract}

\maketitle

AMS 2010 Subject classification: 35K58, 60H10, 60H30, 93E20.

\tableofcontents
\section{Introduction}
\label{sec:intro}
The aim of this paper is to address the infinite dimensional semilinear backward Kolmogorov PDE
\begin{equation}\label{eq:kolm:diff:nonlinear}
  \begin{system}
  \frac{\partial u}{\partial t}(t,x) + Du(t,x)\left[Ax + B(t,x)\right] + \frac{1}{2}\tr_{\bR^d} \left[\Sigma\Sigma^\ast D^2u(t,x)\right] = G(t,x,u(t,x),Du(t,x)\Sigma) \ ,\\
  u(T,\cdot) = \Phi
\end{system}
\end{equation} 
in the space of continuous functions on a real interval. Under suitable assumptions on the coefficients and the terminal condition we provide existence of smooth (classical) solutions to (\ref{eq:kolm:diff:nonlinear}) through the associated forward-backward stochastic system, extending the methods introduced in 
\cite{flandoli2016infinite} for the linear equation (i.e. $G\equiv 0$).\\
PDEs of the above given form naturally arise in connection with path-dependent stochastic differential equations in finite dimension through their 
infinite-dimensional reformulation in the so-called product space framework, proposed first in \cite{delfour1972hereditary} 
for deterministic systems with delay and in \cite{chojnowska1978representation} for stochastic ones.  
In particular semilinear equations as \eqref{eq:kolm:diff:nonlinear} describe the value function of optimal control problems for stochastic path dependent state equations.\\
While the solution theory for path-dependent stochastic systems 
is classical (see e.g. the monograph \cite{mohammed1984stochastic})
at least when the coefficients are regular enough,
the study of associated PDEs is a relatively recent subject for which different approaches have been proposed in the last years.\\
The recent research activity on path-dependent functionals of stochastic processes and related PDEs originated from insight by Dupire \cite{dupire2009functional} and investigation by Cont and Fourni\'e \cite{cont2013functional}.
Due to the general lack of regularity of path-dependent functionals (most notably $\gamma \mapsto \sup_{s\in[0,T]}\vert \gamma_s\vert$) various authors have introduced different  weak notions of solutions for nonlinear path-dependent PDEs, see for example \cite{ekren2014, ekren2016, cosso2018, cosso2019osaka, Cordoni2017, dipersio2019, bayaraktar2018, zhou2018}; in many cases such PDEs arise in connection with stochastic control problems, as in \cite{guatteri2017stochastic, fuhrman2010stochastic, tang2015}. 
Nonetheless, at the current stage there is no complete theory even for regular solutions of nonlinear PDEs (existence of solutions was proved in \cite{cosso2014pdep} and \cite{cosso2019idaqp} only for coefficients with a very specific cylindrical form), and only the linear case has been extensively investigated in this sense \cite{flandoli2016infinite, digirolami2018}.
The appearance of so many different approaches is essentially motivated by the intrinsic infinite-dimensional nature of the problem which then reflects in different notions of differential for functions of paths.\\
It is by now well understood that the parabolic-type operators associated to path-dependent SDEs do not possess
smoothing properties in general (although there is a kind of partial smoothing in some problems with delay and for particular choices of the coefficients, see \cite{gozzi2017partial}): this affects the regularity of any type of solution, and makes the study of regular solutions nontrivial. 
In the case we discuss here, a precise investigation of differentiability of the forward-backward system has never been rigorously carried out before. 
%To our knowledge, higher order differentiability for BSDEs has only been taken into account in \cite{izumi2018higher} in a non-Markovian setting, where Malliavin calculus is applied.   
The approach we develop provides, under suitable assumptions on the regularity of the coefficients and on finite-dimensionality of the noise, a solution theory for general PDEs on the space of continuous functions .
For a discussion on its relation with the functional Ito calculus we refer to \cite{zanco2015phd}\\

Let us now briefly introduce our framework and main results, sketching the general lines of the proofs. 
Fix a finite time horizon $T>0$ and consider the path-dependent SDE in $\bR^d$
\begin{equation}
\label{eq:SDEx}
\begin{system} 
\ud\xi_s = b_s(\xi_{[0,s]})\ud s + \sigma \ud W_s\ ,\quad s\in[t,T]\ ,\\
\xi_{t} = \gamma_{t},
\end{system}
\end{equation}
where $W$ is a $d_1$-dimensional Brownian Motion, $\sigma$ is a $d \times d_1$ matrix, $t\in[0,T]$ and $\gamma_{t}$ is a given function that belongs to $D([0,t];\bR^d)$ (the space of c\`adl\`ag functions on $[0,t]$, endowed with the supremum norm). 
The value of the solution process $\xi$ at time $s$ is denoted by $\xi_s$, while its path up to time $s$ is denoted by $\xi_{[0,s]}$.
The drift $b$ at time $s$ depends on the whole past trajectory of the solution $\xi_{[0,s]}$ and it is therefore given as a family $\{b_s\}_{s\in[0,T]}$
\begin{equation*}
\label{eq:driftb_intro}
  b_s : D([0,s];\bR^d) \to \bR^d\ .
\end{equation*} 
Note that for different times the drift $b$ is defined on different spaces of paths; while this is not an issue in the study of the SDE (\ref{eq:SDEx}), it becomes a delicate question for the investigation of the associated Kolmogorov PDE.
Furthermore, even if the solution to \eqref{eq:SDEx} has continuous paths from time $t$ on, it is convenient (actually unavoidable) to formulate everything in spaces of c\`adl\`ag functions.\\

The product-space reformulation  of \eqref{eq:SDEx} consists in separating the present state $\xi_t$ from the past trajectory $\xi_{[0,t)}$, rewriting the second one via a time change as a function on $[-t,0)$ and then lengthening it towards the past up to $[-T,0)$. 
In this way it is possible to distinguish between the time $t$ of the forward equation and the time variable of the past trajectory:  for any time $T$ a process
\begin{equation*}
  X_t=\left(\begin{matrix}\xi_t\\ \left\{\xi_{t+r}\right\}_{r\in[-T,0)}\end{matrix}\right)\in\bR^d\times D\left([-T,0);\bR^d\right) \ ,
\end{equation*}
is defined, whose second component is now defined on a fixed functional space.
This reformulation allows to recover Markovianity and turns out to be particularly convenient to investigate differentiability properties of the solution of the nonlinear Kolmogorov PDE.
As a drawback an additional linear term comes into play, which is given by a first order differential operator $A$ usually refer to as the \emph{generator of the delay}. 
Indeed, the process $X$ turns out to be a solution to the following  SDE (the \emph{forward equation} in what follows)
\begin{equation}
  \label{eq:SDEX}
\begin{system}
  \ud X_s=AX_s \ud s+B(s,X_s)\ud s+\Sigma \ud W_s\ ,\quad s\in(t,T]\\
  X_{t}=x\ \text{,}
\end{system}
\end{equation}
where $B$, $\Sigma$ and $x$ are suitable infinite-dimensional liftings of $b$, $\sigma$ and $\gamma_{t}$, respectively. \\

Given \eqref{eq:SDEX} it is natural to associate, at least formally, the linear backward Kolmogorov equation
\begin{equation}\label{eq:kolm:diff}
\begin{system}
\frac{\partial u}{\partial t}(t,x) + Du(t,x)\left[Ax + B(t,x)\right] + \frac{1}{2}\tr_{\bR^d} \left[\Sigma\Sigma^\ast D^2u(t,x)\right] = 0, \\
u(T,\cdot) = \Phi
\end{system}
\end{equation} 
on $[0,T]\times \left(\bR^d\times D\left([-T,0);\bR^d\right)\right)$.
The terms $Du$ and $D^2u$ denote the Fr\`echet differentials of the solution $u$ with respect to the variable $x\in \bR^d\times D\left([-T,0);\bR^d\right)$ and the terms $Du B$ and $\tr\left[\Sigma\Sigma^\ast D^2u\right]$ only depends on the $\bR^d$-component of $x$ (recall that $X$ generates from a finite-dimensional SDE).

Then, to account for a nonlinear term $G$ as  in equation \eqref{eq:kolm:diff:nonlinear}, the introduction of the following  backward SDE (BSDE) is essential
\begin{equation}
\label{eq:BSDE}
\begin{system}
\ud Y_s = G(s,X^{t,x}_s,Y^{t,x}_s,Z^{t,x}_s) \ud s + Z_s \ud W_s, \qquad s \in [t,T],\\
Y_T = \Phi(X^{t,x}(T)) \ , 
\end{system}
\end{equation}
where the variables $t$ and $x$ refer to the initial data of the forward equation satisfied by $X$.
A solution to \eqref{eq:BSDE} is a pair of processes $(Y^{t,x},Z^{t,x})$ with values in $\bR\times\bR^{d_1}$ and the (partially-coupled) system generated by \eqref{eq:SDEX} and \eqref{eq:BSDE} goes under the name of \emph{forward-backward system}.
Notice that, even if the solution $(Y,Z)$ is finite-dimensional, it depends in a nontrivial way on the forward process $X$ that takes values in an infinite-dimensional space.\\
Our main result is a version of the nonlinear Feynman-Kac formula in terms of backward SDEs.
\begin{theorem*}
The function
  \begin{equation*}
    u(t,x)=Y^{t,x}_t\ ,
  \end{equation*}
where $(Y^{t,x},Z^{t,x})$ is the unique solution of (\ref{eq:BSDE}), is a classical solution of the semilinear Kolmogorov backward equation with terminal condition $\Phi$.
\end{theorem*}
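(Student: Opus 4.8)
The plan is to prove the nonlinear Feynman--Kac representation via the classical three-step scheme, adapted to the present infinite-dimensional, non-smoothing setting. I take for granted the regularity theory developed above for the forward--backward system: in particular that the map $x\mapsto (Y^{t,x},Z^{t,x})$ is Fr\`echet differentiable with the required continuity, so that $u(t,x)=Y^{t,x}_t$ is continuous together with $Du$, $D^2u$ and $\partial_t u$ in the sense demanded by the definition of classical solution. With this in hand the proof reduces to identifying the dynamics of $u$ along the forward flow and matching them against the backward equation.

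First I would establish the flow property of the forward equation \eqref{eq:SDEX}, namely $X^{t,x}_s=X^{\tau,X^{t,x}_\tau}_s$ for $t\le\tau\le s\le T$, which is a direct consequence of uniqueness for \eqref{eq:SDEX}. Combined with uniqueness for the BSDE \eqref{eq:BSDE}, this yields the fundamental identification
\begin{equation*}
Y^{t,x}_s=u(s,X^{t,x}_s),\qquad s\in[t,T],\ \bP\text{-a.s.}
\end{equation*}
Indeed, for fixed $\tau\in[t,T]$ the restricted pair $(Y^{t,x}_s,Z^{t,x}_s)_{s\in[\tau,T]}$ solves on $[\tau,T]$ the same backward equation as $(Y^{\tau,X^{t,x}_\tau}_s,Z^{\tau,X^{t,x}_\tau}_s)$, with the same terminal datum $\Phi(X^{t,x}_T)$; evaluating at $s=\tau$ and using the definition of $u$ gives the claim. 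The associated identification of the martingale integrand, $Z^{t,x}_s=Du(s,X^{t,x}_s)\Sigma$, will come out of the comparison below.

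Next I would apply an It\^o formula to $u(s,X^{t,x}_s)$. This is the delicate point, since $X$ is only a mild solution and the drift contains the unbounded generator of the delay $A$, so that $Du(s,x)[Ax]$ cannot be read off directly for $x\notin\Dom(A)$. I would interpret this term as the derivative along the semigroup orbit, $\left.\frac{\ud}{\ud h}u(s,e^{hA}x)\right|_{h=0^+}$, and justify the expansion either by replacing $A$ with its Yosida approximations $A_n$, applying the It\^o formula to the resulting strong solutions $X^n$, and passing to the limit, or by invoking a mild It\^o formula in the spirit of Da Prato--Zabczyk. Here the standing assumption that the noise is finite-dimensional and that $\Sigma\Sigma^\ast$ and $B$ act only on the $\bR^d$-component is essential: the second-order term $\tfrac12\tr_{\bR^d}[\Sigma\Sigma^\ast D^2u]$ involves only the finitely many noisy directions, so no trace-class or smoothing issue arises in the infinite-dimensional component. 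The expansion then reads
\begin{equation*}
\ud\, u(s,X^{t,x}_s)=\Big[\partial_s u+Du\,[AX^{t,x}_s+B(s,X^{t,x}_s)]+\tfrac12\tr_{\bR^d}[\Sigma\Sigma^\ast D^2u]\Big]\ud s+Du\,\Sigma\,\ud W_s .
\end{equation*}

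Finally I would match this expansion with the backward dynamics \eqref{eq:BSDE}. Since $Y^{t,x}_s=u(s,X^{t,x}_s)$, equating the martingale parts gives $Z^{t,x}_s=Du(s,X^{t,x}_s)\Sigma$, and equating the finite-variation parts gives, $\ud s\otimes\ud\bP$-a.e.,
\begin{equation*}
\partial_s u+Du\,[AX^{t,x}_s+B]+\tfrac12\tr_{\bR^d}[\Sigma\Sigma^\ast D^2u]=G\big(s,X^{t,x}_s,u(s,X^{t,x}_s),Du(s,X^{t,x}_s)\Sigma\big).
\end{equation*}
By the continuity in $s$ of all the terms, letting $s\downarrow t$ and using $X^{t,x}_t=x$ recovers the pointwise identity \eqref{eq:kolm:diff:nonlinear} at the arbitrary point $(t,x)$; the terminal condition $u(T,\cdot)=\Phi$ is immediate from $Y_T=\Phi(X^{t,x}_T)$. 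The main obstacle is thus the rigorous justification of the It\^o formula for the mild solution and the correct interpretation of the unbounded term $Du[Ax]$; the regularity inputs for $u$ (existence and continuity of $Du$, $D^2u$, $\partial_t u$) supplied by the earlier differentiability analysis of the forward--backward system are precisely what make the matching argument legitimate.
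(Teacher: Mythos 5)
Your first two steps (flow property of the forward equation, uniqueness for the BSDE, hence $Y^{t,x}_s=u(s,X^{t,x}_s)$ and the identification $Z^{t,x}_s=Du(s,X^{t,x}_s)\Sigma$) are correct and coincide with what the paper does (Proposition \ref{p:identification}). The gap is the It\^o step, and it is not a technicality that Yosida approximations or a mild It\^o formula can repair: the paper explicitly builds its proof around the fact that \emph{no} It\^o formula applies here. The reason is structural. For every $s>t$ the past component of $X^{t,x}_s$ contains a genuine Brownian path (see \eqref{eq:conv_def}), which is almost surely not in $W^{1,2}$; hence $X^{t,x}_s\notin\Dom(A)$ a.s., the term $Du(s,X^{t,x}_s)\bigl[AX^{t,x}_s\bigr]$ in your expansion is undefined for \emph{every} $s>t$, and your final step (``letting $s\downarrow t$ by continuity of all the terms'') has no meaning, since the quantity whose limit you take does not exist on $(t,T]$. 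The proposed rescues fail for the same reason: along the semigroup orbit $h^{-1}\bigl(e^{hA}x-x\bigr)$ does not converge for $x\notin\Dom(A)$, so the ``derivative along the orbit'' is not defined where you need it; and with Yosida approximations $A_n$ the drift terms $A_nX^n_s$ have no limit, precisely because the limiting process never visits $\Dom(A)$. Note also that the classical-solution concept here (Definition \ref{def:sol_D}) only asks for the identity at deterministic points $x\in\aC^1$; any argument that needs the generator evaluated along the random trajectory is asking for more than is true.

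What the paper does instead (proof of Theorem \ref{thm:L2}) is a Taylor-type expansion that keeps the unbounded operator acting on the \emph{deterministic} initial datum: one writes, along a partition $\{t^n_i\}$ of $[t,T]$, the increments $u(t^n_i,x)-u\bigl(t^n_{i+1},e^{(t^n_{i+1}-t^n_i)A}x\bigr)$, uses the Markov property to replace $\E Y^{t^n_i,x}_{t^n_{i+1}}$ by $\E\, u\bigl(t^n_{i+1},X^{t^n_i,x}_{t^n_{i+1}}\bigr)$ and the BSDE to produce the $G$-integral; summing and letting $|\pi^n|\to0$, the remaining term $I_n$ yields the linear part of the PDE exactly as in the linear theory, and the nonlinear term converges by continuity of $(t,x)\mapsto(Y^{t,x},Z^{t,x})$ and Vitali's theorem. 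In this scheme $\partial_t u+Du[Ax]$ only ever appears at points $x\in\aC^1$, where it is defined. A second, independent omission in your proposal: this expansion argument (and the control of the second-order term) works only in $\sL^2$, whereas the theorem of interest lives on $\sD$, where the meaningful coefficients are defined; the paper must therefore approximate $B,G,\Phi$ by $B^n,G^n,\Phi^n$ through the smoothing operators $J^n$, solve the PDE in $\sL^2$ for each $n$, and pass to the limit in every term of the equation, which requires the convergence of the FBSDE together with its first \emph{and second} derivatives (Lemmas \ref{step1.a}--\ref{step4}). This limiting stage is a substantial part of the proof of Theorem \ref{t:main} and is absent from your plan.
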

Here by classical solution we mean a function that is two times differentiable with respect to $x$ and satisfies \eqref{eq:kolm:diff:nonlinear} for every $t\in[0,T]$ and for every $x$ that belongs to the domain of $A$. 
Since the solution $u(t,x)$ is represented by $Y^{t,x}_t$, where $(Y^{t,x},Z^{t,x})$ is the solution to the (\ref{eq:BSDE}), it is crucial to study Frechet differentiability of the map $(t,x)\mapsto Y^{t,x}$, up to the second order with respect to the variable $x$.
At our best knowledge a precise investigation of differentiability, up to the second order, of the forward-backward system has never been rigorously carried out before in the generality needed here. As a matter of fact higher order differentiability for BSDEs  has only been taken into account in \cite{izumi2018higher} in a non-Markovian setting and in Malliavin sense. Although  the two arguments have several technical similarities it seems that here we cannot use the result in  \cite{izumi2018higher}, as we rely only on G\^{a}teaux and Fr\'echet differential calculus. \\
We firstly prove the above theorem in the space $L^2(-T,0;\bR^d)$ and then extend our results to $D([-t,0);\R^d)$. 
Note that requiring regularity in $L^2$-sense drastically restricts the class of models one can consider so that the $L^2$-theory has no much  
relevance by itself. Nonetheless it is a fundamental intermediate step for studying the PDE on $D$. 
As a marginal remark we notice that the $L^2$-theory can be easily adapted to get existence of classical solutions with coefficients in $L^p$, $p>2$, allowing to recover already at this level some interesting examples. \\
Once the $L^2$-theory is established, we  proceed as follows: the coefficients $B$, $G$ and $\Phi$, defined on $D\left([-T,0);\bR^d\right)$, are approximated by suitable sequences $B^n$, $G^n$, $\Phi^n$ defined on $L^2\left(-T,0;\bR^d\right)$, providing a family of solutions $u^n$ of the approximated PDEs
\begin{equation*}
\begin{system}
\frac{\partial u^n}{\partial t}(t,x) + Du^n(t,x)\left[Ax + B^n(t,x)\right] + \frac{1}{2}\tr_{\bR^d} \left[\Sigma\Sigma^\ast D^2u^n(t,x)\right] = G^n(t,x,u^n(t,x),Du^n(t,x)\Sigma), \\
u^n(T,\cdot) = \Phi^n\ .
\end{system}
\end{equation*} 
To conclude the proof we need pass to the limit as $n$ tends to infinity in each term of the PDE. While the derivation of the semilinear PDE in $L^2$ is similar to the linear case, the passage to the limit shows substantial differences with the corresponding proof for the linear case, and requires a nontrivial analysis of the convergence of the BSDE (\ref{eq:BSDE}) together with its first and second derivative.\\
The choice to work in  $D([-t,0);\R^d)$ is motivated by what we hinted at above: there are very few functions satisfying the needed regularity assumptions in $L^2$ but many significant examples, most notably those involving pointwise evaluations of the path, can be recovered switching to Banach spaces with a finer topology (see \cite{flandoli2016infinite} for a discussion of several examples meeting our assumptions). 
To this end, the choice of the space of continuous functions would seem the most natural and appropriate one. 
However the infinite dimensional reformulation mentioned above has the drawback of creating discontinuities: as an example, the lifted drift term $B$ has the form
\begin{equation*}
  B(t,x)=\left(\begin{matrix}b_t\\0\end{matrix}\right)\ 
\end{equation*}
and has to be interpreted as a c\`adl\`ag function that is non-zero only at the current time $t$. 
Consequently, the operator $A$ introduces a transport effect, explicitly visible in the mild formulation of \eqref{eq:SDEX}, shifting the discontinuity over time. 
It is therefore convenient to formulate everything in the larger space of c\`adl\`ag paths and restrict to the subspace of continuous paths when needed.\\
The role of the intermediate $L^2$ step can be informally explained as follows: the natural scheme to investigate existence of regular solutions to \eqref{eq:kolm:diff:nonlinear} consists in combining some form of It\^o formula  with a smoothness result for the solution of the forward-backward system (with respect to the initial data of the forward equation).  
However, because of the spaces we are working with and of the particular form of the noise, no It\^o formula applies to our system and we have to rely on a particular Taylor expansion that exploits the Markovianity recovered through our infinite-dimensional reformulation. 
Furthermore, to obtain the PDE we need a control over the second order term which is achievable only in $L^p$ spaces; in particular this allows to show that the second order term is concentrated on the finite-dimensional component, thus providing the trace term as it appears in the equation. 
The same result cannot be directly proved through estimates with respect to the supremum norm.

Let us finally stress that, as all the technical difficulties related to path-depedency and to the use of c\`adl\`g spaces are already present in the additive noise case, we choose to work in such a setting that considerably simplifies the technical aspects of several points.\\

We eventually apply the result to a stochastic control problem, for the state equation 
\begin{equation*}
\label{eq:SDEX:control}
\begin{system}
  \ud X^u_s=AX^u_s\ud s+B\left(s,X^u_s\right)\ud s+\Sigma u_s \ud s+\Sigma \ud W_s\ ,\quad s\in[t,T]\\
  X^u_t=x\ \text{.}
\end{system}
\end{equation*}
We aim at minimizng the cost functional $\mathscr{J}: [0,T] \times \sD \times \R^{d_1} \to \R$
\begin{equation*}
  \label{cost:abstract}
\mathscr{J}\left(  t,x,u\right) :=\E\int_{t}^{T}\left[
 L\left(s,X^{u;t,x}_s\right) +Q\left(u_s\right)\right]\ud s+\E\Upsilon\left(X^{u;t,x}_T\right) \ ,
\end{equation*}
over all {\it{admissible}} controls. The Hamilton-Jacobi-Bellman equation is related to a semilinear Kolmogorov PDE that can be solved in classical sense thanks to the results proved herein; as a consequence we are able to prove the existence of optimal controls in strong formulation.

We briefly outline the structure of the paper. Section \ref{sec:prel} contains notation and classical results on BSDE that will be used in the sequel. Section \ref{sec:problem} introduces rigorously the product space framework and the assumptions that will stand throughout the paper. In Section \ref{sec:FBSDE} we prove some results about regularity of the solution of the stochastic forward-backward system with respect to the initial data of the forward process. Up to this point results are proved in a generic Banach space $E$, and possibly specialized to particular spaces when needed. Section \ref{sec:PDEL2} is devoted to the proof of existence of a solution to the semilinear backward PDE in $\sL^2$. In Section \ref{sec:PDED} we carry out the limit procedure and prove the main result. Finally Section \ref{sec:control} contains some applications to optimal control problems.

\section{Notation and preliminaries}
\label{sec:prel}
Let $(\Omega,\sF,\bP)$ be a probability space and fix a time interval $[0,T]$.
We denote by $W_t$, $t \geq 0$,  a $d_1$-dimensional standard Brownian motion and by $\sF_t$ the associated natural filtration, completed with the null sets in $\sF_T$. 
All the measurability properties we refer to have to be intended with respect to this filtration.
In the following, given a $\sF_t$-measurable random variable with finite expectation, we denote by $\E^{\sF_s}(X_t):= \E(X_t | \sF_s)$ the conditional expectation of $X_t$ given $\sF_s$.  \\

We denote by $E$ a general Banach space, whose norm is given by $|\cdot|_E$, or simply by $|\cdot|$, when no confusion can arise.
For any pair of Banach spaces $E, F$, we write $L(E,F)$ for the space of linear and bounded operators $T:E \to F$, endowed with the operator norm.
In the special case $F = \bR$, we shorthand $E':= L(E;\bR)$. 
The operator norm is indicated  by $\| T \|_{L(E,F)}$, or $\| T \|$ if no confusion is possible.

Moreover, given two possibly different Banach spaces $E_1, E_2$ we indicate with $L(E_1,E_2;F)$ the space of bilinear maps (linear in each argument) from $E_1 \times E_2 \to F$. In the following we will identify $L(E_1,E_2;F)$ with $L(E_1; L(E_2; F))$.   

For every $p,q \geq 1$, we use the following notation for classes of random variables and stochastic processes with values in a Banach space $E$:
\begin{itemize}
\item $L^p_{\sF_T}(\Omega;E)$, the set of $\sF_T$-measurable $E$-valued random variable endowed with the norm
\[ \| X \|_{L^p_{\sF_T(\Omega;E)}}  : = \left( \E |X|^p_E \right)^{\nicefrac{1}{p}};\]  
\item $L^p(\Omega \times [0,T]; E)$, the set of progressively measurable $E$-valued processes endowed with the norm
\[	\| X \|_{L^p(\Omega \times [0,T]; E)} := \left( \E \int_0^T |X_t|_E^p \ud t \right)^{\nicefrac{1}{p}};\]
\item $L^p\left(\Omega;L^q(0,T;E)\right)$, the space of progressively measurable $E$-valued processes with the norm given by
\[  \| X \|_{L^p(\Omega;L^q(0,T;E))} :=  \left( \E \left( \int_0^T |X_t|_E^q \ud t \right)^{\nicefrac{p}{q}}\right)^{\nicefrac{1}{p}};\]
\item $L^p\left(\Omega;C([0,T];E)\right)$, the space of progressively measurable $E$-valued processes such that the map $t \mapsto X_t$ is a.s. continuous and the norm
\[ \|  X \|_{L^p(\Omega;C([0,T];E))} : = \left( \E \sup_{t \in [0,T]} |Y_t|^p \right)^{\nicefrac{1}{p}}, \]
is finite. 
\end{itemize}

If $E = \bR$, to shorten the notation we denote by  $\sK_p$ the product space
\begin{equation}
 \sK_p:=L^p\left(\Omega;C([0,T];\bR)\right)\times L^p\left(\Omega;L^2(0,T;\bR^{d_1})\right).
\end{equation}

We say that a function $R: E \to F$ belongs to $C^{n,\alpha}(E; F)$ if it is $n$-times Fr\'echet differentiable in $E$ with measurable differentials $D^j R$, $j =1, \ldots, n$, and the map $x \mapsto D^nR(x)$ is $\alpha$-H\"older continuous with measurable norm.\\
We say that a function $S:[0,T]\times E\to F$ belongs to $C^{1;n,\alpha}$ if for every $x\in E$ the map $t\mapsto S(t,x)$ is differentiable with measurable differential and for every $t\in[0,T]$ the map $x\mapsto S(t,x)$ belongs to $C^{n,\alpha}$.
For space-time functions $R = R(t,x)$ we will denote by $\frac{\partial R}{\partial t}$ the derivative w.r.t. $t$ and by $D^j R$ the Fr\'echet differentials w.r.t. $x$.\\
\smallskip
In what follows we generally use capital letters $X,Y,Z, \ldots$ to denote random variables, on the contrary we use small letters $x,y,z, \ldots$ to denote deterministic objects.
Whenever we write $a \lesssim b$, with $a,b \in \bR$, we mean that there exists a constant $c >0$ for which $a \leq c\, b$.

\subsection{BSDEs toolbox}
\label{subsec:toolbox}
Here we collect some basic results from the theory of Backward SDEs that will be useful in the sequel. We refer to \cite{pardoux2014stochastic} for a general introduction to the subject.\\
Given  a $\sF_T$-measurable real-valued random variable $\eta$ 
and a driver $g: \Omega \times [0,T] \times \bR\times\bR^{d_1}\to\bR$ which is $ \sF_t \otimes \sB([0,t]) \otimes \sB(\bR \times \bR^{d_1})$-measurable, we say that a pair of progressively measurable processes $(Y,Z) \in \sK_p$ is a solution to the BSDE associated with $(g,\eta)$ if $\bP$-a.s.
\begin{equation}\label{BSDE}
Y_t = \eta + \int_t^T g(s,Y_s,Z_s) \ud s - \int_t^T Z_s \ud W_s \ , \qquad 0 \leq  t \leq T \ .
\end{equation}
In a differential formulation, we also write that $\bP$-a.s.
\begin{equation}
-\ud Y_t = g(t,Y_t,Z_t) \ud t - Z_t \ud W_t \ , \qquad Y_T = \eta \ , \qquad 0 \leq  t \leq T \ .
\end{equation}
 
Wellposedness results and a priori estimates for solutions to \eqref{BSDE} hold under specific assumptions on the pair $(g,\eta)$.
Let us recall here a classical result with uniform Lipschitz hypothesis.

\begin{proposition}\label{prop:FT}
Let $p >1$ and  $\eta\in L^p_{\sF_T}(\Omega;\bR)$. Moreover, suppose that
\begin{itemize}
\item[(i)] There exists $L > 0 $ for which
\[\abs{g(s,y_1,z_1)-g(s,y_2,z_2)}\leq L(\abs{y_1-y_2}+\abs{z_1-z_2}),  \qquad \forall \,s \in [0,T], \, \bP\text{-a.s.} \ , \] 
for any $y_1,y_2 \in \R$ and $z_1,z_2 \in \R^{d_1}$;
\item[(ii)] $\E\left(\int_0^T\abs{g(s,0,0)}^2\ud s\right)^{\nicefrac{p}{2}}<+\infty \ .$ 
\end{itemize} 
Then the BSDE \eqref{BSDE} admits a unique solution $(Y,Z) \in \sK_p$ and for every $t \in [0,T]$ it holds
\begin{equation}
  \label{eq:BSDE_stima_classica}
  \E\sup_{s\in[t,T]}\abs{Y_s}^p+\E\left(\int_{t}^T\abs{Z_r}^2\ud r\right)^{\nicefrac{p}{2}}\leq C \E\left(\int_{t}^T\abs{g(r,0,0)}^2\ud r\right)^{\nicefrac{p}{2}}+C \E\abs{\eta}^p,
\end{equation}
where $C = C(p,L,T)$ is a positive constant.
\end{proposition}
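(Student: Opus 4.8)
The plan is to obtain existence and uniqueness through a fixed-point argument in $\sK_p$ and then to establish the a priori bound \eqref{eq:BSDE_stima_classica} by an It\^o/BDG computation; the estimate in turn guarantees that the map underlying the fixed point is well defined.

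First I would decouple the equation. Given $(U,V) \in \sK_p$, consider the random variable $\zeta := \eta + \int_0^T g(s,U_s,V_s)\ud s$, which belongs to $L^p_{\sF_T}(\Omega;\bR)$ thanks to the Lipschitz hypothesis (i) together with (ii) and the fact that $(U,V)\in\sK_p$. The martingale $M_t := \E^{\sF_t}\zeta$ admits, by the representation theorem, a progressively measurable integrand $Z$ with $M_t = M_0 + \int_0^t Z_s \ud W_s$; setting $Y_t := \E^{\sF_t}[\eta + \int_t^T g(s,U_s,V_s)\ud s] = M_t - \int_0^t g(s,U_s,V_s)\ud s$ produces the unique solution of the BSDE with frozen driver $g(\cdot,U,V)$. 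This defines a map $\Gamma:(U,V)\mapsto(Y,Z)$ on $\sK_p$; Doob's inequality controls $\E\sup_t|Y_t|^p$ and the Burkholder-Davis-Gundy inequality controls $\E(\int_0^T|Z_s|^2\ud s)^{\nicefrac{p}{2}}$ in terms of $\E|\zeta|^p$, so that $\Gamma$ indeed takes values in $\sK_p$. Computing $\Gamma(U^1,V^1)-\Gamma(U^2,V^2)$ and using again the Lipschitz bound, I would show that $\Gamma$ (under the equivalent norm weighted by $e^{\beta t}$ with $\beta$ large, or after passing to a suitable iterate) is a contraction; its unique fixed point is the sought solution $(Y,Z)$.

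For the a priori estimate I would apply It\^o's formula to $s\mapsto|Y_s|^2$, obtaining
\[
|Y_t|^2 + \int_t^T|Z_s|^2\ud s = |\eta|^2 + 2\int_t^T Y_s\, g(s,Y_s,Z_s)\ud s - 2\int_t^T Y_s Z_s \ud W_s.
\]
The pointwise bound $|g(s,Y_s,Z_s)|\le |g(s,0,0)| + L(|Y_s|+|Z_s|)$ combined with Young's inequality lets me estimate the drift term, absorbing a fraction of $\int_t^T|Z_s|^2\ud s$ into the left-hand side. Taking the $\nicefrac{p}{2}$-th power, the supremum over $t$, expectations, and treating the stochastic integral through BDG, then closing the argument by Gronwall's lemma, yields \eqref{eq:BSDE_stima_classica} with $C = C(p,L,T)$.

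The main obstacle is the range $1 < p < 2$, where $y \mapsto |y|^p$ fails to be $C^2$ at the origin and the direct It\^o computation on $|Y|^p$ is not licit. I would circumvent this by working with the regularized functions $(\varepsilon + |y|^2)^{\nicefrac{p}{2}}$, deriving the estimate uniformly in $\varepsilon$ and then letting $\varepsilon \downarrow 0$; the delicate point is to keep the constant independent of $\varepsilon$ while controlling the local-martingale part so that, after localization and BDG, it contributes only lower-order terms that can be absorbed into the left-hand side. For $p \geq 2$ the computation is direct and this difficulty does not arise.
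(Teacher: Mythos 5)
The paper itself offers no proof of Proposition \ref{prop:FT}: it simply defers to \cite[Thm.~5.21]{pardoux2014stochastic}, and your proposal reconstructs exactly the standard argument behind that citation --- construction of the solution via martingale representation plus a Picard/contraction argument in $\sK_p$, followed by It\^o--BDG a priori estimates, with the regularization $(\varepsilon+|y|^2)^{\nicefrac{p}{2}}$ (the device of Briand--Delyon--Hu--Pardoux--Stoica, also used in the cited book) to handle $1<p<2$. Your sketch is correct in outline and flags the genuinely delicate points (contraction via weighted norms or iteration on subintervals; keeping the constants uniform in $\varepsilon$, where the positivity of the quadratic-variation term coming from $p>1$ is what saves the limit), so it is essentially the same approach as the paper's reference.
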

For a proof of Proposition \ref{prop:FT} we refer to  \cite[Thm.~5.21]{pardoux2014stochastic} where (integrable) time-dependent Lipschitz constants are also taken into account.\\
In the sequel we will be interested in BSDEs associated with data $(g,\eta)$ depending on a given stochastic process.
Precisely, consider a stochastic process $X$ with values in a general Banach space $E$  and assume that $g: \Omega \times [0,T] \times E \times \bR\times\bR^{d_1}\to\bR$ and $\eta = \phi(\cdot)$, $\phi: E \to \R$, are given measurable functions. 
If we write the equation
\begin{equation}\label{BSDE_X}
Y_t = \phi(X_T) + \int_t^T g(s,X_s ,Y_s, Z_s) \ud s - \int_t^T Z_s \ud W_s \ , \qquad 0 \leq  t \leq T \ ,
\end{equation}
existence and uniqueness of a solution in $\sK_p$ is a consequence of Proposition \ref{prop:FT}.

Let us now give an explicit formula for one-dimensional BSDEs under general integrability conditions on the driver.

\begin{lemma}\label{l:linearBSDE}
Let $p >1$ and $\eta \in L^p_{\sF_T}(\Omega; \bR)$. 
Suppose that $(a_t)_{t\geq 0}$, $(b_t)_{t\geq 0}$ are bounded $\bR$-valued and $\bR^{d_1}$-valued processes, respectively, and $c \in L^p(\Omega; L^1(0,T;\bR))$, i.e.
\[\E\left( \int_0^T |c_s| \ud s\right)^p < +\infty.\]
Then the BSDE 
\begin{equation}\label{linear_BSDE}
-\ud Y_t = \left( a_t Y_t + b_t Z_t + c_t \right) \ud t - Z_t \ud W_t, \qquad Y_T = \eta,
\end{equation}
admits a unique solution $(Y,Z) \in \sK_p$.
The process $Y$ can be written as
\begin{equation}
Y_t = \Gamma_t^{-1} \E^{\sF_t} \left[ \Gamma_T \eta + \int_t^T \Gamma_s c_s \ud s, \right]
\end{equation}
where $\Gamma$ is given by the formula
\begin{equation}
\Gamma_t = \exp \left[ \int_0^t \left( a_s - \frac{1}{2}|b_s|^2 \right) \ud s + \int_0^t b_s \ud W_s \right].
\end{equation}
Moreover, by setting
\begin{equation*}
  V_t := \int_0^t |a_s| \ud s + \frac{1}{1 \wedge (p-1)}\int_0^t|b_s|^2 \ud s\ ,
\end{equation*}
there exists $C = C(p) >0$ such that 
\begin{equation}\label{estimate_linear}
\E \sup_{t \in [0,T]} \left| e^{V_t} Y_t \right|^p + \E \left( \int_0^T e^{2V_r} |Z_r|^2 \ud r \right)^{\nicefrac{p}{2}} \leq C \,\E \left| e^{V_T} \eta\right|^p + C \,\E\left( \int_0^T e^{V_r}|c_r| \ud r \right)^p.
\end{equation}
\end{lemma}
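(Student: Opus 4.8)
The plan is to proceed in three stages: first derive the closed representation formula (which simultaneously yields uniqueness), then construct the solution, and finally establish the weighted a priori bound \eqref{estimate_linear}, which is the technical core.

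I would begin by noting that, since $a,b$ are bounded, It\^o's formula applied to the exponential shows that $\Gamma$ solves the linear SDE $\ud\Gamma_t=\Gamma_t\,a_t\,\ud t+\Gamma_t\,b_t\,\ud W_t$, and that both $\Gamma$ and $\Gamma^{-1}$ have finite moments of every order, uniformly on $[0,T]$. Assuming a solution $(Y,Z)\in\sK_p$ is given, I would apply the It\^o product rule to $\Gamma_t Y_t$. The drift coming from $\Gamma_t\,\ud Y_t$, from $Y_t\,\ud\Gamma_t$, and from the cross variation $\ud\langle\Gamma,Y\rangle_t=\Gamma_t\,b_t\cdot Z_t\,\ud t$ is arranged so that the $a_tY_t$ and $b_tZ_t$ contributions cancel exactly, leaving $\ud(\Gamma_t Y_t)=-\Gamma_t c_t\,\ud t+(\Gamma_t Z_t+\Gamma_t b_t Y_t)\,\ud W_t$. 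Integrating over $[t,T]$ and taking $\E^{\sF_t}$—the stochastic integral being a true martingale by the moment bounds on $\Gamma$ together with $(Y,Z)\in\sK_p$ and H\"older's inequality—yields $\Gamma_t Y_t=\E^{\sF_t}[\Gamma_T\eta+\int_t^T\Gamma_s c_s\,\ud s]$, and multiplying by the strictly positive, $\sF_t$-measurable $\Gamma_t^{-1}$ gives the stated formula. Since this determines $Y$, and then $Z$ through martingale representation, uniqueness follows at once.

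For existence I would turn the formula into a construction: set $N_t:=\E^{\sF_t}[\Gamma_T\eta+\int_0^T\Gamma_s c_s\,\ud s]$, which is a genuine martingale (the integrand is integrable by the moment estimates on $\Gamma$ and H\"older), represent it as $N_t=N_0+\int_0^t\psi_s\,\ud W_s$, and define $Y_t:=\Gamma_t^{-1}(N_t-\int_0^t\Gamma_s c_s\,\ud s)$. Unwinding the It\^o computation of the previous step then identifies $Z$ in terms of $\psi$, $Y$, $b$ and $\Gamma$ and verifies that $(Y,Z)$ solves \eqref{linear_BSDE}. Equivalently, since the right-hand side of \eqref{estimate_linear} involves only the $L^1$-in-time norm $\int_0^Te^{V_r}|c_r|\,\ud r$ of $c$ and $V$ is bounded, one may truncate $c$ to drivers satisfying hypothesis (ii) of Proposition \ref{prop:FT}, obtain approximating solutions from that proposition, and pass to the limit via \eqref{estimate_linear}; note that Proposition \ref{prop:FT} cannot be invoked directly on $c$, as it requires $L^2$- rather than $L^1$-integrability in time. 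Membership $(Y,Z)\in\sK_p$ then follows from the estimate proved below.

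The main obstacle is the weighted bound \eqref{estimate_linear}. Setting $\hat Y_t:=e^{V_t}Y_t$ and $\hat Z_t:=e^{V_t}Z_t$ and using that $V$ has finite variation with $\ud V_t=(|a_t|+(1\wedge(p-1))^{-1}|b_t|^2)\,\ud t$, I would compute the backward equation for $(\hat Y,\hat Z)$: its linear coefficient on $\hat Y$ equals $a_t-|a_t|-(1\wedge(p-1))^{-1}|b_t|^2\le-(1\wedge(p-1))^{-1}|b_t|^2\le0$. I would then apply It\^o's formula to $|\hat Y_t|^p$, regularizing through $(\varepsilon+|\hat Y_t|^2)^{p/2}$ and letting $\varepsilon\downarrow0$ in order to cover the range $1<p<2$ where $|\cdot|^p$ is not $C^2$. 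The nonpositive linear coefficient yields a good term $-\tfrac{p}{1\wedge(p-1)}\int|b_s|^2|\hat Y_s|^p\,\ud s$ which, together with the It\^o second-order term $\tfrac{p(p-1)}{2}\int|\hat Y_s|^{p-2}|\hat Z_s|^2\,\ud s$, is exactly what is needed—this is precisely why the constant $\tfrac{1}{1\wedge(p-1)}$ appears—to absorb by Young's inequality the cross term $p\int|\hat Y_s|^{p-1}|b_s||\hat Z_s|\,\ud s$ arising from $b_t\hat Z_t$ (the absorption is borderline exactly when $1<p<2$). After taking the supremum in time, expectations, and applying the Burkholder--Davis--Gundy inequality to the martingale part, the remaining terms are controlled by $\E|e^{V_T}\eta|^p$ and $\E(\int_0^Te^{V_r}|c_r|\,\ud r)^p$ through a further Young inequality that splits $|\hat Y_s|^{p-1}e^{V_s}|c_s|$ against $\sup_s|\hat Y_s|^p$; finally the retained fraction of $\int|\hat Y|^{p-2}|\hat Z|^2$, combined once more with the supremum term, produces the $\E(\int_0^Te^{2V_r}|Z_r|^2\,\ud r)^{p/2}$ term on the left. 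The delicate points are the $1<p<2$ regularization and tracking the constants through the two absorption steps so that the final constant depends on $p$ only, as asserted.
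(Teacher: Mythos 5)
Your proposal is correct, but it cannot be compared line-by-line with the paper's argument for a simple reason: the paper does not prove this lemma at all. Its ``proof'' is a one-sentence deferral to \cite[Prop.~5.31]{pardoux2014stochastic}. What you have written is, in effect, a self-contained reconstruction of the standard argument that underlies that citation, and all the key mechanisms are in place: the It\^o product rule with the stochastic exponential $\Gamma$ (whose $a_tY_t$ and $b_tZ_t$ cancellations you identify correctly) yields the representation formula and hence uniqueness; existence follows either from martingale representation applied to $N_t=\E^{\sF_t}[\Gamma_T\eta+\int_0^T\Gamma_sc_s\,\ud s]$ or, as you note, by truncating $c$ so that Proposition \ref{prop:FT} applies and passing to the limit through \eqref{estimate_linear} applied to differences --- and you are right that Proposition \ref{prop:FT} cannot be used directly, since $c$ is only $L^1$ in time. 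Your treatment of the weighted estimate is the genuinely substantive part and it is sound: after the change of variables $\hat Y=e^{V}Y$, $\hat Z=e^{V}Z$, the drift coefficient on $\hat Y$ becomes $a_t-|a_t|-(1\wedge(p-1))^{-1}|b_t|^2\le -(1\wedge(p-1))^{-1}|b_t|^2$, and Young's inequality with parameter $\gamma=(1\wedge(p-1))/2$ splits the cross term $p|\hat Y|^{p-1}|b||\hat Z|$ so that the $|b|^2|\hat Y|^p$ part is exactly cancelled by the good drift term (borderline when $1<p<2$, as you say) while only half of the It\^o term $\tfrac{p(p-1)}{2}|\hat Y|^{p-2}|\hat Z|^2$ is consumed. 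Three routine points are glossed and would need a few lines in a full write-up: the justification that the stochastic integrals are true martingales (localization plus H\"older against the moments of $\Gamma$, since $Z$ is only in $L^p(\Omega;L^2(0,T))$ with possibly $p<2$); the passage to the limit $\varepsilon\downarrow 0$ in the $(\varepsilon+|\hat Y|^2)^{p/2}$ regularization (Fatou); and the extraction of $\E\bigl(\int_0^Te^{2V_r}|Z_r|^2\,\ud r\bigr)^{p/2}$, where your interpolation $\int|\hat Z|^2\le\sup|\hat Y|^{2-p}\int|\hat Y|^{p-2}|\hat Z|^2$ works only for $p\le 2$; for $p\ge 2$ one instead applies It\^o to $|\hat Y|^2$ and raises to the power $p/2$ with BDG. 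None of these affects the validity of the plan.
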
 

\begin{proof}
A proof of this result can be easily derived from \cite[Prop.~5.31]{pardoux2014stochastic}, in which more general growth conditions on the coefficients are taken into account.  
\end{proof}

\section{Setting of the problem and Assumptions}
\label{sec:problem}
In this section we firstly show how PDEs of the form of \eqref{eq:kolm:diff:nonlinear}  naturally arise in connection with path-dependent stochastic dynamics. 
Even if path-dependent calculus remains our main motivation, the method we develop here applies to a wide class of equations that do not necessarily originate from path-dependent problems  (see the discussion at the end of subsection \ref{subsec:FBSDE}). 
We subsequently introduce the assumptions under which the main results will be valid.

\subsection{The forward-backward system and the PDE}
\label{subsec:FBSDE}
In what follows, for a given path $\xi$ we will denote by $\xi_t$ the value of $\xi$ at time $t$, while we will use the notation $\xi_{[0,t]}$ for the path of $\xi$ up to time $t$, that is $\xi_{[0,t]}=\br{\xi(s)}_{s\in[0,t]}$. 
We will denote by $C([0,t];\bR^d)$ and $D([0,t];\bR^d)$ the space of $\bR^d$-valued continuous and c\`adl\`ag functions, respectively, defined on the interval $[0,t]$. \\
Let us introduce the  path-dependent SDE
\begin{equation}
\tag{\ref{eq:SDEx}}
\begin{system} 
\ud\xi_s = b_s(\xi_{[0,s]})\ud s + \sigma \ud W_s\ ,\quad s\in[t,T]\ ,\\
\xi_{[0,t]} = \gamma
\end{system}
\end{equation}
where $\gamma\in D\left([0,t];\bR^d\right)$ is a given deterministic curve and the drift $b$ is a family $\{b_s\}_{s\in[0,T]}$,
\begin{equation}
\label{eq:driftb}
  b_s : D([0,s];\bR^d) \to \bR^d\ .
\end{equation}
A solution to (\ref{eq:SDEx}) will be denoted by $\xi^{\gamma,t}$.
Some authors define the drift equivalently as a map
\begin{equation*}
  b:[0,T]\times D([0,T]:\bR^d)\to\bR^d
\end{equation*}
that is \emph{non-anticipative}: $b(s,\chi)=b(s,\chi_{[0,s]})$ for every $s\in[0,T]$. 
In this setting, non-anticipativeness is assured requiring $b$ to be measurable with respect to the $\sigma$-algebra induced on $D([0,T];\bR^d)$ by the metric
\begin{equation*}
  d_\infty\left((s,\mu),(t,\chi)\right)=\abs{s-t}+\sup_{r\in[0,T]}\abs{\mu(r\wedge s)-\eta(r\wedge t)}\ .
\end{equation*}
We will prefer the first formulation (\ref{eq:driftb}) in what follow, but everything can be easily adapted to the second one, where the particular topology induced by the metric $d_\infty$ has to be taken into account.

We now introduce the product space framework (see \citep[Chap.~4]{benosussan2007representation} for a general discussion), where the present state $\xi_s$ and the past trajectory $\xi_{[0,s)}$ are seen as separate variables. 
Setting
\begin{align*}
  E_0&:=\left\{\phi\in C\left([-T,0);\bR^d\right) \colon \exists\lim_{r\uparrow 0}\phi(r)\in\bR^d\right\}\ \text{,}  \\
    D_0&:=\left\{\phi\in D\left([-T,0);\bR^d\right) \colon \exists\lim_{r\uparrow 0}\phi(r)\in\bR^d\right\}\ \text{,}  \\
\end{align*}
we define the spaces
\begin{equation}
  \label{eq:spaces}
\begin{aligned}
    \sC&:=\bR^d\times E_0\ \text{,}\\
    \aC&:=\left\{x=\xphi{y}{\phi}\in\sC \text{ s.t. }y=\lim_{r\uparrow 0}\phi(r)\right\}\text{,}\\
    \sD&:=\bR^d\times D_0\ \text{,}\\
   \sL^2&:=\bR^d\times L^2\left(-T,0;\bR^d\right)\ \text{.}
 \end{aligned}
\end{equation}
The spaces $\sC$, $\aC$ and $\sD$ are Banach spaces with respect to the norm $\VV \xphi{y}{\phi}\VV^2=\vert y\vert^2+{\VV\phi\VV}_\infty^2$, while $\sL^2$ is a Banach space with respect to the norm $\VV\xphi{y}{\phi}\VV^2=\vert y\vert^2+\VV\phi\VV_{2}^2$.
The space $\sD$, endowed with the topology given by the norm above, is not separable, but this will not undermine the methods used herein.\\
With these norms we have the natural inclusions
\begin{equation*}
  \aC\subset\sC\subset\sD\subset\sL^2
\end{equation*}
with continuous embeddings. We remark that $\aC$, $\sC$ and $\sD$ are dense in $\sL^2$ while neither $\aC$ nor $\sC$ are dense in $\sD$. 
The choice of the interval $[-T,0]$ is made in accordance with most of the classical literature on delay equations.
Note also that the space $\aC$ does not have the structure of a product space, and it is isomorphic to the space $C\left([-T,0];\bR^d\right)$.\\

The reformulation of equation (\ref{eq:SDEx}) in infinite dimensions is obtained through the family of \emph{restriction operators}
\begin{equation}\label{eq:M_t}
\begin{gathered}
  M_t\colon \sD\longrightarrow D\left([0,t];\bR^d\right)\\
           M_t\left(\xphi{y}{\phi}\right)=\phi(s-t)\ind_{[0,t)}(s)+y\ind_{\{t\}}(s)\ \text{.}
\end{gathered} 
\end{equation}
Using $M_t$ we can define the operator
\begin{equation}\label{eq:Bprod}
\begin{gathered}
 B\colon [0,T]\times \sD\to\sD\\
          B\left(t,\xphi{y}{\phi}\right)=\left(\begin{matrix} b_t\left(M_t\xphi{y}{\phi}\right) \\ 0\end{matrix}\right)\ .
\end{gathered}
\end{equation}
Note that the variable $t$ appears explicitely in $B$ even if $b$ does not depend explicitely on time; such a variable acts here as a selector for $b_t$ and $M_t$. 
The right inverse of $M_t$ is the \emph{backward extension} operators defined as
\begin{equation}
\label{eq:Lt}
\begin{gathered}
L^t\colon D([0,t];\bR^d)\longrightarrow \sD\\
           L^t(\chi)=\left(\begin{matrix}\chi(t)\\ \chi(0)\ind_{[-T,-t)}+\chi(t+\cdot)\ind_{[-t,0)}
             \end{matrix}
\right)\ \text{;}
\end{gathered}
\end{equation}
with these definitions we have that $M_tL^t\gamma=\gamma$ for every $\gamma\in D([0,t];\bR^d)$.

Finally let us introduce the operator
\begin{equation*}
\label{eq:DAE}
\Dom\left(A\right)=\left\{\xphi{y}{\phi}\in \sL^2 \colon \phi\in W^{1,2}\left(-T,0;\bR^d\right),\ y=\lim_{r\to 0^-}\phi(r)\right\}\ \text{,}
\end{equation*}
\begin{equation*}
\label{eq:A}
A=\left(
\begin{array}
[c]{cc}
0 & 0\\
0 & \frac{d}{dr}
\end{array}
\right)\ ,
\end{equation*}
(we identify an element of $W^{1,2}(-T,0;\bR^d)$ with its continuous version restricted to $[-T,0)$ ) and the space
\begin{equation*}
  \aC^1:=A^{-1}\left(\br{0}\times E_0\right)=\left\{\xphi{y}{\phi}\in\aC\colon \phi\in C^1\left([-T,0);\bR^d\right), \, \exists \lim_{r \uparrow 0} \phi^\prime(r)\right\}\ \text{.}
\end{equation*}
The operator $A$ generates a strongly continuous semigroup $e^{tA}$ in $\sL^2$  which is explicitely given by the formula
\begin{equation}
  \label{eq:semigroup}
  e^{tA}\xphi{y}{\phi}=\left(\begin{matrix} y \\ \phi(\cdot +t)\ind_{[-T,-t)}+y\ind_{[-t,0)}\end{matrix}\right)\ 
\end{equation}
(see \cite{benosussan2007representation} or \cite{yosida1995functional} for details). 
It is evident that such a semigroup is well defined on $\sC$ and $\sD$, maps $\sD$ into itself, but it is not strongly continuous neither in $\sD$ nor in $\sC$.
 Nevertheless it is equibounded in $\sD$, it maps $\aC$ in itself and it is strongly continuous in $\aC$.\\

Consider now a strong solution $\xi=\xi^{\gamma,t}$ to equation \eqref{eq:SDEx} and set
\begin{equation*}
  X_s:=L^s\xi_{[0,s]}\ \text{.}
\end{equation*}
$X$ is a $\sD$-valued process that solves the SDE
\begin{equation}
  \tag{\ref{eq:SDEX}}
\begin{system}
  \ud X_s=AX_s \ud s+B(s,X_s)\ud s+\Sigma \ud W_s\ ,\quad s\in(t,T]\\
  X_{t}=L^t\gamma=:x\ \text{,}
\end{system}
\end{equation}
in mild sense, that is, it satisfies
\begin{equation}
  \label{eq:SDEXmild}
  X_s=e^{sA}x+\int_t^s \etsa B(r,X_r)\ud r+\intt\etsa\Sigma \ud W_s\ ,\quad s\in[t,T] \ ,
\end{equation}
where $\Sigma:\bR^{d_1}\to\sD$ is the operator given by
\begin{equation}
\label{Sigma}
  \Sigma w =\left(\begin{matrix} \sigma w\\ 0\end{matrix}\right)
\end{equation}
and
  \begin{equation}
    \label{eq:conv_def}
    \intt\etsa\Sigma\ud W_s=\intt\etsa\begin{pmatrix}\sigma \ud W_r\\0\end{pmatrix}=\begin{pmatrix}\intt\sigma\ud W_r\\ \intt\ind_{[-(s-r),0]}(\cdot)\sigma\ud W_r\end{pmatrix}=\begin{pmatrix}\sigma\left(W_s-W_{t}\right)\\ \sigma\left(W_{(s+\cdot)\vee t}-W_{t}\right)\end{pmatrix}\ .
  \end{equation}
Conversely, if $X$ solves \eqref{eq:SDEXmild}, its first component $X^1$ solves equation \eqref{eq:SDEx} and $X^1_s=M_sX_s$ for every $s\in[t,T]$. 
In the following we will study the forward equation both in $\sD$ and in $\sL^2$; this means that we will consider drift operators $B$ defined on $[0,T]\times\sL^2$ or on $[0,T]\times\sD$, depending on the occasion.
When needed, the solution to \eqref{eq:SDEX} will be denoted by $X^{t,x}$ to stress the dependence on initial data.

\begin{remark}
  \label{rem:conv}
If we denote by $\sZ^t$ the stochastic convolution
\begin{equation*}
  \sZ^{t}(s)=\intt\etsa\Sigma\ud W_r\ ,\quad s\geq t\ ,
\end{equation*}
then $s \mapsto \sZ^{t}(s)$ is a continuous process with values in $\aC$ for every $t \in [0,T]$ and  $\E\left[\norm{\sZ^{t}(s)}_{\aC}^p\right]\lesssim (s-t)^{\frac{p}{2}}$ for every $p\geq 2$. 
Thanks to the continuity of the embedding $\aC\subset\sL^2$, the same properties hold in $\sL^2$ as well. 
From the explicit form of the semigroup it can be easily seen that $X_s^{t,x}$ belongs to $\aC$ whenever $x\in\aC$, whereas it only belongs to $\sD$ if the path $x\in\sD$ is discontinuous at some point. 
We refer to \cite{flandoli2016infinite} for a detailed discussion.
\end{remark}

In \cite{flandoli2016infinite} it was shown that, under some regularity assumptions, the function
\begin{equation*}
  u(t,x)=\bE\left[\Phi\left(X^{t,x}_T\right)\right]
\end{equation*}
is a regular solution of the linear Kolmogorov backward equation
\begin{equation}\label{eq:kolm:diff}
\begin{system}
\frac{\partial u}{\partial t}(t,x) + Du(t,x)\left[Ax + B(t,x)\right]   + \frac{1}{2}\tr_{\bR^d}\left[\Sigma\Sigma^\ast D^2u(t,x)\right] = 0, \\
u(T,\cdot) = \Phi
\end{system}
\end{equation} 
where $\Phi:\sD\to\bR$ is a given terminal condition, $Du(t,x)\left[Ax + B(t,x)\right]$ is the duality pairing between $\sD^\prime$ and $\sD$ and the trace term is defined as
\begin{equation*}
  \tr_{\bR^d}\left[\Sigma\Sigma^\ast D^2v(t,x)\right] = \sum_{j=1}^d\Sigma\Sigma^\ast D^2v(t,x)\left(e_j,e_j\right)
\end{equation*}
for an arbitrary orthonormal system $\left\{e_j\right\}_{j=1}^d$ of $\bR^d$.\\
Here we are interested in the nonlinear version of (\ref{eq:kolm:diff}) given by
\begin{equation}
  \tag{\ref{eq:kolm:diff:nonlinear}}
\begin{system}
  \frac{\partial u}{\partial t}(t,x) + Du(t,x)\left[Ax + B(t,x) \right] + \frac{1}{2}\tr_{\bR^d}\left[\Sigma\Sigma^\ast D^2u(t,x)\right] = G(t,x,u(t,x),Du(t,x)\Sigma) \\
  u(T,\cdot) = \Phi \ ,
\end{system}
\end{equation} 
where $G:[0,T]\times \sD \times \bR\times \bR^{d_1}\to\bR$.
\begin{definition}
\label{def:sol_D}
Given $\Phi\in C\left(\sD,\mathbb{R}\right)  $, we say that $u:\left[  0,T\right]  \times \sD\rightarrow\mathbb{R}$ is a \emph{classical solution} of the Kolmogorov semilinear backward equation with terminal condition $\Phi$ if
\begin{equation*}
u\in C^{1;2}\left([0,T]\times \sD,\mathbb{R}\right)\ ,
\end{equation*}
 and satisfies identity (\ref{eq:kolm:diff:nonlinear}) for every $t\in\left[  0,T\right]  $ and $x\in \aC^1$.
\end{definition}
To find a classical solution to the semilinear Kolmogorov backward equation we introduce the following real-valued BSDE:
\begin{equation}\label{eq:BSDEint}
Y^{t,x}_s + \int_s^T Z^{t,x}_r\ud W_r = -\int_s^T G(r,X^{t,x}_r,Y^{t,x}_r,Z^{t,x}_r) \ud r + \Phi(X^{t,x}_T) \ , \qquad t\leq s \leq T \ ,
\end{equation}
where the notation $(\cdot)^{t,x}$ refers to the initial data $(t,x)$ of the forward equation. In a differential formulation, we are concerned with the forward-backward system of the form
\begin{equation}
\label{eq:FBSDE}
\begin{system}
\ud X^{t,x}_s = \left[ AX_s^{t,x} + B(s,X_s^{t,x}) \right]\ud s + \Sigma \ud W_s \\
\ud Y^{t,x}_s = G(s,X_s^{t,x},Y^{t,x}_s,Z^{t,x}_s) \ud s + Z^{t,x}_s \ud W_s \\
X^{t,x}_t = x \\
Y^{t,x}_T = \Phi(X^{t,x}_T)
\end{system}
\end{equation}
where $s \in [t,T] \subset [0,T]$.
Our goal is to show that the function
  \begin{equation*}
    u(t,x)=Y^{t,x}_t\ ,
  \end{equation*}
  is a classical solution of the semilinear Kolmogorov backward equation with terminal condition $\Phi$. Since the scheme we follow consists in first solving the PDE in $\sL^2$ and then passing to $\sD$, we will need to study the forward-backward system and the PDE in both these spaces. For this reason we will state some results and assumptions in a general separable Banach space $E$, specialyzing to the cases $E=\sL^2$, $\sL^p$, $\sD$, $\sC$, $\aC$ when needed.

\begin{remark}[Path-dependent case] 
When $G$ and $\Phi$ are infinite-dimensional lifting of path-dependent functions, i.e.
  \begin{equation*}
    G(s,x,y,z)=g_s\left(M_sx,y,z\right)\quad\text{and}\quad  \Phi(x)=\phi\left(M_Tx\right)
  \end{equation*}
for a family $\{g_s\}_{s\in[0,T]}$, $g_s:D([0,s];\bR^d)\times\bR\times\bR^{d_1}\to\bR$ and a map $\phi:D([0,T];\bR^d)\to\bR$ (cf. \eqref{eq:M_t} ), the PDE \eqref{eq:kolm:diff:nonlinear} can be interpreted as the Kolmogorov PDE associated to the path-dependent forward-backward system
\begin{equation*}
  \begin{system}
    \ud \xi_s=b_s\left(\xi_{[0,s]}\right)\ud s+\sigma\ud W_s\\
    \ud \psi_s =g_s\left(\xi_{[0,s]},\psi_s,\zeta_s\right)\ud s+\zeta_s\ud W_s\\
    \xi_{[0,t]}=\gamma_{[0,t]}\\
    \psi_T=\phi\left(\xi_{[0,T]}\right)\ .
  \end{system}
\end{equation*}
In this specific situation the PDE actually has the form
  \begin{equation}\label{PDE-path-dep}
    \frac{\partial u}{\partial t}(t,x)+Du(t,x)Ax=G(t,x,u(t,x),Du(t,x)\Sigma)- Du(t,x)B(t,x)-\frac{1}{2}\tr_{\bR^d}\left[\Sigma\Sigma^\ast D^2u^\ast\right] ,
  \end{equation}
where the r.h.s depends on $Du(t,x)\in\sD^\prime$ only through its action on the first components of elements in $\sD$.
Moreover, exploiting the so-called \emph{functional differential calculus} introduced in \cite{cont2013functional} one can formulate a PDE very similar to \eqref{PDE-path-dep} for which a wellposedness result can also be provided by our approach.
We refer again to \cite{flandoli2016infinite} and \cite{zanco2015phd} for a detailed discussion about the relations between the two settings and the role played by the operator $\frac{\partial}{\partial t}+D[\cdot]A$. 
\end{remark}
In the following, we essentially provide a solution theory for semilinear PDEs on the space of continuous functions under the assumption that the second order term concentrates on the final dimensional component of $\sD$ (thus ensuring the trace term be well defined) and without requiring that coefficients arise as liftings of path-dependent functions.
For these reasons,  in the whole presentation we will consider general coefficients, without sticking to the path-dependent formalism.

\subsection{Assumptions}
\label{subsec:assumptions}
Let $E$ be any of the spaces listed in \eqref{eq:spaces} and  let $m  \ge 0$. 
The following sets of assumptions are in force throughout the paper.

\begin{assumption}\label{ass:B}
The drift term $B$ belongs to $C^{1;2,\alpha}([0,T]\times E;E))$ for some $\alpha \in (0,1)$, with the H\"older norm of $D^2B$ bounded uniformly in $s \in [0,T]$.
Furthermore, there exists a constant $C \ge 0$ such that 
\begin{enumerate}[label=$(B.\Roman{*})$]
\item $\left\vert B(s,x)\right\vert \leq C\left(1+\left\vert x\right\vert\right)$,
\item $\left\vert B\left(s,x_1\right)-B\left(s,x_2\right)\right\vert \leq C\left\vert x_1-x_2\right\vert$,
\item $\left\vert D^2B(s,x)\right\vert \leq C\left(1+\left\vert x\right\vert^m\right)$,
\end{enumerate}
for every $x,x_1,x_2 \in E$, uniformly in $s \in [0,T]$.
\end{assumption}

For what concerns the coefficients of the BSDE \eqref{eq:BSDE}, hereinafter we use the notation $D_i G$ to denote the derivative with respect to the $i$-th (spatial) entry of the map $(x,y,z) \mapsto G(s,x,y,z)$, and $D_{i,j}^2 G$ for the second derivatives

\begin{assumption}\label{ass:G}
$G: [0,T] \times E \times \bR \times \bR^{d_1} \to \bR$ is such that  for every $s \in [0,T]$ the map  $G(s,\cdot) \in C^{2,\alpha}(E \times \R \times \R^{d_1}; \R)$.
Moreover there exists $C \ge 0$ such that :
\begin{enumerate}[label=$(G.\Roman{*})$]
\item\label{item:F1} $\abs{G(s,x,y,z)}\leq C(1+\abs{x}^{m}+\abs{y}+\abs{z})$;

\item $\abs{G(s,x,y_1,z_1)-G(s,x,y_2,z_2)}\leq C\left(\abs{y_1-y_2}+\abs{z_1-z_2}\right)$;
\item $\abs{D_1G(s,x,y,z)} + \abs{D^2_{1,1} G(s,x,y,z)} \leq C(1+\abs{x}^m)(1+\vert y\vert+\vert z\vert)$;
\item $\abs{D^2_{1,2} G(s,x,y,z)} + \abs{D^2_{1,3} G(s,x,y,z)}\leq C\left(1+\abs{x}^{m}\right)(1+\vert y\vert)$;
\item $\abs{D^2_{2,2} G(s,x,y,z)} + \abs{D^2_{2,3} G(s,x,y,z)} \leq C(1+\vert x \vert^m)$;
\item $\abs{D^2_{3,3} G(s,x,y,z)} \leq C$,
\end{enumerate}
for every $x,y,z, y_1,y_2,z_1,z_2 \in E$,  uniformly in $s \in [0,T]$.
\end{assumption}

\begin{assumption}\label{ass:Phi2}
The function $\Phi$ belongs to $C^{2,\alpha}(E,\bR)$ for some $\alpha \in (0,1)$ and 
\begin{equation*}
|\Phi(x)| + |D\Phi(x)| + |D^2\Phi(x)| \leq C(1 + |x|^m).
\end{equation*}
\end{assumption}

In Section \ref{sec:PDED}, when passing from the PDE in $\sL^2$ to the PDE in $\sD$ we will need to carefully approximate the coefficients.
In doing so, it is crucial that if $B$, $G$, $\Phi$ satisfy the above assumptions in $\sD$ then the same could hold for the approximations $B^n$, $G^n$, $\Phi^n$  in $\sL^2$, possibly with some uniformity with respect to $n$. It turns out that a convenient way to build such approximations is to consider a sequence of bounded linear operators $J^n$ from $\sL^2$ to $\sC$ with the following properties:

\begin{itemize}
\item $J^nx\to x$ in $\sC$ for every $x\in\sC$;
\item $\sup_n\left\VV J^n x\right\VV_\infty\leq C_J\VV x\VV_\infty$ for every $x\in\sD$ such that $M_T(x)$ has at most one jump and is continuous elsewhere, where $M_T$ is defined in \eqref{eq:M_t}.
\end{itemize}
Note that any such sequence converges to the identity uniformly on compact sets of $\sC$.\\
An example of  $\{J_n\}$ can be constructed as follows: given any $\epsilon\in\left(0,\frac{T}{2}\right)$ define a function $\tau_\epsilon:[-T,0]\to[-T,0]$ as
\begin{equation*}
  \tau_\epsilon(x)=\begin{cases}-T+\epsilon &\mbox{if } x\in[-T,-T+\epsilon]\\x&\mbox{if } x\in[-T+\epsilon,-\epsilon]\\-\epsilon&\mbox{if }x\in[-\epsilon,0]\ .
  \end{cases}
\end{equation*}
Then choose any function $\rho \in C^{\infty}(\bR;\bR)$ such that $\VV\rho\VV_1=1$, $0\leq\rho\leq 1$,  $\supp(\rho)\subseteq[-1,1]$ and define a sequence $\left\{\rho_n\right\}$ of mollifiers by $\rho_n(x):=n\rho(nx)$. Set, for any $\phi\in L^1(-T,0;\bR^d)$
\begin{equation}
\label{eq:Jn}
  \sJ^n\phi(x):=\int_{-T}^0\rho_n\big(\tau_{\frac{1}{n}}(x)-y\big)\phi(y)\ud y \ ;
\end{equation}
finally set
\begin{equation*}
  J^n\xphi{a}{\phi}=\left(\begin{matrix}a\\ \sJ^n\phi\end{matrix}\right)\ .
\end{equation*}
To ensure the applicability of the limiting procedure, we need one more assumption, that is satisfied by many examples as discussed in \cite{flandoli2016infinite} and \cite{zanco2015phd}.
\begin{definition}
\label{def:phi}
Let $F$ be a Banach space, $R\colon\sD\to F$ twice Fr\'echet differentiable and $\Gamma\subseteq\sD$. We say that $R$ has \emph{one-jump-continuous Fr\'echet differentials of first and second order on $\Gamma$} if there exists a sequence of linear continuous operators $J^n$ as above such that for every $y\in\Gamma$ and for almost every $a\in[-T,0]$ the following hold:
  \begin{equation*}
    DR(y)J^n\xphi{1}{\ind_{[a,0)}}\longrightarrow DR(y)\xphi{1}{\ind_{[a,0)}}\ \text{,}
  \end{equation*}
  \begin{equation*}
    D^2R(y)\Big(J^n\xphi{1}{\ind_{[a,0)}}-\xphi{1}{\ind_{[a,0)}},\xphi{1}{\ind_{[a,0)}}\Big)\longrightarrow 0 \ , \quad    D^2R(y)\Big(\xphi{1}{\ind_{[a,0)}},J^n\xphi{1}{\ind_{[a,0)}}-\xphi{1}{\ind_{[a,0)}}\Big)\longrightarrow 0 \ ,
  \end{equation*}
  \begin{equation*}
    D^2R(y)\Big(J^n\xphi{1}{\ind_{[a,0)}}-\xphi{1}{\ind_{[a,0)}},J^n\xphi{1}{\ind_{[a,0)}}-\xphi{1}{\ind_{[a,0)}}\Big)\longrightarrow 0\ \text{,}
  \end{equation*}
where we adopt the convention that $\xphi{1}{\ind_{[a,0)}}=\xphi{1}{0}$ when $a=0$.
\end{definition}
We will call \emph{smoothing sequence} for $R$ any sequence $\left\{J^n\right\}$ satisfying the above requirements.
%Note that if a function $R$ fulfils definition \ref{def:phi} then 
By linearity, the above convergences hold true also if $\xphi{1}{\ind_{[a,0)}}$ is substituted with any $x\in\sD$ with the property that $M_T(x)$ has at most one jump and it is continuous elsewhere.
\begin{assumption}
\label{ass:dphi}
For every $s\in[0,T]$, $B(s,\cdot)$ and $\Phi$ have one-jump-continuous Fr\'echet differentials of first and second order on $\aC \subset \sD$ and the smoothing sequence of $B$ does not depend on $s$.
\end{assumption}
\begin{assumption}
  \label{ass:GJ}
  For every $s\in[0,T]$, $y\in\bR$, $z\in\bR^{d_1}$, $G(s,\cdot,y,z)$ has one-jump continuous Fr\'echet differential of first order and its smoothing sequence does not depend on $s$ nor on $y,z$.

\end{assumption}

\section{The forward-Backward system}
\label{sec:FBSDE}

This section is devoted to the forward-backward system \eqref{eq:FBSDE} (FBSDE in the following), that we write below in mild formulation for the reader's convenience:
\begin{equation}\label{FB_mild}
\begin{system}
X^{t,x}_s=e^{(s-t)A}x_{[0,t]}+\intt e^{(s-r)A} B(r,X^{t,x}_r)\ud r+\int_t^s e^{(s-r)A} \Sigma \ud W_r \\
\\
Y^{t,x}_s + \int_s^T Z^{t,x}_r\ud W_r = -\int_s^T G(r,X^{t,x}_r,Y^{t,x}_r,Z^{t,x}_r) \ud r + \Phi(X^{t,x}_T) \ ,\\
\end{system}
\end{equation}
where $t\leq s \leq T$.
Observe that the system is not fully coupled: the forward equation does not depend on the values of the pair $(Y,Z)$.
We firstly state some result for the process $X$, whose proof can be found in \citep[Thms.~2.2, 2.3, 2.4]{flandoli2016infinite}

\begin{proposition}\label{p:existence_X}
Under Assumption \ref{ass:B}, there exists a set $\Omega_0\subseteq\Omega$ of full probability such that:
\begin{enumerate}[label=$(\roman{*})$]
\item\emph{(existence)} for every initial data $(t,x)\in[0,T]\times E$ and every $\omega\in\Omega_0$ , equation (\ref{eq:SDEXmild}) admits a unique solution $(s,\omega)\mapsto X^{t,x}_s(\omega) \in E$ which 
is continuous in time if $E=\sL^2$, while it 
is only bounded in time if $E=\sD$; 
\item\emph{(regularity in space)} for every $\omega\in\Omega_0$, $t\in[0,T]$ and $s\in[t,T]$ the map $x\mapsto X^{t,x}_s(\omega)$ is in $C^{2,\alpha}$;
\item\emph{(regularity in time)} if $E=\sL^2$, for every $s\in[0,T]$, $x\in E$ and $\omega\in\Omega_0$ the map $t\mapsto X^{t,x}_s(\omega)$ ($t\leq s$) is continuous; if $E=\sD$ the same property holds whenever $x\in\aC$;
\item\emph{(Markovianity)} if $E=\sL^2$ the solution $X^{t,x}$ has the markov property.
\end{enumerate}
\end{proposition}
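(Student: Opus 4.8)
The plan is to treat \eqref{eq:SDEXmild} as a mild SDE with additive noise and to exploit this additive structure to reduce existence and regularity to a \emph{pathwise} fixed-point problem, thereby circumventing the lack of strong continuity of $e^{tA}$ on $\sD$. First I would isolate the stochastic convolution $\sZ^t(s)=\intt\etsa\Sigma\ud W_r$ and invoke Remark \ref{rem:conv} to fix a single full-measure set $\Omega_0$ on which, for every $t$, the map $s\mapsto\sZ^t(s)$ is continuous with values in $\aC$ and satisfies the stated moment bounds. On $\Omega_0$ I would then set $\widehat X_s:=X_s-\sZ^t(s)$ and observe that $\widehat X$ must solve the \emph{random} integral equation
\[
\widehat X_s=e^{(s-t)A}x+\intt\etsa B\big(r,\widehat X_r+\sZ^t(r)\big)\ud r,
\]
which involves no stochastic integral. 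For each fixed $\omega\in\Omega_0$ this is a deterministic equation that I would solve by Banach's fixed-point theorem in $C([t,T];E)$ (or in the space of bounded maps when $E=\sD$), using only the equiboundedness of the semigroup together with the global Lipschitz bound in Assumption \ref{ass:B}; a Gronwall argument then yields uniqueness and the a priori estimate $\sup_{s}\norm{X_s}_E\lesssim 1+\norm{x}_E$, pathwise on $\Omega_0$. For the time regularity asserted in (i), continuity of $s\mapsto X_s$ in $\sL^2$ follows from strong continuity of $e^{tA}$ on $\sL^2$ and continuity of the convolution, whereas in $\sD$ the initial-datum term $e^{(s-t)A}x$ may be discontinuous in $s$ whenever $x$ has jumps, so one only recovers boundedness in time.

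For the spatial regularity (ii) I would differentiate the fixed-point equation in $x$. The G\^ateaux derivative in a direction $h$, formally
\[
\nabla_h X^{t,x}_s=e^{(s-t)A}h+\intt\etsa DB(r,X^{t,x}_r)\,\nabla_h X^{t,x}_r\ud r,
\]
solves a linear mild equation whose well-posedness and uniform bounds follow from the same contraction and Gronwall scheme; differentiating once more produces the analogous linear equation for $\nabla^2 X$ with inhomogeneous term $\etsa D^2B(r,X_r)(\nabla_h X_r,\nabla_k X_r)$. Here the polynomial growth of $D^2B$ forces the use of the pathwise a priori bound on $X$ just obtained, and the $C^{2,\alpha}$-regularity of $B$ transfers to $x\mapsto X^{t,x}_s$ by the standard argument showing that the second differential inherits the $\alpha$-H\"older modulus of $D^2B$. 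Rigorously this requires checking that the formal derivatives are genuine Fr\'echet (not merely G\^ateaux) differentials, which one does by verifying continuity of the derivative processes in the initial datum.

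The time regularity (iii) is proved along the same lines: in $\sL^2$ one compares $X^{t,x}_s$ with $X^{t',x}_s$, exploits strong continuity of the semigroup to control $e^{(s-t)A}x-e^{(s-t')A}x$ together with continuity of the convolution in its lower limit, and closes with Gronwall; in $\sD$ this argument only survives when $x\in\aC$, precisely because $e^{tA}$ is strongly continuous on $\aC$ and maps $\aC$ into itself. Finally, for Markovianity (iv) in $\sL^2$ I would first establish the flow (cocycle) identity $X^{t,x}_s=X^{\tau,X^{t,x}_\tau}_s$ for $t\le\tau\le s$, which is an immediate consequence of pathwise uniqueness, and then combine it with the independence of the increments of $W$ after time $\tau$ to obtain the Markov property in the standard way.

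The main obstacle, and the reason the additive-noise reduction is essential, is that $e^{tA}$ is not strongly continuous on $\sD$ (nor on $\sC$) and $\sD$ is not even separable, so the usual infinite-dimensional SDE machinery, which presupposes a $C_0$-semigroup, does not apply directly. The device that rescues the argument is that the stochastic convolution takes values in the smaller space $\aC$, on which the semigroup \emph{is} strongly continuous, so that all the genuinely stochastic analysis is performed in $\aC$ while the transport-type irregularities of $\sD$ enter only through the deterministic, pathwise-solvable part of the equation.
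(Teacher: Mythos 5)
Your proposal is correct and takes essentially the same route as the paper: the paper itself proves Proposition \ref{p:existence_X} simply by citing \cite[Thms.~2.2, 2.3, 2.4]{flandoli2016infinite}, and the argument carried out there is precisely yours --- isolate the explicit stochastic convolution, which has continuous $\aC$-valued paths (cf.\ Remark \ref{rem:conv}), and solve the remaining random integral equation $\omega$-by-$\omega$ via contraction and Gronwall, exactly the ``path-by-path'' scheme echoed in Theorem \ref{thm:X}, with differentiation of the fixed-point map giving (ii) and the flow property plus independent increments giving (iv). The only technical point you pass over is how the integral $\intt\etsa B(r,X_r)\ud r$ is defined in the non-separable space $\sD$, which the cited reference resolves by using the explicit formula \eqref{eq:semigroup} for the semigroup rather than abstract Bochner integration; this is a detail of implementation, not a gap in the strategy.
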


From now on we will denote by $\Omega_0 \subseteq \Omega$ the fixed set given by Proposition \ref{p:existence_X}.

\begin{theorem}
  \label{thm:X}
Assume that $B:[0,T]\times E\to E$ satisfies Assumption \ref{ass:B}. Fix a time $t\in[0,T]$ and a $\sF_t$-measurable $E$-valued random variable $\xi$, and let $X_\cdot^{t,\xi}$ be the unique $E$-valued solution to
  \begin{equation}
    \label{eq:Xlemma}
    X_s=e^{(s-t)A}\xi+\int_{t}^s \etsa B(r,X_r)\ud r+\int_{t}^s\etsa\Sigma\ud W_r\ .
  \end{equation}
For any $p\geq 1$, if $\xi$ has finite $p$-th moment then $X^{t,\xi}\in L^p\left(\Omega;C([t,T];E)\right)$ and
\begin{equation}
\label{eq:estX}
  \bE\sup_{s\in[t,T]}\left\vert X_s\right\vert_E^p\leq c_1\left(1+\bE\left\vert\xi\right\vert^p\right)
\end{equation}
When $\xi = x\in E$ is deterministic, for every $t\in[0,T]$ the map $x\mapsto X_\cdot^{t,x}$ is twice Fr\'echet differentiable as a map from $E$ to $L^p\left(\Omega;C([t,T];E)\right)$ with continuous differentials; the $L(E;E)$-valued process $D_xX_\cdot^{t,x}$ is the unique solution to
\begin{equation}
\label{eq:DX1}
  \Xi_s=e^{(s-t)A}+\int_{t}^se^{(s-r)A}DB(r,X_r^{t,x})\Xi_r\ud r\ ,
\end{equation}
while the $L(E,E;E)$-valued process $D^2_xX_\cdot^{t,x}$ is the unique solution to
\begin{equation}
  \label{eq:DX2}
  \Theta_s=\int_{t}^se^{(s-r)A}D^2B(r,X_r^{t,x})\left(D_xX_r^{t,x}, D_xX_r^{t,x}\right)\ud r+\int_{t}^se^{(s-r)A}DB(r,X_r^{t,x})\Theta_r\ud r\ .
\end{equation}
All the three SDEs above can be solved path-by-path, meaning that for any fixed $\omega\in\Omega_0$ there exist unique functions $s\mapsto X_s^{t,\xi}(\omega)$, $s\mapsto D_xX_s^{t,x}(\omega)$ and $s\mapsto D^2_xX_s^{t,x}(\omega)$ that satisfy (\ref{eq:Xlemma}), (\ref{eq:DX1}) and (\ref{eq:DX2}), respectively.
Moreover
\begin{equation}
  \label{eq:estDXnorm}
  \sup_{s\in[t,T]}\left\Vert D_xX_s^{t,x}\right\Vert_{L(E;E)}\leq c_2 \quad \text{ for a.e. } \omega \in \Omega\ 
\end{equation}
and in particular for any $E$-valued random variable $\eta\in L^p(\Omega;E)$
\begin{equation}
\label{eq:estDX}
\bE\sup_{s\in[t,T]}\left\vert D_xX_s^{t,x}\eta\right\vert^p\leq c_2\bE\left\vert\eta\right\vert^p\ .
\end{equation}
Furthermore
\begin{equation}
  \label{eq:estD2Xnorm}
  \sup_{s\in[t,T]}\left\Vert D^2_xX_s^{t,x}\right\Vert_{L(E,E;E)}\leq c_3\left(1+\left\vert x\right\vert^{m}\right) \quad \text{ for a.e. } \omega \in \Omega\ .
\end{equation}
The constants $c_1,c_2,c_3$ in the inequalities above depend only on $m$, $T$, $D^i B$ with $i = 0,1,2$, and on the constant $C$ in Assumption \ref{ass:B}.
\end{theorem}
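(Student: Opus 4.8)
The plan is to exploit the additive structure of the noise to reduce everything to a pathwise (deterministic) problem in $E$. Since the stochastic convolution $\sZ^t(s)=\int_t^s \etsa\Sigma\ud W_r$ does not involve the unknown, for each fixed $\omega\in\Omega_0$ I would regard $s\mapsto\sZ^t(s)(\omega)$ as a given continuous $E$-valued path (Remark \ref{rem:conv}) and read \eqref{eq:Xlemma} as the deterministic Volterra equation $X_s=e^{(s-t)A}\xi(\omega)+\int_t^s \etsa B(r,X_r)\ud r+\sZ^t(s)(\omega)$. Using the equiboundedness of $e^{\cdot A}$ together with the global Lipschitz bound (B.II), a contraction argument in $C([t,T];E)$ with a Bielecki-type weighted norm yields a unique pathwise solution; this simultaneously gives the path-by-path solvability asserted at the end of the statement (and existence can alternatively be quoted from Proposition \ref{p:existence_X}). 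Taking $p$-th moments, estimating $e^{(s-t)A}\xi$ by $\|\xi\|$, invoking the linear growth (B.I) and the convolution bound $\bE\|\sZ^t(s)\|^p\lesssim(s-t)^{p/2}$, an application of Gronwall's lemma produces \eqref{eq:estX}; the time regularity then follows from the strong continuity of $e^{\cdot A}$ on $\sL^2$ (respectively its mere equiboundedness on $\sD$).

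For the first derivative I would first solve the linearized equation \eqref{eq:DX1}. Since $\|DB(r,\cdot)\|\le C$ by (B.II), this is a linear integral equation for an $L(E;E)$-valued process, whose unique pathwise solution $\Xi$ satisfies $\sup_s\|\Xi_s\|\le c_2$ by Gronwall; this is precisely \eqref{eq:estDXnorm}, and \eqref{eq:estDX} follows by applying it to $\eta$. To identify $\Xi$ with $D_xX$, I would estimate the remainder $r^h_s:=X^{t,x+h}_s-X^{t,x}_s-\Xi_s h$: writing $B(r,X^{t,x+h}_r)-B(r,X^{t,x}_r)=DB(r,X^{t,x}_r)(X^{t,x+h}_r-X^{t,x}_r)+o(\cdot)$ via the mean value theorem and subtracting the defining equation of $\Xi$ gives an integral inequality for $\|r^h_s\|$ whose forcing term is $o(\|h\|)$ uniformly on bounded sets, thanks to uniform continuity of $DB$ on bounded sets; Gronwall then yields $\sup_s\|r^h_s\|=o(\|h\|)$, hence Fr\'echet differentiability as a map into $L^p(\Omega;C([t,T];E))$, and an analogous stability estimate gives continuity of $x\mapsto D_xX$.

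The second derivative is handled in the same spirit. Differentiating \eqref{eq:DX1} formally in $x$ suggests \eqref{eq:DX2}, which is again linear in the unknown $\Theta$, with forcing term $\etsa D^2B(r,X_r)(D_xX_r,D_xX_r)$. Existence, uniqueness and the bound \eqref{eq:estD2Xnorm} follow from Gronwall once this forcing is controlled: by (B.III) its norm is $\lesssim(1+\|X_r\|^m)\|D_xX_r\|^2\le c\,(1+\|X_r\|^m)$, which produces the $(1+|x|^m)$ factor after inserting \eqref{eq:estX}. To prove $\Theta=D^2_xX$ I would then estimate the second-order remainder $X^{t,x+h}-X^{t,x}-\Xi h-\tfrac12\Theta(h,h)$; here the $\alpha$-H\"older continuity of $D^2B$ from Assumption \ref{ass:B} supplies exactly the modulus of continuity needed to bound this remainder by $o(\|h\|^2)$, and Gronwall closes the argument.

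I expect the second-order step to be the main obstacle. Controlling the remainder genuinely needs the $C^{2,\alpha}$ regularity of $B$ rather than mere differentiability, and one must track the polynomial growth factor $(1+\|X_r\|^m)$ throughout: it is what forces the $(1+|x|^m)$ on the right-hand side of \eqref{eq:estD2Xnorm}, and since $\|X_r\|$ is not pathwise bounded by $|x|$ when $m>0$, this estimate is most naturally read in conjunction with the moment bound \eqref{eq:estX} (i.e. essentially as an $L^p$ statement), in contrast with the genuinely uniform first-order bound \eqref{eq:estDXnorm} where $\|DB\|\le C$. A further recurring technicality is that $e^{\cdot A}$ fails to be strongly continuous on $\sD$ and $\sC$, so every continuity-in-time assertion must be read in the weaker boundedness sense there, full continuity being available only on $\sL^2$ and on $\aC$.
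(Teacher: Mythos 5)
Your proposal follows essentially the same route as the paper: path-by-path solvability, Gronwall-type estimates, and pathwise remainder arguments for the two differentials. The paper itself is terse here precisely because it defers the fixed-point and remainder analysis to \cite{flandoli2016infinite}, and what it defers is exactly the Bielecki/Gronwall contraction and the first- and second-order Taylor estimates (using the growth and $\alpha$-H\"older continuity of $D^2B$) that you spell out. Two small refinements to your write-up: first, "uniform continuity of $DB$ on bounded sets" is not automatic from continuity in an infinite-dimensional $E$, but it does follow from Assumption \ref{ass:B} since $D^2B$ is bounded on bounded sets, so you should invoke the mean value theorem with the $D^2B$ bound rather than mere continuity; second, your closing observation about \eqref{eq:estD2Xnorm} is well taken, since the pathwise Gronwall argument actually yields $\sup_{s}\bigl\Vert D^2_xX^{t,x}_s\bigr\Vert\lesssim 1+\sup_{r}\vert X^{t,x}_r(\omega)\vert^m$, a genuinely random quantity, which becomes $c_3(1+\vert x\vert^m)$ only after taking moments via \eqref{eq:estX} — in contrast with the truly deterministic first-order bound \eqref{eq:estDXnorm}.

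There is, however, one step that does not work as written: the derivation of \eqref{eq:estX}. You invoke only the pointwise-in-$s$ bound $\bE\Vert\sZ^{t}(s)\Vert^p\lesssim(s-t)^{\nicefrac{p}{2}}$ from Remark \ref{rem:conv}, but your pathwise Gronwall inequality gives
\begin{equation*}
\sup_{s\in[t,T]}\vert X_s(\omega)\vert\ \lesssim\ 1+\vert\xi(\omega)\vert+\sup_{s\in[t,T]}\Vert\sZ^{t}(s)(\omega)\Vert\ ,
\end{equation*}
so after taking $p$-th moments you need $\bE\sup_{s\in[t,T]}\Vert\sZ^{t}_s\Vert^p<+\infty$, and a pointwise-in-$s$ moment bound can never control an expected supremum (this is exactly the difference between a moment estimate and a maximal inequality). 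The paper closes this step by using that $\sZ^{t}$ is an $E$-valued martingale and applying the maximal inequality of \cite[Thm.~3.9]{daprato2014stochastic}, which gives $\bE\sup_{s}\Vert\sZ^{t}_s\Vert^p\leq C\sup_{s}\bE\Vert\sZ^{t}_s\Vert^p\leq CT^{\nicefrac{p}{2}}$. Alternatively, in the present additive setting you can avoid any abstract result: by the explicit formula \eqref{eq:conv_def} one has $\sup_{s\in[t,T]}\Vert\sZ^{t}(s)\Vert\lesssim\sup_{r\in[t,T]}\vert\sigma(W_r-W_t)\vert$, and Doob's $L^p$ inequality applied to the finite-dimensional martingale $W$ yields the required bound. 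With this one-line repair the rest of your argument goes through.
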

\begin{proof}
Using that $\sZ^{t}_s$ is a $E$-valued martingale, by \cite[Thm.~3.9]{daprato2014stochastic} and Remark \ref{rem:conv}, we have that 
\begin{equation*}
  \bE\sup_{s\in[t,T]}\left\vert\sZ^{t}_s\right\vert_E^p\leq C \sup_{s\in[t,T]}\bE\left\vert \sZ^{t}_s\right\vert_E^p \leq CT^{\frac{p}{2}}\ .
\end{equation*}
Therefore, from the uniform estimates on $e^{tA}$ and Assumption \ref{ass:B} we get that for every $t\leq R\leq T$
\begin{equation*}
  \bE\sup_{s\in[t,R]}\left\vert X_s\right\vert^p\lesssim 1+\bE\left\vert\xi\right\vert^p + \int_{t}^R\bE\left(\sup_{s\in[t,r]}\left\vert X_s\right\vert^p\right)\ud r \ ,
\end{equation*}
from which (\ref{eq:estX}) follows thanks to Gronwall's lemma. 
Furthermore, the proof of the Fr\'echet differentiability of the map $x\mapsto X_s^{t,x}(\omega)$ given in \cite{flandoli2016infinite} can be easily extended to the required differentiability of $x\mapsto X^{t,x}$ in the space of $E$-valued processes. 
Well-posedness of \eqref{eq:DX1} and \eqref{eq:DX2} (and the fact that $D_xX^{t,x}$ ad $D^2_xX^{t,x}$ are the required solutions) has been already established in \cite{flandoli2016infinite}. Estimates (\ref{eq:estDXnorm}), (\ref{eq:estDX}) and (\ref{eq:estD2Xnorm}) are then easy consequences of Assumption \ref{ass:B}.
 \end{proof}

For what concerns the Backward SDE in \eqref{FB_mild}, the following wellposedness result has been given in \cite{fuhrman2002nolinear}.
 
\begin{proposition}\label{p:stimeBSDE}
Under Assumptions \ref{ass:B}, \ref{ass:G} and \ref{ass:Phi2}, for every $(t,x) \in [0,T] \times E$, the BSDE in \eqref{FB_mild} admits a unique solution $(Y,Z)\in \sK_p$, for every $p \in [2, +\infty)$. Moreover, the map $(t,x) \mapsto \left( Y^{t,x}_\cdot, Z^{t,x}_\cdot \right)$ belongs to $C([0,T] \times E; \sK_p)$ and
there exists $c\geq 0$ such that 
\begin{equation}\label{eq:est:bsde_Y_Z}
 \E \sup_{s \in [t,T]} |Y^{t,x}_s|^p  + \E\left(\int_t^T | Z^{t,x}_r |^2 \ud r \right)^{\nicefrac{p}{2}} \leq c\left( 1 + \left| x \right|^{pm} \right).
\end{equation}
\end{proposition}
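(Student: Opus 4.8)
The plan is to separate the statement into the (routine) wellposedness-plus-estimate part and the (substantive) joint-continuity part, treating the latter via a linearisation of the BSDE.

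\textbf{Wellposedness and the a priori estimate.} For fixed $(t,x)$ I would read the backward equation in \eqref{FB_mild} as a BSDE with driver $g(s,y,z):=G(s,X^{t,x}_s,y,z)$ and terminal datum $\eta:=\Phi(X^{t,x}_T)$, and simply check the hypotheses of Proposition \ref{prop:FT}. Condition (i) there is exactly assumption (G.II), which makes $g$ uniformly Lipschitz in $(y,z)$. For condition (ii), and for $\eta\in L^p_{\sF_T}(\Omega;\bR)$, I would use the growth bounds (G.I) and Assumption \ref{ass:Phi2}, i.e. $|g(s,0,0)|\le C(1+|X^{t,x}_s|^m)$ and $|\eta|\le C(1+|X^{t,x}_T|^m)$, together with the moment estimate \eqref{eq:estX} of Theorem \ref{thm:X} applied with exponent $pm$; this yields $\E(\int_t^T|g(r,0,0)|^2\ud r)^{p/2}\lesssim 1+|x|^{pm}$ and $\E|\eta|^p\lesssim 1+|x|^{pm}$. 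Proposition \ref{prop:FT} then gives existence and uniqueness of $(Y,Z)\in\sK_p$, and inserting the two bounds above into \eqref{eq:BSDE_stima_classica} produces precisely \eqref{eq:est:bsde_Y_Z}.

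\textbf{Continuity: set-up and linearisation.} The real content is the joint continuity of $(t,x)\mapsto(Y^{t,x},Z^{t,x})$. Taking $(t_n,x_n)\to(t,x)$ I would first invoke Proposition \ref{p:existence_X} and Theorem \ref{thm:X} to get $X^{t_n,x_n}\to X^{t,x}$ in $L^q(\Omega;C([0,T];E))$ for every $q\ge 2$, comparing the two forward processes on the common interval after freezing each at its initial datum before its own starting time. Writing $X^1=X^{t_n,x_n}$, $X^2=X^{t,x}$, $(Y^i,Z^i)$ for the corresponding backward solutions and $\delta Y=Y^1-Y^2$, $\delta Z=Z^1-Z^2$, I would linearise the driver: by (G.II) there exist bounded processes $a,b$ (bounded by the Lipschitz constant) such that $\delta Y$ solves the linear BSDE
\[
-\ud\,\delta Y_r=\big(a_r\,\delta Y_r+b_r\,\delta Z_r+c_r\big)\ud r-\delta Z_r\ud W_r,\qquad \delta Y_T=\Phi(X^1_T)-\Phi(X^2_T),
\]
with inhomogeneity $c_r=G(r,X^1_r,Y^2_r,Z^2_r)-G(r,X^2_r,Y^2_r,Z^2_r)$. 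Since $a,b$ are bounded, the weight $V$ appearing in Lemma \ref{l:linearBSDE} is bounded, so that lemma (whose integrability hypothesis on $c$ is verified by the very bound used below) gives
\[
\E\sup_{s}|\delta Y_s|^p+\E\Big(\int_t^T|\delta Z_r|^2\ud r\Big)^{p/2}\lesssim \E|\Phi(X^1_T)-\Phi(X^2_T)|^p+\E\Big(\int_0^T|c_r|\ud r\Big)^p.
\]

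\textbf{Continuity: passing to the limit.} It remains to send both right-hand terms to zero, and this is the main obstacle, because $\Phi$ and $G$ depend on $x$ only in a locally Lipschitz (polynomially growing) way. For the terminal term I would use Assumption \ref{ass:Phi2} and the mean value theorem to get $|\Phi(X^1_T)-\Phi(X^2_T)|\lesssim(1+|X^1_T|^m+|X^2_T|^m)|X^1_T-X^2_T|$, then split by Hölder's inequality into a factor with uniformly bounded moments (controlled by \eqref{eq:estX}) and the factor $\E|X^1_T-X^2_T|^{2p}\to0$. For the inhomogeneity I would use (G.III) to obtain $|c_r|\lesssim(1+|X^1_r|^m+|X^2_r|^m)(1+|Y^2_r|+|Z^2_r|)|X^1_r-X^2_r|$, bound $\int_0^T|Z^2_r|\ud r$ by $\sqrt{T}\,(\int_0^T|Z^2_r|^2\ud r)^{1/2}$, and apply a three-factor Hölder inequality: the $X$-moments from Theorem \ref{thm:X} and the a priori estimate \eqref{eq:est:bsde_Y_Z} for $(Y^2,Z^2)$ (valid for \emph{every} $p$, which is exactly why the full range of exponents is needed) keep two factors bounded, while the middle factor $\E\sup_r|X^1_r-X^2_r|^q\to0$. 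Hence both terms vanish and $(Y^{t_n,x_n},Z^{t_n,x_n})\to(Y^{t,x},Z^{t,x})$ in $\sK_p$. The delicate points are precisely this interplay between polynomial growth and the need for all-order moment bounds, together with the comparison of forward processes started at different times.
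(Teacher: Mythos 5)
The paper does not actually prove this proposition: it is imported from the literature (``the following wellposedness result has been given in \cite{fuhrman2002nolinear}''), so there is no in-paper argument to compare yours against. Your reconstruction is the standard one and is essentially correct; moreover, it uses exactly the machinery the paper itself sets up and later reuses: Proposition \ref{prop:FT} for wellposedness and the a priori bound (your reduction of \eqref{eq:est:bsde_Y_Z} to \eqref{eq:BSDE_stima_classica} via (G.I), (G.II), Assumption \ref{ass:Phi2} and \eqref{eq:estX} is exactly right), and a linearisation of the difference equation combined with Lemma \ref{l:linearBSDE}, which is the same device the paper employs in the proofs of Propositions \ref{p:diff1} and \ref{p.diff_2_BSDE} one derivative higher. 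What the citation buys the paper is brevity; what your proof buys is self-containedness, and it makes visible that the constant in \eqref{eq:est:bsde_Y_Z} can be chosen independently of $(t,x)$, which the paper only records in a remark.

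The one step you should not treat as routine is the input $X^{t_n,x_n}\to X^{t,x}$ in $L^q\left(\Omega;C([0,T];E)\right)$. Continuity in $x$ for fixed $t$ does follow from Theorem \ref{thm:X}, since the uniform derivative bound \eqref{eq:estDXnorm} gives Lipschitz dependence on the initial datum; but continuity in $t$ is delicate: comparing $X^{t_n,x}$ with $X^{t,x}$ produces the term $\left(e^{(s-t_n)A}-e^{(s-t)A}\right)x$, which tends to $0$ uniformly in $s$ only where the semigroup is strongly continuous. This is fine in $\sL^2$, but in $E=\sD$ it requires $x\in\aC$ --- precisely the caveat in Proposition \ref{p:existence_X}(iii). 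So your argument delivers the stated joint continuity for $E=\sL^2$, and in $\sD$ only at initial data in $\aC$ (with continuity in $x$ alone for general $x$); in the generality in which the proposition is phrased, the $t$-continuity must be inherited from the forward flow and cannot simply be ``invoked''. Since the paper itself hides this behind the citation, this is not a fatal flaw of your proof, but it is the only place where a genuine argument is missing rather than merely compressed.
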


\begin{remark}
The constant $c \geq 0$ appearing in \eqref{eq:est:bsde_Y_Z} can be chosen independently of $(t,x)$. The same applies to Propositions \ref{t.diff_gateaux_BSDE}, \ref{p:diff1} and \ref{p.diff_2_BSDE} below.
Alternatively, one could set $(Y_r, Z_r) = 0$ for every $r \in [0,t]$.
\end{remark}

\subsection{First-order differentiability of the BSDE}
\label{subsec:first}

Here we investigate the differentiability of the map $x \mapsto \left( D_xY^{t,x}, D_xZ^{t,x} \right)$.
G\^ateaux differentiability has been established in a Hilbert setting  in \cite{fuhrman2002nolinear} and then extended to a general Banach setting in \cite{masiero2008stochastic} and \cite{masiero2014hjb}. 
Our aim is to show that under Assumptions \ref{ass:G} and \ref{ass:Phi2}, also Fr\'echet differentiability takes place.  

Let us firstly lighten the notation introducing the shorthand
\begin{equation*}
D_iG_r(t,x) := D_iG(r,X^{t,x}_r,Y^{t,x}_r,Z^{t,x}_r)\ , \qquad i = 1,2,3 \ ,
\end{equation*}
and consider the backward equation satisfied by the pair $\left( U^{t,x}, V^{t,x} \right)$:
\begin{equation}\label{der1_U_BSDE}
\begin{split}
  U^{t,x}_s h +\int_s^T V^{t,x}_r h\ud W_r &= U^{t,x}_T h  -\int_s^T D_1G_r(t,x)D_xX^{t,x}_r h \ud r\\
  &-\int_s^T \left( D_2G_r(t,x) U^{t,x}_r h + D_3G_r(t,x) V^{t,x}_r h \right) \ud r \ ,\\
\end{split}
\end{equation}
where the terminal condition is given by $U^{t,x}_T h = D\Phi(X^{t,x}_T)D_xX^{t,x}_Th$.
It turns out that \eqref{der1_U_BSDE} admits a unique solution $(U^{t,x}h,V^{t,x}h)$  which is given by the directional derivatives $(D_xY^{t,x}h,D_xZ^{t,x}h)$, for every $h \in E$.
This is the content of the next Proposition, whose proof can be found in \cite[Prop.~4.8]{fuhrman2002nolinear}.

\begin{proposition}\label{t.diff_gateaux_BSDE}
Let Assumptions \ref{ass:B}, \ref{ass:G} and \ref{ass:Phi2} hold true. 
For every $h\in E$ equation \eqref{der1_U_BSDE} admits a unique solution $(U^{t,x} h, V^{t,x} h) =(D_xY^{t,x}h,D_xZ^{t,x}h)$.
Moreover, for every $p >1$ the map $(t,x) \mapsto \left(Y^{t,x}, Z^{t,x} \right)$ is G\'ateaux differentiable as a map from $[0,T] \times E$ to $\sK_p$ and for every $h\in E$ the directional derivatives $(D_xY^{t,x}h,D_xZ^{t,x}h)$ satisfy the BSDE \eqref{der1_U_BSDE}:
\begin{equation}\label{der1BSDE}
\begin{split}
  D_xY^{t,x}_s h&+\int_s^T D_xZ^{t,x}_r h\ud W_r =  D\Phi(X^{t,x}_T)D_xX^{t,x}_Th -\int_s^TD_1G_r(t,x)D_xX^{t,x}_r h\ud r\\
  &\phantom{=}-\int_s^T \left(D_2G_r(t,x)D_xY^{t,x}_rh  + D_3G_r(t,x)D_xZ^{t,x}_r h \right)\ud r.\\ 
\end{split}
\end{equation}

Finally, for every $(t,x) \in [0,T] \times E$, the following estimate holds true
\begin{equation}\label{eq:estBSDEh}
  \left[\E\sup_{s\in[t,T]}\abs{D_xY^{t,x}_sh}^p\right]^{\nicefrac{1}{p}}+\left[\E\left(\int_t^T\abs{D_xZ^{t,x}_r h}^2\ud r\right)^{\nicefrac{p}{2}}\right]^{\nicefrac{1}{p}}\leq C\abs{h}\left(1+\abs{x}^{m^2}\right)\ .
\end{equation}
\end{proposition}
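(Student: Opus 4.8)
The plan is to treat \eqref{der1_U_BSDE} as a \emph{linear} BSDE in the unknown pair $(U^{t,x}h,V^{t,x}h)$ and then to identify its solution with the G\^ateaux derivative of $(Y^{t,x},Z^{t,x})$ via a difference-quotient argument. First I would establish well-posedness of \eqref{der1_U_BSDE}. Read as a linear equation of the type covered by Lemma \ref{l:linearBSDE}, its coefficients multiplying the unknown are $a_r=D_2G_r(t,x)$ and $b_r=D_3G_r(t,x)$, which are bounded uniformly by the Lipschitz bound on $G$ in Assumption \ref{ass:G}; the forcing term is $c_r=D_1G_r(t,x)D_xX^{t,x}_r h$ and the terminal datum is $\eta=D\Phi(X^{t,x}_T)D_xX^{t,x}_Th$. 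To invoke Lemma \ref{l:linearBSDE} I only need $c\in L^p(\Omega;L^1(0,T;\bR))$ and $\eta\in L^p_{\sF_T}(\Omega;\bR)$: both follow by combining the polynomial growth of $D_1G$ and $D\Phi$ (Assumptions \ref{ass:G} and \ref{ass:Phi2}) with the uniform operator bound $\norm{D_xX^{t,x}_r}_{L(E;E)}\le c_2$ from \eqref{eq:estDXnorm}, the moment estimate \eqref{eq:estX} on $X$, and the bound \eqref{eq:est:bsde_Y_Z} on $(Y,Z)$, after a Hölder inequality. This already gives existence and uniqueness of $(U^{t,x}h,V^{t,x}h)\in\sK_p$ together with the a priori estimate \eqref{estimate_linear}.

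Next I would show that this solution is the directional derivative. Fix $h\in E$ and $\epsilon\neq 0$ and set $\tilde Y^{\epsilon}=\epsilon^{-1}(Y^{t,x+\epsilon h}-Y^{t,x})$ and $\tilde Z^{\epsilon}=\epsilon^{-1}(Z^{t,x+\epsilon h}-Z^{t,x})$. Subtracting the two copies of \eqref{eq:BSDEint} and expressing the increments of $G$ and of $\Phi$ through the fundamental theorem of calculus along the segment joining $(X^{t,x},Y^{t,x},Z^{t,x})$ to $(X^{t,x+\epsilon h},Y^{t,x+\epsilon h},Z^{t,x+\epsilon h})$, one checks that $(\tilde Y^{\epsilon},\tilde Z^{\epsilon})$ solves a linear BSDE of the same structure, with coefficients $a^{\epsilon}_r,b^{\epsilon}_r$ given by averaged values of $D_2G,D_3G$ and a forcing term built from $D_1G$ times the forward difference quotient $\epsilon^{-1}(X^{t,x+\epsilon h}_r-X^{t,x}_r)$. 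By the Fr\'echet differentiability of $x\mapsto X^{t,x}$ from Theorem \ref{thm:X}, this forward quotient converges to $D_xX^{t,x}_r h$ in $L^p(\Omega;C([t,T];E))$; using continuity of $(t,x)\mapsto(Y^{t,x},Z^{t,x})$ (Proposition \ref{p:stimeBSDE}) and continuity of the second derivatives of $G$ and $\Phi$, the coefficients $a^{\epsilon},b^{\epsilon}$ and the forcing term converge respectively to $D_2G_r(t,x)$, $D_3G_r(t,x)$ and $D_1G_r(t,x)D_xX^{t,x}_rh$.

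To conclude the differentiability I would subtract \eqref{der1_U_BSDE} and apply the stability estimate \eqref{estimate_linear} to the difference $(\tilde Y^{\epsilon}-U^{t,x}h,\tilde Z^{\epsilon}-V^{t,x}h)$, whose terminal and forcing data vanish in the relevant norms as $\epsilon\to0$, so that the difference vanishes in $\sK_p$; this is exactly G\^ateaux differentiability with derivative $(U^{t,x}h,V^{t,x}h)$, and \eqref{der1BSDE} follows. I expect the main obstacle to be precisely this convergence step, and within it the control of the forcing term: $D_1G$ grows polynomially in $\abs{X}$ (Assumption \ref{ass:G}) and multiplies difference quotients of $X$, $Y$ and $Z$, so passing to the limit requires uniform-in-$\epsilon$ integrability extracted from the moment bounds \eqref{eq:estX} and \eqref{eq:est:bsde_Y_Z} together with Hölder's inequality. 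The $\tilde Z^{\epsilon}$-component is the delicate one, since its convergence is recovered only through the full $\sK_p$ norm in \eqref{estimate_linear} and not pathwise.

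Finally, the quantitative bound \eqref{eq:estBSDEh} comes directly from \eqref{estimate_linear} applied to \eqref{der1_U_BSDE}: the terminal term contributes $C\abs{h}(1+\abs{x}^m)$ via Assumption \ref{ass:Phi2} and \eqref{eq:estDXnorm}, while the forcing term, after inserting the growth bound on $D_1G$ and the bounds \eqref{eq:estDXnorm}, \eqref{eq:estX} and \eqref{eq:est:bsde_Y_Z} and applying Hölder, produces the stated polynomial dependence on $\abs{x}$.
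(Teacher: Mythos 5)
Your proposal is correct in outline, but it follows a genuinely different route from the paper's. The paper gives no proof of Proposition \ref{t.diff_gateaux_BSDE} at all: it defers entirely to \cite[Prop.~4.8]{fuhrman2002nolinear}, where G\^ateaux differentiability of $(t,x)\mapsto (Y^{t,x},Z^{t,x})$ is deduced from a parameter-depending contraction principle: the BSDE solution is realized as the fixed point of a Picard-type contraction that is uniform in the parameter $(t,x)$, and an abstract theorem on differentiability of fixed points of parametric contractions yields the derivative, identified a posteriori as the solution of the linearized equation \eqref{der1_U_BSDE}. You instead argue directly: solvability of \eqref{der1_U_BSDE} through Lemma \ref{l:linearBSDE} (your verification that the forcing lies in $L^p(\Omega;L^1(0,T;\R))$ and the terminal datum in $L^p_{\sF_T}(\Omega;\R)$ is exactly right, and is where the weak $L^1$-in-time requirement of that lemma matters), followed by a difference-quotient argument closed by the stability estimate \eqref{estimate_linear}. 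This is precisely the machinery the authors themselves deploy one level higher, in Propositions \ref{p:diff1} and \ref{p.diff_2_BSDE}, so your route has the merit of making the argument self-contained and uniform in method; its price is that you must pass to the limit in coefficients obtained by averaging $D_iG$ along segments joining $(X^{t,x},Y^{t,x},Z^{t,x})$ to $(X^{t,x+\epsilon h},Y^{t,x+\epsilon h},Z^{t,x+\epsilon h})$, and since $Z^{t,x+\epsilon h}\to Z^{t,x}$ only in $L^p(\Omega;L^2(0,T;\R^{d_1}))$, the a.e. convergence of those averaged coefficients holds only along subsequences; the Vitali/dominated-convergence step therefore has to be run on arbitrary subsequences of any $\epsilon_n\to 0$. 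This works, but it is the one point your sketch leaves implicit, and it is exactly the step that the contraction-principle route avoids altogether.

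Two further caveats. The statement asserts G\^ateaux differentiability of the joint map on $[0,T]\times E$, while your argument covers only the $x$-directions; the $t$-derivative needs a separate, analogous argument (the paper is equally brief about this in Proposition \ref{p:diff1}). Also, for \eqref{eq:estBSDEh} your chain of bounds (growth of $D_1G$ and $D\Phi$, \eqref{eq:estDXnorm}, \eqref{eq:estX}, \eqref{eq:est:bsde_Y_Z}, H\"older) naturally yields the exponent $2m$ rather than $m^2$: for $m\geq 2$ this implies the stated bound, but for $m<2$ it does not, so you should not claim the stated exponent drops out "directly". Since everything downstream in the paper only uses some polynomial growth in $|x|$, this discrepancy is harmless, but it deserves to be flagged rather than asserted.
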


We are now in position to study the Fr\'echet differentiability of the maps $t,x \mapsto \left( D_xY^{t,x}, D_xZ^{t,x} \right)$. 
\begin{proposition}\label{p:diff1}
Under Assumptions \ref{ass:B}, \ref{ass:G} and \ref{ass:Phi2}, the map $x \mapsto \left( Y^{t,x}_\cdot, Z^{t,x}_\cdot \right)$ (resp. $t \mapsto \left( Y^{t,x}_\cdot, Z^{t,x}_\cdot \right)$) is Fr\'echet differentiable as a map from $E$ (resp. $[0,T]$) to $\sK_p$. 
Moreover the following estimate holds true
\begin{equation}\label{eq:estBSDE}
  \left[\E\sup_{s\in[t,T]}\left\|D_xY^{t,x}_s\right\|^p\right]^{\nicefrac{1}{p}}+\left[\E\left(\int_t^T\left\|D_xZ^{t,x}_r \right\|^2\ud r\right)^{\nicefrac{p}{2}}\right]^{\nicefrac{1}{p}}\leq C\left(1+\abs{x}^{m^2}\right)\ .
\end{equation}

\end{proposition}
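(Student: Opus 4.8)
The plan is to upgrade the G\^ateaux differentiability already granted by Proposition~\ref{t.diff_gateaux_BSDE} to Fr\'echet differentiability through the classical criterion: a G\^ateaux differentiable map whose G\^ateaux differential is continuous in operator norm is Fr\'echet differentiable, the two differentials coinciding. For fixed $t$ the candidate derivative $h\mapsto(D_xY^{t,x}h,D_xZ^{t,x}h)$ is linear in $h$ (by uniqueness of the linear BSDE \eqref{der1BSDE}, whose data depend linearly on $h$) and bounded by \eqref{eq:estBSDEh}, hence defines an element $Df(x)\in L(E;\sK_p)$, where $f(x):=(Y^{t,x},Z^{t,x})$. Once $x\mapsto Df(x)$ is shown to be continuous, the segment formula
\[
f(x_1)-f(x_2)-Df(x_2)(x_1-x_2)=\int_0^1\big[Df(x_2+\theta(x_1-x_2))-Df(x_2)\big](x_1-x_2)\,\ud\theta
\]
gives $\norm{f(x_1)-f(x_2)-Df(x_2)(x_1-x_2)}_{\sK_p}=o(\abs{x_1-x_2})$, i.e. Fr\'echet differentiability with continuous derivative. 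The $t$-derivative is a single scalar direction and is treated the same (simpler) way, so I would concentrate on the continuity of $x\mapsto Df(x)$.

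To prove this continuity I would fix $x_1,x_2\in E$ and $h\in E$ with $\abs{h}\le1$ and subtract the two copies of \eqref{der1BSDE} written at $x_1$ and $x_2$. The pair $(D_xY^{t,x_1}h-D_xY^{t,x_2}h,\,D_xZ^{t,x_1}h-D_xZ^{t,x_2}h)$ then solves a linear BSDE with \emph{bounded} coefficients $D_2G_r(t,x_1)$, $D_3G_r(t,x_1)$ (bounded by Assumption~\ref{ass:G}), terminal datum
\[
\eta=D\Phi(X^{t,x_1}_T)D_xX^{t,x_1}_Th-D\Phi(X^{t,x_2}_T)D_xX^{t,x_2}_Th,
\]
and source term
\[
c_r=\big(D_1G_r(t,x_1)D_xX^{t,x_1}_r-D_1G_r(t,x_2)D_xX^{t,x_2}_r\big)h+\big(D_2G_r(t,x_1)-D_2G_r(t,x_2)\big)D_xY^{t,x_2}_rh+\big(D_3G_r(t,x_1)-D_3G_r(t,x_2)\big)D_xZ^{t,x_2}_rh.
\]

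Applying the a priori estimate of Proposition~\ref{prop:FT} (equivalently the representation of Lemma~\ref{l:linearBSDE}) bounds the $\sK_p$-norm of the difference by $C(\E\abs{\eta}^p)^{1/p}+C(\E(\int_t^T\abs{c_r}\ud r)^p)^{1/p}$, and it remains to show this is $o(1)\abs{h}$ uniformly in $\abs{h}\le1$ as $x_2\to x_1$. Each term of $\eta$ and $c_r$ is linear in $h$, so $\abs{h}$ factors out; splitting every difference into \emph{(increment of a coefficient)}$\times$\emph{(process)} plus \emph{(coefficient)}$\times$\emph{(increment of $D_xX$ or $D\Phi$)}, I would use the pathwise bound \eqref{eq:estDXnorm} on $\norm{D_xX^{t,x_i}}$, the bound \eqref{eq:estBSDEh} on $D_xY^{t,x_2}h,D_xZ^{t,x_2}h$, the polynomial growth and continuity of $D\Phi,D_1G,D_2G,D_3G$ from Assumptions~\ref{ass:G} and~\ref{ass:Phi2}, and the continuity in $x$ of $X^{t,x},D_xX^{t,x}$ (Theorem~\ref{thm:X}) and of $(Y^{t,x},Z^{t,x})$ (Proposition~\ref{p:stimeBSDE}). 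These yield a.s.\ convergence of the integrands to $0$ with integrable polynomial dominations, so the increments vanish in $L^p$ by dominated convergence; since the moduli of convergence are $h$-independent, taking the supremum over $\abs{h}\le1$ gives $\norm{Df(x_1)-Df(x_2)}_{L(E;\sK_p)}\to0$.

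The estimate \eqref{eq:estBSDE} would follow from the same linear structure: using Lemma~\ref{l:linearBSDE} for \eqref{der1BSDE} (with multiplier $\Gamma$ whose $\Gamma^{\pm1}$ have finite moments of all orders, as $D_2G,D_3G$ are bounded), I would bound $\abs{D_xY^{t,x}_sh}$ by a conditional expectation of terms $\norm{D_xX^{t,x}}(1+\abs{X^{t,x}}^m)(1+\abs{Y^{t,x}}+\abs{Z^{t,x}})\abs{h}$; dividing by $\abs{h}$ the right-hand side no longer depends on $h$, hence controls $\norm{D_xY^{t,x}_s}$ (using separability of $E$ for measurability of the operator norm), and $\E\sup_s$ with Doob's inequality together with \eqref{eq:estX} and \eqref{eq:est:bsde_Y_Z} closes the bound, similarly for $D_xZ$. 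I expect the main obstacle to be exactly this uniformity in $h$: every estimate must be arranged so that $h$ appears only as a single linear factor while the vanishing or dominating quantities are $h$-independent, since this is what converts the directional statements of Proposition~\ref{t.diff_gateaux_BSDE} into operator-norm statements. A subordinate technical point is the control of $c_r$, where $D_xZ^{t,x_2}h$ is only $L^2$ in time, so $\E(\int_t^T\abs{c_r}\ud r)^p$ must be handled by H\"older's inequality against \eqref{eq:estBSDEh} with matched exponents.
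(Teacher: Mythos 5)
Your proposal follows essentially the same route as the paper: G\^ateaux differentiability from Proposition \ref{t.diff_gateaux_BSDE}, then continuity of $x\mapsto(D_xY^{t,x},D_xZ^{t,x})$ in $L(E;\sK_p)$ via the linear BSDE solved by the differences, with terminal datum and source term estimated uniformly over $\abs{h}\le 1$ using the continuity and moment bounds for $X^{t,x}$, $D_xX^{t,x}$, $(Y^{t,x},Z^{t,x})$, and finally \eqref{eq:estBSDE} from \eqref{eq:estBSDEh}. One correction is needed: Proposition \ref{prop:FT} and Lemma \ref{l:linearBSDE} are \emph{not} interchangeable here, since the source term contains $\left[D_3G_r(t,x_1)-D_3G_r(t,x_2)\right]D_xZ^{t,x_2}_rh$, which behaves like $\left(Z^{t,x_1}_r-Z^{t,x_2}_r\right)D_xZ^{t,x_2}_rh$, a product of two processes that are only square-integrable in time; this lies in $L^p(\Omega;L^1(0,T))$ but not in $L^p(\Omega;L^2(0,T))$, so the hypothesis of Proposition \ref{prop:FT} fails and only the $L^1$-in-time estimate \eqref{estimate_linear} of Lemma \ref{l:linearBSDE} applies --- precisely the point the paper singles out before its proof. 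Since the bound you actually write is the $L^1$-in-time one, and your closing remark handles this product by H\"older against \eqref{eq:estBSDEh}, your argument goes through as stated; just drop the parenthetical claim of equivalence.
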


Before entering the details of the proof let us briefly comment on the crucial role played by estimate \eqref{estimate_linear}.
In taking the differences $\left( D_xY^{t,x} - D_xY^{t,y}, D_xZ^{t,x} - D_xZ^{t,y} \right)$ (hence comparing solutions whose forward process starts at different points), we inevitably end up with the term
\[ \int_s^T\left[D_3G_r(t,x)-D_3G_r(t,y)\right]D_xZ^{t,y}_rh\ud r, \]
leading to the product $\left(Z_r^{t,x} - Z_r^{t,y}\right)D_xZ^{t,y}_rh$ which does not belong to $ L^p(\Omega; L^2(0,T;\R))$. 
In this situation standard methods are not effective. 
Nonetheless, the minimal integrability requirement in Lemma \ref{l:linearBSDE} allows to treat with simple tools (see estimate \eqref{est_lambda_31}) the following term 
\[\bE\left(\int_s^T\left|Z_r^{t,x} - Z_r^{t,y}\right| \left|D_xZ^{t,y}_rh \right|\ud r\right)^p. \]

\begin{proof}[Proof of Proposition \ref{p:diff1}]
To shorten the proof we concentrate only on the Fr\'echet differentiability of the map $x \mapsto \left( Y_\cdot^{t,x}, Z_\cdot^{t,x} \right)$, for every $t \in [0,T]$. Differentiability in time follows by the very same technique (see e.g. \citep{masiero2014hjb} for what concerns differentiability in the G\^ateaux sense).

The strategy of the proof is as follows: 
by Theorem \ref{t.diff_gateaux_BSDE} we deduce that the pair $(Y^{t,x},Z^{t,x})$ is G\^ateaux differentiable with respect to $x$. 
Then we show the continuity of $x \mapsto (D_xY^{t,x},D_xZ^{t,x})$ as a map from $E$ to $L(E ; \sK_p)$, 
%$L^p(\Omega;C([0,T];E')) \times L^p(\Omega; L^2(0,T;L(E; \bR^{d_1} ))$. 
which easily yields the required Fr\'echet differentiability.
To do it, we write the equation for the differences  $D_xY^{t,x}h - D_xY^{t,y}h$, $D_xZ^{t,x}h - D_xZ^{t,y}h$ emphasizing its linear character. 
We  employ estimates \eqref{estimate_linear} and we show that the r.h.s. vanishes as $|x-y|_E \to 0$, uniformly in $h \in E$, $|h|_E \leq 1$.

\medskip

Given $x,y,h \in E$, let us write the equation for the differences $\big(D_xY_s^{t,x}h-  D_xY_s^{t,y}h \big)$, $\big(D_xZ_s^{t,x}h - D_xZ_s^{t,y}h\big)$:
\begin{equation*}
  \begin{aligned}
    [&D_xY^{t,x}_s - D_xY^{t,y}_s]h+\int_s^T\left[D_xZ^{t,x}_r-D_xZ^{t,y}_r\right]h\ud W_r 
    =\left[D\Phi\left(X^{t,x}_T\right)D_xX^{t,x}_T-D\Phi\left(X_T^{t,y}\right)D_xX_T^{t,y}\right]h \\
    &\phantom{=}-\int_s^T\left[D_1G_r(t,x)D_xX^{t,x}_r-D_1G_r(t,y)D_xX^{t,y}_r\right]h\ud r 
    - \int_s^T\left[D_2G_r(t,x)D_xY^{t,x}_r-D_2G_r(t,y)D_xY^{t,y}_r\right]h\ud r\\
    &\phantom{=}-\int_s^T\left[D_3G_r(t,x)D_xZ^{t,x}_r-D_3G_r(t,y)D_xZ^{t,y}_r\right]h\ud r\\
	&=\left[D\Phi\left(X^{t,x}_T\right)D_xX^{t,x}_T-D\Phi\left(X_T^{t,y}\right)D_xX_T^{t,y}\right]h 
	-\int_s^T\left[D_1G_r(t,x)D_xX^{t,x}_r-D_1G_r(t,y)D_xX^{t,y}_r\right]h\ud r   \\
    &\phantom{=}-\int_s^TD_2G_r(t,x)\left[ D_xY^{t,x}_r-D_xY^{t,y}_r\right]h\ud r 
    - \int_s^TD_3G_r(t,x)\left[ D_xZ^{t,x}_r-D_xZ^{t,y}_r\right]h\ud r\\
    &\phantom{=}-\int_s^T\left[D_2G_r(t,x)-D_2G_r(t,y)\right]D_xY^{t,y}_rh\ud r - \int_s^T\left[D_3G_r(t,x)-D_3G_r(t,y)\right]D_xZ^{t,y}_rh\ud r \ .\\
  \end{aligned}
\end{equation*}
If we define 
\begin{gather}
  \Delta Y_r=\left(D_xY^{t,x}_s - D_xY^{t,y}_s\right)h\ ,\quad  \Delta Z_r=\left(D_xZ^{t,x}_r-D_xZ^{t,y}_r\right)h\  \nonumber,\\
  \xi=\left[D\Phi\left(X^{t,x}_T\right)D_xX^{t,x}_T-D\Phi\left(X_T^{t,y}\right)D_xX_T^{t,y}\right]h \ , \nonumber \\
  \lambda(r)= -\lambda_1(r)-\lambda_2(r)-\lambda_3(r) =  - \left[D_1G_r(t,x)D_xX^{t,x}_r-D_1G_r(t,y)D_xX^{t,y}_r\right]h \label{eq:lambda}\\
  \phantom{++++}- \left[D_2G_r(t,x)-D_2G_r(t,y)\right]D_xY^{t,y}_rh 
  - \left[D_3G_r(t,x)-D_3G_r(t,y)\right]D_xZ^{t,y}_rh \ , \nonumber\\
V_s=\int_t^s\left\vert D_2G_r(t,x) \right\vert\ud r+\frac{1}{1\wedge(p-1)}\int_t^s\left\vert D_3G_r(t,x)\right\vert^2\ud r \ ,\nonumber
\end{gather}
the above equation reads
\begin{equation*}
 \Delta Y_s+\int_s^T\Delta Z_r\ud W_r=\xi - \int_s^T \left( D_2G_r(t,x)\Delta Y_r + D_3G_r(t,x)\Delta Z_r - \lambda(r)\ud r \right) \ud r \ .
\end{equation*}
where $D_2G_r(t,x)$ and $D_3G_r(t,x)$ are bounded processes with values in $\R$ and $\R^{d_1}$, respectively. 
Since $V$ is a bounded process as well, estimate \eqref{estimate_linear} in Lemma \ref{l:linearBSDE} guarantees that
\begin{equation*}
  \bE\sup_{s\in[t,T]}\left\vert e^{V_s}\Delta Y_s\right\vert^p+\bE\left(\int_t^Te^{2V_r}\left\vert\Delta Z_r\right\vert^2\ud r\right)^{\nicefrac{p}{2}}\lesssim \bE\left\vert \xi\right\vert^p+\bE\left(\int_t^T\left\vert\lambda(r)\right\vert\ud r\right)^p,
\end{equation*}
and the desired continuity follows as soon as
\begin{equation}\label{con_frechet1}
\sup_{\substack{h \in E, \\ |h|_E \leq 1}} \left[\bE\left\vert \xi\right\vert^p+\bE\left(\int_t^T\left\vert\lambda(r)\right\vert\ud r\right)^p \right] \longrightarrow 0, \qquad \text{ if } |x -y|_E \to 0.
\end{equation}
Let us start by showing the convergence for the first term in \eqref{con_frechet1}.
For $p>1$
\begin{equation*}
  \begin{split}
    \bE \left\vert \xi\right\vert^p
    &\lesssim \bE\left\vert D\Phi\left(X^{t,x}_T\right)D_xX^{t,x}_Th-D\Phi\left(X_t^{t,y}\right)D_xX^{t,y}_Th\right\vert^p \\
    &\lesssim \bE\left\vert D\Phi\left(X^{t,x}_T\right) \left( D_xX^{t,x}_Th-D_xX^{t,y}_Th \right)\right\vert^p\\
    &\phantom{=}+\bE\left\vert \left( D\Phi\left(X^{t,x}_T\right) - D\Phi\left(X^{t,y}_T\right) \right) D_xX^{t,y}_Th\right\vert^p\\
    &= \bE\left(\left|\xi_1\right|^p+\left|\xi_2\right|^p\right)\ .
  \end{split}
\end{equation*}
Using Assumption \ref{ass:Phi2} we have
\begin{equation*}
  \begin{split}
    \left| \xi_1\right|\leq\left\| D\Phi\left(X^{t,x}_T\right)\right\|\left\vert D_xX^{t,x}_Th-D_xX^{t,y}_Th\right\vert\lesssim \left(1+\sup_{s\in[t,T]}\left\vert X^{t,x}_s\right\vert^m\right)\left\vert D_xX^{t,x}_Th-D_xX^{t,y}_Th\right\vert \ ,
  \end{split}
\end{equation*}
and  thanks to estimates \eqref{eq:estX}, \eqref{eq:estDX} and the Fr\'echet differentiability of $x \mapsto X_s^{t,x}(\omega)$ for every $\omega \in \Omega_0$, $s \in [t,T]$ (see Proposition \ref{p:existence_X}) it holds
\begin{equation*}
\begin{split}
  \sup_{\substack{h \in E, \\ |h|_E \leq 1}} \bE\left|\xi_1\right|^p &\lesssim \sup_{\substack{h \in E, \\ |h|_E \leq 1}} \left[\bE\left(1+\sup_{s\in[t,T]}\left\vert X^{t,x}_s\right\vert^{2mp}\right)\right]^{\nicefrac{1}{2}}\left[\bE\left\| D_xX^{t,x}_T - D_xX^{t,y}_T \right\|_{L(E;E)}^{2p}\right]^{\nicefrac{1}{2}}|h|^p \\
  &\lesssim \left[\bE\left\| D_xX^{t,x}_T - D_xX^{t,y}_T \right\|_{L(E;E)}^{2p}\right]^{\frac{1}{2}} \longrightarrow 0 \ , \qquad \text{ if } |x -y|_E \to 0 \ .
\end{split}
\end{equation*}
A similar argument yields
\begin{equation*}
\begin{split}
 \sup_{\substack{h \in E, \\ |h|_E \leq 1}} \E \left|\xi_2\right|^p &\leq \sup_{\substack{h \in E, \\ |h|_E \leq 1}} \left[\E\left\| D\Phi(X_T^{t,x})-D\Phi(X^{t,y}_T)\right\|^{2p}\right]^{\nicefrac{1}{2}} \left[\left\| D_xX^{t,y}_T\right\|_{L(E;E)}^{2p} \right]^{\nicefrac{1}{2}} |h|^p \\
 &\lesssim \left[\E\left\| D\Phi(X_T^{t,x})-D\Phi(X^{t,y}_T)\right\|^{2p}\right]^{\nicefrac{1}{2}} \longrightarrow 0 \ , \qquad \text{ if } |x -y|_E \to 0 \ ,
\end{split}
\end{equation*}
where we employed Vitali convergence theorem.
More precisely, Fr\'echet differentiability of the map  $x \mapsto X_T^{t,x}(\omega)$ for every  $\omega \in \Omega_0$, along with the continuity of $D\Phi$, guarantees the convergence of $\| D\Phi(X_T^{t,x})-D\Phi(X^{t,y}_T) \| \to 0$ ; whereas Assumption \ref{ass:Phi2} combined with estimate \eqref{eq:estX} and the choice of a determintistic initial condition $x \in \E$, ensure the uniform integrability of $\left\| D\Phi(X_T^{t,x})-D\Phi(X^{t,y}_T)\right\|^{2p}$.    
\smallskip

\noindent Concerning the second term in \eqref{con_frechet1} we treat separately the three processes $\lambda_i$ in \eqref{eq:lambda}.
First we write
  \begin{equation*}
    \begin{split}
\left\vert\lambda_1(r)\right\vert&=\vert D_1G_r(t,x)D_xX^{t,x}_rh-D_1G_r(t,y)D_xX^{t,y}_rh\vert\\
&\lesssim \left\vert D_1G_r(t,x)\left(D_xX^{t,x}_rh-D_xX^{t,y}_rh\right)\right\vert +\left\vert \left[D_1G_r(t,x)-D_1G_r(t,y)\right]D_xX^{t,y}_rh\right\vert\\
      &=|\lambda_{11}(r)|+ |\lambda_{12}(r)| \ .
    \end{split}
  \end{equation*}
Assumption \ref{ass:G} ensures that 
\begin{equation*}
  \begin{split}
    |\lambda_{11}(r)|&\leq\left\| D_1G_r(t,x)\right\| \left| D_xX^{t,x}_rh-D_xX^{t,y}_rh\right|\\
    &\lesssim \left(1+\sup_{r\in[t,T]}\left\vert X^{t,x}_r \right\vert^m\right)\left(1+\sup_{r\in[t,T]}\left\vert Y^{t,x}_r\right\vert 
    + \left| Z^{t,x}_r \right| \right)\left\vert D_xX^{t,x}_rh-D_xX^{t,y}_rh\right\vert,
  \end{split}
\end{equation*}
and from estimates \eqref{eq:estX} and Proposition \ref{p:stimeBSDE} we get 
\begin{equation*}
  \begin{split}
   \bE\left(\int_t^T |\lambda_{11}(r)|\ud r \right)^p\lesssim \left(1+\left\vert x\right\vert ^{4pm}\right)|h|^p\left[\bE\left(\int_t^T\left\| D_xX^{t,x}_r-D_xX^{t,y}_r\right\|_{L(E;E)}^2\ud r\right)^p\right]^{\nicefrac{1}{2}}
  \end{split}
\end{equation*}
which converges to zero as $|x-y|_E \to 0$ uniformly with respect to $h$, $\| h\| \leq 1$, thanks to the Fr\'echet character of the map $x \mapsto X^{t,x}$  and the Lebesgue dominated convergence theorem (recall estimate \eqref{eq:estDXnorm}).

Then we have
  \begin{equation*}
    \begin{split}
      |\lambda_{12}(r)|&\leq\left\vert \left[ D_1 G\left(r,X^{t,x}_r,Y^{t,x}_r,Z^{t,x}_r\right)-D_1G\left(r,X^{t,x}_r,Y^{t,x}_r,Z^{t,y}_r\right) \right]D_xX^{t,y}_rh\right\vert\\
      &\phantom{\leq}+\left\vert \left[ D_1 G\left(r,X^{t,x}_r,Y^{t,x}_r,Z^{t,y}_r\right)-D_1G\left(r,X^{t,x}_r,Y^{t,y}_r,Z^{t,y}_r\right) \right]D_xX^{t,y}_rh\right\vert\\
      &\phantom{\leq}+\left\vert\left[ D_1 G\left(r,X^{t,x}_r,Y^{t,y}_r,Z^{t,y}_r\right)-D_1G\left(r,X^{t,y}_r,Y^{t,y}_r,Z^{t,y}_r\right) \right]D_xX^{t,y}_rh\right\vert\\
      &=|\lambda_{121}(r)|+|\lambda_{122}(r)|+|\lambda_{123}(r)|.
    \end{split}
  \end{equation*}
Assumption \ref{ass:G} yields
  \begin{equation*}
    \begin{split}
      |\lambda_{121}(r)|&=\left\vert\int_0^1 D_{1,3}^2G\left(r,X^{t,x}_r,Y^{t,x}_r,\alpha Z^{t,x}_r+(1-\alpha)Z^{t,y}_r\right)\Big(Z^{t,x}_r-Z^{t,y}_r,D_xX^{t,y}_rh\Big)\ud \alpha\right\vert\\
      &\lesssim\left(1+\sup_{r\in[t,T]}\left\vert X^{t,x}_r\right\vert^m\right)\left(1+\sup_{r\in[t,T]}\left\vert Y^{t,x}_r\right\vert\right)\left(\sup_{r\in[t,T]}\left\| D_xX^{t,y}_r\right\|\right)|h|\left\vert Z^{t,x}_r-Z^{t,y}_r\right\vert.
    \end{split}
  \end{equation*}
If we apply Holder inequality, Theorem \ref{thm:X} and Proposition \ref{p:stimeBSDE} we get that, as $|x-y|_E \to 0$,  
\begin{equation*}
  \begin{split}
   \sup_{\substack{h \in E, \\ |h|_E \leq 1}} \bE\left(\int_t^T|\lambda_{121}(r)|\ud r\right)^p\lesssim\left(1+\vert x\vert^{mp}\right)\left(1+\vert x\vert^{pm^2}\right)\left[\bE\left(\int_t^T\left\vert Z^{t,x}_r-Z^{t,y}_r \right\vert^2 \ud r\right)^{2p}\right]^{\frac{1}{4}}\longrightarrow 0 \ .
  \end{split}
\end{equation*} 
The same strategy also applies to the terms $\lambda_{122}(r)$ and $\lambda_{123}(r)$.
Therefore, uniformly in $h \in E$, $|h|_E \leq 1$:
\begin{equation*}
\bE\left(\int_t^T\beta_1(r)\ud r\right)^p \leq \bE\left(\int_t^T|\lambda_{11}(r)|\ud r\right)^p + \sum_{i=1}^3\bE\left(\int_t^T|\lambda_{12i}(r)|\ud r\right)^p \longrightarrow 0 \ ,  \qquad \text{ if } |x-y|_E \to 0 \ .
\end{equation*}

We proceed in a similar way for the term $\lambda_2$:
\begin{equation*}
  \begin{split}
    \vert\lambda_2(r)\vert&\leq \left\vert D_2G\left(r,X^{t,x}_r,Y^{t,x}_r,Z^{t,x}_r\right)-D_2G\left(r,X^{t,x}_r,Y^{t,x}_r,Z^{t,y}_r\right)\right\vert \left\vert D_xY^{t,y}_rh\right\vert\\
    &\phantom{aa}+\left\vert D_2G\left(r,X^{t,x}_r,Y^{t,x}_r,Z^{t,y}_r\right)-D_2G\left(r,X^{t,x}_r,Y^{t,y}_r,Z^{t,y}_r\right)\right\vert \left\vert D_xY^{t,y}_rh\right\vert\\
    &\phantom{aa}+\left\vert D_2G\left(r,X^{t,x}_r,Y^{t,y}_r,Z^{t,y}_r\right)-D_2G\left(r,X^{t,y}_r,Y^{t,y}_r,Z^{t,y}_r\right)\right\vert \left\vert D_xY^{t,y}_rh\right\vert\\
    &=|\lambda_{21}(r)|+|\lambda_{22}(r)|+|\lambda_{23}(r)|\ .
  \end{split}
\end{equation*}

Then
\begin{equation*}
  \begin{split}
    \bE\left(\int_t^T |\lambda_{21}(r)|\ud r\right)^p&=\bE\left(\int_t^T\left| D_xY^{t,y}_rh\right| \int_0^1\left\vert D^2_{2,3}G\left(r,X^{t,x}_r,Y^{t,x}_r,a Z^{t,x}_r+(1-a)Z^{t,y}_r\right)\right\vert\left\vert Z^{t,x}_r-Z^{t,y}_r\right\vert\ud a\ud r\right)^p\\
    &\lesssim \left[\bE\sup_{r \in [t,T]}\left\vert D_xY^{t,y}_rh\right\vert^4\right]^{\nicefrac{p}{4}}\left[1+\bE\sup_{r \in [t,T]}\left\vert Y^{t,x}_r\right\vert^{4m}\right]^{\nicefrac{p}{4}}\left[\bE\int_0^T\left\vert Z^{t,x}_r-Z^{t,y}_r\right\vert^2\ud r\right]^{\nicefrac{p}{2}}\\
  &\lesssim \left\vert h\right\vert^p\bE\left(\int_t^T\left\vert Z^{t,x}_r-Z^{t,y}_r\right\vert^2\ud r\right)^{\nicefrac{p}{2}}\longrightarrow 0 \ , \qquad \text{ if } |x-y|_E\to 0 \ ,
  \end{split}
\end{equation*}
uniformly with respect to $h \in E$, $|h|_E \leq 1$.
The terms $\lambda_{22}(r)$ and $\lambda_{23}(r)$ can be treated in the same manner, so that the required convergence holds for $\lambda_2(r)$ as well.\\
It remains to check the term $\lambda_3(r)$:
\begin{equation*}
  \begin{split}
    \vert\lambda_3(r)\vert&\leq \left\vert D_3G\left(r,X^{t,x}_r,Y^{t,x}_r,Z^{t,x}_r\right)-D_3G\left(r,X^{t,x}_r,Y^{t,x}_r,Z^{t,y}_r\right)\right\vert \left\vert D_xZ^{t,y}_rh\right\vert\\
    &\phantom{aa}+\left\vert D_3G\left(r,X^{t,x}_r,Y^{t,x}_r,Z^{t,y}_r\right)-D_3G\left(r,X^{t,x}_r,Y^{t,y}_r,Z^{t,y}_r\right)\right\vert \left\vert D_xZ^{t,y}_rh\right\vert\\
    &\phantom{aa}+\left\vert D_3G\left(r,X^{t,x}_r,Y^{t,y}_r,Z^{t,y}_r\right)-D_3G\left(r,X^{t,y}_r,Y^{t,y}_r,Z^{t,y}_r\right)\right\vert \left\vert D_xZ^{t,y}_rh\right\vert\\
    &=|\lambda_{31}(r)|+|\lambda_{32}(r)|+|\lambda_{33}(r)| \ .
  \end{split}
\end{equation*}
Exploiting again Assumption \ref{ass:G} we easily derive the required result for each term $\lambda_{3i}(r)$, $i = 1,2,3$.
We present here the estimate involving the increments $Z^{t,x}_r-Z^{t,y}_r$:  
\begin{equation}\label{est_lambda_31}
  \begin{split}
    \bE\left(\int_t^T|\lambda_{31}(r)|\ud r\right)^p&\leq\bE\left(\int_t^T\left\vert D_xZ^{t,y}_rh\right\vert\int_0^1\left\vert D_{3}^2G\left(r,X^{t,y}_r,Y_r^{t,x},a Z_r^{t,x}+(1- a)Z^{t,y}_r\right)\right\vert\left\vert Z^{t,x}_r-Z^{t,y}_r\right\vert\ud a\ud r\right)^p\\
    &\lesssim \left[\bE\left(\int_t^T\left\vert D_xZ^{t,y}_rh\right\vert^2\ud r\right)^p\right]^{\nicefrac{1}{2}}\left[\bE\left(\int_t^T\left\vert Z^{t,x}_r-Z^{t,y}_r\right\vert^2\ud r\right)^p\right]^{\nicefrac{1}{2}}\\
    &\lesssim \left\vert h\right\vert^p\left(1+\left\vert x\right\vert ^{m^2p}\right)\left[\bE\left(\int_t^T\left\vert Z^{t,x}_r-Z^{t,y}_r\right\vert^2\ud r\right)^p\right]^{\nicefrac{1}{2}}
  \end{split}
\end{equation}
and the above term converges to zero thanks to Proposition \ref{p:stimeBSDE}.

Summing up, we have that 
\begin{equation*}
\sup_{\substack{h \in E, \\ |h|_E \leq 1}} \bE\left(\int_t^T\left\vert\lambda(r)\right\vert e^{ V_r}\ud r\right)^p \lesssim \sum_{i=1}^3 \sup_{\substack{h \in E, \\ |h|_E \leq 1}}  \bE\left(\int_t^T\left\vert\lambda_i(r)\right\vert\ud r\right)^p \longrightarrow 0, \qquad \text{ if } |x-y|_E \to 0,
\end{equation*}
from which we get the required continuity.
For what concerns the estimate \eqref{eq:estBSDE} it simply follows from \eqref{eq:estBSDEh} by taking the supremum in $h \in E$, $|h|_E \leq 1$.
\end{proof}

Let us now give a representation result for the solution $Z^{t,x}$ of \eqref{eq:BSDEint}, in terms of the Fr\'echet differential of the map $x \mapsto Y^{t,x}$.
This will be crucial in the following, e.g. for the second-order Fr\'echet differentiability of the map $x \mapsto (Y^{t,x},Z^{t,x})$.

\begin{proposition}
\label{p:identification}
Let Assumptions \ref{ass:B}, \ref{ass:G} and \ref{ass:Phi2} be in force. 
Given the solution  $(Y^{t,x},Z^{t,x})$ of the BSDE in \eqref{FB_mild}, we define the map $u: [0,T] \times E \to \R$ as $u(t,x):= Y^{t,x}_t$.
Then for every $(t,x) \in [0,T] \times E$ it holds that
\begin{equation}
Y^{t,x}_s = u(s, X^{t,x}_s)  \ ,\quad s\in[t,T]
\end{equation}
and, by \eqref{eq:est:bsde_Y_Z}, there exists $c \geq 0$ such that 

\begin{equation}
\sup_{t\in[0,T]}|u(t,x)| \leq c(1 + |x|^m) \ .
\end{equation}
Moreover, by Proposition \ref{p:diff1} the map $x \mapsto u(\cdot, x)$ is differentiable and, denoting by $Du(t,x)$ its gradient with respect to the second variable, we have
  \begin{equation}
    \label{eq:stima_grad_u}
\sup_{t\in[0,T]}\left| Du(t,x) \right| \leq c \left( 1+ |x|^{m^2} \right) \ .
\end{equation}
Finally, for every $(t,x) \in [0,T] \times E$ we have the identification 
\begin{equation}\label{eq:identification}
Z^{t,x}_\cdot = DY^{t,X^{t,x}_\cdot}_\cdot\Sigma= Du (\cdot, X^{t,x}_\cdot) \Sigma \qquad \text{ in } L^p(\Omega; L^2(0,T)) \ ,
\end{equation}
where $DY^{t,X^{t,x}_\cdot}_\cdot= D_y Y_\cdot^{t,y} \vert_{y = X_\cdot^{t,x}}$.
In particular for every $s\in[t,T]$
\begin{equation*}
  Z_s^{t,x}=\lim_{r\downarrow s}D_xY^{s,X^{t,x}_s}_r\ .
\end{equation*}
Notice that \eqref{eq:identification} identifies a specific version $\tilde Z^{t,x} \in L^p(C([0,T]; \R^{d_1}))$ of $Z^{t,x}$. 
This identification will hold throughout the paper and clearly yields  
\begin{equation}\label{eq:stimaZ}
\E \left( \sup_{s \in [t,T]} \left| \tilde Z^{t,x}_s \right|^p \right) < +\infty \ .
\end{equation}
\end{proposition}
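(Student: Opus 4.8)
The plan is to proceed in three stages: first derive the flow (Markov) property of the forward-backward system, then read off the a priori bounds, and finally establish the identification of $Z$, which is the genuine core of the statement.

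First I would prove the representation $Y^{t,x}_s = u(s, X^{t,x}_s)$ by a flow argument. For fixed $s \in [t,T]$ uniqueness of the forward equation gives the cocycle identity $X^{t,x}_r = X^{s, X^{t,x}_s}_r$ for $r \in [s,T]$, $\bP$-a.s., so that the restriction of $(Y^{t,x}, Z^{t,x})$ to $[s,T]$ solves the BSDE in \eqref{FB_mild} driven by the forward process $X^{s, X^{t,x}_s}$ with terminal datum $\Phi(X^{s,X^{t,x}_s}_T)$. Since $u(s,y) = Y^{s,y}_s$ is deterministic (it is $\sF_s$-measurable and the data on $[s,T]$ depend only on the increments of $W$ after $s$, independent of $\sF_s$), a freezing argument based on the continuity of $(t,x) \mapsto (Y^{t,x}, Z^{t,x})$ in $\sK_p$ and uniqueness of the BSDE (Proposition \ref{p:stimeBSDE}) lets me substitute the $\sF_s$-measurable $X^{t,x}_s$ for $y$ and conclude $Y^{t,x}_r = Y^{s,X^{t,x}_s}_r$ on $[s,T]$; at $r = s$ this is the claim. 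The same argument applied to the $Z$-component yields $Z^{t,x}_s = Z^{s, X^{t,x}_s}_s =: v(s, X^{t,x}_s)$ for a deterministic function $v$. The bounds are then immediate: $u(t,x) = Y^{t,x}_t$ and $Du(t,x) = D_xY^{t,x}_t$ are deterministic, hence coincide with their $L^p(\Omega)$-norms, and \eqref{eq:est:bsde_Y_Z}, \eqref{eq:estBSDE} give the stated growth, while differentiability of $x\mapsto u(t,x)$ is inherited from the Fr\'echet differentiability of $x \mapsto Y^{t,x}$ in Proposition \ref{p:diff1}.

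The heart of the proof is the identification $v(t,x) = Du(t,x)\Sigma$, equivalently $Z^{t,x}_t = Du(t,x)\Sigma$ at the initial time. Since no It\^o formula is available in our non-smooth, infinite-dimensional setting, I would replace it by a local correlation (Taylor) argument exploiting Markovianity. Starting from $Y^{t,x}_{t+h} - u(t,x) = \int_t^{t+h} G(r, X^{t,x}_r, Y^{t,x}_r, Z^{t,x}_r)\ud r + \int_t^{t+h} Z^{t,x}_r\ud W_r$, I multiply by $(W_{t+h} - W_t)^\top$ and take expectations: by the It\^o isometry the stochastic-integral term contributes $\bE\int_t^{t+h} Z^{t,x}_r\ud r$, while the drift term is $O(h^{3/2})$ and hence negligible after division by $h$. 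On the other side I substitute $Y^{t,x}_{t+h} = u(t+h, X^{t,x}_{t+h})$ and expand to first order, $u(t+h, X^{t,x}_{t+h}) - u(t+h, x) = \int_0^1 Du\bigl(t+h, x + \theta(X^{t,x}_{t+h} - x)\bigr)\ud\theta\,(X^{t,x}_{t+h} - x)$, the purely time-dependent increment $u(t+h,x) - u(t,x)$ being deterministic and therefore decorrelating from $W_{t+h} - W_t$. Inserting the mild form of $X^{t,x}_{t+h} - x$ and the explicit convolution \eqref{eq:conv_def}, the drift and deterministic semigroup parts decorrelate in the limit whereas the martingale part produces exactly $Du(t,x)\Sigma$; letting $h\downarrow 0$ and using continuity of $Du$ together with the growth bound \eqref{eq:stima_grad_u} to dominate remainders, I obtain $\tfrac1h\bE\int_t^{t+h} Z^{t,x}_r\ud r \to Du(t,x)\Sigma$, which identifies $v(t,x) = Du(t,x)\Sigma$.

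Finally, combining the flow representation $Z^{t,x}_s = v(s, X^{t,x}_s)$ with the initial-time identification $v(s,y) = Du(s,y)\Sigma$ gives $Z^{t,x}_s = Du(s, X^{t,x}_s)\Sigma$ for a.e. $s$, i.e. the identity \eqref{eq:identification} in $L^p(\Omega; L^2(0,T))$. Its right-hand side $s \mapsto Du(s, X^{t,x}_s)\Sigma$ is a.s. continuous in time and, by \eqref{eq:stima_grad_u} and \eqref{eq:estX}, has finite $L^p(\Omega; C)$-norm, thus furnishing the continuous version $\tilde Z^{t,x}$ together with estimate \eqref{eq:stimaZ}. I expect the main obstacle to be precisely the identification step: controlling the remainder in the first-order expansion of $u$ uniformly in $h$ (where only $C^1$-regularity with a polynomially growing gradient is available), checking that the drift and semigroup contributions genuinely decorrelate, and passing from the averaged limit $\tfrac1h\bE\int_t^{t+h}Z^{t,x}_r\ud r$ to the pointwise value $v(t,x)$ — the last point being where the $\sK_p$-continuity of $(t,x)\mapsto(Y^{t,x},Z^{t,x})$ from Proposition \ref{p:stimeBSDE} is indispensable.
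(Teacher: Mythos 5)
The paper offers no self-contained argument here at all: its ``proof'' is a pointer to the literature (\cite[Cor.~6.29]{fabbri2017stochastic} for the Hilbert case, \cite{masiero2008stochastic}, \cite{Zhang2011optimal} for Banach spaces), so your covariation proof is necessarily a different route, and your first two stages (flow property via a freezing argument, and the growth bounds) are standard and sound. The genuine gap sits in the identification stage, exactly at the point you flag but do not resolve. You expand $u(t+h,\cdot)$ around $x$ and assert that ``the drift and deterministic semigroup parts decorrelate in the limit''. Decorrelation is exact only in the leading term, where the coefficient $Du(t+h,x)$ is deterministic and $\E\left[W_{t+h}-W_t\right]=0$. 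In the Taylor remainder the coefficient $F_h:=\int_0^1\left[Du\bigl(t+h,x+\theta\Delta_h\bigr)-Du(t+h,x)\right]\ud\theta$, with $\Delta_h:=X^{t,x}_{t+h}-x$, is \emph{random}, so no decorrelation is available, and Cauchy--Schwarz only gives
\begin{equation*}
\frac1h\left\vert\E\left[F_h\bigl((e^{hA}-I)x\bigr)\,(W_{t+h}-W_t)^\top\right]\right\vert\;\lesssim\;\bigl\Vert(e^{hA}-I)x\bigr\Vert\,\bigl(\E\Vert F_h\Vert^2\bigr)^{\nicefrac12}\,h^{-\nicefrac12}\ .
\end{equation*}
For generic $x\in\sL^2$ the quantity $\Vert(e^{hA}-I)x\Vert$ tends to zero with \emph{no rate} (translation continuity in $L^2$ can be arbitrarily slow), and even invoking Corollary \ref{cor:D2u} one only gets $(\E\Vert F_h\Vert^2)^{\nicefrac12}\lesssim\Vert(e^{hA}-I)x\Vert+h^{\nicefrac12}$, so the bound is $\Vert(e^{hA}-I)x\Vert^2h^{-\nicefrac12}+\Vert(e^{hA}-I)x\Vert$, which need not vanish. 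The repair is the paper's own device in the proof of Theorem \ref{thm:L2}: compare $u(t+h,X^{t,x}_{t+h})$ with $u\bigl(t+h,e^{hA}x\bigr)$ rather than with $u(t+h,x)$; then the spatial increment consists of drift plus stochastic convolution, is $O(h^{\nicefrac12})$ in every $L^p$, the deterministic bracket $u(t+h,e^{hA}x)-u(t,x)$ still decorrelates exactly, and the remainder is $o(h)$.

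Even after this repair, your argument proves the claim only for $E=\sL^2$ (a Hilbert setting with strongly continuous semigroup), whereas the proposition is stated for generic $E$ from \eqref{eq:spaces} and is applied later with $E=\sD$ (it is used crucially in Proposition \ref{p.diff_2_BSDE} and Lemma \ref{step4}). In $\sD$ two ingredients break: $e^{hA}$ is not strongly continuous, and the normalized noise covariance $\frac1h\E\left[\sZ^t(t+h)(W_{t+h}-W_t)^\top\right]=\xphi{\sigma}{a\mapsto\sigma((h+a)\vee 0)/h}$ converges to $\Sigma$ in $\sL^2$-norm but \emph{not} in $\sD$: its path component is a tent of sup-norm $\vert\sigma\vert$ for every $h$, and pairing it with the legitimate functional $\xphi{y}{\phi}\mapsto\lim_{a\uparrow 0}\phi(a)\in\sD'$ yields $\sigma$ for all $h$. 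Hence in $\sD$ the limit of $Du(t+h,e^{hA}x)$ applied to that vector need not be $Du(t,x)\Sigma$ without further structural information on $Du$ --- which is presumably why the paper appeals to the Banach-space identification results instead of running this computation. A smaller but real flaw: your gluing step uses $v(s,y):=Z^{s,y}_s$, which is not well defined, since $Z^{s,y}$ is only an equivalence class in $L^p(\Omega;L^2(s,T;\bR^{d_1}))$; the flow property gives $Z^{t,x}_r=Z^{s,X^{t,x}_s}_r$ only for a.e.\ $r\in[s,T]$, and the covariation argument gives an averaged limit, so passing to \eqref{eq:identification} requires the conditional Lebesgue-differentiation argument (set $\tilde Z_s:=Du(s,X^{t,x}_s)\Sigma$ and show $\E\int_t^T\vert Z^{t,x}_s-\tilde Z_s\vert^2\ud s=0$), which you name as a difficulty but never carry out.
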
 
\begin{proof}
For a proof of this result we refer to \cite[Cor.~6.29]{fabbri2017stochastic} for the Hilbert setting and to \cite{masiero2008stochastic}, \cite{Zhang2011optimal} for the extension to Banach spaces.  
\end{proof}

\begin{remark}\label{rem:BSDE:bdd}
Let us comment on the particular case in which $D\Phi$ and $D_iG$, $i =1,2,3$, in \eqref{der1BSDE} are uniformly bounded.
By a standard application of Girsanov theorem, see e.g. \cite[Section~6.7.1]{fabbri2017stochastic},  estimate \eqref{eq:estDXnorm} yields the boundedness of $D_xY^{t,x} h$ in the sense that there exists $K \ge 0$ such that for every $h \in E$
\begin{equation}
\label{bound_DY}
\left| D_x Y^{t,x}_s h \right| \le K|h|, \qquad \text{for a.e. } \omega \in \Omega, \ \forall t\le s\le T, \ \forall x \in E.
\end{equation}
Hence, in view of  Proposition \ref{p:identification},
\begin{equation}
\label{bound_Z}
\left| Z^{t,x}_s \right| \le K|\Sigma|, \qquad \text{for a.e. } \omega \in \Omega, \ \forall t\le s\le T, \ \forall x \in E,
\end{equation}
where the constant $K$ only depends on $\sup_x \|D\Phi(x)\|$, $\sup_{s,x,y,z} \left( |D_1G(s,x,y,z)| + |D_2G(s,x,y,z)| \right)$ but \emph{not} on $D_3G(s,x,y,z)$, 
This is crucial in the application to stochastic optimal control in Section \ref{sec:control}.    
\end{remark}

\subsection{Second-order differentiability of the BSDE}
\label{subsec:second}
From the previous section we know that Fr\'echet derivatives $(D_xY^{t,x},D_xZ^{t,x})$ are well-defined
and the pair  $(D_xY^{t,x}h,D_xZ^{t,x}h)$ solves equation \eqref{der1BSDE} for every $h \in E$.
By exploiting the form of the equation, here we firstly study the Fr\'echet differentiability of the directional derivatives
$x \mapsto (D_xY^{t,x}h,D_xZ^{t,x}h)$, for every $h \in E$ fixed.
Then, using the uniform character of all the estimates, we identify the pair $(D_x^2Y^{t,x},D_x^2Z^{t,x})$ as the second-order Fr\'echet differential of the map $x \mapsto (Y^{t,x},Z^{t,x})$. 
Similarly to the previous subsection we will use the shorthand
\begin{equation*}
  D^2_{i,j}G_r(t,x):=D^2_{i,j}G\left(r,X^{t,x}_r,Y^{t,x}_r,Z^{t,x}_r\right)\ .
\end{equation*}
For every $h,k \in E$, let us introduce the backward equation
\begin{equation}\label{BSDE_second}
\begin{split}
  F^{t,x}_s&(k,h)+\int_s^TH^{t,x}_r(k,h)\ud W(r)= F^{t,x}_T(k,h) + \int_s^T L_r^{t,x}(k,h) \ud r\\
  &-\int_s^T \left[ D_2G_r(t,x)F^{t,x}_r(k,h) + D_3G_r(t,x)H^{t,x}_r(k,h) \right]\ud r \ ,
\end{split}
\end{equation}
where we used the notation 
\[F^{t,x}_T(k,h) := D^2\Phi\left(X^{t,x}_T\right)\left(D_xX^{t,x}_Tk,D_xX^{t,x}_Th\right) + D\Phi(X^{t,x}_T)D^2_xX^{t,x}_T(k,h) \ ; \] 
\begin{equation*}
\begin{split}
L&_r^{t,x}(k,h) := - D^2_{1,1}G_r(t,x)\left(D_xX^{t,x}_rk,D_xX^{t,x}_rh\right) - D^2_{1,2}G(r,x)\left(D_xY^{t,x}_rk,D_xX^{t,x}_rh\right) \\
&- D^2_{1,3}G_r(t,x)\left(D_xZ^{t,x}_rk,D_xX^{t,x}_rh\right)- D_1G_r(t,x)D^2_xX^{t,x}_r(k,h) \\
&- D^2_{2,1}G_r(t,x)\left(D_xX^{t,x}_rk,D_xY^{t,x}_rh\right) - D^2_{2,2}G_r(t,x)\left(D_xY^{t,x}_rk,D_xY^{t,x}_rh\right) -D^2_{2,3}G_r(t,x)\left(D_xZ^{t,x}_rk,D_xY^{t,x}_rh\right) \\
&- D^2_{3,1}G_r(t,x)\left(D_xX^{t,x}_rk,D_xZ^{t,x}_rh\right) - D^2_{3,2}G_r(t,x)\left(D_xY^{t,x}_rk,D_xZ^{t,x}_rh\right) - D^2_{3,3}G_r(t,x)\left(D_xZ^{t,x}_rk,D_xZ^{t,x}_rh\right) \\
&\qquad \quad \;\,=: L_{1;r}^{t,x}(k,h) + L_{2;r}^{t,x}(k,h) + L_{3;r}^{t,x}(k,h)
\end{split}
\end{equation*}
and $L_{i;r}^{t,x}$ includes all the terms containing $D_{i}G$ or $D^2_{i,j}G$, for any $j$.
The main result of the section is  the following 

\begin{proposition}\label{p.diff_2_BSDE}
Let Assumptions \ref{ass:B}, \ref{ass:G} and \ref{ass:Phi2} hold true. 
For every $t \in [0,T]$, $h,k\in E$, equation \eqref{BSDE_second} admits a unique solution $\left(F^{t,x}_s(k,h), H^{t,x}_s (k,h)\right)$.
For every $p >1$ the map $x \mapsto \left(D_xY^{t,x}_{\cdot}h, D_xZ^{t,x}_\cdot h \right)$ (resp. $t \mapsto \left(D_xY^{t,x}_{\cdot}h, D_xZ^{t,x}_\cdot h \right)$) is G\^ateaux differentiable as a map from $E$ (resp. $[0,T]$) to $\sK_p$.
For every $(k,h)\in E$ the pair $\big(D^2_xY^{t,x}_s(k,h)$, $D^2_xZ^{t,x}_s(k,h)\big)$ satisfies the BSDE \eqref{BSDE_second}.
Moreover, for every $t \in [0,T]$ and $p >1$, the map $x \mapsto (Y^{t,x}_\cdot, Z^{t,x}_\cdot)$ is twice Fr\'echet differentiable as a map from $E$ to $\sK_p$ with second order Fr\'echet differential given by $(D^2_x Y^{t,x}, D^2_xZ^{t,x})$ and 
\begin{equation}\label{eq:est_D2YZ}
  \left[\E\sup_{s\in[t,T]}\left\| D^2_xY^{t,x}_s\right\|^p\right]^{\nicefrac{1}{p}}+\left[\E\left(\int_t^T\left\|D^2_xZ^{t,x}_s\right\|^2\ud r\right)^{\nicefrac{p}{2}}\right]^{\nicefrac{1}{p}}\leq c\left(1+\abs{x}^{l}\right)\ ,
\end{equation}
for some $c,l \ge 0$.
\end{proposition}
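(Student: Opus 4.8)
The plan is to follow the very same three-step scheme used in the proof of Proposition \ref{p:diff1}, now carried out one order higher. Throughout, we regard \eqref{BSDE_second} as a \emph{linear} BSDE in the unknown pair $\left(F^{t,x}(k,h), H^{t,x}(k,h)\right)$, whose $\R$-valued and $\R^{d_1}$-valued coefficients are exactly the bounded processes $D_2G_r(t,x)$ and $D_3G_r(t,x)$ already appearing in \eqref{der1BSDE}, and whose inhomogeneous term consists of the driver $L^{t,x}_r(k,h)$ together with the terminal datum $F^{t,x}_T(k,h)$. The crucial observation is that several summands of $L^{t,x}$, most notably $D^2_{3,3}G_r(t,x)\left(D_xZ^{t,x}_r k, D_xZ^{t,x}_r h\right)$ and the analogous mixed terms, are products of two processes that are only square-integrable in time; hence they belong to $L^p\left(\Omega; L^1(0,T;\R)\right)$ but \emph{not} to $L^p\left(\Omega; L^2(0,T;\R)\right)$. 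For this reason the standard wellposedness result (Proposition \ref{prop:FT}) does not apply directly and one must invoke Lemma \ref{l:linearBSDE}, whose $L^1$-in-time integrability hypothesis on the driver is tailored exactly to this situation.

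First I would check that $F^{t,x}_T(k,h) \in L^p_{\sF_T}(\Omega;\R)$ and $L^{t,x}(k,h) \in L^p\left(\Omega; L^1(0,T;\R)\right)$. Each term is estimated by combining the polynomial growth of the second differentials of $G$ and $\Phi$ (Assumptions \ref{ass:G} and \ref{ass:Phi2}) with the bounds \eqref{eq:estX}, \eqref{eq:estDXnorm}, \eqref{eq:estD2Xnorm} on $X$, $D_xX$, $D^2_xX$, the bounds \eqref{eq:est:bsde_Y_Z} and \eqref{eq:stimaZ} on $(Y,Z)$, and the bounds \eqref{eq:estBSDEh}, \eqref{eq:estBSDE} on $(D_xY, D_xZ)$; the products of two factors that are square-integrable in time are controlled in $L^1$-in-time by Cauchy--Schwarz. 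Lemma \ref{l:linearBSDE} then produces the unique solution together with estimate \eqref{estimate_linear}, from which \eqref{eq:est_D2YZ} follows after taking the supremum over $|h|_E, |k|_E \leq 1$ and tracking the resulting power $l$ of $|x|$.

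For G\^ateaux differentiability I fix $h,k \in E$ and write the BSDE solved by the incremental ratio $\varepsilon^{-1}\big(D_xY^{t,x+\varepsilon k}_\cdot h - D_xY^{t,x}_\cdot h,\, D_xZ^{t,x+\varepsilon k}_\cdot h - D_xZ^{t,x}_\cdot h\big)$ and compare it with \eqref{BSDE_second}. Subtracting, one obtains a linear BSDE for the error whose coefficients are again $D_2G_r(t,x)$, $D_3G_r(t,x)$ and whose inhomogeneous term collects the differences between the incremental-ratio driver and $L^{t,x}$. Using the continuity and differentiability already established for $x \mapsto X^{t,x}, D_xX^{t,x}, D^2_xX^{t,x}$ (Theorem \ref{thm:X}) and for $x \mapsto (Y^{t,x}, Z^{t,x}, D_xY^{t,x}, D_xZ^{t,x})$ (Propositions \ref{p:stimeBSDE}, \ref{t.diff_gateaux_BSDE} and \ref{p:diff1}), together with the continuity of the second differentials of $G$ and $\Phi$, a dominated/Vitali convergence argument shows that this inhomogeneous term vanishes in $L^p(\Omega; L^1)$; by \eqref{estimate_linear} the error then tends to zero in $\sK_p$, proving that $\left(F^{t,x}(k,h), H^{t,x}(k,h)\right) = \left(D^2_xY^{t,x}(k,h), D^2_xZ^{t,x}(k,h)\right)$. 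Differentiability in $t$ is obtained by the same method.

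It remains to upgrade G\^ateaux to Fr\'echet differentiability, which, exactly as in Proposition \ref{p:diff1}, amounts to proving that $x \mapsto (D^2_xY^{t,x}, D^2_xZ^{t,x})$ is continuous from $E$ into $L(E,E;\sK_p)$. I would write the linear BSDE satisfied by the differences $\big(F^{t,x}(k,h) - F^{t,y}(k,h),\, H^{t,x}(k,h) - H^{t,y}(k,h)\big)$, freeze its coefficients at $x$, and apply \eqref{estimate_linear}; the task then reduces to showing that the terminal datum and the driver, both built from differences of the numerous summands of $F^{t,\cdot}_T$ and $L^{t,\cdot}$, vanish in the appropriate norms as $|x-y|_E \to 0$, uniformly over $|h|_E, |k|_E \leq 1$. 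This is where I expect the main difficulty to lie: the summands carrying increments such as $Z^{t,x}_r - Z^{t,y}_r$ or $D_xZ^{t,x}_r - D_xZ^{t,y}_r$ paired against another $Z$-type derivative are precisely the products that fail to be square-integrable in time, and they must be handled with the same device as in \eqref{est_lambda_31}, namely a Cauchy--Schwarz splitting that exploits the $L^1$-in-time integrability permitted by Lemma \ref{l:linearBSDE}. All remaining terms are controlled through the continuity of the maps listed above together with uniform integrability (Vitali), mirroring the treatment of $\xi_1, \xi_2, \lambda_1, \lambda_2, \lambda_3$ in the first-order proof.
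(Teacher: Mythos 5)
Your overall architecture (wellposedness of \eqref{BSDE_second} through the integrability conditions of Lemma \ref{l:linearBSDE}, then differentiability by writing a linear BSDE for the error and driving its data to zero, then the bound \eqref{eq:est_D2YZ} from \eqref{estimate_linear}) coincides with the paper's, up to one organisational difference: the paper does not pass through directional incremental ratios followed by a continuity upgrade, but proves Fr\'echet differentiability directly, estimating $\Upsilon^k_s = |k|^{-1}\bigl[D_xY^{t,x+k}_sh - D_xY^{t,x}_sh - F^{t,x}_s(k,h)\bigr]$ uniformly over $|h|_E\le 1$ as $|k|\to 0$. That difference is harmless. The problem lies elsewhere.

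The genuine gap is your treatment of the terms generated by the $\alpha$-H\"older continuity of $D^2_{3,3}G$ (and of the other $D^2_{i,3}G$) in the $z$-variable, i.e.\ the paper's term $M^k_{311}$, which is bounded by
\begin{equation*}
\left| Z^{t,x+k}_r - Z^{t,x}_r\right|^{\alpha}\,\frac{\left| Z^{t,x+k}_r - Z^{t,x}_r\right|}{|k|}\,\left\| D_xZ^{t,x+k}_r\right\| .
\end{equation*}
You propose to handle these with ``the same device as in \eqref{est_lambda_31}, namely a Cauchy--Schwarz splitting.'' This cannot work. In \eqref{est_lambda_31} the coefficient $D^2_{3,3}G$ is \emph{bounded} (Assumption \ref{ass:G}), so only two factors that are merely square-integrable in time appear, and Cauchy--Schwarz in time suffices. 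Here there are \emph{three} such factors: the H\"older increment contributes $\left|Z^{t,x+k}_r-Z^{t,x}_r\right|^{\alpha}$, which a priori lies only in $L^{2/\alpha}(t,T)$ pathwise, and since $\alpha/2+1/2+1/2>1$ no H\"older splitting in time places the product in $L^1(t,T)$ at all, let alone gives a vanishing $L^p(\Omega;L^1)$ bound. The missing ingredient is Proposition \ref{p:identification}: the identification $Z^{t,x}_\cdot = Du(\cdot,X^{t,x}_\cdot)\Sigma$ supplies a pathwise-continuous, pathwise-bounded version of $Z$ (estimate \eqref{eq:stimaZ}), which allows one to pull
\begin{equation*}
\sup_{r\in[t,T]}\left| Z^{t,x+k}_r - Z^{t,x}_r\right|^{\alpha} \lesssim \sup_{r\in[t,T]}\left\| Du\bigl(r,X^{t,x+k}_r\bigr)-Du\bigl(r,X^{t,x}_r\bigr)\right\|^{\alpha}\left|\Sigma\right|^{\alpha}
\end{equation*}
out of the time integral, leaving exactly two square-integrable factors for Cauchy--Schwarz; the proof is then closed by the pointwise convergence \eqref{conv_Du_Sigma} (which itself uses the deterministic character of $Du$, the finite dimensionality of the noise through $\Sigma e_j\in E$, and the additivity of the noise in the forward equation) together with Vitali's theorem. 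Without invoking the identification at this point, the most delicate driver term cannot be estimated, and both your G\^ateaux step and your continuity step collapse, since this term appears in each of them.
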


\begin{proof} 
For what concerns wellposedness of \eqref{BSDE_second}, let us check that $F_T^{t,x}(k,h)$ and $L_r^{t,x}(k,h)$ satisfy the integrability conditions given in Lemma \ref{linear_BSDE}. 
The application of H\"older inequality along with Assumption \ref{ass:Phi2} and Theorem \ref{thm:X} immediately give that $F_T^{t,x}(k,h) \in L^p_{\mathcal{F}_T}(\Omega;\R)$.
To prove that $L^{t,x}(k,h)$  belongs to $L^p(\Omega; L^1(0,T;\R))$, for every $p>1$, we profit from the growth of $G$ (see Assumption \ref{ass:G}) and the estimates on $D_xX_r, D_xY_r, D_xZ_r,$ given in Theorem \ref{thm:X} and Proposition \ref{t.diff_gateaux_BSDE}, respectively.
Let us give some details for the term $L_{1;r}^{t,x}(h,k)$:
\begin{equation*}
\begin{split}
\int_t^T&\left| D^2_{1,1}G_r(t,x)\left(D_xX^{t,x}_rk,D_xX^{t,x}_rh\right) \right|  \ud r\\
&\lesssim (1 + \sup_{s \in [t,T]}|X^{t,x}_s|^m) (1 +  \sup_{s \in [t,T]}|Y^{t,x}_r| )\sup_{s \in [t,T]}\left\|D_xX^{t,x}_s\right\|^2 |h| |k| \int_t^T  |Z^{t,x}_r| \ud r. \\
\end{split}
\end{equation*} 
Using H\"older inequality and the estimates recalled above we get boundedness in $L^p(\Omega; L^1(0,T;\R))$, as required. 
The other terms in $L^{t,x}(h,k)$ can be treated in a similar way.

%The strategy is as follows: we first prove existence and uniqueness of a solution to \eqref{BSDE_second} through Lemma \ref{l:linearBSDE}.
To prove Fr\'echet differentiability,
%,  here we just give a sketch of the proof and we refer to the Appendix for a detailed version.
fixing $h,k \in E$ and using the equations solved by
$D_xY^{t,x+k}_s h,  D_xY^{t,x}_s h$ and $F^{t,x}_s(k,h)$ it can be easily shown that
\begin{equation}\label{eq:upsilon}
\Upsilon^k_s+\int_s^T\Psi^k_r\ud W_r=\Upsilon^k_T- \int_s^T \left( D_2G_r(t,x)\Upsilon^k_r + D_3G_r(t,x)\Psi^k_r - M^k(r)\ud r \right) \ud r \ ,
\end{equation}
where 
\begin{equation}\label{notation_first_der}
\begin{gathered}
\Upsilon_r^k: = \frac{1}{|k|}\left[ D_xY^{t,x+k}_s h - D_xY^{t,x}_s h - F^{t,x}_s (h,k) \right], \quad \Psi_r^k: = \frac{1}{|k|} \left[ D_xZ^{t,x+k}_s h - D_xZ^{t,x}_s h - H^{t,x}_s (h,k)\right] \\
\Upsilon_T^k:= \frac{1}{|k|} \left[U_T^{t,x+k} h - U_T^{t,x} h - F^{t,x}_T(k,h) \right]\\
M^k:= M^k_1 + M^k_2 +M^k_3:= -\frac{1}{|k|}\left[ D_1G_r(t,x +k)D_xX^{t,x+k}_r h - D_1G_r(t,x)D_xX^{t,x}_r h  -  L_{1;r}^{t,x}(k,h)\right] \\
-\frac{1}{|k|}\left[ \left(D_2G_r(t,x+k) - D_2G_r(t,x) \right)D_xY^{t,x+k}_r h  - L_{2;r}^{t,x}(k,h) \right] \\ 
 -\frac{1}{|k|}\left[ 	\left(D_3G_r(t,x+k) - D_3G_r(t,x) \right)D_xZ^{t,x+k}_r h  - L_{3;r}^{t,x}(k,h) \right]. \\
\end{gathered}
\end{equation} 
Taking advantage from the linear character of \eqref{eq:upsilon}, thanks to   estimate \eqref{estimate_linear} and recalling that in this case $V$ is a bounded process, we have that 
\begin{equation*}
  \bE\sup_{s\in[t,T]}\left\vert \Upsilon^k_s\right\vert^p+\bE\left(\int_t^T\left\vert\Psi^k_r\right\vert^2\ud r\right)^{\nicefrac{p}{2}}\lesssim \bE\left\vert \Upsilon^k_T\right\vert^p+\bE\left(\int_t^T\left\vert M^k(r)\right\vert\ud r\right)^p.
\end{equation*}
The desired Fr\'echet differentiability follows as soon as
\begin{equation*}
\lim_{k \to 0} \sup_{|h|=1}\left[\bE\left\vert \Upsilon^k_T\right\vert^p+\bE\left(\int_t^T\left\vert M^k(r)\right\vert\ud r\right)^p \right] =0.
\end{equation*}
A detailed computation of all the terms is postponed in the Appendix. 
Here we only show how to deal with the most delicate one, which we denote by $M^k_{311}$ to be consistent with the notation of the appendix,
\begin{equation*}
M^k_{311}:= 
-\frac{1}{|k|}\int_0^1 \left[ D^2_{3,3} G(r,X_r^{t,x+k}, Y_r^{t,x+k}, \lambda Z_r^{t,x+k} + (1-\lambda)Z_r^{t,x}) - D^2_{3,3}G(r,X_r^{t,x+k}, Y_r^{t,x+k}, Z_r^{t,x})\right],
\end{equation*}
and where the application of Proposition \ref{p:identification} seems to be crucial.
Using the notation $u(t,x):= Y^{t,x}_t$, from $\alpha$-H\"older continuity of $D^2_3G_r(t,x)$ we get, on a set of full probability,
\begin{equation*}
\begin{split}
\int_t^T| & M^k_{311}(r)| \ud r\lesssim \frac{1}{|k|}
\int_t^T \left|Z_r^{t,x+k} - Z_r^{t,x} \right|^{\alpha} \left|Z_r^{t,x+k} - Z_r^{t,x} \right| \left\| D_xZ^{t,x+k}_r\right\| |h| \ud r \\
&\lesssim   \sup_{r \in [t,T]} \left\| Du(r,X_r^{t,x+k}) - Du(r,X_r^{t,x}) \right\|^{\alpha}  \left| \Sigma \right| |h| 
\left( \int_t^T \frac{\left| Z^{t,x+k}_r - Z^{t,x}_r\right|^2}{|k|^2} \ud r \right)^{\nicefrac{1}{2}}
\left( \int_t^T \left\| D_xZ^{t,x+k}_r\right\|^2 \ud r \right)^{\nicefrac{1}{2}}.\\  
\end{split}
\end{equation*}
To show that $\E \left(\int_t^T |M^k_{311}(r)| \ud r \right)^p \to 0$ we employ Vitali convergence theorem.
Taking advantage of the fact that the initial datum $x$ is deterministic,  
from \eqref{eq:estBSDE} there exists $l \ge 0$ such that for any $p' > p$
\begin{equation*}
\begin{split}
\E \left(\int_t^T |M^k_{311}(r)| \ud r \right)^{p'}\lesssim  \left(1 + |x+k|^l + |x|^l\right)\left| \Sigma \right| |h|
\left[ \E\left( \int_t^T \frac{ \left| Z^{t,x+k}_r - Z^{t,x}_r\right|^2}{|k|^2} \ud r \right)^{2p'} \right]^{\nicefrac{1}{4}} < +\infty \ , 
\end{split}
\end{equation*}
which is bounded thanks to Proposition \ref{p:diff1} and estimate \eqref{eq:estBSDE}. 
This guarantees the uniform integrability of the familiy $\E \left(\int_t^T |M^k_{311}(r)| \ud r \right)^p$, when $k$ is varying.
Hence, it remains to show that for a.e. $\omega \in \Omega$ and a.e $r \in [0,T]$
\begin{equation}\label{conv_Du_Sigma}
\left| Du(r,X_r^{t,x+k})\Sigma -  Du(r,X_r^{t,x})\Sigma \right| \longrightarrow 0 \ , \qquad \text{ if } |k| \to 0 \ .
\end{equation}
To do it, recall the general continuity result for $y \mapsto D_xY^{t,y}h$ as a map from $E$ to $L^2(\Omega; C([0,T]; \R))$ given in the proof of  Proposition \ref{p:diff1}.
When dealing with $u(t,x) = Y^{t,x}_t$, which is deterministic, this implies that for every $y_1,y_2,h \in E$ 

\begin{equation}\label{conv_Du_h}
|Du(t,y_1)h - Du(t,y_2)h| \longrightarrow 0 \ , \qquad \text{ as } |y_1 - y_2| \to 0 \ .
\end{equation}
Now, given a basis $e_1, \ldots, e_{d_1}$ in $\R^{d_1}$, \eqref{conv_Du_Sigma} is equivalent to the convergence
\[ \sup_{j \in \lbrace 1, \ldots, d_1\rbrace} \left| Du(r,X_r^{t,x+k})\Sigma e_j -  Du(r,X_r^{t,x})\Sigma e_j \right|\longrightarrow 0 \ , \qquad \text{ if } |k| \to 0 \ .  \]
where $\Sigma e_j \in E$ for every $j = 1,\ldots,{d_1}$. 
Hence combining the continuity of the map $x \mapsto X^{t,x}_s(\omega)$ for every $\omega \in \Omega_0$, $s \in [t,T]$ (see Theorem \ref{thm:X}) with the convergence result in \eqref{conv_Du_h} we easily get \eqref{conv_Du_Sigma}.
%The subsequent application of Lebesgue dominated convergence theorem gives
%\begin{equation}
%\E \left( \int_t^T |M^k_{311}| \ud r \right)^p \longrightarrow 0 \ ,  \qquad\text{ if } |k| \to 0 \ .
%\end{equation}
For a detailed proof of the convergence of  all the remaining terms we refer to the Appendix.

To conclude the proof observe that estimate \eqref{eq:est_D2YZ} is a direct consequence of Lemma \ref{l:linearBSDE} applied to (\ref{BSDE_second}). Indeed, for every $k,h \in E$, uniqueness of solutions to \eqref{BSDE_second} gives that  $F^{t,x}_s(k,h) = D^2_xY^{t,x}_s(k,h)$ and $H^{t,x}_s(k,h) = D^2_xZ^{t,x}_s(k,h)$.  
\end{proof}
\begin{remark}
The proof of (\ref{conv_Du_Sigma}) as it is performed here exploits the fact that noise is additive in the forward equation.
\end{remark}
  \begin{corollary}
    \label{cor:D2u}
Setting $u(t,x):= Y^{t,x}_t$ as in  Proposition \ref{p:identification}, the map $x \mapsto u(t,x)$ belongs to $C^2(E;\R)$, for every $t \in [0,T]$. 
 Moreover, for some $c \ge 0$, $l\in\bN$ it holds that
 \begin{equation*}
   \sup_{t\in[0,T]}\left\vert D^2u(t,x)\right\vert\leq c\left(1+\left\vert x\right\vert^l\right) \ .
 \end{equation*}
  \end{corollary}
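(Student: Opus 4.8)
The plan is to deduce everything from Proposition \ref{p.diff_2_BSDE} by composing the $\sK_p$-valued map $x \mapsto (Y^{t,x}_\cdot, Z^{t,x}_\cdot)$ with a single bounded linear functional. First I would recall, as already exploited in Proposition \ref{p:identification}, that for a deterministic initial datum $x$ the random variable $Y^{t,x}_t$ is itself deterministic, so that $u(t,x) = Y^{t,x}_t = \E\left[ Y^{t,x}_t \right]$. I then introduce the evaluation-and-average functional
\begin{equation*}
\Lambda_t : \sK_p \to \R, \qquad \Lambda_t(Y_\cdot, Z_\cdot) := \E\left[ Y_t \right];
\end{equation*}
it is linear, and bounded because, by Jensen's inequality, $|\Lambda_t(Y_\cdot,Z_\cdot)| \le \E|Y_t| \le \left( \E \sup_{s \in [0,T]}|Y_s|^p \right)^{\nicefrac{1}{p}} \le \| (Y_\cdot, Z_\cdot)\|_{\sK_p}$ for every $p \ge 1$. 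With this notation one has the factorization $u(t,\cdot) = \Lambda_t \circ \big( x \mapsto (Y^{t,x}_\cdot, Z^{t,x}_\cdot) \big)$.

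Since $\Lambda_t$ is a bounded linear map, the composition rule for Fr\'echet derivatives applies directly: a bounded linear functional composed with a $C^2$ map is again $C^2$, and each differential of the composition is obtained by applying $\Lambda_t$ to the corresponding differential of the inner map. Hence Proposition \ref{p.diff_2_BSDE} immediately yields $u(t,\cdot) \in C^2(E;\R)$ together with the representations
\begin{equation*}
Du(t,x)h = \E\left[ D_x Y^{t,x}_t h \right], \qquad D^2u(t,x)(k,h) = \E\left[ D^2_x Y^{t,x}_t (k,h) \right].
\end{equation*}
The continuity of $x \mapsto D^2 u(t,x)$ is inherited from the continuity of $x \mapsto D^2_x Y^{t,x}$ as a map into $L(E,E;\sK_p)$, once more because $\Lambda_t$ is continuous.

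For the growth estimate I would simply move the operator norm inside the expectation. Taking the supremum over $|k|_E, |h|_E \le 1$ in the representation of $D^2u$ gives, for every $t \in [0,T]$,
\begin{equation*}
\left| D^2 u(t,x) \right| \le \E\left\| D^2_x Y^{t,x}_t \right\| \le \E \sup_{s \in [t,T]} \left\| D^2_x Y^{t,x}_s \right\| \le \left( \E \sup_{s \in [t,T]} \left\| D^2_x Y^{t,x}_s \right\|^p \right)^{\nicefrac{1}{p}} \le c\left( 1 + |x|^l \right),
\end{equation*}
the last inequality being precisely estimate \eqref{eq:est_D2YZ}. Because the constants in \eqref{eq:est_D2YZ} can be chosen independently of $(t,x)$ (cf. the remark following Proposition \ref{p:stimeBSDE}), taking the supremum over $t \in [0,T]$ preserves the bound, which is exactly the assertion of the corollary. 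I do not expect any genuine obstacle here, as all the analytic difficulty has already been absorbed into Proposition \ref{p.diff_2_BSDE}; the only two points requiring care are the deterministic character of $Y^{t,x}_t$ (so that $u$ is a bona fide real-valued function equal to its own expectation) and the $t$-uniformity of the constant, both of which are at our disposal.
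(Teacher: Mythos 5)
Your proof is correct and takes essentially the same route as the paper, which presents this corollary as an immediate consequence of Proposition \ref{p.diff_2_BSDE} together with the identification $Du(t,x)=D_xY^{t,x}_t$, $D^2u(t,x)=D^2_xY^{t,x}_t$ (no separate proof is given). Your device of composing with the bounded linear functional $\Lambda_t(Y_\cdot,Z_\cdot)=\E\left[Y_t\right]$ — legitimate precisely because $Y^{t,x}_t$ is deterministic — is simply a clean formalization of that identification, and the growth bound with $t$-uniform constants follows from \eqref{eq:est_D2YZ} exactly as you argue.
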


\section{Solution to the Kolmogorov equation in $\sL^2$}
\label{sec:PDEL2}

In this section we deal with the infinite-dimensional semilinear PDE \eqref{eq:kolm:diff:nonlinear} in the space $\sL^2$. Therefore we assume all the coefficients to be defined on $\sL^2$ and that assumptions \ref{ass:B} and \ref{ass:G} are satisfied with $E=\sL^2$. This is a quite strong requirement that is seldom satisfied by examples; however it represents only the first step towards establishing the theory for $E=\aC$, where the same assumptions are much more reasonable and indeed verified by a large class of examples.

\begin{theorem}
\label{thm:L2}
Let Assumptions \ref{ass:B}, \ref{ass:G} and \ref{ass:Phi2} hold with $E=\sL^2$. Then the function $u(t,x):=Y^{t,x}_t$
is a classical solution, in the sense of Definition \ref{def:sol_D}, to the semilinear Kolmogorov equation
(\ref{eq:kolm:diff:nonlinear}). 
\end{theorem}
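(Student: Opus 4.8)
The plan is to verify the two requirements of Definition \ref{def:sol_D}. The regularity in the space variable is already available: Corollary \ref{cor:D2u} gives $u(t,\cdot)\in C^2(\sL^2;\R)$ with $\abs{D^2u(t,x)}\le c(1+\abs{x}^l)$ uniformly in $t$, and Propositions \ref{p:diff1} and \ref{p.diff_2_BSDE}, together with the continuity of $(t,x)\mapsto(Y^{t,x},Z^{t,x})$ in $\sK_p$ from Proposition \ref{p:stimeBSDE}, give the joint continuity of $(t,x)\mapsto Du(t,x)$ and $(t,x)\mapsto D^2u(t,x)$. The existence and continuity of $\frac{\partial u}{\partial t}$ will instead come out of the equation itself: once the remaining terms are shown to be continuous and the difference quotient in $t$ is identified, $u$ will automatically be of class $C^{1;2}$ and satisfy \eqref{eq:kolm:diff:nonlinear}.

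For the derivation I would avoid any It\^o formula (unavailable in this setting) and instead combine the Markovian identification of Proposition \ref{p:identification} with a deterministic second-order Taylor expansion in $\sL^2$. Fix $(t,x)$ with $x\in\aC^1\subset\Dom(A)$ and $\epsilon>0$ small. Writing the backward equation in \eqref{FB_mild} on $[t,t+\epsilon]$, substituting $Y^{t,x}_r=u(r,X^{t,x}_r)$ and $Z^{t,x}_r=Du(r,X^{t,x}_r)\Sigma$, and taking expectations so that the stochastic integral drops, yields
\begin{equation*}
u(t,x)=\E\!\left[u(t+\epsilon,X^{t,x}_{t+\epsilon})\right]-\E\int_t^{t+\epsilon}G\!\left(r,X^{t,x}_r,u(r,X^{t,x}_r),Du(r,X^{t,x}_r)\Sigma\right)\ud r .
\end{equation*}
I would then expand $u(t+\epsilon,X^{t,x}_{t+\epsilon})-u(t,x)$ as the sum of the time increment $u(t+\epsilon,x)-u(t,x)$ and a second-order Taylor expansion (with integral remainder) of $u(t+\epsilon,\cdot)$ around $x$ evaluated at $X^{t,x}_{t+\epsilon}$, using the mild formula \eqref{eq:SDEXmild} for the increment $X^{t,x}_{t+\epsilon}-x$.

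Dividing by $\epsilon$ and letting $\epsilon\downarrow0$, each term is identified as follows. The running-cost term converges to $G(t,x,u(t,x),Du(t,x)\Sigma)$ by continuity of $G$, $u$, $Du$ and of $r\mapsto X^{t,x}_r$ at $r=t$. In the first-order term $Du(t+\epsilon,x)(X^{t,x}_{t+\epsilon}-x)$ the stochastic convolution has zero mean, since $Du(t+\epsilon,x)$ is deterministic (because $x$ is), while the deterministic part equals $(e^{\epsilon A}-I)x+\int_t^{t+\epsilon}e^{(t+\epsilon-r)A}B(r,X^{t,x}_r)\ud r$; dividing by $\epsilon$ and using $x\in\Dom(A)$ for the first summand and strong continuity of the semigroup together with continuity of $B$ and $Du$ for the second, this converges to $Du(t,x)[Ax+B(t,x)]$ --- precisely the step that forces the restriction $x\in\aC^1$. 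Collecting the limits shows that the right time-increment $\tfrac{1}{\epsilon}\big(u(t+\epsilon,x)-u(t,x)\big)$ converges to $G(t,x,u,Du\,\Sigma)-Du(t,x)[Ax+B(t,x)]-\tfrac12\tr_{\R^d}[\Sigma\Sigma^\ast D^2u(t,x)]$; since this limit is continuous in $(t,x)$, it identifies $\frac{\partial u}{\partial t}$ and gives exactly \eqref{eq:kolm:diff:nonlinear}. The terminal condition follows from $X^{T,x}_T=x$, whence $u(T,x)=Y^{T,x}_T=\Phi(x)$.

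The main obstacle is the second-order term $\tfrac12\E\,D^2u(t+\epsilon,\xi_\epsilon)\big(X^{t,x}_{t+\epsilon}-x,X^{t,x}_{t+\epsilon}-x\big)$, with $\xi_\epsilon$ on the segment between $x$ and $X^{t,x}_{t+\epsilon}$, which must be shown to converge to $\tfrac12\tr_{\R^d}[\Sigma\Sigma^\ast D^2u(t,x)]$. Here the additive, finite-dimensional structure of the noise is essential: from \eqref{eq:conv_def} the stochastic convolution has an $\R^d$-component of size $O(\sqrt\epsilon)$, namely $\sigma(W_{t+\epsilon}-W_t)$, and an $E_0$-component whose $\sL^2$-norm is only $O(\epsilon)$, because it is supported on an interval of length $\epsilon$ on which the Brownian increment is itself $O(\sqrt\epsilon)$. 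Consequently, only the quadratic interaction of the $\R^d$-component survives after division by $\epsilon$, and since $\E[(\sigma(W_{t+\epsilon}-W_t))_i(\sigma(W_{t+\epsilon}-W_t))_j]=(\sigma\sigma^\ast)_{ij}\,\epsilon$ this produces exactly the trace against the finite-dimensional block of $D^2u(t,x)$. To make this rigorous I would bound the Taylor remainder using the polynomial growth of $D^2u$ from Corollary \ref{cor:D2u} together with the moment estimates of Theorem \ref{thm:X} and Remark \ref{rem:conv}, pass to the limit in the intermediate point $\xi_\epsilon\to x$ via continuity of $D^2u$ and a dominated (or Vitali) convergence argument, and check that the drift contributions and the mixed drift--convolution cross terms are of order $\epsilon$ in $\sL^2$ and hence vanish in the limit.
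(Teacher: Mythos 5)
Your proposal is correct in substance, but it is organized quite differently from the paper's proof, so a comparison is in order. The paper never computes a pointwise time-derivative: it reduces the claim to the integral identity \eqref{eq:kolm:int:nonlinear}, takes a sequence of partitions of $[t,T]$, and on each subinterval uses the Markov property together with the BSDE (essentially your identity $u(t_0,x)=\E\, u(t_1,X^{t_0,x}_{t_1})-\E\int_{t_0}^{t_1}G\ud r$, but written against the semigroup-shifted comparison point $e^{(t_1-t_0)A}x$ rather than $x$); the entire linear part --- the terms $Du\,[Ax+B]$ and the trace --- is then obtained as the limit of the sum $I_n$ by invoking the linear-case argument of \cite{flandoli2016infinite}, and the genuinely new work in the paper is the convergence of the $G$-Riemann sums, handled via the identification $Z^{t,x}=Du(\cdot,X^{t,x}_\cdot)\Sigma$, the continuity of the flow in the initial time $\tau\mapsto X^{\tau,x}_r$ (Proposition \ref{p:existence_X}), and Vitali's theorem. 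You instead work with a single increment $[t,t+\epsilon]$, Taylor-expand around $x$ itself (absorbing the semigroup through $(e^{\epsilon A}-I)x/\epsilon\to Ax$, which is where $x\in\aC^1$ enters, exactly as in the paper), and carry out the trace computation explicitly from the structure \eqref{eq:conv_def} of the convolution; your orders --- $O(\sqrt\epsilon)$ for the $\R^d$-component and $O(\epsilon)$ in $\sL^2$ for the $E_0$-component, so that only the finite-dimensional quadratic interaction survives division by $\epsilon$ --- are right, and this is precisely the computation the paper outsources. What your route buys: it is self-contained, avoids the partition bookkeeping, and does not need initial-time continuity of the flow. What it costs: identifying the limit pointwise in $t$ forces you to use (right-)continuity in time of $Du(\cdot,x)$ and convergence $D^2u(t+\epsilon,\xi_\epsilon)\to D^2u(t,x)$, whereas the paper's treatment of the $G$-term only ever invokes continuity of $u$ and $Du$ in the space variable at fixed times, which is exactly what Propositions \ref{p:diff1}, \ref{p.diff_2_BSDE} and Corollary \ref{cor:D2u} record (the corollary gives bounds uniform in $t$, not continuity in $t$). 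That time-continuity is provable by the same BSDE estimates, and some of it is implicitly needed anyway to pass from the integral to the differential form, but in a complete write-up you should state and prove it rather than treat it as available. Two minor imprecisions to fix: the drift integral $\int_t^{t+\epsilon}e^{(t+\epsilon-r)A}B(r,X^{t,x}_r)\ud r$ is random, not ``the deterministic part'' (harmless, since it is paired with the deterministic functional $Du(t+\epsilon,x)$ under the expectation), and the mixed cross terms in the quadratic form are $O(\epsilon^{3/2})$, i.e.\ $o(\epsilon)$, which is the property actually needed after dividing by $\epsilon$.
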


\begin{proof}
Thanks to the regularity results given in Section \ref{sec:FBSDE} we know that the map $(t,x)\mapsto u(t,x)$ belongs to $C^{1;2}([0,T] \times \mathcal{L}^2)$.
Hence, it is enough to prove that $u(t,x):= Y^{t,x}_t$ is a solution of the semilinear Kolmogorov equation in integral form:
\begin{multline}
  \label{eq:kolm:int:nonlinear}
  u(t,x)-\Phi(x)+\int_t^TG(s,x,u(s,x),Du(s,x)\Sigma)\ud s\\=\int_t^T\left[Du(s,x)\left[Ax+B(s,x)\right]+\frac{1}{2}\sum_{j=1}^d \Sigma \Sigma^*D^2u(s,x)\left(e_j,e_j\right)\right]\ud s\ .
\end{multline}
The standard way of proving such a result goes through an application of It\^o formula to the increments of $u(t,X)$ along a partition; eventually taking expectations, summing along the partition and letting the size of the mesh going to $0$ yields the result. The difficulty here lies in the fact that at every time $t$, $X(t)$ lies almost surely not in the domain of the operator $A$. There are different ways to circumvent this difficulty; we detail here one of the possibilities.\\
Consider two time instants $0\leq t_0\leq t_1\leq T$ and a point $x\in\sL^2$. We want to analyse the increment
\begin{align}
  \nonumber u(t_0,x)-u\left(t_1,e^{(t_1-t_0)A}x\right)&=\E u(t_0,x)-u\left(t_1,e^{(t_1-t_0)A}x\right)\\
\label{eq:u_incr1}  &=\E Y^{t_0,x}_{t_0}-\E Y^{t_0,x}_{t_1}+\E Y^{t_0,x}_{t_1}-u\left(t_1,e^{(t_1-t_0)A}x\right)\ .
\end{align}
Thanks to the Markov property of $X^{t_0,x}$ it is not difficult to show (see \citep{fuhrman2002nolinear}) that almost surely
\begin{equation*}
  Y^{t_0,x}_t=Y^{t_1,X^{t_0,x}_{t_1}}_t\ ,\quad Z^{t_0,x}_t=Z^{t_1,X^{t_0,x}_{t_1}}_t
\end{equation*}
for every $t\in[t_1,T]$, hence
\begin{equation*}
  \E Y^{t_0,x}_{t_1}=\E Y^{t_1,X^{t_0,x}_{t_1}}_{t_1}=\E u\left(t_1,X^{t_0,x}_{t_1}\right)
\end{equation*}
and (\ref{eq:u_incr1}) yields
\begin{equation}
\label{eq:u_incr2}
u(t_0,x)-u\left(t_1,e^{(t_1-t_0)A}x\right)=\E\left[ Y^{t_0,x}_{t_0}-Y^{t_0,x}_{t_1}\right]+\E\left[ u\left(t_1,X^{t_0,x}_{t_1}\right)-u\left(t_1,e^{(t_1-t_0)A}x\right)\right]\  .
\end{equation}
Since $Y$ satisfies the BSDE (\ref{eq:BSDEint}), the first expectation on the r.h.s. can be written as
\begin{equation*}
  \E\left[-\int_{t_0}^{t_1}G\left(r,X^{t_0,x}_r,Y^{t_0,x}_r,Z^{t_0,x}_r\right)\ud r\right]\ .
\end{equation*}
Now fix $t\in[0,T]$ and consider a sequence $\br{\pi^n}$ of partitions of $[t,T]$ such that each of the $\pi^n$'s is given by $k_n+1$ points $t=t^n_1\leq t^n_2\leq\dots\leq t^n_{k_n+1}=T$ and such that $\abs{\pi^n}\to 0$ as $n\to\infty$. For each fixed $n$ and every $i=1,\dots,k_n+1$ we consider (\ref{eq:u_incr2}) with $t_0=t^n_i$ and $t_1=t^n_{i+1}$ and sum over the index $i$ obtaining
\begin{equation*}
  u(t,x)-\Phi(x)=-\sum_{i=1}^{k_n+1}\E\left[\int_{t_i^n}^{t^n_{i+1}}G\left(r,X^{t^n_i,x}_r,Y^{t^n_i,x}_r,Z^{t_i^n,x}_r\right)\ud r\right] + I_n\ .
\end{equation*}
The term
\begin{equation*}
  I_n=\sum_{i=1}^{k_n+1}\E\left[ u\left(t_{i+1},X^{t_i,x}_{t_{i+1}}\right)-u\left(t_{i+1},e^{(t_{i+1}-t_i)A}x\right)\right]
\end{equation*}
can be treated as in the proof of Theorem 4.1 of \citep{flandoli2016infinite}, yelding as $n\to\infty$ the linear part of the PDE (i.e. the r.h.s. of (\ref{eq:kolm:int:nonlinear})). The only difference is that in \cite{flandoli2016infinite} $\Phi$ is assumed to be bounded, but the generalization to the polynomial growth (cf. Assumption \ref{ass:Phi2}) is immediate.\\
Concerning the remaining term, we need to prove that  
\begin{equation*}
\E \int_{t}^{T}\sum_{i=1}^{k_n+1} G\left(r,X^{t^n_i,x}_r,Y^{t^n_i,x}_r,Z^{t_i^n,x}_r\right) \ind_{[t^n_i,t^n_{i+1})}(r)\ud r \xrightarrow{n\to\infty} \int_t^T G\left(r,x,u(r,x),Du(r,x)\Sigma\right) \ud r \ .
\end{equation*} 
Let us write
\begin{equation*}
\begin{aligned}
  &\E \int_{t}^{T}\sum_{i=1}^{k_n+1}\left[G\left(r,X^{t^n_i,x}_r,Y^{t^n_i,x}_r,Z^{t_i^n,x}_r\right) - G\left(r,x,u(r,x),Du(r,x)\Sigma\right)\ud r\right]\ind_{[t^n_i,t^n_{i+1})}(r)\ud r \\
 &= \E \int_{t}^{T}\sum_{i=1}^{k_n+1}\left[G\left(r,X^{t^n_i,x}_r,Y^{t^n_i,x}_r,Z^{t_i^n,x}_r\right) - G\left(r,X^{t^n_i,x}_r,Y^{t^n_i,x}_r,Du(r,x)\Sigma\right) \right]\ind_{[t^n_i,t^n_{i+1})}(r)\ud r \\
 &\;+ \E \int_{t}^{T}\sum_{i=1}^{k_n+1}\left[G\left(r,X^{t^n_i,x}_r,Y^{t^n_i,x}_r,Du(r,x)\Sigma\right)  - G\left(r,X^{t^n_i,x}_r,u(r,x),Du(r,x)\Sigma\right)\right]\ind_{[t^n_i,t^n_{i+1})}(r)\ud r \\
 &\;+ \E \int_{t}^{T}\sum_{i=1}^{k_n+1}\left[G\left(r,X^{t^n_i,x}_r,u(r,x),Du(r,x)\Sigma\right) - G\left(r,x,u(r,x),Du(r,x)\Sigma\right)\right]\ind_{[t^n_i,t^n_{i+1})}(r)\ud r \\
% &\;+ \E \int_{t}^{T}\sum_{i=1}^{k_n+1}\left[G\left(r,X^{t^n_i,x}_r,Y^{r,x}_r,Z^{r,x}_r\right) - G\left(r,x,Y^{r,x}_r,Z^{r,x}_r\right)\right]\ind_{[t^n_i,t^n_{i+1})}(r)\ud r  \  ,
\end{aligned}
\end{equation*}
so that, by the Lipschitz character of $G$ and Proposition \ref{p:identification}
\begin{equation}\label{conv:G}
\begin{aligned}
&\lesssim \E \int_{t}^{T}\sum_{i=1}^{k_n+1}\left[ \left| Du(r,X^{t_1^n,x}_r)\Sigma - Du(r,x)\Sigma\right| + \left| u(r,X^{t_i^n,x}_r) - u(r,x)\right|  \right]\ind_{[t^n_i,t^n_{i+1})}(r)\ud r \\
 &\;+ \E \int_{t}^{T}\sum_{i=1}^{k_n+1}\left[G\left(r,X^{t^n_i,x}_r,u(r,x),Du(r,x)\Sigma\right) - G\left(r,x,u(r,x),Du(r,x)\Sigma\right)\right]\ind_{[t^n_i,t^n_{i+1})}(r)\ud r \ .
\end{aligned}
\end{equation}
The last term in \eqref{conv:G} can be treated as follows. 
For every $r \in [t, T]$ fixed, there exists a unique sequence of intervals $\{[ t_{i(r,n)}^n, t_{i(r,n)+1}^n)\}_{n\in \N}$ such that  $r \in [ t_{i(r,n)}^n, t_{i(r,n)+1}^n)$ for every $n \in \N$. 
Moreover, for every $x \in \sL^2$, $r \in [t,T]$ and $\omega \in \Omega_0$, Proposition \ref{p:existence_X} guarantees the continuity of the map $\tau \mapsto X^{\tau,x}_r(\omega)$, so that
\begin{equation}\label{conv:Xti}
|X_r^{t^n_{i(r,n)},x}(\omega) - x|_{\sL^2} \xrightarrow{n\to \infty} 0 \ .
\end{equation} 
 From the regularity of $G$ (see Assumption \ref{ass:G}) it easily follows that for every $x \in \sL^2$, $r \in [t,T]$ and $\omega \in \Omega_0$
\begin{equation*}
\left|G\left(r,X^{t^n_{i(r,n)},x}_r(\omega),u(r,x),Du(r,x)\Sigma\right) - G\left(r,x,u(r,x),Du(r,x)\Sigma\right)\right| \xrightarrow{n\to \infty} 0
\end{equation*} 
Thanks to Assumption \ref{ass:G} and estimate \eqref{eq:estX}, the application of the Vitali theorem gives the required convergence.

Concerning the first term in \eqref{conv:G}, we employ for every $r \in [t,T]$ the continuity of the (deterministic) maps
%$y \mapsto Y^{\tau,y}$ and $y \mapsto D_xY^{\tau,y}h$, 
$y \mapsto u(r,y)$ and $y \mapsto Du(r,y)\Sigma$, with $y \in \sL^2$,
given by Proposition \ref{p:identification} along with Propositions \ref{p:stimeBSDE} and \ref{p:diff1}, respectively.
These, in combination with \eqref{conv:Xti}, give the convergence of the integrand, for a.e. $\omega \in \Omega$, for every $r \in [t,T]$. 
Recalling estimates \eqref{eq:est:bsde_Y_Z} and \eqref{eq:estBSDE} and applying again the Vitali convergence theorem we finally get the result. 
\end{proof}

\begin{remark}
 With the very same proof we can easily show existence of solutions also in the space
    \begin{equation*}
      \sL^p:=\bR^d\times L^p\left(-T,0;\bR^d\right)\ , \quad  p>2\ .
    \end{equation*}
This allows to treat coefficients depending on the path in an integral way, which are not smooth in $\sL^2$ but satisfy our assumptions in $\sL^{2+\epsilon}$, for any $\epsilon>0$.
For this particular choice, it is then possible to establish wellposedness neither  requiring Assumption \ref{ass:GJ} nor introducing the approximation procedure explained in the next section.
\end{remark}

\section{Solution of the Kolmogorov equation in $\sD$}
\label{sec:PDED}
Here we prove our main result, following the strategy described in the introduction.\\
For every initial condition $x\in\aC^1$ and every initial time $t\in[0,T]$ we can find a solution $(X^{t,x},Y^{t,x},Z^{t,x})\in L^p\left(\Omega;C\left([t,T];\sD\right)\right)\times L^p\left(\Omega;C\left([t,T];\bR\right)\right)\times L^p\left(\Omega;L^2\left(0,T;\bR^{d_1}\right)\right)$ of the forward-backward system
\begin{equation}
\label{eq:FBSDE_0}
\begin{cases}
\ud X_s = \left[ AX_s + B(s,X_s) \right]\ud s + \Sigma \ud W_s & \text{ in }[t,T]\\
\ud Y_s = G(s,X_s,Y_s,Z_s) \ud s + Z_s \ud W_s & \text{ in }[t,T]\\
X_t = x \\
Y_T = \Phi(X_T).
\end{cases}
\end{equation} 
Then we can define the function $u\colon [0,T]\times \aC^1\to\bR$ as
\begin{equation}
  \label{eq:uuu}
  u(t,x)\colon=Y^{t,x}_t,
\end{equation}
and show that it is a classical solution of the Kolmogorov equation. This is the content of the next theorem.
\begin{theorem}\label{t:main}
Let $B$, $G$ and $\Phi$ satisfy respectively Assumtpions \ref{ass:B}, \ref{ass:G} and \ref{ass:Phi2} with $E=\sD$, as well as Assumptions \ref{ass:dphi} and \ref{ass:GJ}. Assume moreover that $B$ maps $\sC$ into itself. The function $u$ defined by (\ref{eq:uuu})
is a classical solution of the Kolmogorov semilinear equation with terminal condition $\Phi$, i.e. $u\in C^{1;2}\left([0,T]\times \sD,\mathbb{R}\right)$ and 
\begin{equation}
  \tag{\ref{eq:kolm:diff:nonlinear}}
\begin{system}
  \frac{\partial u}{\partial t}(t,x) + Du(t,x)\left[Ax + B(t,x)\right] + \frac{1}{2}\tr_{\bR^d}\left[\Sigma\Sigma^\ast D^2v(t,x)\right] = G(t,x,u(t,x),Du(t,x)\Sigma),\\
  u(T,\cdot) = \Phi,
\end{system}
\end{equation}
 for every $t \in [0,T]$ and $x \in \aC^1$.
\end{theorem}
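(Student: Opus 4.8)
The regularity part of the statement is already in hand: applying the results of Section~\ref{sec:FBSDE} with $E=\sD$ (Theorem~\ref{thm:X}, Propositions~\ref{p:stimeBSDE}, \ref{p:diff1}, \ref{p.diff_2_BSDE} and Corollary~\ref{cor:D2u}) shows that $u(t,x)=Y^{t,x}_t$ belongs to $C^{1;2}([0,T]\times\sD;\R)$, together with the polynomial bounds on $u$, $Du$ and $D^2u$. Thus the only genuinely new point is the pointwise PDE identity at $x\in\aC^1$, which I would obtain by approximation. The plan is to fix a common smoothing sequence $\{J^n\}$ as in \eqref{eq:Jn} (legitimate thanks to Assumptions~\ref{ass:dphi} and \ref{ass:GJ}) and to set
\[
 B^n(s,x):=B(s,J^nx),\qquad G^n(s,x,y,z):=G(s,J^nx,y,z),\qquad \Phi^n(x):=\Phi(J^nx).
\]
Since $J^n$ maps $\sL^2$ into $\sC$, where the pointwise evaluations hidden in $B,G,\Phi$ become meaningful, and is bounded, these coefficients are well defined on $\sL^2$ and inherit Assumptions~\ref{ass:B}, \ref{ass:G}, \ref{ass:Phi2} there; Theorem~\ref{thm:L2} then produces classical solutions $u^n(t,x)=Y^{n;t,x}_t$ of the approximated equation, which in particular holds at every $x\in\aC^1\subset\Dom(A)$.

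Next I would prove convergence of the whole forward--backward system at a fixed $x\in\aC^1$. Since $x\in\aC$, Remark~\ref{rem:conv} gives $X^{t,x}_s\in\aC\subset\sC$ a.s., so $J^nX^{t,x}_s\to X^{t,x}_s$ in $\sC$; combined with the uniform bound $\sup_n\|J^n\|_{L(\sD)}\le 1$ (as $\|\sJ^n\phi\|_\infty\le\|\phi\|_\infty$) and the Lipschitz stability of \eqref{eq:Xlemma}, this yields $X^{n;t,x}\to X^{t,x}$ in $L^p(\Omega;C([t,T];\sD))$. The convergence of the drivers $G^n\to G$ and of the terminal data $\Phi^n(X^{n}_T)\to\Phi(X_T)$, together with Proposition~\ref{p:stimeBSDE} and Lemma~\ref{l:linearBSDE}, then gives $(Y^{n},Z^{n})\to(Y,Z)$ in $\sK_p$; repeating the arguments of Propositions~\ref{p:diff1} and \ref{p.diff_2_BSDE} for the linearised systems \eqref{der1BSDE} and \eqref{BSDE_second}, where estimate \eqref{estimate_linear} again absorbs the troublesome $Z$-increment terms, propagates the convergence to the first and second $x$-derivatives. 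In this way $u^n(t,x)\to u(t,x)$, $Du^n(t,x)h\to Du(t,x)h$ and $D^2u^n(t,x)(h,k)\to D^2u(t,x)(h,k)$ for every fixed pair of directions, and $\partial_t u^n(t,x)\to\partial_t u(t,x)$ by the analogous stability in the time variable.

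It then remains to pass to the limit in each term of the approximated equation evaluated at $x\in\aC^1$. The terms carrying only finite-dimensional directions pass to the limit by the convergences just described: $Du^n(t,x)[B^n(t,x)]\to Du(t,x)[B(t,x)]$ (direction supported on the present-state component $\bR^d$), $\tfrac12\tr_{\bR^d}[\Sigma\Sigma^\ast D^2u^n(t,x)]$ is evaluated on the pairs $\Sigma e_j$, and $G^n(t,x,u^n,Du^n\Sigma)\to G(t,x,u,Du\,\Sigma)$ by the Lipschitz character of $G$ and the identification $Z=Du\,\Sigma$ of Proposition~\ref{p:identification}. The delicate point is that, although $Ax$ and the columns $\Sigma e_j$ are regular directions, the variational flow $D_xX$ transports them through the semigroup \eqref{eq:semigroup}, whose explicit shift turns them into directions carrying the one-jump profile $\xphi{1}{\ind_{[a,0)}}$ in the path component. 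Here Assumptions~\ref{ass:dphi} and \ref{ass:GJ}, i.e. the one-jump continuity of Definition~\ref{def:phi}, are exactly what allows one to replace $D\Phi(\cdot)J^n\xphi{1}{\ind_{[a,0)}}$, the bilinear expressions $D^2\Phi(\cdot)(J^n\xphi{1}{\ind_{[a,0)}},\cdot)$, and their counterparts for $B$ and $G$, by the unsmoothed objects in the limit. Collecting all terms recovers \eqref{eq:kolm:diff:nonlinear} for $u$ at every $x\in\aC^1$.

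I expect the crux to be precisely this last passage to the limit in $Du^n(t,x)[Ax]$ and in the trace term. The obstruction is structural rather than computational: each $Du^n(t,x)$ is a functional in the Hilbert dual $(\sL^2)'$, whereas the limiting differential $Du(t,x)$ is a genuine element of the larger dual $\sD'$ (it may encode pointwise evaluations of the path), so convergence of the differentials in a single operator norm is unavailable and one cannot simply test against $Ax$. The one-jump-continuity assumptions bridge exactly this gap, but making the argument rigorous requires (i) checking that the directions generated by $Ax$ and $\Sigma e_j$ along the perturbed flow are, up to negligible remainders, of the admissible one-jump form to which Definition~\ref{def:phi} applies, and (ii) upgrading the resulting almost sure, almost everywhere convergences to convergence of the relevant expectations, for which the uniform estimates \eqref{eq:estBSDE}, \eqref{eq:est_D2YZ} and a Vitali argument, as already used in Section~\ref{sec:FBSDE}, are the natural tools.
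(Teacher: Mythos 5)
Your proposal is correct and follows essentially the same route as the paper's proof: approximate $B,G,\Phi$ by composition with the smoothing operators $J^n$, invoke Theorem \ref{thm:L2} to get classical solutions $u^n$ in $\sL^2$, then pass to the limit term by term at $x\in\aC^1$ using convergence of the forward--backward system together with its first and second derivatives tested on fixed directions in $\sC$ (such as $Ax$ and $\Sigma e_j$), with the one-jump-continuity Assumptions \ref{ass:dphi}--\ref{ass:GJ}, the identification $Z=Du\,\Sigma$ of Proposition \ref{p:identification}, and Vitali arguments doing exactly the work you describe. This is precisely the content of the paper's Lemmas \ref{step1.a}--\ref{step4} and the surrounding proof of Theorem \ref{t:main}.
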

\begin{remark}
  \label{rmk:Bgeneral}
The requirement that $B$ maps $\sC$ into itself is automatically satisfied if $B$ is the lifting of a path-dependent function as described in Subection \ref{subsec:FBSDE}.
The regularity $x \in \aC^1$ is necessary to give sense to the term $Ax$ and it is a standard requirement in the framework of classical solutions.
\end{remark}
 \begin{proof}
     We give here a complete scheme of the proof and postpone most of the technicalities to Lemmas \ref{step1.a}-\ref{step4} below.  
 We will assume for simplicity that $B$, $\Phi$ and $G$ have the same smoothing sequences, but the proof applies with almost no modifications also when the smoothing sequences are different.\\
 To lighten the notation, we will write $X^n_s=X^{n;t,x}_s(\omega)$, $X_s=X^{t,x}_s(\omega)$  (and similarly for $Y^n_s, Z^n_s$).
 We will also take $m\geq 1$ in Assumptions \ref{ass:B} and \ref{ass:G}; this guarantees that the exponents $mp$ in all the estimates below are larger than $1$. 
 The general case $m\geq 0$ follows from a further application of H\"older's inequality.\\
 Firstly observe that, by Proposition \ref{p:identification} and Corollary \ref{cor:D2u}, $u$ has the required regularity and 
 \begin{equation*}
  Du(t,x)=D_xY^{t,x}_t\quad\text{ and}\quad  D^2u(t,x)=D^2_xY^{t,x}_t\ .
 \end{equation*}
 Then, given $B$, $G$ and $\Phi$ we define for every $n\in\bN$
 \begin{gather*}
   B^n(t,x):=B(t,J^nx)\\
   G^n(t,x,y,z):=G(t,J^nx,y,z)\\
   \Phi^n(x):=\Phi(J^nx)\ ;
 \end{gather*}
 it is immediate to check that also $B^n$, $G^n$, $\Phi^n$ satisfy Assumptions \ref{ass:B}-\ref{ass:GJ} on $\sL^2$ with constants that \emph{do not depend} on $n$. Moreover
 \begin{equation}
   \label{eq:derivate}
   \begin{gathered}
     D\Phi^n(x)\bar x=D\Phi(J^nx)J^n\bar x \ , \\ D^2\Phi^n(x)(\bar x_1,\bar x_2)=D^2\Phi(J^nx)(J^n\bar x_1,J^n\bar x_2) \ , \\
     D_1G^n(r,x,y,z)\bar x=D_1G(r,J^nx,y,z)J^n\bar x \ , \\ D_{1,1}^2G^n(r,x,y,z)(\bar x_1,\bar x_2)=D_{1,1}^2G(r,J^nx,y,z)(J^n\bar x_1,J^n\bar x_2) \ , \\
     D_{1,2}G^n(r,x,y,z)(\bar x,\bar y)=D_{1,2}G(r,J^nx,y,z)(J^n\bar x,\bar y) \ , \\
     D_{2}G^n(r,x,y,z)\bar y=D_2G^n(r,J^nx,y,z)\bar y \ , \\
      D_{2,2}^2G^n(r,x,y,z)(\bar y_1,\bar y_2)=D_{2,2}^2G(r,J^nx,y,z)(\bar y_1,\bar y_2) \ ,
   \end{gathered}
 \end{equation}
 with similar identities for the derivatives of $G^n$ with respect to the variable $z$ and for the derivatives of $B^n$. We actually have that $B^n$ maps $\sL^2$ into $\sC\subset\sD$ and, thanks to the properties of $\{J^n\}_{n \in \N}$ (see Section \ref{subsec:assumptions}) and the Lipschitz character of $B$, for every $t\in[0,T]$ and $x\in\sC$ it holds that
   \begin{equation}
     \label{eq:convB}
   B^n(t,x)\xrightarrow{n\to\infty} B(t,x)\text{ in }\sC
 \end{equation}
and for every $(t,x,y,z)\in[0,T]\times\sC\times\bR\times\bR^{d_1}$
 \begin{equation}
   \label{convPhiG}
   \Phi^n(x)\xrightarrow{n\to\infty}\Phi(x)\ ,\quad G^n(t,x,y,z)\xrightarrow{n\to\infty} G(t,x,y,z) \quad \text{ in }\bR\ .
 \end{equation}
 For every $x\in\sL^2$, $t\in[0,T]$ and for each $n\in\bN$ we can solve the forward-backward system in $ \sL^2$ 
 \begin{equation}
 \label{eq:FBSDE_n}
 \begin{cases}
 dX^n_s = \left[ AX^n_s + B^n(s,X^n_s) \right]\ud s + \Sigma \ud W_s & \text{ in }[t,T]\\
 dY^n_s = G^n(s,X^n_s,Y^n_s,Z^n_s) \ud s + Z^n_s \ud W_s & \text{ in }[t,T]\\
 X^n_t = x \\
 Y^n_T = \Phi^n(X^n_T)
 \end{cases}
 \end{equation} 
thus obtaining a sequence of solutions $(X^{n;t,x},Y^{n;t,x},Z^{n;t,x})_n$. Note that all the estimates in Theorem \ref{thm:X}, Proposition \ref{p:stimeBSDE} and Propositions \ref{t.diff_gateaux_BSDE}-\ref{p.diff_2_BSDE} hold \emph{uniformly} in $n$ due to the equiboundedness of the $J^n$'s; this is a crucial feature for the proof.\\
Thanks to Theorem \ref{thm:L2}, the deterministic function
 \begin{equation*}
   u^n(t,x):=Y^{n;t,x}_t
 \end{equation*}
is twice Fr\'echet differentiable with
 \begin{equation}\label{id_Dun}
   Du^n(t,x)=D_xY^{n;t,x}_t\ ,\quad D^2u^n(t,x)=D^2_xY^{n;t,x}_t\ ,
 \end{equation}
 and it solves the backward PDE\textsubscript{n} in $ \sL^2$
 \begin{equation}\label{eq:kolm:diff:nonlinear_n}
 \begin{cases}
 \frac{\partial u^n}{\partial t}(t,x) + Du^n(t,x)\left[Ax + B^n(t,x)\right] + \frac{1}{2}\tr_{\bR^d}\left[\Sigma\Sigma^\ast D^2u^n(t,x)\right] = G^n(t,x,u^n(t,x),Du^n(t,x)\Sigma) \ , \\
 u^n(T,x) = \Phi^n(x) \ .
 \end{cases}
 \end{equation}
 By choosing $x\in\aC^1 \subset \aC$ \emph{also} in the system (\ref{eq:FBSDE_n}), we know that for every $n\in\bN$ and $s\in[t,T]$ the random variable $X^{n;t,x}_s$ belongs to $\aC$ (cf. Proposition \ref{p:existence_X}) and in particular it is differentiable as a random variable with values in $\sL^2$. 
 To conclude the proof, it remains to show that $u^n(t,x)$ converges to $u(t,x)$ for every $t\in[0,T]$ and that each term in the PDE\textsubscript{$n$} converges to the corresponding term in the PDE (\ref{eq:kolm:diff:nonlinear}) as $n\to\infty$.\\
 Convergence of $u^n(t,x)$ to $u(t,x)$, for every $(t,x)\in[0,T]\times\aC$, is a consequence of the (more general) convergence $Y^n \to Y$ in $L^p\left(\Omega;C\left([t,T];\bR\right)\right)$ given in Lemma \ref{step2.a} below.\\
Regarding the first derivative of $u$, Lemma \ref{step3} guarantees that for any $h\in\sC$ 
 \begin{equation*}
   DY^n_\cdot h\to DY_\cdot h \qquad  \text {in } L^p(\Omega;C([t,T];\bR)) \ ;
 \end{equation*}
 therefore $Du^n(t,x) h\to Du(t,x)h$  for every $h \in \sC$.
 Writing
 \begin{equation*}
   Du^n(t,x)B^n(t,x)- Du(t,x)B(t,x)= Du^n(t,x)\left[B^n(t,x)-B(t,x)\right]+ \left[Du^n(t,x)-Du(t,x)\right]B(t,x)\ ,
 \end{equation*}
 the convergence of the second term on the r.h.s. is straightforward. 
 Using estimate (\ref{eq:stima_grad_u}) for $u^n$ (which is indeed uniform in $n$), the first term goes to zero by (\ref{eq:convB}).
 Since $Ax \in \sC$, this implies that the linear first order term $Du^n(t,x)\left[Ax+B^n(t,x)\right]$ in PDE\textsubscript{n} converges to the corresponding term in the limit PDE.
 \smallskip
 
 For what concerns the second order term in PDE\textsubscript{n}, in Lemma \ref{step4} we exploit the identification result obtained  in Proposition \ref{p:identification} to show that for any $h,k\in\sC$ it holds
 \begin{equation*}
 \begin{split}
 D^2_xY^n(k,h) &\to D^2_xY(k,h) \qquad \text{ in } L^p\left(\Omega;C\left([t,T];\bR\right)\right) \ , \\
 D^2_xZ^n(k,h) &\to D^2_xZ(k,h) \qquad \text{ in } L^p(\Omega;L^2([t,T];\bR^{d_1})) \ ,
 \end{split}
 \end{equation*}
 which is sufficient thanks to \eqref{id_Dun}.
 Finally, since $Y_t$, $Y^n_t$, $D_xY_t$ and $D_xY^n_t$ are all deterministic, from continuity of $G$ and Lemmas \ref{step2.a}, \ref{step3} it follows that 
 %for almost every $r\in[t,T]$, 
 \begin{equation*}
   G^n\left(t,x,u^n(t,x),D_xY^n_t\Sigma\right)\to G\left(t,x,u(t,x),Du(t,x)\Sigma\right)\ ,
   \end{equation*}
   and this concludes the proof.
 \end{proof}
In Lemmas \ref{step1.a}$-$\ref{step4} below we will always let the assumptions of Theorem \ref{t:main} to hold, without explicitly write it in every statement.
The only difference concerns the less stringent requirement $x \in \aC$  (instead of $x \in \aC^1$) which turns out to be sufficient for all the convergences.

We will use the notation $a\lesssim b$ meaning $a\leq Cb$ for some positive constant $C$ only when the hidden constant $C$ does not depend on $n$ nor on the time variables. 
All the convergences has to be intended as $n$ goes to $+\infty$.
\begin{lemma}\label{step1.a}
Let $t \in [0,T]$ and  $x \in \aC$. Then $X^{n;t,x}\to X^{t,x}$ and $J^nX^{n;t,x}\to X^{t,x}$ $\bP$-a.s. in $C\left([0,T];\sC\right)$ and
 also in $L^p\left(\Omega;C\left([0,T];\sC\right)\right)$.
\end{lemma}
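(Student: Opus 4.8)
The plan is to estimate $X^{n;t,x}_s - X^{t,x}_s$ directly in the supremum norm of $\sC$ through the mild formulation and reduce the whole statement to the single scalar convergence $\int_t^T \VV J^n X^{t,x}_r - X^{t,x}_r\VV_\infty\,\ud r \to 0$. The preliminary, and decisive, observation is that for $x\in\aC$ both $X^{t,x}_r$ and $X^{n;t,x}_r$ take values in $\aC$. For $X^{t,x}$ this is Remark \ref{rem:conv}; for $X^{n;t,x}$ the very same argument applies, since it only uses that the drift takes values in $\sC$ with vanishing $E_0$-component, a property shared by $B^n(r,\cdot)=B(r,J^n\,\cdot)$. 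In particular $X^{n;t,x}_r - X^{t,x}_r\in\aC$ is continuous, so it has no jumps and the equiboundedness property $\sup_n\VV J^n v\VV_\infty\le C_J\VV v\VV_\infty$ of the smoothing sequence (Section \ref{subsec:assumptions}) is applicable to $v = X^{n;t,x}_r - X^{t,x}_r$ as well as to $v = X^{t,x}_r$. The same inspection of the mild formula shows that $s\mapsto X^{n;t,x}_s$ and $s\mapsto X^{t,x}_s$ are continuous with values in $\sC$, so that the convergences claimed in $C([0,T];\sC)$ are meaningful (on $[0,t]$ the processes coincide by the natural convention $X_s=x$).

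Then I would write, dropping the superscripts $t,x$,
\[
X^n_s - X_s = \int_t^s e^{(s-r)A}\bigl[B^n(r,X^n_r)-B(r,X_r)\bigr]\ud r,
\]
and split the integrand by adding and subtracting $B(r,J^nX_r)$:
\[
B^n(r,X^n_r)-B(r,X_r)=\bigl[B(r,J^nX^n_r)-B(r,J^nX_r)\bigr]+\bigl[B(r,J^nX_r)-B(r,X_r)\bigr].
\]
Using the Lipschitz bound in Assumption \ref{ass:B} together with the equiboundedness of $J^n$, the first bracket is controlled by $C\,C_J\VV X^n_r - X_r\VV_\infty$ and the second by $C\VV J^nX_r - X_r\VV_\infty$; bounding $\VV e^{(s-r)A}\cdot\VV_\infty\le M\VV\cdot\VV_\infty$ via the equiboundedness of the semigroup in $\sD$, I arrive at
\[
\VV X^n_s - X_s\VV_\infty \le M C C_J\int_t^s\VV X^n_r - X_r\VV_\infty\,\ud r + M C\int_t^s\VV J^nX_r - X_r\VV_\infty\,\ud r.
\]
Gronwall's lemma then gives, with a constant depending only on $M,C,C_J,T$ and hence uniform in $n$,
\[
\sup_{s\in[t,T]}\VV X^n_s - X_s\VV_\infty \lesssim \int_t^T\VV J^nX_r - X_r\VV_\infty\,\ud r.
\]

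To conclude I would pass to the limit in the right-hand side by dominated convergence. For fixed $\omega\in\Omega_0$ one has $X_r(\omega)\in\sC$, hence $\VV J^nX_r(\omega)-X_r(\omega)\VV_\infty\to 0$ for every $r$ by the pointwise convergence $J^n\to\mathrm{Id}$ on $\sC$; since $\VV J^nX_r-X_r\VV_\infty\le(C_J+1)\VV X_r\VV_\infty\le(C_J+1)\sup_{r\in[t,T]}\VV X_r\VV_\infty$ is integrable in $r$ (a.s.\ finite), dominated convergence in $r$ yields $\int_t^T\VV J^nX_r - X_r\VV_\infty\,\ud r\to 0$ $\bP$-a.s., and therefore $\sup_s\VV X^n_s - X_s\VV_\infty\to 0$ $\bP$-a.s. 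For the $L^p$ statement I would dominate $\bigl(\int_t^T\VV J^nX_r-X_r\VV_\infty\,\ud r\bigr)^p\le\bigl[(C_J+1)T\bigr]^p\sup_{r\in[t,T]}\VV X_r\VV_\infty^p$, which is integrable over $\Omega$ by estimate \eqref{eq:estX} (recall that $x$ is deterministic), and conclude by dominated convergence on $\Omega$. Finally the convergence of $J^nX^n$ follows from the triangle inequality $\VV J^nX^n_s - X_s\VV_\infty\le C_J\VV X^n_s - X_s\VV_\infty + \VV J^nX_s - X_s\VV_\infty$, whose two summands are handled exactly as above, both $\bP$-a.s.\ and in $L^p$.

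The main obstacle is the mismatch of topologies: the systems \eqref{eq:FBSDE_n} are solved in $\sL^2$, but convergence is required in the strictly finer space $\sC$, where the operators $J^n$ are merely equibounded on paths with at most one jump and not on all of $\sD$. The crux is therefore the opening observation that $X^n_r$ and $X_r$ live in $\aC$, so that their difference is genuinely jump-free; this is exactly what makes the equiboundedness of $\{J^n\}$ usable and collapses the argument into the clean Gronwall estimate above. Once this is in place the remaining steps are routine applications of the Lipschitz continuity of $B$, the equiboundedness of the semigroup, and dominated convergence supported by the moment bound \eqref{eq:estX}.
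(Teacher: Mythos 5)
Your treatment of the first assertion ($X^{n;t,x}\to X^{t,x}$) is correct and essentially the paper's own proof: the same mild-formulation splitting of $B^n(r,X^n_r)-B(r,X_r)$ by adding and subtracting $B(r,J^nX_r)$, the same use of the Lipschitz bound on $B$, of the equiboundedness of $\{J^n\}$ on jump-free paths (legitimate because $X^n_r-X_r\in\aC$; note, though, that in the general setting of Theorem \ref{t:main} the reason $X^n_r\in\aC$ is that $B$ maps $\sC$ into itself, not that the lifted drift has vanishing $E_0$-component), of the equiboundedness of $e^{tA}$ in $\sD$, and then Gronwall. The one genuine deviation is that you control the inhomogeneity $\int_t^T\VV J^nX_r-X_r\VV_\infty\ud r$ by pointwise-in-$r$ convergence plus dominated convergence, whereas the paper first proves the stronger fact $\sup_{s\in[t,T]}\VV J^nX_s-X_s\VV_\infty\to 0$ a.s., exploiting that the trajectory $\{X_s(\omega)\}_{s\in[t,T]}$ is compact in $\sC$ (continuity of $s\mapsto X_s\in\aC$ on $\Omega_0$) and that $J^n\to\Id$ uniformly on compact subsets of $\sC$. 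Your $L^p$ step, dominating by $\bigl[(C_J+1)T\bigr]^p\sup_r\VV X_r\VV_\infty^p$, which is integrable by \eqref{eq:estX} with a Gronwall constant uniform in $n$, is also fine.

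The gap is in your last sentence, i.e.\ in the second assertion $J^nX^{n;t,x}\to X^{t,x}$ in $C([0,T];\sC)$. After the triangle inequality you are left with the summand $\sup_{s\in[t,T]}\VV J^nX_s-X_s\VV_\infty$, and this is \emph{not} ``handled exactly as above'': all you established above about $J^nX-X$ is pointwise-in-$r$ convergence and the integral convergence $\int_t^T\VV J^nX_r-X_r\VV_\infty\ud r\to 0$. Neither yields uniformity in $s$: pointwise convergence together with an equibound does not imply uniform convergence, and a supremum over the uncountable family $s\in[t,T]$ cannot be obtained by dominated convergence. This is exactly the point where the compactness argument that you bypassed becomes unavoidable: on $\Omega_0$ the set $K_\omega=\{X_s(\omega):s\in[t,T]\}$ is compact in $\sC$, and since $J^n\to\Id$ uniformly on compact sets of $\sC$ (as noted in the paper right after the construction of the $J^n$'s), one gets $\sup_{s\in[t,T]}\VV J^nX_s-X_s\VV_\infty\to 0$ a.s.; equivalently, combine your pointwise convergence with the equi-Lipschitz bound $\VV (J^n-\Id)v-(J^n-\Id)w\VV_\infty\le (C_J+1)\VV v-w\VV_\infty$ for $v,w\in\aC$ and a finite $\epsilon$-net of $K_\omega$. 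With this supplement (and the same domination you already used), the $L^p$ statement for $J^nX^n$ follows and your proof closes, coinciding with the paper's.
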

\begin{proof}Recall that $\Omega_0\subset\Omega$ is the subset of full probability where $X_\cdot^{(t,x)}$  has continuous trajectories.
Given $x\in\aC$ and $\omega\in\Omega_0$ the continuity of the map $[t,T]\ni s\mapsto X_s\in\aC$ guarantees the compactness in $\aC$ of the set $\left\{X_s(\omega)\right\}_s$. Therefore $J^nX_s\to X_s$ uniformly in $s$ almost surely, i.e.
\begin{equation*}
  \sup_{s\in[t,T]}\left\vert J^nX_s-X_s\right\vert\xrightarrow{n\to\infty}0\ , \qquad \bP\text{-a.s.} \ .
\end{equation*}
Thanks to Assumption \ref{ass:B} and to the properties of the semigroup $e^{tA}$ we have almost surely
\begin{align*}
  \left\vert X^n_\tau-X_\tau\right\vert&=\left\vert\int_t^\tau e^{(\tau-r)A}\left[B^n\left(r,X^n_r\right)-B\left(r,X_r\right)\right]\ud r\right\vert\\
  &\lesssim\int_t^\tau\left[\left\vert B\left(r,J^nX^n_r\right)-B\left(r,J^nX_r\right)\right\vert+\left\vert B\left(r,J^nX_r\right)-B\left(r,X_r\right)\right\vert\right]\ud r\\
  &\lesssim\int_t^\tau\left[\left\vert X^n_r-X_r\right\vert+\left\vert J^nX_r-X_r\right\vert\right]\ud r\ ;
\end{align*}
therefore
\begin{equation*}
  \sup_{\tau\in[t,s]}\left\vert X^n_\tau-X_\tau\right\vert\lesssim\int_t^s\sup_{\tau\in[t,r]}\left\vert X^n_\tau-X_\tau\right\vert\ud r+\int_t^s\sup_{\tau\in[t,r]}\left\vert J^nX_\tau-X_\tau\right\vert\ud r
\end{equation*}
and by Gronwall's lemma 
\begin{equation}
  \label{eq:duepallini}
  \sup_{\tau\in[t,s]}\left\vert X^n_\tau-X_\tau\right\vert\xrightarrow{n\to\infty}0 \ , \qquad \bP\text{-a.s.} \ .
\end{equation}
 Since for every $s$ a.s.
\begin{equation*}
  \begin{split}
    \left\vert J^nX^n_s-X_s\right\vert_{\aC}&\leq\left\vert J^nX^n_s-J^nX_s\right\vert_{\aC}+\left\vert J^nX_s-X_s\right\vert_{\aC}\\
    &\leq \left\vert J^n\right\vert_{L(\sL^2;\sC)}\ \left\vert X^n_s-X_s\right\vert_{\aC}+\left\vert J^nX_s-X_s\right\vert_{\aC}\xrightarrow{n\to\infty}0\ ,
  \end{split}
\end{equation*}
the equiboundedness of the $J^n$'s implies that
\begin{equation*}
  \sup_{\tau\in[t,s]}\left\vert J^nX^n_s-X_s\right\vert_{\aC}\xrightarrow{n\to\infty}0 \ .
\end{equation*}
The second claim follows by estimate \eqref{eq:estX} (here the initial datum $x$ is deterministic). Indeed
\begin{equation*}
\sup_{s\in[t,T]}\left\vert X^n_s\right\vert+\sup_{s\in[t,T]}\left\vert X_s\right\vert \leq \gamma_T, \qquad \bP\text{-a.s.}
\end{equation*}
where $\gamma_T$ is a random variable with $\bE\gamma_T^p<\infty$, for every $p\geq 1$.
\end{proof}

\begin{lemma}\label{step1.b}
Let $t \in [0,T]$ and  $x \in \aC$. For every $h\in\sC$,  $D_xX^{n;t,x}h\to D_xX^{t,x} h$ $\bP$-a.s. in $C\left([0,T];\sC\right)$ and in $L^p\left(\Omega;C\left([0,T];\sC\right)\right)$.
Moreover, for every $h,k\in\sC$, $D^2_xX^{n;t,x}(k,h)\to D_x^2X^{t,x}(k,h)$ in $L^p\left(\Omega,C\left([0,T];\sC\right)\right)$.
\end{lemma}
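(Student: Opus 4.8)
The plan is to run, for both orders, the same Volterra/Gronwall scheme used in Lemma \ref{step1.a}, now on the linear equations \eqref{eq:DX1}--\eqref{eq:DX2} characterising the derivatives. Writing $\Xi^n_\cdot h:=D_xX^{n;t,x}_\cdot h$ and $\Xi_\cdot h:=D_xX^{t,x}_\cdot h$, both solve \eqref{eq:DX1}, the former with coefficient $DB^n(\rho,X^n_\rho)=DB(\rho,J^nX^n_\rho)J^n$ (see \eqref{eq:derivate}) and the latter with $DB(\rho,X_\rho)$. On subtracting, the free term $e^{(s-t)A}h$ cancels, so $\Delta^n_\cdot:=\Xi^n_\cdot h-\Xi_\cdot h$ solves
\begin{equation*}
\Delta^n_s=\int_t^s e^{(s-\rho)A}\Big[DB(\rho,J^nX^n_\rho)J^n\Delta^n_\rho+S^n_\rho\Big]\ud\rho,\qquad S^n_\rho:=\big[DB(\rho,J^nX^n_\rho)J^n-DB(\rho,X_\rho)\big]\Xi_\rho h.
\end{equation*}
Since $\|DB\|\le C$ and the $J^n$ are equibounded in sup-norm on one-jump paths, the first summand yields a Gronwall term; Gronwall's lemma then reduces the a.s. statement to $\int_t^T|S^n_\rho|\ud\rho\to0$ $\bP$-a.s.

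The heart of the matter is thus $S^n_\rho\to0$ uniformly in $\rho$. I would split $S^n_\rho=[DB(\rho,J^nX^n_\rho)-DB(\rho,X_\rho)]J^n\Xi_\rho h+DB(\rho,X_\rho)[J^n\Xi_\rho h-\Xi_\rho h]$. The first piece is handled by the local Lipschitz continuity of $DB$ (from the bound on $D^2B$ in $(B.III)$), the uniform convergence $J^nX^n_\rho\to X_\rho$ in $\sC$ from Lemma \ref{step1.a}, and the a.s. bound $\sup_\rho|X^n_\rho|+\sup_\rho|X_\rho|\le\gamma_T$. In the second piece the discontinuity structure enters: $\Xi_\rho h$ is not continuous, carrying the single jump of the free term $e^{(\rho-t)A}h$ at $-(\rho-t)$ while the time-integral contribution stays continuous, so $J^n\Xi_\rho h\to\Xi_\rho h$ cannot be read off from $J^n\to\mathrm{Id}$ on $\sC$. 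As $\Xi_\rho h$ has at most one jump, this is precisely the situation covered by Assumption \ref{ass:dphi} (in the one-jump form noted after Definition \ref{def:phi}), giving $DB(\rho,X_\rho)J^n\Xi_\rho h\to DB(\rho,X_\rho)\Xi_\rho h$. Dominated convergence yields $\int_t^T|S^n_\rho|\ud\rho\to0$; the upgrade to $L^p$ comes from the $n$-uniform bound \eqref{eq:estDXnorm}, which dominates $\Delta^n$ and lets me invoke dominated convergence (or Vitali).

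For the second order, $\Theta^n_\cdot(k,h):=D^2_xX^{n;t,x}_\cdot(k,h)$ and $\Theta_\cdot(k,h)$ solve \eqref{eq:DX2}, which has no free term; hence $\Theta^n-\Theta$ again obeys a linear Volterra equation with the same $DB$-driven Gronwall term and a source consisting of the $DB$-difference acting on $\Theta_\rho$ (treated as above) plus the difference of quadratic terms $D^2B(\rho,J^nX^n_\rho)(J^nD_xX^n_\rho k,J^nD_xX^n_\rho h)-D^2B(\rho,X_\rho)(D_xX_\rho k,D_xX_\rho h)$. I would telescope the latter one slot at a time: the two terms replacing an inner factor use the first-order convergence $J^nD_xX^n_\rho k\to D_xX_\rho k$ just established — again invoking Assumption \ref{ass:dphi} for the one-jump part of $D_xX_\rho k$ — together with $\|D^2B(\rho,J^nX^n_\rho)\|\lesssim1+\gamma_T^m$ and $\|J^nD_xX^n_\rho\|\lesssim1$; the remaining term, where only the base point changes, uses the uniform $\alpha$-H\"older continuity of $D^2B$ from Assumption \ref{ass:B} and $\sup_\rho|J^nX^n_\rho-X_\rho|^\alpha\to0$. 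Gronwall then gives a.s. convergence, and the $n$-uniform, here deterministic, bound \eqref{eq:estD2Xnorm} dominates the difference, yielding convergence in $L^p(\Omega;C([0,T];\sC))$.

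The main obstacle I expect is precisely the second piece of these source terms — pushing $J^n$ through $DB$ and $D^2B$ when these act on the \emph{non-continuous} derivative processes. The mollifiers $J^n$ are not uniformly bounded as operators $\sL^2\to\sC$, only in sup-norm on paths with at most one jump, and the limiting objects $\Xi_\rho h$ and $D_xX_\rho k$ carry exactly such a jump, inherited from $e^{(\rho-t)A}h$. This is the situation Assumption \ref{ass:dphi} is tailored to, and it is the cancellation of the free term $e^{(s-t)A}h$ in the first-order difference (and its absence from the second-order equation) that keeps all differences in $\sC$ and lets the scheme close.
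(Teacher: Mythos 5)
Your proposal is correct and follows essentially the same route as the paper's own proof: the identical Volterra/Gronwall scheme on the difference of the mild equations \eqref{eq:DX1}--\eqref{eq:DX2}, with the same telescoping of the source terms into a Gronwall piece, a base-point difference of $DB$ (resp.\ $D^2B$) handled via the growth/H\"older bounds in Assumption \ref{ass:B} together with Lemma \ref{step1.a}, and the delicate term $DB(r,X_r)\left[J^nD_xX_rh-D_xX_rh\right]$ (resp.\ its second-order analogues) handled via Assumption \ref{ass:dphi} on one-jump paths. Your identification of why Assumption \ref{ass:dphi} is exactly what is needed (the derivative processes inherit a single jump from $e^{(r-t)A}h$, while the differences stay continuous because the free terms cancel) and the $L^p$ upgrade by the $n$-uniform bounds \eqref{eq:estDXnorm}, \eqref{eq:estD2Xnorm} with dominated convergence/Vitali match the paper's argument in substance.
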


\begin{proof}
First note that in general we cannot expect $J^nD_xX_sh$ to converge to $D_xX_sh$ when $h\notin\aC$; this is due to the action of the semigroup $e^{tA}$ on $h$ (see equation (\ref{eq:DX1})).
We prove here only the first part of the statement, for what concerns second order derivatives we refer to the appendix. 

Thanks to the equiboundedness of the $J^n$'s and (\ref{eq:estDXnorm}), we can find a constant $c=c(B,\Sigma,T)$ such that 
\begin{equation}
  \label{eq:estDXunif}
  \left\vert D_xX_s(t,x)h\right\vert\vee\sup_{n\in\bN}\left\vert D_xX^n_s(t,x)h\right\vert\leq c\left\vert h\right\vert\, \quad \forall h\in\sC \, ,\ \forall s\in[t,T]\ .
\end{equation}
By properties of $B$, $e^{tA}$ and $J^n$ we also have, for $h\in\sC$,
\begin{align*}
&\left\vert D_xX^n_\tau h-D_xX_\tau h\right\vert^p\lesssim\int_t^\tau\left\vert DB^n\left(r,X^n_r\right)D_xX^n_rh-DB\left(r,X_r\right)D_xX_rh\right\vert^p\ud r\\ 
  &\lesssim\int_t^\tau\left\vert DB\left(r,J^nX^n_r\right) \left(J^nD_xX^n_rh - J^nD_xX_rh\right)\right\vert^p\ud r +\int_t^\tau\left\vert \left(DB\left(r,J^nX^n_r\right) - DB\left(r,X_r\right) \right)J^nD_xX_rh\right\vert^p\ud r\\
  &\phantom{\lesssim}+\int_t^\tau\left\vert DB\left(r,X_r\right)\left(J^nD_xX_rh-D_xX_rh\right)\right\vert^p\ud r\\
  &\lesssim\int_t^\tau\left\vert D_xX^n_rh-D_xX_rh\right\vert^p\ud r +\int_t^\tau\left\vert h\right\vert^p\left\vert J^nX^n_r-X_r\right\vert^p\int_0^1\left\vert D^2B\left(r,a J^nX^n_r+(1-a)X_r\right)\right\vert^p\ud a\ud r\\
  &\phantom{\lesssim}+\int_t^\tau\left\vert DB\left(r,X_r\right)\left(J^nD_xX_rh-D_xX_rh\right)\right\vert^p\ud r\\
  &\lesssim\int_t^\tau\left\vert D_xX^n_rh-D_xX_rh\right\vert^p\ud r +\int_t^\tau\left\vert h\right\vert^p\left\vert J^nX^n_r-X_r\right\vert^p\int_0^1\left(1+\left\vert a J^nX^n_r+(1-a)X_r\right\vert^{mp}\right)\ud a\ud r\\
  &\phantom{\lesssim}+\int_t^\tau\left\vert DB\left(r,X_r\right) \left(J^nD_xX_rh-D_xX_rh\right)\right\vert^p\ud r\ .
\end{align*}
Therefore
\begin{align*}
  \bE\sup_{\tau\in[t,T]}&\left\vert D_xX^n_\tau h-D_xX_\tau h\right\vert^p\lesssim\int_t^T\bE\sup_{\tau\in[t,r]}\left\vert D_xX^n_\tau h-D_xX_\tau h\right\vert^p\ud r\\
  &\phantom{\lesssim}+\bE\int_t^T\left\vert h\right\vert^p\left\vert J^nX^n_r-X_r\right\vert^p\int_0^1\left(1+\left\vert a J^nX^n_r+(1-a)X_r\right\vert^{mp}\right)\ud a\ud r\\
  &\phantom{\lesssim}+\bE\int_t^T\left\vert DB\left(r,X_r\right)\left(J^nD_xX_rh-DX_rh\right)\right\vert^p\ud r\ .
\end{align*}
Now the second term is bounded by
\begin{equation*}
 \E \left[ \left\vert h\right\vert^p\left(1+\sup_{r\in[t,T]}\left\vert X_r^n\right\vert^{mp}+\sup_{r\in[t,T]}\left\vert X_r\right\vert^{mp}\right)\sup_{r\in[t,T]}\left\vert J^nX^n_r-X_r\right\vert^p \right],
\end{equation*}
which goes to zero thanks to H\"older inequality, estimates \eqref{eq:estX} and  Lemma \ref{step1.a}. 
Exploiting estimate \eqref{eq:estDX} and Assumption \ref{ass:dphi} the same holds for the third term. 
From the Gronwall's lemma we get the convergence in $L^p\left(\Omega;C\left([0,T];\sC\right)\right)$. 
Finally, by the very same technique, the a.s. convergence in $C\left([0,T];\sC\right)$ follows directly exploiting the a.s. convergence of $J^nX^{n;t,x}$ given in Lemma \ref{step1.a}.
\end{proof}

\begin{lemma}\label{step2.a}
Let $t \in [0,T]$ and  $x \in \aC$. 
Then $Y^{n;t,x}\to Y^{t,x}$ in $L^p\left(\Omega;C\left([t,T];\bR\right)\right)$
and  $Z^{n;t,x}\to Z^{t,x}$ in $L^p\left(\Omega;L^2\left(t,T;\bR^{d_1}\right)\right)$.
\end{lemma}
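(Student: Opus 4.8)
The plan is to write the equation satisfied by the differences $\Delta Y := Y^{n;t,x} - Y^{t,x}$ and $\Delta Z := Z^{n;t,x} - Z^{t,x}$ and to recognise it as a \emph{linear} BSDE with bounded coefficients, so that Lemma \ref{l:linearBSDE} applies directly. Recalling that $G^n(r,\cdot,y,z) = G(r,J^n\cdot,y,z)$ and $\Phi^n = \Phi(J^n\cdot)$, the terminal datum of the difference equation is $\Delta\Phi := \Phi(J^nX^n_T) - \Phi(X_T)$, while the driver difference splits as
\[
G(r,J^nX^n_r,Y^n_r,Z^n_r) - G(r,X_r,Y_r,Z_r) = a_r\,\Delta Y_r + b_r\cdot\Delta Z_r + c_r \ ,
\]
where, by the mean value theorem, $a_r := \int_0^1 D_2G(r,J^nX^n_r,Y_r+\theta\Delta Y_r,Z_r+\theta\Delta Z_r)\,\ud\theta$ and $b_r$ is defined analogously with $D_3G$; these are bounded uniformly in $n$ and $\omega$ thanks to the Lipschitz assumption $(G.\mathrm{II})$. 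The source term $c_r := G(r,J^nX^n_r,Y_r,Z_r) - G(r,X_r,Y_r,Z_r)$ carries all the dependence on $J^nX^n$. The key structural point is that the troublesome dependence on $Z^n$ is entirely absorbed into the \emph{bounded} coefficient $b_r$, while $c_r$ involves only the limit processes $Y,Z$; the difference equation then reads $\Delta Y_s = \Delta\Phi - \int_s^T(a_r\Delta Y_r + b_r\cdot\Delta Z_r + c_r)\,\ud r - \int_s^T \Delta Z_r\,\ud W_r$.

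Since $a,b$ are bounded, the process $V$ of Lemma \ref{l:linearBSDE} is bounded and estimate \eqref{estimate_linear} yields
\[
\bE\sup_{s\in[t,T]}|\Delta Y_s|^p + \bE\Big(\int_t^T|\Delta Z_r|^2\,\ud r\Big)^{\nicefrac{p}{2}} \lesssim \bE|\Delta\Phi|^p + \bE\Big(\int_t^T|c_r|\,\ud r\Big)^p \ ,
\]
so it suffices to show that both terms on the right-hand side vanish as $n\to\infty$. For both I would argue via the Vitali convergence theorem, combining pointwise convergence with uniform integrability.

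For the pointwise convergence, Lemma \ref{step1.a} gives $\sup_{r\in[t,T]}|J^nX^n_r - X_r|_{\sC}\to 0$ $\bP$-a.s.; hence, by continuity of $\Phi$ and of $G(r,\cdot,Y_r,Z_r)$ on $E=\sD\supset\sC$, one gets $\Delta\Phi\to 0$ $\bP$-a.s. and $c_r\to 0$ for a.e. $(\omega,r)$. Since for a.e. fixed $\omega$ the sequence $\{J^nX^n_\cdot(\omega)\}_n$ is bounded in $C([t,T];\sC)$, the growth bound $(G.\mathrm{I})$ dominates $|c_r|$ by an $r$-integrable function, and dominated convergence gives $\int_t^T|c_r|\,\ud r\to 0$ $\bP$-a.s. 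For the uniform integrability I would exhibit a uniform-in-$n$ bound of order $p'>p$: using $(G.\mathrm{I})$, the growth of $\Phi$ from Assumption \ref{ass:Phi2}, the equiboundedness $\sup_n\|J^nX^n_r\|_\infty\le C_J\|X^n_r\|_\infty$ (legitimate since $x\in\aC$ forces $X^n_r\in\aC$, hence continuous, by Remark \ref{rem:conv}), the moment estimate \eqref{eq:estX}---which holds uniformly in $n$ because $B^n$ satisfies Assumption \ref{ass:B} with $n$-independent constants---together with \eqref{eq:est:bsde_Y_Z} for $Y,Z$, one bounds $\bE|\Delta\Phi|^{p'}$ and $\bE(\int_t^T|c_r|\,\ud r)^{p'}$ uniformly in $n$. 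This gives uniform integrability of the $p$-th powers and closes the argument.

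The main obstacle is precisely this last uniform-integrability step: it is what forces the use of the uniform-in-$n$ moment bounds, and it rests on two facts to be checked with care---that all the a priori estimates of Section \ref{sec:FBSDE} are uniform in $n$ (guaranteed by the $n$-independent constants in Assumptions \ref{ass:B}--\ref{ass:GJ} for $B^n,G^n,\Phi^n$ and by the equiboundedness of the $J^n$), and that the initial datum $x\in\aC$ keeps the forward paths $X^n_\cdot$ in $\aC$, so that the sup-norm equiboundedness of $J^n$ is applicable to $J^nX^n_r$.
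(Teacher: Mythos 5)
Your proof is correct, but it takes a genuinely different route from the paper's. The paper proves this lemma \emph{without} linearizing the driver: it applies It\^o's formula to $|Y^n_s-Y_s|^p$, uses only the Lipschitz property of $G$ in $(y,z)$ together with a Young-type absorption of the term $|Y^n_r-Y_r|^{p-2}|Z^n_r-Z_r|^2$, and concludes by Gronwall that $\E|Y^n_s-Y_s|^p\to 0$ for each fixed $s$; the convergence of $Z^n$ is then obtained first in $L^2(\Omega\times(t,T))$ from the $p=2$ identity, and promoted to arbitrary $p\ge 2$ by the interpolation $\E\bigl(\int_s^T|Z^n_r-Z_r|^2\ud r\bigr)^{\nicefrac{p}{2}}\le\bigl[\E\int_s^T|Z^n_r-Z_r|^2\ud r\bigr]^{\nicefrac{1}{2}}\bigl[\E\bigl(\int_s^T|Z^n_r-Z_r|^2\ud r\bigr)^{p-1}\bigr]^{\nicefrac{1}{2}}$ combined with the uniform-in-$n$ bounds of Proposition \ref{p:stimeBSDE}; finally the sup-in-time convergence of $Y^n$ is recovered from the a priori estimate \eqref{eq:BSDE_stima_classica} of Proposition \ref{prop:FT} applied to the difference BSDE, bounding the driver difference $\hat G^n$ via Lipschitzianity by $|\Delta Y|,|\Delta Z|$ (whose $L^2$-in-time convergence is already known) plus exactly your term $c_r$. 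You instead linearize the increment via the mean value theorem --- legitimate, since Assumption \ref{ass:G} gives $C^{2,\alpha}$, hence $C^1$, regularity in $(y,z)$ with $|D_2G|,|D_3G|\le C$ --- and invoke Lemma \ref{l:linearBSDE} once, obtaining in a single stroke the sup-norm convergence of $Y^n$ and the $L^p(\Omega;L^2)$ convergence of $Z^n$ for every $p$, with no interpolation step; the treatment of the residual terms $\Delta\Phi$ and $c_r$ (Lemma \ref{step1.a}, continuity of $\Phi$ and $G$, Vitali with uniform-in-$n$ moment bounds and the sup-norm equiboundedness of $J^n$ on continuous paths) coincides with the paper's. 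In effect you apply at order zero the scheme the paper reserves for the first- and second-order derivatives (Proposition \ref{p:diff1}, Lemmas \ref{step3} and \ref{step4}), and your structural remark that the dependence on $Z^n$ is absorbed into the bounded coefficient $b_r$ while $c_r$ involves only the limit processes is precisely what makes that scheme work; what the paper's route buys in exchange is that it never uses differentiability of $G$ in $(y,z)$, only its Lipschitz continuity.
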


\begin{proof}
We first show that, for every $s\in[t,T]$, $Y_s^{n;t,x}\to Y_s^{t,x}$ in $L^p\left(\Omega;\bR\right)$.\\
Given $p \ge 2$, for every $s\in[t,T]$, $Y^n_s-Y_s$ and $Z^n_s-Z_s$ satisfy the identity
\begin{equation*}
  Y^n_s-Y_s+\int_s^T\left[ Z^n_r-Z_r\right]\ud W_r=\Phi^n\left(X^n_T\right)-\Phi\left(X_T\right)+\int_s^T\hat G^n_r\ud r\ ,
\end{equation*}
where $\hat G_r^n$ is the process
\begin{equation*}
  \hat G^n_r=G^n\left(r,X^n_r,Y^n_r,Z^n_r\right)-G\left(r,X_r,Y_r,Z_r\right)\ .
\end{equation*}
By It\^o formula and taking expectation we get
\begin{equation}
  \label{eq:Ydiff}
  \begin{aligned}
    \bE&\left\vert Y^n_s-Y_s\right\vert^p +\frac{p(p-1)}{2}\bE\int_s^T\left\vert Y^n_r-Y_r\right\vert^{p-2}\left\vert Z^n_r-Z_r\right\vert^2\ud r \\
    &\quad \leq \bE\left\vert\Phi^n\left(X^n_T\right)-\Phi(X_T)\right\vert^p+p\bE\int_s^T\left\vert Y^n_r-Y_r\right\vert^{p-1}\ \vert \hat G^n_r\vert\ud r\ .
\end{aligned}  
  \end{equation}
  Since $G^n, G$ satisfy Assumption \ref{ass:G}, for the last integral in (\ref{eq:Ydiff}) we have
\begin{equation*}
  \begin{aligned}
  &\bE\int_s^T\left\vert Y^n_r-Y_r\right\vert^{p-1}\ \vert \hat G^n_r\vert\ud r \leq 
    \left( \frac{C^2}{2} + C + \frac{p-1}{p} \right) \int_s^T\left\vert Y^n_r-Y_r\right\vert^p\ud r\\
    &\quad +\frac{1}{2}\int_s^T \left\vert Y^n_r-Y_r\right\vert^{p-2}\left\vert Z^n_r-Z_r\right\vert^2\ud r +\frac{1}{p}\int_s^T\left\vert G\left(r,J^nX^n_r,Y_r,Z_r\right)-G\left(r,X_r,Y_r,Z_r\right)\right\vert^p\ud r
  \end{aligned}
\end{equation*}
where $C$ is the constant provided by assumption \ref{ass:G}. 
Therefore
\begin{equation}  \label{eq:Ydifftemp}
\begin{aligned}
  \bE\left\vert Y^n_s-Y_s\right\vert^p&\lesssim \int_s^T\bE\left\vert Y^n_r-Y_r\right\vert^p\ud r + \bE\left\vert \Phi^n\left(X^n_T\right)-\Phi\left(X_T\right)\right\vert^p\\ 
  &+\bE\int_s^T\left\vert G\left(r,J^nX^n_r,Y_r,Z_r\right)-G\left(r,X_r,Y_r,Z_r\right)\right\vert^p\ud r
\end{aligned}
\end{equation}
and since $\int_s^T\left\vert G\left(r,J^nX^n_r,Y_r,Z_r\right)-G\left(r,X_r,Y_r,Z_r\right)\right\vert^p\ud r$ is decreasing in $s$, by Gronwall's lemma
\begin{equation}
  \label{eq:YnY1}
  \bE\left\vert Y^n_s-Y_s\right\vert^p\lesssim\bigg[\bE\left\vert \Phi^n\left(X^n_T\right)-\Phi\left(X_T\right)\right\vert^p+\bE\int_t^T\left\vert G\left(r,J^nX^n_r,Y_r,Z_r\right)-G\left(r,X_r,Y_r,Z_r\right)\right\vert^p\ud r\bigg]
\end{equation}
The first term on the r.h.s. of (\ref{eq:YnY1}) can be easily shown to converge to $0$ thanks to the properties of $\Phi$, the uniform bound on $J^n$ and the convergence proved in Lemma \ref{step1.a}.
For the second term on the r.h.s. of (\ref{eq:YnY1}), recall that $G$ is continuous and by  Lemma \ref{step1.a} $J^nX^n_s(\omega)$ converges to $X_s(\omega)$ for every $s \in [t,T]$ and a.e. $\omega \in \Omega$;
 then Assumption \ref{ass:G}, estimates \eqref{eq:estX} and Propositions \ref{p:stimeBSDE} and \ref{p:identification} yield
\begin{equation*}
  \bE\int_t^T\left\vert G\left(r,J^nX^n_r,Y_r,Z_r\right)-G\left(r,X_r,Y_r,Z_r\right)\right\vert^p\ud r\xrightarrow{n\to\infty}0 \ ,
\end{equation*}
thanks to Vitali convergence theorem.
Note that, by Tonelli's theorem and the dominated convergence theorem,
  $\bE\int_s^T\left\vert Y^n_r-Y_r\right\vert^p\ud r \to 0$
for every $s\in[t,T]$; so that 
\begin{equation*}
  \bE\left(\int_t^T\left\vert Y^n_r-Y_r\right\vert^2\ud r\right)^{\nicefrac{p}{2}}\xrightarrow{n\to\infty}0 \ .
\end{equation*}
For what concerns $Z^{n;t,x}$, starting by \eqref{eq:Ydiff} with $p=2$ it is easily seen that $\bE\int_t^T\left\vert Z^n_r-Z_r\right\vert^2\ud r \to 0$; moreover, 
\begin{equation*}
  \bE\left(\int_s^T\left\vert Z^n_r-Z_r\right\vert^2\ud r\right)^{\frac{p}{2}}\leq\left[\bE\int_s^T\left\vert Z^n_r-Z_r\right\vert^2\ud r\right]^{\frac{1}{2}}\left[\bE\left(\int_s^T\left\vert Z^n_r-Z_r\right\vert^2\ud r\right)^{p-1}\right]^{\frac{1}{2}} \ ,
\end{equation*}
so that the result holds for arbitrary $p\geq 2$ since the last term can be estimated uniformly in $n \in \N$ thanks to Proposition \ref{p:stimeBSDE}.

Let finally show the refined convergence $Y^{n;t,x}\to Y^{t,x}$ in $L^p\left(\Omega;C\left([t,T];\bR\right)\right)$.
It is easy to show that  
\begin{equation*}
\bE\left(\int_t^T\left\vert \hat G^n_r\right\vert^2\ud r \right)^{\nicefrac{p}{2}} \lesssim 1+\left\vert x\right\vert^{mp} \ ;
\end{equation*}
%\begin{equation*}
%  \begin{split}
%    \bE&\left(\int_t^T\left\vert \hat G^n_r\right\vert^2\ud r\right)^{\nicefrac{p}{2}}\lesssim\bE\left(\int_t^T\left\vert G\left(r,J^nX^n_r,Y^n_r,Z^n_r\right)\right\vert^2+\left\vert G\left(r,X_r,Y_r,Z_r\right)\right\vert^2\right)^{\nicefrac{p}{2}}\\
%      &\lesssim \bE\left(1+\sup_r\left\vert X^n_r\right\vert^{2m}+\sup_r\left\vert X_r\right\vert^{2m}+\sup_r\left\vert Y^n_r\right\vert^2+\sup_r\left\vert Y_r\right\vert^2+\int_t^T\left\vert Z^n_r\right\vert^2\ud r+\int_t^T\left\vert Z_r\right\vert^2\ud r\right)^{\nicefrac{p}{2}}\\
%      &\lesssim 1+\bE\sup_r\left\vert X^n_r\right\vert^{pm}+\bE\sup_r\left\vert X_r\right\vert^{pm}+\bE\sup_r\left\vert Y^n_r\right\vert^p+\bE\sup_r\left\vert Y_r\right\vert^p\\
%      &\phantom{aa}+\bE\left(\int_t^T\left\vert Z^n_r\right\vert^2\ud r\right)^{\nicefrac{p}{2}}+\bE\left(\int_t^T\left\vert Z_r\right\vert^2\ud r\right)^{\nicefrac{p}{2}}\\
%      &\lesssim 1+\left\vert x\right\vert^{mp}
%  \end{split}
%\end{equation*}
hence we can apply estimate (\ref{eq:BSDE_stima_classica}) in Proposition \ref{prop:FT} to obtain
\begin{equation}
  \label{eq:est_Ghat}
  \bE\sup_{s\in[t,T]}\left\vert Y^n_r-Y_r\right\vert^p+\bE\left(\int_t^T\left\vert Z^n_r-Z_r\right\vert^2\ud r\right)^{\nicefrac{p}{2}}\lesssim \bE\left(\int_t^T\left\vert\hat G_r^n\right\vert^2\ud r\right)^{\nicefrac{p}{2}}+\bE\left\vert\Phi^n\left(X^n_T\right)-\Phi\left(X_T\right)\right\vert^p\ .
\end{equation}
From Assumption \ref{ass:G} it holds 
\begin{multline*}
  \bE\left(\int_t^T\left\vert\hat G_r^n\right\vert^2\ud r\right)^{\nicefrac{p}{2}}\lesssim\bE\left(\int_t^T\left\vert Y^n_r-Y_r\right\vert^2\ud r\right)^{\nicefrac{p}{2}}+\bE\left(\int_t^T\left\vert Z^n_r-Z_r\right\vert^2\ud r\right)^{\nicefrac{p}{2}}\\+\bE\left(\int_t^T\left\vert G\left(r,J^nX^n_r,Y_r,Z_r\right)-G\left(r,X_r,Y_r,Z_r\right)\right\vert^2\ud r\right)^{\nicefrac{p}{2}}\ ,
\end{multline*}
therefore the r.h.s. of (\ref{eq:est_Ghat}) converges to $0$ as $n \to + \infty$ thanks to Lemma \ref{step2.a} and the uniform convergence of the $X^n$.
\end{proof}

\begin{lemma}\label{step3}
Let $t \in [0,T]$ and  $x \in \aC$. 
For any $h\in\sC$, $DY_\cdot^{n;t,x}h\to DY_\cdot^{t,x}h$ in $L^p(\Omega;C([t,T];\bR))$ and $DZ_\cdot^{n;t,x}h\to DZ_\cdot^{t,x}h$ in $L^p(\Omega;L^2([t,T];\bR^{d_1}))$.\\
\end{lemma}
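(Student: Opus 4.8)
The plan is to derive the backward SDE solved by the difference of the first-order derivatives and to reduce the two claimed convergences to the vanishing of a single source term, to which the linear estimate \eqref{estimate_linear} applies. By Proposition \ref{t.diff_gateaux_BSDE} and the identities \eqref{eq:derivate}, for fixed $h\in\sC$ the pair $(D_xY^n_\cdot h,D_xZ^n_\cdot h)$ solves \eqref{der1BSDE} with $X,Y,Z,\Phi,G$ replaced by $X^n,Y^n,Z^n,\Phi^n,G^n$; its terminal datum is $D\Phi(J^nX^n_T)J^nD_xX^n_Th$ and its inhomogeneity is $D_1G(r,J^nX^n_r,Y^n_r,Z^n_r)J^nD_xX^n_rh$. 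Writing $\Delta Y_s=(D_xY^n_s-D_xY_s)h$, $\Delta Z_r=(D_xZ^n_r-D_xZ_r)h$ and keeping $D_2G^n_r:=D_2G(r,J^nX^n_r,Y^n_r,Z^n_r)$, $D_3G^n_r:=D_3G(r,J^nX^n_r,Y^n_r,Z^n_r)$ in the linear part, subtraction of the two equations yields a linear BSDE for $(\Delta Y,\Delta Z)$ with bounded coefficients (Assumption \ref{ass:G}), terminal value $\xi^n:=D\Phi(J^nX^n_T)J^nD_xX^n_Th-D\Phi(X_T)D_xX_Th$ and source
\begin{equation*}
\begin{split}
c^n_r&:=\big[D_1G(r,J^nX^n_r,Y^n_r,Z^n_r)J^nD_xX^n_rh-D_1G(r,X_r,Y_r,Z_r)D_xX_rh\big]\\
&\quad+\big[D_2G^n_r-D_2G(r,X_r,Y_r,Z_r)\big]D_xY_rh+\big[D_3G^n_r-D_3G(r,X_r,Y_r,Z_r)\big]D_xZ_rh.
\end{split}
\end{equation*}
As $V$ is bounded, Lemma \ref{l:linearBSDE} gives
\begin{equation*}
\bE\sup_{s\in[t,T]}|\Delta Y_s|^p+\bE\Big(\int_t^T|\Delta Z_r|^2\ud r\Big)^{\nicefrac{p}{2}}\lesssim\bE|\xi^n|^p+\bE\Big(\int_t^T|c^n_r|\ud r\Big)^p,
\end{equation*}
so both convergences follow at once once the right-hand side is shown to vanish as $n\to\infty$.

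For the terminal term I would split
\begin{equation*}
\xi^n=\big[D\Phi(J^nX^n_T)-D\Phi(X_T)\big]J^nD_xX^n_Th+D\Phi(X_T)J^n\big(D_xX^n_T-D_xX_T\big)h+D\Phi(X_T)\big(J^nD_xX_Th-D_xX_Th\big).
\end{equation*}
The first summand is handled by the continuity of $D\Phi$, the convergence $J^nX^n_T\to X_T$ in $\sC$ (Lemma \ref{step1.a}), the growth in Assumption \ref{ass:Phi2} and the uniform bound \eqref{eq:estDXnorm} together with the equiboundedness of $\{J^n\}$. For the second summand one uses that $D_xX^n_Th-D_xX_Th$ belongs to $\sC$, since the singular contributions $e^{(T-t)A}h$ cancel, so the equiboundedness of $\{J^n\}$ on $\sC$ and Lemma \ref{step1.b} give a.s. convergence to $0$. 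The third summand is where Assumption \ref{ass:dphi} is essential: $D_xX_Th$ carries at most one jump (only $e^{(T-t)A}h$ is discontinuous, the integral part lying in $\aC$), hence the one-jump-continuity of $D\Phi$ forces $D\Phi(X_T)J^nD_xX_Th\to D\Phi(X_T)D_xX_Th$. Passage from a.s. to $L^p(\Omega)$ convergence is via Vitali's theorem, the uniform integrability being supplied by \eqref{eq:estX}, \eqref{eq:estDX} and Assumption \ref{ass:Phi2}.

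The source term is treated bracket by bracket. In the $D_1G$ bracket I add and subtract $D_1G(r,J^nX^n_r,Y^n_r,Z^n_r)D_xX_rh$: the resulting piece with $J^n(D_xX^n_r-D_xX_r)h$ and the piece $D_1G(r,X_r,Y_r,Z_r)(J^nD_xX_rh-D_xX_rh)$ are handled exactly as the corresponding summands of $\xi^n$, the latter now invoking the one-jump-continuity of $D_1G$ granted by Assumption \ref{ass:GJ}, while the leftover difference of $D_1G$'s uses continuity of $D_1G$ together with Lemmas \ref{step1.a} and \ref{step2.a}. The $D_2G$ and $D_3G$ brackets are bounded (Assumption \ref{ass:G}) and multiply $D_xY_rh$, resp.\ $D_xZ_rh$, which are integrable by \eqref{eq:estBSDEh}; being differences of continuous functions of $(X^n,Y^n,Z^n)$ and $(X,Y,Z)$, they tend to $0$ for a.e.\ $(r,\omega)$ along a subsequence on which $Z^n\to Z$ a.e.\ (available from $Z^n\to Z$ in $L^p(\Omega;L^2)$, Lemma \ref{step2.a}), so dominated convergence combined with the a priori estimates and Vitali's theorem gives $\bE(\int_t^T|c^n_r|\ud r)^p\to0$; a routine subsequence argument removes the passage to subsequences.

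The crux of the argument is twofold, and both difficulties are structural rather than computational. First, the quantities $J^nD_xX_\cdot h$ do not converge by the mere convergence property of $\{J^n\}$, because $D_xX_\cdot h$ genuinely carries the jump produced by $e^{(\cdot-t)A}h$ whenever $h\notin\aC$; it is exactly the one-jump-continuity Assumptions \ref{ass:dphi} and \ref{ass:GJ} that restore the convergence of $D\Phi(X_T)J^nD_xX_Th$ and of $D_1G_r\,J^nD_xX_rh$. Second, the $z$-dependence of the coefficients is only controlled through the $L^p(\Omega;L^2)$-convergence of $Z^n$, so pointwise-in-time convergence of the integrands in $c^n_r$ is available only along subsequences and must be upgraded to $L^p(\Omega)$ convergence through the uniform estimates of Section \ref{sec:FBSDE} and Vitali's theorem.
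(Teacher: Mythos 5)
Your proposal is correct and follows essentially the same route as the paper's proof: the same linear BSDE for the differences $(\Delta Y,\Delta Z)$, the same application of Lemma \ref{l:linearBSDE} to reduce both convergences to the vanishing of the terminal datum and the $L^p(\Omega;L^1)$-source, the same add-and-subtract decompositions, and the same use of Lemmas \ref{step1.a}, \ref{step1.b}, \ref{step2.a}, Assumptions \ref{ass:dphi} and \ref{ass:GJ} (for the terms where $J^n$ acts on the one-jump element $D_xX_\cdot h$), and Vitali's theorem. The only deviation is minor: for the $y$- and $z$-increments of $D_iG$ the paper argues quantitatively via the mean value theorem and the bounds on $D^2_{i,j}G$ from Assumption \ref{ass:G}, whereas you use continuity of $D_iG$, a.e. convergence along subsequences extracted from the $L^p(\Omega;L^2)$-convergence of $Z^n$, and a subsequence-uniqueness argument; both are valid.
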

\begin{proof}
Similarly to the proof of Proposition \ref{p:diff1}, for any $h\in\sC\subset\sL^2$, we consider the equations satisfied by 
%$\big(D_xY_s,D_xZ_s\big)$ and $\big(D_xY^n_s,D_xZ^n_s\big)$ we have that the quantities 
$\Delta Y^n_r=\left(D_xY^n_r-D_xY_r\right)h$, $\Delta Z^n_r=\left(D_xZ^n_r-D_xZ_r\right)h$:
\begin{equation*}
  \Delta Y^n_s+\int_s^T\Delta Z^n_r\ud W_r=\eta^n+\int_s^T\alpha^n_r\Delta Y^n_r\ud r +\int_s^T\beta^n(r)\ud r +\int_s^T\gamma^n_r\Delta Z_r\ud r
\end{equation*}
where
\begin{gather*}
  \eta^n:=\left[D_x\Phi^n\left(X^n_T\right)D_xX^n_T-D_x\Phi\left(X_t\right)D_xX_T\right]h\ ,\\
  \alpha^n_r:=-D_2G^n\left(r,X^n_r,Y^n_r,Z^n_r\right)\ ,\\
  \beta^n(r):=-\left[D_1G^n\left(r,X^n_r,Y^n_r,Z^n_r\right)D_xX^n_r-D_1G\left(r,X_r,Y_r,Z_r\right)D_xX_r\right]h\\
  -\left[D_2G^n\left(r,X^n_r,Y^n_r,Z^n_r\right)-D_2G\left(r,X_r,Y_r,Z_r\right)\right]D_xY_rh\phantom{=,}\\
  -\left[D_3G^n\left(r,X^n_r,Y^n_r,Z^n_r\right)-D_3G\left(r,X_r,Y_r,Z_r\right)\right]D_xZ_rh\phantom{+}\\
  =:-\beta^n_1(r)-\beta^n_2(r)-\beta^n_3(r)\ ,\quad\gamma^n_r:=-D_3G^n\left(r,X^n_r,Y^n_r,Z^n_r\right)\ .\phantom{++++,}\\
\end{gather*}
and 
\begin{equation}
\label{eq:Vn}
  V^n_s=\int_t^s\left\vert\alpha^n_r\right\vert\ud r+\frac{1}{1\wedge(p-1)}\int_t^s\left\vert\gamma^n_r\right\vert^2\ud r\ ,
\end{equation}
By Lemma \ref{l:linearBSDE} we have, for every $n \in \N$, the estimate
\begin{equation*}
  \bE\sup_{t\in[t,T]}\left\vert e^{V^n_t}\Delta Y^n_t\right\vert^p+\bE\left(\int_t^Te^{2V^n_r}\left\vert\Delta Z^n_r\right\vert^2\ud r\right)^{\frac{p}{2}}\lesssim \bE\left\vert e^{V^n_t}\eta^n\right\vert^p+\bE\left(\int_t^Te^{V^n_r}\left\vert\beta^n(r)\right\vert\ud r\right)^p\ ;
\end{equation*}
to get the desired convergence we need to show that the r.h.s. of the above inequality goes to $0$ as $n \to +\infty$.
By the uniform boundedness of $V^n_s$, $n \in \N$, $s \in [t,T]$, it holds, for $p \geq 2$
\begin{equation*}
  \begin{aligned}
    \bE\left\vert \eta^n\right\vert^p &\lesssim \bE\left\vert D\Phi^n\left(X^n_T\right)D_xX^n_Th-D\Phi\left(X_t\right)D_xX_Th\right\vert^p \\
    &\lesssim \bE\left\vert D\Phi^n\left(X^n_T\right) \left( D_xX^n_Th - D_xX_Th \right)\right\vert^p+\bE\left\vert \left(D\Phi^n\left(X^n_T\right) - D\Phi\left(X_T\right)\right)D_xX_Th\right\vert^p\\
    &=\bE\left(\left[\mathrm{B}_1^n\right]^p+\left[\mathrm{B}_2^n\right]^p\right)\ .
  \end{aligned}
\end{equation*}
Recalling (\ref{eq:derivate}) we have, by the equiboundedness of the $J^n$'s,
\begin{equation*}
  \begin{split}
    \mathrm{B}^n_1\leq\left\| D\Phi\left(J^nX^n_T\right)\right\|\left\vert J^nD_xX^n_Th-J^nD_xX_Th\right\vert\lesssim \left(1+\sup_{s\in[t,T]}\left\vert X^n_s\right\vert^m\right)\left\vert D_xX^n_Th-D_xX_Th\right\vert
  \end{split}
\end{equation*}
so that, by Lemma \ref{step1.b}
\begin{equation*}
  \bE\left(\left[\mathrm{B}_1^n\right]^p\right)\lesssim\left[\bE\left(1+\sup_{s\in[t,T]}\left\vert X^n_s\right\vert^{2mp}\right)\right]^{\frac{1}{2}}\left[\bE\left\vert D_xX^n_Th-D_xX_Th\right\vert^{2p}\right]^{\frac{1}{2}}\xrightarrow{n \to \infty} 0 \ .
\end{equation*}
Concerning $\mathrm{B}^n_2$ we have 
\begin{equation*}
\begin{split}
  \E \left( \mathrm{B}^n_2 \right)^p &\leq \E  \left\| D\Phi(J^nX_T^n)-D\Phi(X_T)\right\|^p\left\vert J^nD_xX_Th\right\vert^p+ \E \left\vert D\Phi(X_T)\left(J^nD_xX_Th-D_xX_Th\right)\right\vert^p \\
  &\lesssim \left(1+\vert x\vert ^{mp}\right)\vert h \vert^p
\end{split}  
  \end{equation*}
Hence by Lemma \ref{step1.b}, continuity of $\Phi$, equiboundedness of $J^n$ and Assumption \ref{ass:dphi}, $\E \left( \mathrm{B}^n_2 \right)^p$ goes to zero, implying that $\bE\left\vert\eta^n\right\vert^p\to 0$, as $n \to +\infty$.\\
%  \begin{equation*}
%    \begin{split}
%      \bE\Big(\left\| D\Phi(J^nX_T^n)-D\Phi(X_T)\right\|^p&\left\vert J^nD_xX_Th\right\vert^p\Big)\\
%      &\lesssim \left[\bE\left\| D\Phi(J^nX_T^n)\right\|^{2p}+\E\left\| D\Phi(X_T)\right\|^{2p}\right]^{\frac{1}{2}}\left[\bE\left\vert D_xX_Th\right\vert^{2p}\right]^{\frac{1}{2}}\\
%      &\lesssim \left(1+\vert x\vert ^{mp}\right)\vert h \vert^p
%    \end{split}
%  \end{equation*}
%  and similarly $\bE\vert D\Phi(X_T)(J^nD_xX_Th-D_xX_Th)\vert^p\lesssim \left(1+\vert x\vert ^{mp}\right)\vert h \vert^p$, we can conclude that
%  \begin{equation*}
%  \bE\left\vert D\Phi^n\left(X^n_T\right)D_xX^n_Th-D\Phi\left(X_t\right)D_xX_Th \right\vert^p\to 0    
%  \end{equation*}
%  as $n\to\infty$, hence $\bE\left\vert\eta^n\right\vert^p\to 0$.\\
 To prove that $\bE\left(\int_o^T\left\vert\beta^n(r)\right\vert\ud r\right)^p\to 0$ first note that
\begin{equation*}
  \bE\left(\int_0^T\left\vert\beta^n(r)\right\vert e^{\lambda V^n_r}\ud r\right)^p\lesssim\sum_{i=1}^3\bE\left(\int_0^T\left\vert\beta^n_i(r)\right\vert\ud r\right)^p.
\end{equation*}
We detail the computations for the term $\beta_1^n(r)$, the remaining terms being very similar.
  \begin{equation*}
    \begin{split}
\left\vert\beta^n_1(r)\right\vert&=\vert D_1G\left(r,J^nX^n_r,Y^n_r,Z^n_r\right)J^nD_xX^n_rh-D_1G\left(r,X_r,Y_r,Z_r\right)D_xX_rh\vert\\
&\lesssim \left\vert D_1G\left(r,J^nX^n_r,Y^n_r,Z^n_r\right)J^n\left(D_xX^n_rh-D_xX_rh\right)\right\vert\\
&\phantom{\lesssim}+\left\vert \left[D_1G\left(r,J^nX^n_r,Y^n_r,Z^n_r\right)J^n-D_1G\left(r,X_r,Y_r,Z_r\right)\right]D_xX_rh\right\vert\\
      &=\mathrm{C}_1^n(r)+\mathrm{C}_2^n(r) \ .
    \end{split}
  \end{equation*}
%so that
%\begin{equation*}
%      \bE\left(\int_s^T\left\vert\beta^n_1(r)\right\vert\ud r\right)^p\lesssim\bE\int_s^T\left[C_1^n(r)\right]^p\ud r+\bE\left(\int_s^TC_2^n(r)\ud r\right)^p\ .
%\end{equation*}
Using Assumption \ref{ass:G} we get
\begin{equation*}
  \begin{split}
    \mathrm{C}_1^n(r)&\leq\left\vert D_1G\left(r,J^nX^n_r,Y^n_r,Z^n_r\right)\right\vert\left\vert J^n\right\vert\left\vert D_xX^n_rh-D_xX_rh\right\vert\\
    &\lesssim \left(1+\sup_{r\in[s,T]}\left\vert X^n_r\right\vert^m\right)\left(1+\sup_{r\in[s,T]}\left\vert Y^n_r\right\vert\right)\left\vert D_xX^n_rh-D_xX_rh\right\vert \ ,
  \end{split}
\end{equation*}
therefore, reasoning as before, 
\begin{equation*}
  \begin{split}
   \bE\int_s^T\left[\mathrm{C}_1^n(r)\right]^p\ud r\lesssim \left(1+\left\vert x\right\vert ^{2pm}\right)\left[\bE\left(\int_s^T\left\vert D_xX^n_rh-D_xX_rh\right\vert\ud r\right)^{3p}\right]^{\frac{1}{3}}
  \end{split}
\end{equation*}
and the r.h.s. goes to $0$ as $n \to +\infty$ by Lemma \ref{step1.b}.
% since, by \eqref{eq:estDX}, $\bE\int_s^T\left[\left\vert D_xX_r^nh\right\vert^{3p}+\left\vert D_xX_rh\right\vert^{3p}\right]\ud r\lesssim \vert h\vert^{3p}$ and we can apply the dominated convergence theorem.\\
Then  we have
  \begin{equation*}
    \begin{split}
      \mathrm{C}_2^n(r)&\leq\left\vert D_1 G\left(r,J^nX^n_r,Y^n_r,Z^n_r\right)J^nD_xX_rh-D_1G\left(r,J^nX^n_r,Y^n_r,Z_r\right)J^nD_xX_rh\right\vert\\
      &\phantom{\leq}+\left\vert D_1G\left(r,J^nX^n_r,Y^n_r,Z_r\right)J^nD_xX_rh-D_1G\left(r,J^nX^n_r,Y_r,Z_r\right)J^nD_xX_rh\right\vert\\
      &\phantom{\leq}+\left\vert D_1G\left(r,J^nX^n_r,Y_r,Z_r\right)J^nD_xX_rh-D_1G\left(r,X_r,Y_r,Z_r\right)D_xX_rh\right\vert\\
      &=\mathrm{C}_{21}^n(r)+\mathrm{C}_{22}^n(r)+\mathrm{C}_{23}^n(r)\ ,
    \end{split}
  \end{equation*}
thanks to Assumption \ref{ass:G} we can further bound
  \begin{equation*}
    \begin{split}
      \mathrm{C}_{21}^n(r)&=\left\vert\int_0^1 D_{1,3}^2G\left(r,J^nX^n_r,Y^n_r,a Z^n_r+(1-a)Z_r\right)\Big(Z^n_r-Z_r,J^nD_xX_rh\Big)\ud a\right\vert\\
      &\lesssim\left(1+\sup_{r\in[s,T]}\left\vert X^n_r\right\vert^m\right)\left(1+\sup_{r\in[s.T]}\left\vert Y^n_r\right\vert\right)\left(\sup_{r\in[s,T]}\left\vert D_xX_rh\right\vert\right)\left\vert Z^n_r-Z_r\right\vert
    \end{split}
  \end{equation*}
  and
\begin{equation*}
  \begin{split}
    \bE\left(\int_s^T\mathrm{C}_{21}^n(r)\ud r\right)^p&\lesssim\left[1+\bE\sup_{r\in[0,T]}\left\vert X^n_r\right\vert^{4mp}\right]^{\frac{1}{4}}\left[1+\bE\sup_{r\in[0,T]}\left\vert Y^n_r\right\vert^{4p}\right]^{\frac{1}{4}}\\
&\phantom{\lesssim a}\times\left[\bE\sup_{r\in[0,T]}\left\vert D_xX_rh\right\vert^{4p}\right]^{\frac{1}{4}}\left[\bE\left(\int_s^T\left\vert Z^n_r-Z_r\right\vert^2\ud r\right)^{2p}\right]^{\frac{1}{4}}\\
&\lesssim\left(1+\vert x\vert^{mp}\right)\vert h\vert^p\left(1+\vert x\vert^{pm^2}\right)\left[\bE\left(\int_s^T\left\vert Z^n_r-Z_r\right\vert^2\ud r\right)^{2p}\right]^{\frac{1}{4}}\xrightarrow{n\to\infty}0 
  \end{split}
\end{equation*}  
thanks to Lemma \ref{step2.a}.
The same holds for $\mathrm{C}_{22}^n(r)$,    
%  \begin{equation*}
%    \begin{split}
%      \mathrm{C}_{22}^n(r)&=\left\vert\int_0^1 D_{1,2}^2G\left(r,J^nX^n_r,a Y^n_r+(1-a)Y_r,Z_r\right)\Big(Y^n_r-Y_r,J^nD_xX_rh\Big)\ud a\right\vert\\
%      &\lesssim\left(1+\sup_{r\in[s,T]}\left\vert X^n_r\right\vert^m\right)\left(1+\sup_{r\in[s,T]}\left\vert Y^n_r\right\vert+\sup_{r\in[s,T]}\left\vert Y_r\right\vert\right)\left(\sup_{r\in[s,T]}\left\vert D_xX_rh\right\vert\right)\left\vert Y^n_r-Y_r\right\vert\ ,
%    \end{split}
%  \end{equation*}
while
\begin{equation*}
  \begin{aligned}
      \mathrm{C}_{23}^n(r) &\leq \left\vert \left[D_1G\left(r,J^nX^n_r,Y_r,Z_r\right) - D_1G\left(r,X_r,Y_r,Z_r\right)\right]J^nD_xX_rh\right\vert  \\
      &+ \left\vert D_1G\left(r,X_r,Y_r,Z_r\right)\left(J^nD_xX_rh-D_xX_rh\right)\right\vert
    \end{aligned}
    \end{equation*}
 which  goes to $0$ as $n\to +\infty$ thanks to continuity of $D_1G$, equiboundedness of $J^n$, Lemma \ref{step1.b} and Assumption \ref{ass:GJ}.
 By the uniform bound (\ref{eq:estDXunif}) and Vitali convergence theorem also $\bE\left(\int_s^T\mathrm{C}_{23}^n(r)\ud r\right)^p$ goes to $0$ as $n \to +\infty$. 
This immediately yields that 
\begin{equation*}
  \bE\left(\int_s^T\mathrm{C}_2^n(r)\ud r\right)^p\leq\sum_{i=1}^3\bE\left(\int_s^T\mathrm{C}_{2i}^n(r)\ud r\right)^p \xrightarrow{n\to\infty}0 \ ,
\end{equation*}
which concludes the proof.

\end{proof}

The next lemma concerns the convergence of second order derivatives and it is the most delicate one.

\begin{lemma}\label{step4} 
Let $t \in [0,T]$ and  $x \in \aC$. 
For any $h,k\in\sC$, $D^2_xY^{n;t,x}(k,h)\to D^2_xY^{t,x}(k,h)$ in $L^p\left(\Omega;C\left([t,T];\bR\right)\right)$ and $D^2_xZ^{n;t,x}(k,h)\to D^2_xZ^{t,x}(k,h)$ in $L^p(\Omega;L^2([t,T];\bR^{d_1}))$.
\end{lemma}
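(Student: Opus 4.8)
The plan is to follow the linearization strategy of Lemma \ref{step3}, now applied to the second-order BSDE \eqref{BSDE_second}. Fix $h,k\in\sC$ and set $\Delta^2Y^n_s:=\big(D^2_xY^n_s-D^2_xY_s\big)(k,h)$ and $\Delta^2Z^n_s:=\big(D^2_xZ^n_s-D^2_xZ_s\big)(k,h)$. Since, by Theorem \ref{thm:L2} and Proposition \ref{p.diff_2_BSDE}, both $\big(D^2_xY^{n},D^2_xZ^{n}\big)(k,h)$ and $\big(D^2_xY,D^2_xZ\big)(k,h)$ solve \eqref{BSDE_second} with coefficients $(B^n,G^n,\Phi^n)$ and $(B,G,\Phi)$ respectively, I would subtract the two equations, freeze the linear coefficients $D_2G^n_r,D_3G^n_r$ at level $n$ and push all remaining discrepancies into a terminal datum $\eta^n:=F^{n;t,x}_T(k,h)-F^{t,x}_T(k,h)$ and a source $\zeta^n(r):=\big(L^n_r-L_r\big)(k,h)-\big(D_2G^n_r-D_2G_r\big)D^2_xY_r(k,h)-\big(D_3G^n_r-D_3G_r\big)D^2_xZ_r(k,h)$. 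This produces a linear BSDE
\begin{equation*}
\Delta^2Y^n_s+\int_s^T\Delta^2Z^n_r\ud W_r=\eta^n+\int_s^T\big[\alpha^n_r\Delta^2Y^n_r+\gamma^n_r\Delta^2Z^n_r+\zeta^n(r)\big]\ud r,
\end{equation*}
with $\alpha^n_r:=-D_2G^n_r$, $\gamma^n_r:=-D_3G^n_r$ bounded uniformly in $n$, so that with $V^n$ as in \eqref{eq:Vn} the estimate \eqref{estimate_linear} of Lemma \ref{l:linearBSDE} yields
\begin{equation*}
\bE\sup_{s\in[t,T]}\big|e^{V^n_s}\Delta^2Y^n_s\big|^p+\bE\Big(\int_t^Te^{2V^n_r}|\Delta^2Z^n_r|^2\ud r\Big)^{\nicefrac{p}{2}}\lesssim\bE|\eta^n|^p+\bE\Big(\int_t^T|\zeta^n(r)|\ud r\Big)^p.
\end{equation*}
As the $V^n$ are uniformly bounded and nonnegative, it then suffices to prove $\bE|\eta^n|^p\to0$ and $\bE\big(\int_t^T|\zeta^n(r)|\ud r\big)^p\to0$.

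For the terminal term, via the identities \eqref{eq:derivate} the datum $\eta^n$ splits into a part carrying the differences $D^2_xX^n_T(k,h)-D^2_xX_T(k,h)$ and $D_xX^n_T-D_xX_T$, controlled by Lemma \ref{step1.b}, and a part carrying $D^2\Phi(J^nX^n_T)-D^2\Phi(X_T)$ and $D\Phi(J^nX^n_T)-D\Phi(X_T)$, controlled by continuity of $D\Phi,D^2\Phi$ together with the a.s.\ convergence $J^nX^n_T\to X_T$ of Lemma \ref{step1.a} and Assumption \ref{ass:dphi}. The passage to $L^p$ is made rigorous through the uniform moment bounds \eqref{eq:estX}, \eqref{eq:estDXnorm}, \eqref{eq:estD2Xnorm} and the Vitali convergence theorem, exactly as in the treatment of $\eta^n$ in Lemma \ref{step3}.

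For the source, I would decompose $\zeta^n$ into the pieces $L^n_{i;r}-L_{i;r}$, $i=1,2,3$, and handle each along the lines of Proposition \ref{p.diff_2_BSDE} and Lemma \ref{step3}: one inserts intermediate arguments so that every difference of (second) derivatives of $G$ is split into a term governed by the $\alpha$-H\"older continuity of $D^2G$ and a term handled by plain continuity together with Lemma \ref{step1.b} and the one-jump-continuity Assumption \ref{ass:GJ}; one then applies H\"older's inequality with the uniform-in-$n$ estimates of Theorem \ref{thm:X}, Propositions \ref{p:stimeBSDE}, \ref{t.diff_gateaux_BSDE}, \ref{p:diff1}, \ref{p.diff_2_BSDE} and the convergences of Lemmas \ref{step1.a}--\ref{step3}. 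The integrability granted by these uniform bounds lets the Vitali theorem upgrade the a.e.\ convergence of the integrands to convergence of $p$-th moments.

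The main obstacle will be, as already in Proposition \ref{p.diff_2_BSDE}, the quadratic-in-$Z$ term $D^2_{3,3}G^n_r\big(D_xZ^n_rk,D_xZ^n_rh\big)-D^2_{3,3}G_r\big(D_xZ_rk,D_xZ_rh\big)$ and, most acutely, the piece in which the H\"older modulus of $D^2_{3,3}G^n$ is evaluated at the $Z$-increments: since $Z^n-Z$ converges only in $L^p(\Omega;L^2)$ and never in sup-norm, a naive bound fails. The resolution is to invoke the identification $Z^n_\cdot=Du^n(\cdot,X^n_\cdot)\Sigma$ and $Z_\cdot=Du(\cdot,X_\cdot)\Sigma$ from Proposition \ref{p:identification}, rewriting the H\"older factor $|Z^n_r-Z_r|^\alpha$ as $\|Du^n(r,X^n_r)-Du(r,X_r)\|^\alpha|\Sigma|^\alpha$, which converges pointwise by the continuity \eqref{conv_Du_h} combined with Lemma \ref{step1.a} and the convergence $Du^n\to Du$ extracted from Lemma \ref{step3}, while the remaining $L^2$-factors are absorbed by the uniform estimate \eqref{eq:est_D2YZ}. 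A final Cauchy--Schwarz and Vitali argument, identical in spirit to \eqref{est_lambda_31} and to the treatment of $M^k_{311}$, then closes the estimate and delivers both convergences.
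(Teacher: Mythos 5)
Your overall strategy is exactly the paper's: linearize the difference of the two second-order BSDEs, apply the a priori estimate \eqref{estimate_linear} of Lemma \ref{l:linearBSDE} to reduce everything to the terminal datum and the source, treat the terminal datum and the "easy" source pieces by H\"older/Vitali with the uniform-in-$n$ estimates, and resolve the critical H\"older-in-$Z$ term through the identification $Z^{n}=Du^n(\cdot,X^{n})\Sigma$, $Z=Du(\cdot,X)\Sigma$ of Proposition \ref{p:identification}. This is the paper's key insight as well, so in substance you have the right proof. Two points in your final step, however, need repair.

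First, a citation slip: the factors $\bigl(\int_t^T|D_xZ^{n}_rk|^2\ud r\bigr)^{1/2}$ are \emph{first}-order objects, so the uniform-in-$n$ bound you need is \eqref{eq:estBSDE} (or a sup-in-time bound obtained from $D_xZ^{n}_r=D^2u^n(r,X^{n}_r)D_xX^{n}_r\Sigma$ together with Corollary \ref{cor:D2u}), not \eqref{eq:est_D2YZ}, which concerns second derivatives. Second, and more substantively: the pointwise convergence of the H\"older factor $\|Du^n(r,X^{n}_r)-Du(r,X_r)\|$ does \emph{not} follow from the continuity \eqref{conv_Du_h} of $Du$, Lemma \ref{step1.a}, and the fixed-point convergence $Du^n(r,y)\Sigma\to Du(r,y)\Sigma$ alone, because the evaluation point $X^{n}_r$ moves with $n$. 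Splitting $Du^n(r,X^{n}_r)-Du(r,X_r)=\bigl[Du^n(r,X^{n}_r)-Du^n(r,X_r)\bigr]+\bigl[Du^n(r,X_r)-Du(r,X_r)\bigr]$, the second bracket is indeed covered by Lemma \ref{step3} evaluated at the fixed random point $X_r(\omega)$, but the first bracket requires a modulus of continuity for $Du^n$ that is \emph{uniform in $n$}; continuity of each $Du^n$, or of the limit $Du$, is not enough, since the moduli could degenerate as $n\to\infty$. The paper closes precisely this point by invoking Corollary \ref{cor:D2u} for $u^n$ (whose constant is $n$-independent because the approximated coefficients satisfy the assumptions with $n$-independent constants), giving $\|Du^n(r,X^{n}_r)-Du^n(r,X_r)\|\lesssim\bigl(1+|X^{n}_r|^{l}+|X_r|^{l}\bigr)\,|X^{n}_r-X_r|$, which vanishes by Lemma \ref{step1.a}; it then keeps the H\"older factor \emph{inside} the time integral, proving $\bE\int_t^T|Z^{n}_r-Z_r|^{q}\ud r\to0$ by dominated convergence, and pairs it against the \emph{limit} factors $|D_xZ_rk|\,|D_xZ_rh|$, whose sup-in-time moments \eqref{eq:EsupDZ} come again from the identification. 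If you instead keep the $n$-dependent factors and pull the H\"older term out as a supremum in $r$, you would additionally need uniform-in-$r$ convergence of $Du^n(r,X_r)\Sigma$, which Lemma \ref{step3} does not directly provide. With the $n$-uniform bound on $D^2u^n$ added, your argument closes and coincides with the paper's.
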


\begin{proof}
Here we just focus on the most difficult (and interesting) term to deal with, the other terms are discussed in the Appendix.
Recalling the equation satisfied by $D^2_xY_s(k,h)$ (analogously by $D^2_xY_s^n(k,h)$) and denoting
\begin{equation*}
  \Delta^2Y^n_s(k,h)=D^2_xY^n_s(k,h)-D^2_xY_s(k,h)\ ,\quad \Delta^2Z^n_s(k,h)=D^2_xZ^n_s(k,h)-D^2_xZ_s(k,h) \ ,
\end{equation*}
it holds that
\begin{equation*}
  \Delta^2Y^n_s(k,h)+\int_s^T\Delta^2Z^n_r(k,h)\ud W_r=\bar\eta^n+\int_s^T\bar\alpha^n_r\Delta^2Y^n_r(k,h)\ud r+\int_s^T\bar\beta^n_r\ud r+\int_s^T\bar\gamma^n_r\Delta^2Z^n_r(k,h)\ud r \ , 
\end{equation*}
where $\bar \eta^n$, $\bar \alpha^n$, $\bar \gamma^n$ and $\bar \beta^n$ are suitable coefficients, whose definition is given in the Appendix. 
Exploiting the linear character of the equation (see also  the proof of Proposition \ref{p.diff_2_BSDE})
we need to check that for some $p\ge 2$
\begin{equation*}
  \bE\left\vert \bar\eta^n\right\vert^p+\bE\left(\int_t^T\left\vert\bar\beta^n_r\right\vert\ud r\right)^p\xrightarrow{n\to\infty}0\ .
\end{equation*}
Let us show how to deal with one of the term involved in the definition of $\bar \beta^n$, namely, for $s \in [t, T]$,
\begin{equation*}
\ccd{12} : =\int_s^T\left[D^2_{3,3}G^n\left(r,X^n_r,Y^n_r,Z^n_r\right)\left(D_xZ^n_rk,D_xZ^n_rh\right)-D^2_{3,3}G\left(r,X_r,Y_r,Z_r\right)\left(D_xZ_rk,D_xZ_rh\right)\right]\ud r \ ,
\end{equation*}
where, for sake of consistency, we used the same notation as in the Appendix. 
Then
\begin{align*}
  \bE\left\vert\ccd{12}\right\vert^p&\leq\bE\left(\int_t^T\left\vert \left[D^2_{3,3}G_r(n)-D^2_{3,3}G_r(\cdot)\right]\left(D_xZ^n_rk,D_xZ^n_rh\right)\right\vert\ud r\right)^p\\
                                    &\phantom{\lesssim}+\bE\left(\int_t^T\left\vert D^2_{3,3}G_r(\cdot)\left(D_xZ^n_rk,D_xZ^n_rh-D_xZ_rh\right)\right\vert\ud r\right)^p\\
                                    &\phantom{\lesssim}+\bE\left(\int_t^T\left\vert D^2_{3,3}G_r(\cdot)\left(D_xZ^n_rk-D_xZ_rk,D_xZ_rh\right)\right\vert\ud r\right)^p\\
    &=\bE\left[\overline{\mathrm{F}}^n_{331}\right]^p+\bE\left[\overline{\mathrm{F}}^n_{332}\right]^p+\bE\left[\overline{\mathrm{F}}^n_{333}\right]^p\ .
\end{align*}
By Assumtpion \ref{ass:G}
\begin{align*}
\bE\left[\overline{\mathrm{F}}^n_{331}\right]^p&\lesssim\bE\left(\int_t^T\left(\left\vert J^nX^n_r-X_r\right\vert^\alpha+\left\vert Y^n_r-Y_r\right\vert^\alpha+\left\vert Z^n_r-Z_r\right\vert^\alpha\right)\left\vert D_xZ^n_rk\right\vert\left\vert D_xZ^n_rh\right\vert\ud r\right)^p\\
                                                &\lesssim \bE\left[\left(\sup_{r\in[t,T]}\left\vert J^nX^n_r-X_r\right\vert^{\alpha p}+\sup_{r\in[t,T]}\left\vert Y^n_r-Y_r\right\vert^{\alpha p}\right)\right.\\                                
  &\phantom{\sup_{r\in[0,T]}\Vert_{\aC^\prime}}\left.\times\left(\int_t^T\left\vert D_xZ^n_rk\right\vert^2\ud r\right)^{\nicefrac{p}{2}}\left(\int_t^T\left\vert D_xZ^n_rh\right\vert^2\ud r\right)^{\nicefrac{p}{2}}\right]\\
&\phantom{\lesssim}+\bE\left(\int_t^T\left\vert Z^n_r-Z_r\right\vert^\alpha\left\vert D_xZ^n_rk\right\vert\left\vert D_xZ^n_rh\right\vert\ud r\right)^p\\
                                               &\lesssim\left(\left[\bE\sup_{r\in[t,T]}\left\vert J^nX^n_r-X_r\right\vert^{\bar \nu \alpha p}\right]^{\nicefrac{1}{\bar \nu}}+\left[\bE\sup_{r\in[t,T]}\left\vert Y^n_r-Y_r\right\vert^{\bar \nu\alpha p}\right]^{\nicefrac{1}{\bar \nu}}\right)\\
&\phantom{aaaaaaaaaaaaaaaa\lesssim}
                                                                                                                                                                                                                                                                        \times\left[\bE\left(\int_t^T\left\vert D_xZ^n_rk\right\vert^2\ud r\right)^{\nicefrac{\nu p}{2}}\right]^{\nicefrac{1}{\nu}}\left[\bE\left(\int_t^T\left\vert D_xZ^n_rh\right\vert^2\ud r\right)^{\nicefrac{\nu p}{2}}\right]^{\nicefrac{1}{\nu}}\\
                                               &\phantom{\lesssim}+\bE\left(\int_t^T\left\vert Z^n_r-Z_r\right\vert^\alpha\left\vert D_xZ^n_rk\right\vert\left\vert D_xZ^n_rh\right\vert\ud r\right)^p\ .\\
\end{align*}
The first two terms go to $0$ as $n\rightarrow\infty$ by Lemmas \ref{step1.a} and \ref{step2.a}, respectively.
It remains to estimate
\begin{align*}
\bE&\left(\int_t^T\left\vert Z^n_r-Z_r\right\vert^\alpha\left\vert D_xZ^n_rk\right\vert\left\vert D_xZ^n_rh\right\vert  \ud r\right)^p\lesssim
\bE\left(\int_t^T\left\vert Z^n_r-Z_r\right\vert^\alpha\left\vert D_xZ_rk\right\vert\left\vert D_xZ_rh\right\vert\ud r\right)^p\\
&\qquad\qquad+\bE\left(\int_t^T\left\vert Z^n_r-Z_r\right\vert^\alpha\left\vert D_xZ^n_rk- D_xZ_rk\right\vert\left\vert D_xZ^n_rh- D_xZ_rh\right\vert\ud r\right)^p\\
&\qquad\qquad+\bE\left(\int_t^T\left\vert Z^n_r-Z_r\right\vert^\alpha\left\vert D_xZ_rk\right\vert\left\vert D_xZ^n_rh- D_xZ_rh\right\vert\ud r\right)^p\\
&\qquad\qquad+\bE\left(\int_t^T\left\vert Z^n_r-Z_r\right\vert^\alpha\left\vert D_xZ^n_rk- D_xZ_rk\right\vert\left\vert D_xZ_rh\right\vert\ud r\right)^p
\end{align*}
Thanks to (\ref{eq:stimaZ}), we get that
\begin{align*}
&\bE\left(\int_t^T\left\vert Z^n_r-Z_r\right\vert^\alpha\left\vert D_xZ^n_rk- D_xZ_rk\right\vert\left\vert D_xZ^n_rh- D_xZ_rh\right\vert\ud r\right)^p\\
&\leq\bE\sup_{r\in[t,T]}\left\vert Z^n_r-Z_r\right\vert^\alpha\left(\int_t^T\left\vert D_xZ^n_rk- D_xZ_rk\right\vert\left\vert D_xZ^n_rh- D_xZ_rh\right\vert\ud r\right)^p\\
&\lesssim\left(\bE\int_t^T \left\vert D_xZ^n_rk- D_xZ_rk\right\vert^2\ud r\right)^{\frac{1}{2p}}
\left(\bE\int_t^T\left\vert D_xZ^n_rh- D_xZ_rh\right\vert^2\ud r\right)^{\frac{1}{2p}}\xrightarrow{n\to\infty} 0 \ ,
\end{align*}
where the last convergence follows by Lemma \ref{step3}. 
Similarly, 
\begin{align*}
&\bE\left(\int_t^T\left\vert Z^n_r-Z_r\right\vert^\alpha\left\vert D_xZ_rk\right\vert\left\vert D_xZ^n_rh- D_xZ_rh\right\vert\ud r\right)^p\\
&\leq\bE\sup_{r\in[t,T]}\left\vert Z^n_r-Z_r\right\vert^\alpha\left(\int_t^T\left\vert  D_xZ_rk\right\vert\left\vert D_xZ^n_rh- D_xZ_rh\right\vert\ud r\right)^p\\
&\lesssim\left(\bE\int_t^T \left\vert  D_xZ_rk\right\vert^2\ud r\right)^{\frac{1}{2p}}
\left(\bE\int_t^T\left\vert D_xZ^n_rh- D_xZ_rh\right\vert^2\ud r\right)^{\frac{1}{2p}}\xrightarrow{n\to\infty} 0 \ ,
\end{align*} and we can proceed in the same way for the term
\begin{equation*}
\bE\left(\int_t^T\left\vert Z^n_r-Z_r\right\vert^\alpha\left\vert D_xZ^n_rk- D_xZ_rk\right\vert\left\vert D_xZ_rh\right\vert\ud r\right)^p.
\end{equation*}
It remains to study the convergence of
\begin{equation}
  \label{eq:pezzo}
  \bE\left(\int_t^T\left\vert Z^n_r-Z_r\right\vert^\alpha\left\vert D_xZ_rk\right\vert\left\vert D_xZ_rh\right\vert\ud r\right)^p
\end{equation}
We first notice that for every $\bar{q}\geq 1$
\begin{equation}
  \label{eq:EsupDZ}
  \bE\sup_{r\in[t,T]}\left\vert D_xZ_rh\right\vert^{\bar{q}} <+\infty\ .
\end{equation}
Indeed, by (\ref{eq:identification})
\begin{align*}
  D_xZ_r&=D_x\left[Du\left(t,X_r^{t,x}\right)\Sigma\right]\\
  &=D^2u\left(t,X^{t,x}_r\right)D_xX^{t,x}_r\Sigma\ ,
\end{align*}
and thanks to Corollary \ref{cor:D2u} and \eqref{eq:estDXnorm} we immediately get \eqref{eq:EsupDZ}.
Therefore, to show that \eqref{eq:pezzo} converges to $0$ as $n \to +\infty$, by a standard application of H\"older inequality with respect to $\omega$,  it is enough to prove that
\begin{equation}
  \label{eq:Zn-z}
  \bE\int_t^T\left\vert Z_r^n-Z_r\right\vert^q\ud r\longrightarrow 0 \ ,
\end{equation}
for some  $q\geq 1$ (it actually holds for every $q \ge 1$).
If we write
\begin{align*}
  \bE\int_t^T\left\vert Z^n_r-Z_r\right\vert^{q}\ud r&=\bE\int_t^T\left\vert Du^n\left(r,X_r^n\right)\Sigma-Du\left(r,X_r\right)\Sigma\right\vert^q\ud r\\
  &\leq\bE\int_t^T\left\vert Du^n(r,X_r^{n})\Sigma-Du^n(r,X_r)\Sigma \right\vert^{q}\ud r+\bE\int_0^T\left\vert Du^n(r,X_r)\Sigma-Du(r,X_r^{n})\Sigma\right\vert^{q}\ud r\ ;
\end{align*}
for what concerns the first term, by Corollary~\ref{cor:D2u} we have 
\begin{multline*}
\bE\sup_{r\in[t,T]}\left\vert Du^n(r,X_r^{n})\Sigma -Du^n(r,X_r)\Sigma \right\vert^{q} \\ \lesssim \left[1+\bE\sup_{r\in[t,T]}\left\vert X^{n}_r\right\vert^{2lq}+\bE\sup_{r\in[t,T]}\left\vert X_r\right\vert^{2lq}\right]^{\frac{1}{2}}\left[\bE\sup_{r\in[t,T]}\left\vert X^n_r-X_r\right\vert^{2q}\right]^{\frac{1}{2}}
\end{multline*}
which converges to $0$ as $n$ goes to $+\infty$ by \eqref{eq:estX} and Lemma \ref{step1.a}.\\
As for the second term, the convergence provided by Lemma \ref{step3} yields
\begin{equation*}
  Du^n\left(s, X^{t,x}_s(\omega)\right) \Sigma \xrightarrow{n\to+\infty}{Du\left(s,X^{t,x}_s(\omega)\right)}\Sigma \quad \text{ for a.e. } (s, \omega) \in [t,T] \times \Omega, \  \forall x\in \aC \ .
\end{equation*}
Indeed, for every $y\in \aC$, $D_xY^{s,y}_s$ is deterministic and  \eqref{eq:identification} along with Lemma \ref{step3} imply that for a.e. $s \in [t,T]$
\begin{equation*}
  Du^n(s,y) \Sigma \to Du(s,y)\Sigma \ ;
\end{equation*}
the required convergence follows by the evaluation $y = X^{t,x}_s(\omega) \in \aC$, for a.e. $\omega \in \Omega$.
A final application of Lebesgue dominated convergence theorem, together with \eqref{eq:stima_grad_u} applied to $u^n$ and $u$ (we remark here that the constant in that estimate does not depend on $n$) and the bound \eqref{eq:estDXnorm} yields 
\begin{equation*}
  \bE\int_t^T\left\vert Du^n(r,X_r)\Sigma-Du(r,X_r^{n})\Sigma\right\vert^{q}\ud r\to 0\ ,
\end{equation*}
hence the required convergence in \eqref{eq:Zn-z}. 
%To conclude it then suffices to choose $q=\bar{\nu}\alpha p$ and $\bar{q}=\nu p$ and apply H\"older's inequality.
\end{proof}

\section{Application to stochastic optimal control}
\label{sec:control}
Here we apply the results obtained in the previous sections to semilinear Kolmogorov equations arising as Hamilton-Jacobi-Bellman (HJB) equations associated to some control problems.\\
We describe the evolution of the state with the forward controlled dynamics in $\sD$
\begin{equation}
\label{eq:SDEX:control}
\begin{system}
  \ud X^u_s=AX^u_s\ud s+B\left(s,X^u_s\right)\ud s+\Sigma u_s \ud s+\Sigma \ud W_s\ ,\quad s\in[t,T]\\
  X^u_t=x\ \text{,}
\end{system}
\end{equation}
where $\Sigma:\bR^{d_1}\to\bR^{d}\times\{0\}\subset\sD$, $B:\sD\to\sD$ is such that $B(\sC)\subseteq\sC$, $u: \Omega \times [0,T] \to \bR^{d_1}$ is the control action and $A$, $W$  are as in Subsection \ref{subsec:FBSDE}. 
As before, the solution to (\ref{eq:SDEX:control}) has to be intended in mild sense and will be denoted also by $X^{u;t,x}$ to emphasize the dependence on the initial data.

Besides equation (\ref{eq:SDEX:control}) we define the cost functional $\mathscr{J}: [0,T] \times \sD \times \R^{d_1} \to \R$
\begin{equation}
  \label{cost:abstract}
\mathscr{J}\left(  t,x,u\right) :=\E\int_{t}^{T}\left[
 L\left(s,X^{u;t,x}_s\right) +Q\left(u_s\right)\right]\ud s+\E\Upsilon\left(X^{u;t,x}_T\right)
\end{equation}
for real-valued functions $L, Q, \Upsilon$, defined respectively on $[0,T]\times\sD$, $\bR^{d_1}$ and $\sD$,  and the  class of admissible controls
\begin{equation*}
\sA := \left\{ u: \Omega \times [0,T] \to \bR^{d_1} \ , \left(  \mathcal{F}_{s}\right)  _{s}\text{-predictable} :  \left\| u\right\|_{L^{\infty}\left((0,T)\times\Omega\right)}<+\infty  \right\} \ .
\end{equation*}
The control problem consists in minimizing the functional $\rJ$ over the admissible controls $u\in\sA$.

\begin{remark}
For example, our control problem arises as infinite-dimensional lifting of a finite-dimensional path-dependent control problem. 
Indeed, let $\xi^u$ be a solution to the path-dependent state equation
\begin{equation}
\label{eq:SDEx:control}
\begin{system} 
\ud\xi^u(s) = b_s(\xi^u_{[0,s]})\ud s +\sigma u_s \ud s+ \sigma \ud W_s\ ,\quad s\in[t,T]\ ,\\
\xi_{[0,t]} = \gamma,
\end{system}
\end{equation}
with $b_s:D\left([0,s]:\bR^d\right)\to\bR$, for every $s \in [t, T]$, $\sigma:\bR^{d_1}\to\bR^d$ and $\gamma\in D\left([0,t];\bR^d\right)$ fixed.
To obtain the forward SDE \eqref{eq:SDEX:control} we just set $x:=L^t\gamma$ (with $L^t$ as in \eqref{eq:Lt}) and define $B$, $\Sigma$ as in  \eqref{eq:Bprod}, \eqref{Sigma}, respectively (see Subsection \ref{subsec:FBSDE} for more details). 
Furthermore, given a path-dependent functional
\begin{equation*}
  l=\{l_s\}_{s\in[t,T]} \ ,\quad l_s:D\left([0,s];\bR^d\right)\to\bR
\end{equation*}
and functions
\begin{equation*}
  q:\bR^{d_1}\to\bR\ ,\quad \phi:D\left([0,T];\bR^d\right)\to\bR \ ,
\end{equation*}
we can define the path-dependent cost functional
\begin{equation}
  \label{cost}
j\left(  t,\gamma,u\right)  :=\mathbb{E}\int_{t}^{T}\left[l_s\left(\xi^{u}_{[0,s]}\right) +q\left(u_s\right)\right]\ud s+\mathbb{E}\phi\left(x^{u}_{[0,T]}\right). 
\end{equation}
Introducing the liftings
\begin{gather*}
  L:[0,T]\times\sD\to\bR\ ,\quad L\left(s,x\right)=l_s\left(M_sx\right)\ ,\\
  Q=q\ ,\quad\Upsilon:\sD\to\bR\ ,\quad \Upsilon(x)=\phi\left(M_Tx\right) \ ,
\end{gather*}
we exactly recover (\ref{cost:abstract}) with the property $\rJ(t,x,u)=j(t,M_tx,u)$, for every $(t,x,u)\in[0,T]\times\sD\times \sA$. 
For the sake of generality, in the following we deal with abstract problems in $\sD$, without exploiting the path-dependent structure behind it.
\end{remark}

The following assumptions on the optimal control problem will be in force throughout.
\begin{assumption}
  \label{ip costo}
There exists $\alpha \in (0,1)$ and constants $a > 0$, $b,c, R, C \ge 0$ such that 
\begin{enumerate}[label=(J.\Roman{*})]
\item $Q:\bR^{d_1}\rightarrow\bR$ is continuous and
%and there exist $a > 0$, $b \ge 0$ such that
\begin{equation}\label{g-coercivo}
 \vert Q(u)\vert\geq a \vert u\vert^2-b, \quad \forall u\in \R^{d_1}\ , \quad \vert Q(u)\vert\leq c \vert u\vert^2, \text{  for   } \vert u\vert \geq R \ ;
\end{equation}
%Moreover there exist $R\ge 0,\,c>0$ such that
%$$
% \vert Q(u)\vert\leq c \vert u\vert^2, \text{  for   } \vert u\vert \geq R \ ;
%$$ 
\item $L: [0,T] \times \sD  \to \R$ is continuous, $L(s,\cdot)$ belongs to $C^{2,\alpha}(\sD,\mathbb{R})$ for every  $s\in[0,T]$ and 
\begin{equation*}\label{ip-psi-contr}
\abs{ L(s,x)} \le C(1 + |x|) \ , \quad  \abs{D L(s,x)} + \abs{D^2 L(s,x)} \leq C \ , \quad \forall s \in [0,T] \ ;
\end{equation*}
\item $\Upsilon: \sD  \to \R$ belongs to $C^{2,\alpha}(\sD,\mathbb{R})$ and 
\begin{equation*}
\abs{ \Upsilon(x)} + \abs{D \Upsilon(x)} + \abs{D^2 \Upsilon(x)} \leq C \ ;
\end{equation*}
\end{enumerate}
\end{assumption}

The Hamiltonian of the problem is defined as 
\begin{equation*}
 \sH\left(z\right)  :=\inf_{u\in \R^{d_1}}\left\{Q\left(u\right)+zu\right\}\quad \forall z\in \R^{d_1} ,
\end{equation*}
and we denote with $\Gamma(z)$ the set of minimizers
\begin{equation*}
\Gamma (z)=\left\lbrace u: \sH(z)=Q(u)+zu\right\rbrace\ .
\end{equation*}
\begin{assumption}
\label{ass:H}
The Hamiltonian $\sH: \R^{d_1} \to \R$ belongs to $C^{2,\alpha}(\R^{d_1})$.
\end{assumption}
Note that $\sH(z) = - Q^\ast(-z)$, $Q^\ast$ being the Fenchel conjugate of $Q$, and from Assumption \ref{ip costo} it is easily seen that $\sH$ has quadratic growth. 
Moreover, a sufficient condition for Assumption \ref{ass:H} to hold is to require 
$Q \in C^3(\R^{d_1})$ strictly convex (along with the superlinearity given by Assumption \ref{ip costo}), see e.g. \cite[Prop.~2.6.3]{fathi2008weak} for a general result.	
%Libro Fathi (versione 10) prop 2.6.3 + cor 1.4.7 

\smallskip

Denoting with $\Psi:[0,T]\times \sD\times \R\rightarrow \R$ the map $\Psi(t,x,z):= L(t,x)+\sH(z)$, let us now introduce the BSDE 
\begin{equation}
\label{eq:BSDE-second-contr}
\begin{system}
\ud Y^{t,x}_s = \Psi\left(s,X^{t,x}_s,Z^{t,x}_s\right) \ud s + Z^{t,x}_s \ud W_s \ , \qquad s \in [t,T] \ ,\\
Y^{t,x}_T = \Upsilon\left(X^{t,x}_T\right) \ ;
\end{system}
\end{equation}
where $X^{t,x}_\cdot$ solves the forward state equation \eqref{eq:SDEX:control} for every $t,x \in [0,T] \times \sD$ with $u = 0$.

\begin{proposition}
  \label{prop:2derFHFrechet-contr} Let Assumptions \ref{ass:B}, \ref{ip costo} and \ref{ass:H} hold; then for every $(t,x) \in [0,T] \times \sD$ the BSDE \eqref{eq:BSDE-second-contr}
admits a unique solution $(Y^{t,x},Z^{t,x}) \in \sK_p$, for every $p >1$. 
Moreover, the map
\begin{equation*}
x\mapsto Y^{t,x},\; \sD\rightarrow L^p\left(\Omega; C\left([0,T];\R\right)\right)
\end{equation*}
is twice differentiable and there exists $K \ge 0$ such that 
\begin{equation*}
|Z^{t,x}_s| \leq K |\Sigma|\ , \quad \forall s \in [t,T], \, \bP\text{-a.s. }\ .
\end{equation*}
In addition, if $|D^2B(s,x)| \leq C$ for every $(s,x) \in [0,T] \times \sD$, there exists $c\ge 0$ such that
\begin{equation}
\label{bound:der2}
 \E\sup_{s\in[t,T]}\| D^2_{x}Y^{t,x}_s\| \leq c \ .
\end{equation}
 \end{proposition}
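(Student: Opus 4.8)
The plan is to reduce the problem, which features the quadratically growing Hamiltonian $\sH$, to the Lipschitz/linear–growth framework of Section \ref{sec:FBSDE} by a truncation argument, exploiting the fact that the $Z$-bound of Remark \ref{rem:BSDE:bdd} is insensitive to $D_3G$. First I would record the structural simplifications coming from the separated form of the driver $G(s,x,y,z)=L(s,x)+\sH(z)$ of \eqref{eq:BSDE-second-contr}: there is no dependence on $y$, so $D_2G\equiv 0$ and all mixed derivatives $D^2_{1,2}G$, $D^2_{1,3}G$, $D^2_{2,2}G$, $D^2_{2,3}G$ vanish, while $D_1G=D_xL$, $D^2_{1,1}G=D^2_xL$ and $D^2_{3,3}G=\sH''$. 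The only obstruction to Assumption \ref{ass:G} is then the quadratic growth of $\sH$, which violates the linear-in-$z$ condition (G.I).

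To circumvent it, for $N>0$ I would replace $\sH$ by a function $\sH_N\in C^{2,\alpha}(\R^{d_1})$ that coincides with $\sH$ on $\{|z|\le N\}$ and has globally bounded first and second derivatives (hence at most linear growth). Then $G_N(s,x,y,z):=L(s,x)+\sH_N(z)$ satisfies Assumption \ref{ass:G} with $m=1$ (using the bounds on $L$, $DL$, $D^2L$ in condition (J.II)), and $\Upsilon$ satisfies Assumption \ref{ass:Phi2} with a constant bound (condition (J.III)). Consequently Propositions \ref{p:stimeBSDE}, \ref{p:diff1}, \ref{p.diff_2_BSDE} and Corollary \ref{cor:D2u} apply to the forward–backward system with data $(B,G_N,\Upsilon)$, giving a unique $(Y^{t,x,N},Z^{t,x,N})\in\sK_p$ and twice Fr\'echet differentiability of $x\mapsto Y^{t,x,N}$. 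The decisive step is to invoke Remark \ref{rem:BSDE:bdd}: since $D\Upsilon$, $D_1G_N=D_xL$ and $D_2G_N\equiv0$ are uniformly bounded, estimate \eqref{bound_Z} yields $|Z^{t,x,N}_s|\le K|\Sigma|$ for a.e.\ $\omega$, every $s$ and every $x$, with $K$ depending only on $\sup\|D\Upsilon\|$ and $\sup|D_xL|$ and — crucially — \emph{not} on $\sH_N'$, hence independent of $N$. Choosing $N>K|\Sigma|$ forces $Z^{t,x,N}$ to stay in the region where $\sH_N=\sH$, so $(Y^{t,x,N},Z^{t,x,N})$ solves \eqref{eq:BSDE-second-contr}; this simultaneously gives existence and the asserted $Z$-bound, and the twice differentiability transfers verbatim because $Y^{t,x}=Y^{t,x,N}$ for \emph{every} $x$. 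Uniqueness in $\sK_p$ I would deduce from uniqueness for the truncated (Lipschitz) equation together with the a priori bound, the terminal datum $\Upsilon(X^{t,x}_T)$ being bounded by (J.III).

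Finally, for the estimate \eqref{bound:der2} under the extra hypothesis $|D^2B|\le C$, I would apply the linear estimate \eqref{estimate_linear} of Lemma \ref{l:linearBSDE} to the second-order equation \eqref{BSDE_second}, written for $(D^2_xY^{t,x},D^2_xZ^{t,x})$, and check that the terminal term $F_T^{t,x}(k,h)$ and the forcing $L_r^{t,x}(k,h)$ are bounded in the relevant norms uniformly in $x$ and in $|h|,|k|\le1$. Here the boundedness of $D^2B$ is exactly what upgrades \eqref{eq:estD2Xnorm} to a \emph{constant} bound on $\sup_s\|D^2_xX_s\|$, via \eqref{eq:DX2} and Gronwall's lemma; combined with $\sup_s\|D_xX_s\|\le c_2$ from \eqref{eq:estDXnorm}, the boundedness of $Z$, of $\sH''(Z)$, of $D_xL$, $D^2_xL$, $D\Upsilon$, $D^2\Upsilon$, and the uniform $L^p(\Omega;L^2)$ bound on $D_xZ$ coming from the first-order linear estimate, every nonvanishing term of $F_T^{t,x}$ and $L_r^{t,x}$ is controlled uniformly in $x$. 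In particular the only potentially dangerous term $\sH''(Z)(D_xZ\,k,D_xZ\,h)$ is tamed precisely as in the proof of Proposition \ref{p.diff_2_BSDE}, by Cauchy–Schwarz and the minimal integrability allowed in Lemma \ref{l:linearBSDE}.

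The main obstacle I anticipate is the reconciliation of the quadratic growth of $\sH$ with the linear-growth hypotheses of Section \ref{sec:FBSDE}: the whole argument hinges on the fact that the $Z$-bound is governed by $D\Upsilon$, $D_1G$ and $D_2G$ but not by $D_3G$, so that the truncation level can be fixed once and for all and the modified and original problems coincide. A secondary difficulty is obtaining the bound \eqref{bound:der2} \emph{uniformly in} $x$, which is why the boundedness of $D^2B$ — and the vanishing of the mixed derivatives of $G$ — are essential to keep the forcing term $L_r^{t,x}$ bounded independently of the initial datum.
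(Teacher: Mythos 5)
Your proposal is correct and follows essentially the same route as the paper's proof: truncate the Hamiltonian to $\sH_M\in C^{2,\alpha}$ with bounded derivatives so that $\Psi_M=L+\sH_M$ satisfies Assumption \ref{ass:G}, invoke the regularity theory of Section \ref{sec:FBSDE} for the truncated system, use Remark \ref{rem:BSDE:bdd} to get the bound $|Z^{t,x}_s|\le K|\Sigma|$ with $K$ independent of the truncation level (since it does not involve $D_3G$), and choose $M>K|\Sigma|$ so the truncated and original equations coincide. Your treatment of \eqref{bound:der2} — boundedness of $D^2B$ upgrading \eqref{eq:estD2Xnorm} to a constant, together with the bounded data $DL$, $D^2L$, $D\Upsilon$, $D^2\Upsilon$, the $Z$-bound, and the $L^p(\Omega;L^2)$ control of $D_xZ$ fed into the linear estimate \eqref{estimate_linear} for the second-order equation — is also exactly the paper's argument.
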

\begin{proof}
Let us start by setting $\sH_M=\sH(\rho_M(\cdot))$, where $\rho_M(z)$ is a smooth function such that $\rho(z)=z$ if $|z|<M$ and  $\rho(z)=0$ if $|z|>M+1$. 
The truncated Hamiltonian $\sH_M \in C^{2,\alpha}(\R^{d_1})$ has bounded derivatives, $\Psi_M(t,x,z):=L(t,x)+\sH_M(z)$ complies with Assumption \ref{ass:G} and thanks to Proposition \ref{p:stimeBSDE} the BSDE
\begin{equation} \label{eq:BSDE-second-contr-M}  \left\{\begin{array}{l}
 \ud Y^{M;t,x}_s=\Psi_M\left(s, X^{t,x}_s,Z^{M;t,x}_s\right)\ud s+Z^{M;t,x}_s\ud W_s \ ,
 \quad s\in [t,T] \ ,
  \\\dis
  Y^{M;t,x}_T=\Upsilon(X_T^{t,x}) \ ,
\end{array}\right.
\end{equation}
admits a unique solution $(Y^{M;t,x},Z^{M;t,x})$ in $\mathcal{K}_p$. 
In view of Assumptions \ref{ip costo} and \ref{ass:H}, the application of Propositions 
\ref{p:diff1} and \ref{p.diff_2_BSDE} guarantees that the map
\begin{equation*}
x\mapsto Y^{M;t,x},\; \sD \rightarrow L^p\left(\Omega; C\left([0,T];\R\right)\right)
\end{equation*}
is twice differentiable.
Furthermore, thanks to Remark \ref{rem:BSDE:bdd} it follows that 
\begin{equation*}
|Z^{M;t,x}_s| \leq K |\Sigma|\ , \quad \forall s \in [t,T], \, \bP\text{-a.s.}\ ,
\end{equation*}
for some constant $K \ge 0$ which is \emph{independent} on $M$.
Thus, choosing $M > K|\Sigma|$, it follows that $\sH_M(Z^{t,x}_s) = \sH(Z^{t,x}_s)$ for every $s \in [t,T]$ and the pair 
$\left(Y^{M;t,x},Z^{M;t,x}\right)$ solves also equation \eqref{eq:BSDE-second-contr}.
By the uniqueness of solutions of \eqref{eq:BSDE-second-contr} with bounded second component it easily follows that $\left(Y^{M;t,x},Z^{M;t,x}\right) \equiv \left(Y^{t,x},Z^{t,x}\right)$ whenever $M > K|\Sigma|$, hence the boundedness of $Z^{t,x}$.

Concerning estimate \eqref{bound:der2}, fixing $\bar M > K|\Sigma|$ big enough we firstly observe that $D^i_{x}Y^{\bar M;t,x} = D^i_{x}Y^{t,x}$, for $i = 1,2$, so that we can directly work with $\sH$ instead of $\sH_{\bar M}$. 
Then, for every $h \in \sD$, the application of estimate \eqref{estimate_linear} to the linear BSDE solved by the pair $\left(DY^{t,x}h,DZ^{t,x}h\right)$, immediately gives $D_xZ^{t,x} \in L^p(\Omega; L^2(0,T;\R^{d_1}))$.
%the boundedness of $\E \int_t^T |D_xZ^{t,x}_rh|^2 \ud r$.
Moreover, for every $(k,h) \in \sD$ the equation for the second derivatives reads as 
\begin{equation}\label{der2contr}
\begin{aligned}
D_x^2 &Y^{t,x}_s(k,h) + \int_s^T D_x^2Z^{t,x}_r(k,h) \ud W(r) = \Xi_T(k,h)  - \int_s^T \big[ D^2L(r,X^{u;t,x})\left(D_xX^{t,x}_rk,D_xX^{t,x}_rh\right) \\
&\quad + DL(r,X^{u;t,x})D^2_xX^{t,x}_r(k,h)  + D^2 \sH(Z^{t,x}_r)\left(D_xZ^{t,x}_rk,D_xZ^{t,x}_rh\right)\big] \ud r  \\
& - \int_s^T \left[ D\sH(Z^{t,x}_r)D_x^2Z^{t,x}_r(k,h)   \right] \ud r  \ ,
\end{aligned}
\end{equation}
where $\Xi_T(k,h):= D^2\Upsilon\left(X^{t,x}_T\right)\left(D_xX^{t,x}_Tk,D_xX^{t,x}_Th\right) + D\Upsilon(X^{t,x}_T)D^2_xX^{t,x}_T(k,h)$ is uniformly bounded thanks to Assumption \ref{ip costo} and  estimate \eqref{eq:estDXnorm}.\\
The boundedness of $DL, D^2L$ and the uniform bound on $D^2_xX^{t,x}$ given by \eqref{eq:estD2Xnorm} (with $m = 0$) finally guarantee the validity of \eqref{bound:der2} by the application of estimate \eqref{estimate_linear} to equation \eqref{der2contr}. 
This concludes the proof.
\end{proof}

For every $(t,x) \in [0,T] \times \sD$, let us now introduce the value function $v$ associated to the cost functional $\rJ$: 
\begin{equation*}
 v\left(  t,x\right)  =\inf_{u\in\mathcal{A}}\rJ\left(  t,x,u\right) \ ,
 \end{equation*}
and consider the associated HJB equation 
\begin{equation}\label{eq:hjb}
  \begin{system}
  \frac{\partial v}{\partial t}(t,x) + Dv(t,x)\left[Ax + B(t,x)\right] + \frac{1}{2}\tr_{\bR^d} \left[\Sigma\Sigma^\ast D^2v(t,x)\right] = \Psi(t,x,Dv(t,x)\Sigma) \ ,\\
  v(T,\cdot) = \Upsilon \ .
\end{system}
\end{equation} 
This is a semilinear Kolmogorov equation with the same structure as equation \eqref{eq:kolm:diff:nonlinear}, for which we already obtained a wellposedness result.

\begin{proposition}\label{p:HJB}
Let $B$ satisfies Assumptions \ref{ass:B}, \ref{ass:dphi} with $E = \sD$ and suppose that $B$ maps $\sC$ into itself.
Let also Assumptions \ref{ip costo}, \ref{ass:H} hold and suppose that $\Upsilon$ has one-jump-continuous Fr\'echet differential of first and second order on $\aC \subset \sD$.
Then the function $v(t,x):=Y_t^{t,x}$ (where $Y^{t,x}$ solves \eqref{eq:BSDE-second-contr}) is a classical solution to the HJB equation \eqref{eq:hjb} in the sense of Definition \ref{def:sol_D}, for every $t \in [0,T]$ and $x \in \aC^1$.
\end{proposition}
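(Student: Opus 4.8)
The plan is to read the HJB equation \eqref{eq:hjb} as a semilinear Kolmogorov equation of the form \eqref{eq:kolm:diff:nonlinear} with nonlinearity $G = \Psi$ and terminal datum $\Phi = \Upsilon$, and then to invoke Theorem \ref{t:main}. The only obstruction to a direct application is that the Hamiltonian $\sH$, and hence $\Psi(t,x,z) = L(t,x) + \sH(z)$, has quadratic growth in $z$ (see the discussion following Assumption \ref{ass:H}): this violates the $z$-Lipschitz requirement (G.II) in Assumption \ref{ass:G}. This is precisely the difficulty already overcome in Proposition \ref{prop:2derFHFrechet-contr}, and the same truncation device applies here.

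First I would introduce the truncated Hamiltonian $\sH_M = \sH(\rho_M(\cdot))$ exactly as in the proof of Proposition \ref{prop:2derFHFrechet-contr}, so that $\sH_M \in C^{2,\alpha}(\R^{d_1})$ has bounded derivatives, and set $\Psi_M(t,x,z) := L(t,x) + \sH_M(z)$. I would then check that the triple $(B,\Psi_M,\Upsilon)$ meets all the hypotheses of Theorem \ref{t:main} with $E = \sD$. Assumptions \ref{ass:B} and \ref{ass:dphi} for $B$, together with $B(\sC) \subseteq \sC$, are in force by hypothesis; Assumption \ref{ass:Phi2} for $\Upsilon$ and its one-jump continuity follow from (J.III) and the hypothesis on $\Upsilon$; Assumption \ref{ass:G} for $\Psi_M$ holds with growth exponent $m = 0$, since $\Psi_M$ is independent of $y$, $L(s,\cdot)$ has bounded first and second derivatives by (J.II), and $\sH_M$ has bounded gradient and Hessian (so that (G.II) and (G.VI) now hold, while the mixed second derivatives $D^2_{1,2}\Psi_M$, $D^2_{1,3}\Psi_M$, $D^2_{2,2}\Psi_M$, $D^2_{2,3}\Psi_M$ vanish identically). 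Finally, Assumption \ref{ass:GJ} for $\Psi_M$ reduces to the one-jump continuity of the first-order Fr\'echet differential of $L(s,\cdot)$, because $\sH_M$ carries no $x$-dependence; this has to be assumed on $L$ in the same way it is assumed on $\Upsilon$.

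With these verifications Theorem \ref{t:main} yields that $v^M(t,x) := Y^{M;t,x}_t$ is a classical solution of the truncated HJB equation, i.e. \eqref{eq:hjb} with $\sH$ replaced by $\sH_M$, where $(Y^{M;t,x},Z^{M;t,x})$ solves \eqref{eq:BSDE-second-contr-M}. The decisive step is then to remove the truncation. Here I would invoke the a priori bound $|Z^{M;t,x}_s| \leq K|\Sigma|$ established in Proposition \ref{prop:2derFHFrechet-contr}, whose constant $K$ is \emph{independent} of $M$. Choosing $M > K|\Sigma|$ forces $\sH_M(Z^{M;t,x}_s) = \sH(Z^{M;t,x}_s)$ for every $s$, so that $(Y^{M;t,x},Z^{M;t,x})$ solves the untruncated BSDE \eqref{eq:BSDE-second-contr} as well; by uniqueness of solutions with bounded second component one gets $(Y^{M;t,x},Z^{M;t,x}) \equiv (Y^{t,x},Z^{t,x})$, whence $v = v^M$. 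On the set $\{|z| \leq K|\Sigma|\}$ the maps $\Psi_M$ and $\Psi$ agree together with all their derivatives up to second order, and since $Dv(t,x)\Sigma = Z^{t,x}_t$ satisfies $|Z^{t,x}_t| \leq K|\Sigma| < M$, every term of the PDE solved by $v^M$ coincides with the corresponding term of \eqref{eq:hjb}. This gives that $v \in C^{1;2}([0,T]\times \sD)$ solves \eqref{eq:hjb} classically for every $x \in \aC^1$.

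The main obstacle is exactly the removal of the truncation, which rests on the fact that the $L^\infty$-bound on $Z$ does not depend on the Lipschitz/growth constant of the driver in the $z$-variable. As stressed in Remark \ref{rem:BSDE:bdd}, this independence is obtained through a Girsanov change of measure that absorbs the $D_3 G$ term, and it is precisely what makes the truncation argument close. Everything else is a routine check that the truncated coefficients satisfy the assumptions of Theorem \ref{t:main}.
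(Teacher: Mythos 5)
Your proof is correct and follows essentially the same route as the paper: truncate the Hamiltonian as in Proposition \ref{prop:2derFHFrechet-contr}, apply Theorem \ref{t:main} to the truncated driver $\Psi_M$, and remove the truncation via the $M$-independent bound $\left|Z^{M;t,x}_s\right|\le K|\Sigma|$ combined with the identification $Dv(t,x)\Sigma = Z^{t,x}_t$, exactly as in the paper's argument. The one point where you are more careful than the paper is your observation that Theorem \ref{t:main} also requires Assumption \ref{ass:GJ} for $\Psi_M$, which amounts to one-jump continuity of the first-order Fr\'echet differential of $L(s,\cdot)$; this hypothesis is not supplied by Assumption \ref{ip costo} and is tacitly omitted in the statement and proof of Proposition \ref{p:HJB}, so your explicit flagging of it as an extra assumption on $L$ is a genuine and correct addition.
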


\begin{proof}
Recall that $\Psi_M:= L(t,x)+\sH_M(z)$ satisfies Assumtpion \ref{ass:G}. 
Hence, for $M > K |\Sigma|$, Theorem \ref{t:main} and Proposition \ref{prop:2derFHFrechet-contr} guarantee that  $v(t,x):=Y_t^{M;t,x} = Y_t^{t,x}$ is a classical solution of the HJB equation
\begin{equation*}
  \begin{system}
  \frac{\partial v}{\partial t}(t,x) + Dv(t,x)\left[Ax + B(t,x)\right] + \frac{1}{2}\tr_{\bR^d} \left[\Sigma\Sigma^\ast D^2v(t,x)\right] = \Psi_M(t,x,Dv(t,x)\Sigma)  \ , \\
  v(T,\cdot) = \Upsilon \ .
\end{system}
\end{equation*} 
Since $|Dv(t,x)\Sigma| \le K |\Sigma|$, $\Psi_M \equiv \Psi$ and $v$ is also a  classical solution of \eqref{eq:hjb} in the sense of Definition \ref{def:sol_D}.

\end{proof}

%If it exists, we denote by $u^{\ast}$ the optimal control realizing the infimum. 

In order to derive the so-called {\em fundamental relation} for the value function, it is useful to introduce a family of auxiliary problems.
 For every $\Lambda\in\bR$, we define 
\begin{equation*}
\mathcal{A}^\Lambda := \left\{ u: \Omega \times [0,T] \to \bR^{d_1} \ , \left(  \mathcal{F}_{s}\right)  _{s}\text{-predictable} :  \left\| u\right\|_{L^{\infty}\left((0,T)\times\Omega\right)} \leq \Lambda  \right\} \ ,
\end{equation*}
with the corresponding value function  and Hamiltonian 
% as
%$v_\Lambda\left(  t,x\right)  =\inf_{u \in\mathcal{A}^\Lambda}\rJ\left(  t,x,u\right)$, with corresponding Hamiltonian
\begin{equation*}
 v^\Lambda\left(  t,x\right)  =\inf_{u \in\mathcal{A}^\Lambda}\rJ\left(  t,x,u\right),  \qquad \quad  \sH^\Lambda\left(z\right)  =\inf_{u\in \R^{d_1}, |u|\leq  \Lambda}\left\{  Q\left(u\right)+zu\right\}\quad \forall z\in \R^{d_1} \ .
\end{equation*}

\begin{remark}
Note that $\sH^\lambda$ is Lipschitz but,  
 in general, does not belong to $C^{2,\alpha}(\R^{d_1})$.
A counterexample is given by $ Q(u)=\frac{1}{2}\vert u\vert^2$, for which $ - \sH(z)= \frac{1}{2}\vert z\vert^2$ and
  \begin{equation}
 \label{hamiltonian-controes}
 - \sH^\Lambda(z)=\left \lbrace 
 \begin{array}{ll}
  \frac{1}{2}\vert z\vert^2&\vert z\vert \leq \Lambda \ ,\\
  \Lambda\vert z\vert-\frac{\Lambda^2}{2}&\vert z\vert >\Lambda \ ;
 \end{array}
 \right.
\end{equation}
which is not  $C^2$-regular  on $|z| = \Lambda$.
\end{remark}

\begin{proposition}\label{prop rel fond} 
Under the same assumptions of Proposition \ref{p:HJB}, for every $t\in\left[0,T\right]$, $x\in \aC^1$ and for every $u\in\sA$ it holds
\begin{equation}\label{backward rel fond}
v\left( t,x\right) =\rJ\left(t,x,u\right)+\mathbb{E}\int_{t}^{T}\left[ \sH\left( Dv\left(X_s^{u}\right)\Sigma\right)-Dv\left(X_s^{u}\right)\Sigma u_s -Q( u_s)\right] \ud s \ ,
\end{equation}
so that $v\left( t,x\right) \le \rJ\left( t,x,u \right) $ and equality holds if and only if $u_s\in \Gamma \left(D v (s,X^u_s)\Sigma\right)$ for a.e. $s\in[t,T]$, $\omega \in \Omega$.\\
Furthermore, if $|D^2B(s,x)| \leq C$ for every $(s,x) \in [0,T] \times \sD$, and $\Gamma_{0}: \R^{d_1}\to \bR^{d_1}$ is a measurable selection of $\Gamma$ with $\Gamma_0$ Lipschitz continuous, then the closed-loop equation
\begin{equation}
\begin{system}
  \ud X^u_s=AX^u_s\ud s+B\left(s,X^u_s\right)\ud s+\Sigma\Gamma_{0}\left(Dv(s,X^u_{s})\Sigma\right) \ud s+\Sigma \ud W_s\ ,\quad s\in[t,T] \ ,\\
  X^u_t=x\ \text{,}
\end{system}
\label{closed loop eq}
\end{equation}
admits a unique solution denoted by $X^*$, and the pair $\left(X^*_\cdot, \Gamma_{0}\left(D v(\cdot,X_{\cdot}^{*})\Sigma\right)\right)$
is optimal.
\end{proposition}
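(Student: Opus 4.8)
The plan is to read \eqref{backward rel fond} as a verification identity for the classical solution $v$ of the Hamilton--Jacobi--Bellman equation \eqref{eq:hjb}. By Proposition \ref{p:HJB} the function $v(t,x)=Y^{t,x}_t$ belongs to $C^{1;2}\left([0,T]\times\sD\right)$ and solves \eqref{eq:hjb} pointwise on $\aC^1$, while Proposition \ref{prop:2derFHFrechet-contr} provides the uniform bound $\left|Dv(s,x)\Sigma\right|\le K\left|\Sigma\right|$. The natural route is to apply an It\^o-type formula to $s\mapsto v\left(s,X^{u}_s\right)$ along the controlled trajectory, to substitute the sum of the linear and second order increments by means of \eqref{eq:hjb}, and to let the additional drift $\Sigma u_s$ generate the term $Dv\left(s,X^u_s\right)\Sigma\,u_s$; taking expectations (the stochastic integral is a true martingale thanks to the boundedness of $Dv\,\Sigma$) and reorganizing against the definition \eqref{cost:abstract} of $\rJ$ then yields \eqref{backward rel fond}.

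The main obstacle is exactly the one met in Theorem \ref{thm:L2}: no It\^o formula is directly available, since for $s>t$ the state $X^u_s$ almost surely leaves $\Dom(A)$ and, moreover, under $\bP$ the controlled process is not Markovian. To circumvent both difficulties I would first reduce to the uncontrolled dynamics by a Girsanov change of measure: since $u\in\sA$ is bounded, the process $W^u_s:=W_s+\int_t^s u_r\,\ud r$ is a Brownian motion under an equivalent measure $\widetilde{\bP}^{\,u}$, and under $\widetilde{\bP}^{\,u}$ the state $X^u$ solves the \emph{uncontrolled} equation \eqref{eq:SDEX:control} (with $u\equiv0$) driven by $W^u$, hence it is Markovian. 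In this setting the partition/Taylor--expansion device of Theorem \ref{thm:L2} applies and the Feynman--Kac identification of Proposition \ref{p:identification} gives $v\left(s,X^u_s\right)=\widehat Y_s$ and $Dv\left(s,X^u_s\right)\Sigma=\widehat Z_s$, where $(\widehat Y,\widehat Z)$ is the solution of the backward equation with terminal value $\Upsilon\left(X^u_T\right)$ driven by $W^u$. Rewriting $\widehat Z\,\ud W^u=\widehat Z\,\ud W+\widehat Z\,u\,\ud s$ and taking $\bP$-expectation (again the $\ud W$--integral vanishes because $\widehat Z=Dv\,\Sigma$ is bounded) produces the fundamental identity, which rearranges into \eqref{backward rel fond} once the definition of $\sH$ is used.

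The inequality $v(t,x)\le\rJ(t,x,u)$ and the optimality criterion are then immediate: by definition $\sH(z)=\inf_{w}\{Q(w)+zw\}\le Q(u_s)+Dv\left(s,X^u_s\right)\Sigma\,u_s$, so the integrand $\sH\left(Dv\left(s,X^u_s\right)\Sigma\right)-Dv\left(s,X^u_s\right)\Sigma\,u_s-Q(u_s)$ in \eqref{backward rel fond} is nonpositive, and the integral vanishes if and only if $u_s\in\Gamma\left(Dv\left(s,X^u_s\right)\Sigma\right)$ for a.e.\ $(s,\omega)$.

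For the last assertion I would argue as follows. Under the additional hypothesis $\left|D^2B(s,x)\right|\le C$, estimate \eqref{bound:der2} gives $\sup_{s}\left\|D^2v(s,\cdot)\right\|<+\infty$, so $Dv(s,\cdot)$ is globally Lipschitz in $x$; composing with the Lipschitz selection $\Gamma_0$ and with $\Sigma$, the feedback coefficient $x\mapsto\Sigma\Gamma_0\left(Dv(s,x)\Sigma\right)$ is Lipschitz in $x$, bounded, continuous in $s$ and maps $\sC$ into $\sC$. Hence the closed--loop equation \eqref{closed loop eq} is a forward equation of the class covered by Theorem \ref{thm:X} and Proposition \ref{p:existence_X}, and admits a unique mild solution $X^*$. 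The feedback $u^*_s:=\Gamma_0\left(Dv(s,X^*_s)\Sigma\right)$ is predictable and, by boundedness of $Dv\,\Sigma$ together with the Lipschitz character of $\Gamma_0$, it is bounded, so $u^*\in\sA$. Since $\Gamma_0$ is a selection of $\Gamma$, one has $u^*_s\in\Gamma\left(Dv(s,X^*_s)\Sigma\right)$ for every $s$, whence equality holds in \eqref{backward rel fond} for the pair $(X^*,u^*)$, giving $v(t,x)=\rJ(t,x,u^*)$ and the optimality of $\left(X^*_\cdot,u^*_\cdot\right)$. I expect the Girsanov reduction combined with the reuse of the Theorem \ref{thm:L2} expansion to be the delicate point, the remaining steps being essentially routine.
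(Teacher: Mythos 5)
Your strategy is in substance the same as the paper's: the paper proves \eqref{backward rel fond} by citing Theorem~7.2 of Fuhrman--Tessitore, whose proof is precisely the Girsanov-plus-identification argument you reconstruct, and the closed-loop part is argued exactly as you do (Lipschitz feedback from \eqref{bound:der2}, admissibility from the boundedness of $Dv\,\Sigma$, then equality in the fundamental relation). However, there is one concrete gap at your central step. You invoke Proposition \ref{p:identification} (and, implicitly, well-posedness and uniqueness of the BSDE driven by $W^u$) for the driver $\Psi(s,x,z)=L(s,x)+\sH(z)$. That proposition --- like all the BSDE theory of Sections \ref{sec:prel}--\ref{sec:FBSDE} --- requires Assumption \ref{ass:G}, in particular Lipschitz continuity and linear growth in $z$, and $\sH$ violates both, having quadratic growth. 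This is not a formality: it is exactly the obstruction that the paper's written proof is designed to remove. The paper works with the truncated Hamiltonian $\sH_M$ (which does satisfy Assumption \ref{ass:G}) and with the restricted classes $\mathcal{A}^\Lambda$ and Lipschitz Hamiltonians $\sH^\Lambda$, obtains the relation there, and then uses the uniform bound $|Z^{t,x}_s|\le K|\Sigma|$ of Proposition \ref{prop:2derFHFrechet-contr} to conclude that $\sH_M=\sH^\Lambda=\sH$ on the ball $|z|\le M$ actually visited by $Dv\,\Sigma$, for $M>K|\Sigma|$ and $\Lambda$ large (relation \eqref{rel:H}); for a generic $u\in\sA$ one must in addition take $\Lambda\ge\|u\|_{L^\infty}$. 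You have all the ingredients (you cite the bound $|Dv\,\Sigma|\le K|\Sigma|$ and Proposition \ref{prop:2derFHFrechet-contr}, which itself is proved by this truncation), but as written your Girsanov/identification step is applied outside the hypotheses under which those results hold: it must be run for $\Psi_M$ first, replacing $\sH_M$ by $\sH$ only at the end.

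Two minor further points. First, for the closed-loop equation \eqref{closed loop eq} you appeal to Theorem \ref{thm:X} and Proposition \ref{p:existence_X}, but these require the drift to satisfy Assumption \ref{ass:B} (in particular $C^{2,\alpha}$ regularity in $x$), whereas the feedback drift $x\mapsto B(s,x)+\Sigma\Gamma_0\left(Dv(s,x)\Sigma\right)$ is in general only Lipschitz, since $\Gamma_0$ is merely Lipschitz; well-posedness instead follows from the standard fixed-point argument for mild equations with Lipschitz coefficients, which is what the paper's one-line justification means. Second, your remark that the partition/Taylor-expansion device of Theorem \ref{thm:L2} should be re-run under $\widetilde{\bP}^{u}$ is unnecessary: once $v$ is known to solve the HJB (Proposition \ref{p:HJB}), the fundamental relation only needs the identification of Proposition \ref{p:identification} (suitably truncated, as above) under the new measure, not a re-derivation of the PDE.
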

\begin{proof}
For $M> K|\Sigma|$ there exists $\Lambda \ge 0$ such that $\sH_M(z)=\sH^\Lambda(z)$ for every $|z|\leq M$. 
Moreover, there exists an increasing function $\tilde\rho: \mathbb{R}^+\rightarrow \mathbb{R}^+$ with
$\lim_{\Lambda\rightarrow +\infty }\tilde\rho(\Lambda)=+\infty$ such that  $\sH^\Lambda(z)=\sH(z)$ for every $|z|<\tilde\rho(\Lambda)$.
Thus, for $\Lambda, M \ge 0$ with $\tilde\rho(\Lambda) > M $ it holds 
\begin{equation}\label{rel:H}
  \sH_M(z) = \sH^\Lambda(z)=\sH(z) \ , \qquad \forall |z| \le M \ . 
\end{equation}
Thanks to Proposition \ref{prop:2derFHFrechet-contr}, $\left(Y^{t,x},Z^{t,x}\right)$ solves equation \eqref{eq:BSDE-second-contr} either with $\Psi$, $\Psi_M$ or $\Psi^\Lambda:=L+\sH^\Lambda$.
Then, $v(t,x):= Y^{t,x}_t$ is a classical solution of \eqref{eq:hjb}, and by  \cite[Thm.~7.2]{fuhrman2002nolinear} (see also \cite[Section 6]{fuhrman2010stochastic}) it can be easily proved that equality \eqref{backward rel fond} holds for every $u\in \mathcal{A}^\Lambda$.
The extension to any $u\in \mathcal{A}$ follows again by \eqref{rel:H}.

For what concerns the closed loop equation \eqref{closed loop eq}, existence of a (unique) solution follows by the Lipschitz continuity of the selection $\Gamma_0$ along with the estimate \eqref{bound:der2} applied to $D^2v$ (which in turn ensures the Lipschitz continuity of $Dv$).
\end{proof}

%\begin{remark}
%The quadratic cost previously introduced is a typical example of $\Gamma_0$ satisfying the Lipschitz condition.
%\end{remark}

\section{Appendix}

We collect here a detailed version of the proofs of Proposition \ref{p.diff_2_BSDE}, Lemma \ref{step1.b} and Lemma \ref{step4}.

\begin{proof}[Proof of  Proposition \ref{p.diff_2_BSDE}]
We detail here the  Fr\'echet differentiability of the map $x \mapsto \left( D_xY^{t,x} h, D_xZ^{t,x} h \right)$. 
We fix $h,k \in E$ and use the equations solved by
$D_xY^{t,x+k}_s h,  D_xY^{t,x}_s h$ and $F^{t,x}_s(h,k)$ to write:
%\begin{equation*}
%\begin{aligned}
\begin{align*}
&\left[ D_xY^{t,x+k}_s h - D_xY^{t,x}_s h - F^{t,x}_s (h,k) \right] + \int_s^T \left[ D_xZ^{t,x+k}_s h - D_xZ^{t,x}_s h - H^{t,x}_s(h,k) \right] \ud W(r) \\
&=\left[ U_T^{t,x+k} h - U_T^{t,x} h - F^{t,x}_T(k,h)\right]  \\
&\quad -\int_s^T \left[ D_1G_r(t,x +k)D_xX^{t,x+k}_r h - D_1G_r(t,x)D_xX^{t,x}_r h  -  L_{1;r}^{t,x}(k,h)\right] \ud r\\
&\quad -\int_s^T \left[D_2G_r(t,x+k)D_xY^{t,x+k}_rh  - D_2G_r(t,x)D_xY^{t,x}_r h -  L_{2;r}^{t,x}(k,h) - D_2G_r(t,x)F^{t,x}_r(k,h)\right]\ud r\\
&\quad -\int_s^T \left[D_3G_r(t,x+k)D_xZ^{t,x+k}_rh  - D_3G_r(t,x)D_xZ^{t,x}_r h  -  L_{3;r}^{t,x}(k,h) - D_3G_r(t,x)H^{t,x}_r(k,h)\right]\ud r\\ 
& =\left[ U_T^{t,x+k} h - U_T^{t,x} h - F^{t,x}_T(k,h)\right]  \\
&\quad -\int_s^T \left[ D_1G_r(t,x +k)D_xX^{t,x+k}_r h - D_1G_r(t,x)D_xX^{t,x}_r h  -  L_{1;r}^{t,x}(k,h)\right] \ud r\\
&\quad -\int_s^T \left[ \left(D_2G_r(t,x+k) - D_2G_r(t,x) \right)D_xY^{t,x+k}_r h  - L_{2;r}^{t,x}(k,h) \right]\ud r \\
&\quad -\int_s^T D_2G_r(t,x)  \left[D_xY^{t,x+k}_r h - D_xY^{t,x}_r h - D_2G_r(t,x)F^{t,x}_r(k,h)\right]\ud r\\
&\quad -\int_s^T \left[ 	\left(D_2G_r(t,x+k) - D_3G_r(t,x) \right)D_xZ^{t,x+k}_r h  - L_{3;r}^{t,x}(k,h) \right]\ud r \\
&\quad -\int_s^T D_3G_r(t,x)  \left[D_xZ^{t,x+k}_r h - D_xZ^{t,x}_r h  - D_3G_r(t,x)H^{t,x}_r(k,h)\right]\ud r.\\
\end{align*}
%\end{aligned}
%\end{equation*}

If we divide both left and right hand side by $|k|$  and we shorten the notation as in \eqref{notation_first_der} we end up with
%\begin{gather*}
%\Upsilon_r^k: = \frac{1}{|k|}\left[ D_xY^{t,x+k}_s h - D_xY^{t,x}_s h - F^{t,x}_s (h,k) \right], \qquad \Psi_r^k: = \frac{1}{|k|} \left[ D_xZ^{t,x+k}_s h - D_xZ^{t,x}_s h - H^{t,x}_s (h,k)\right] \\
%\Upsilon_T^k:= \frac{1}{|k|} \left[U_T^{t,x+k} h - U_T^{t,x} h - F^{t,x}_T(k,h) \right]\\
%M^k:= M^k_1 + M^k_2 +M^k_3:= -\frac{1}{|k|}\left[ D_1G_r(t,x +k)D_xX^{t,x+k}_r h - D_1G_r(t,x)D_xX^{t,x}_r h  -  L_{1;r}^{t,x}(k,h)\right] \\
%-\frac{1}{|k|}\left[ \left(D_2G_r(t,x+k) - D_2G_r(t,x) \right)D_xY^{t,x+k}_r h  - L_{2;r}^{t,x}(k,h) \right] \\ 
% -\frac{1}{|k|}\left[ 	\left(D_3G_r(t,x+k) - D_3G_r(t,x) \right)D_xZ^{t,x+k}_r h  - L_{3;r}^{t,x}(k,h) \right] \\
%\end{gather*} 
%we can rewrite the equation in the simplified form:
\begin{equation*}
\Upsilon^k_s+\int_s^T\Psi^k_r\ud W_r=\Upsilon^k_T- \int_s^T \left( D_2G_r(t,x)\Upsilon^k_r + D_3G_r(t,x)\Psi^k_r - M^k(r)\ud r \right) \ud r \ .
\end{equation*}
Exploiting estimate \eqref{estimate_linear} in Lemma \ref{l:linearBSDE} we have to show that 
%and recalling that in this case $V$ is a bounded process we know that 
%\begin{equation*}
%  \bE\sup_{s\in[t,T]}\left\vert \Upsilon^k_s\right\vert^p+\bE\left(\int_t^T\left\vert\Psi^k_r\right\vert^2\ud r\right)^{\nicefrac{p}{2}}\lesssim \bE\left\vert \Upsilon^k_T\right\vert^p+\bE\left(\int_t^T\left\vert M^k(r)\right\vert\ud r\right)^p,
%\end{equation*}
%and the desired Fr\'echet differentiability follows as soon as
\begin{equation*}
\lim_{k \to 0} \sup_{|h|=1}\left[\bE\left\vert \Upsilon^k_T\right\vert^p+\bE\left(\int_t^T\left\vert M^k(r)\right\vert\ud r\right)^p \right] =0 \ .
\end{equation*}
Firstly, dealing with the final datum, let us set for convenience $\bar{\Upsilon}^k_T: =|k| \Upsilon^k_T$; then almost surely
\begin{equation}
\begin{split}
\bar{\Upsilon}^k_T&= U_T^{t,x+k} h - U_T^{t,x} h - F^{t,x}_T(k,h) \\
&= D\Phi(X^{t,x+k}_T)D_xX^{t,x+k}_Th - D\Phi(X^{t,x}_T)D_xX^{t,x}_Th \\
&\quad - D^2\Phi\left(X^{t,x}_T\right)\left(D_xX^{t,x}_Tk,D_xX^{t,x}_Th\right) - D\Phi(X^{t,x}_T)D^2_xX^{t,x}_T(k,h)\\
&= \left[ D\Phi(X^{t,x+k}_T) - D\Phi(X^{t,x}_T)\right] D_xX^{t,x+k}_Th  - D^2\Phi\left(X^{t,x}_T\right)\left(D_xX^{t,x}_Tk,D_xX^{t,x}_Th\right)\\
&\quad +D\Phi(X^{t,x}_T)\left[ D_xX^{t,x+k}_Th - D_xX^{t,x}_Th  - D^2_xX^{t,x}_T(k,h)\right] \\
&=: \bar\Upsilon^{k}_{T,1} + \bar\Upsilon^{k}_{T,2}  \ .
\end{split}
\end{equation}
For what concerns $\Upsilon^{k}_{T,1}$ we use Assumption \ref{ass:Phi2} to write
\begin{equation}
\begin{split}
\bar\Upsilon^{k}_{T,1} &= \int_0^1 \left[ D^2\Phi (\lambda X_T^{t,x+k} + (1-\lambda)X_T^{t,x}) - D^2\Phi (X_T^{t,x}) \right] \left( X_T^{t,x+k} - X_T^{t,x}, D_xX_T^{t,x+k} h \right) \ud \lambda \\
&\quad + D^2\Phi (X_T^{t,x})\left( X_T^{t,x+k} - X_T^{t,x} - D_xX^{t,x}_T k, D_xX_T^{t,x+k} h \right) \\
&\quad + D^2\Phi (X_T^{t,x})\left(D_xX^{t,x}_T k, D_xX_T^{t,x+k} h - D_xX_T^{t,x} h \right) \\
&=: \bar\Upsilon^{k}_{T,11} + \bar\Upsilon^{k}_{T,12} + \bar\Upsilon^{k}_{T,13} \ .
\end{split}
\end{equation}
Using $\alpha$-H\"older continuity of $D^2\Phi(\cdot)$ (see Assumption \ref{ass:Phi2}), $\Upsilon^{k}_{T,11}$ can be estimated as follows:
\begin{equation}\label{est_ups11}
\begin{split}
\E \left|\Upsilon^{k}_{T,11}\right|^p &\lesssim \left(\E \left|  X_T^{t,x+k} - X_T^{t,x} \right|^{4p\alpha} \right)^{\nicefrac{1}{4}} 
 \E \left( \left| D_xX_T^{t,x+k} \right|^{4p} \right)^{\nicefrac{1}{4}} |h|^p\\
&\qquad \times \left[ \E \left(\frac{\left|  X_T^{t,x+k} - X_T^{t,x}\right|}{|k|}\right)^{2p} \right]^{\nicefrac{1}{2}} \longrightarrow 0 \ , \qquad\text{ if } |k| \to 0 \ ,
\end{split}
\end{equation}
thanks to the Fr\'echet differentiability of $x \mapsto X_T^{t,x}$ and estimate \eqref{eq:estDX} (see Theorem \ref{thm:X}).
A similar argument can be used for $\Upsilon^{k}_{T,12}$ and $\Upsilon^{k}_{T,13}$:
\begin{equation}
\begin{split}
\E \left|\Upsilon^{k}_{T,12}\right|^p &\lesssim \left( 1+ \E|X_T^{t,x}|^{4p} \right)^{\nicefrac{1}{4}} 
 \left[\E \left( \frac{\left|X_T^{t,x+k} - X_T^{t,x}  - D_xX_T^{t,x} h\right|}{|k|} \right)^{4p} \right]^{\nicefrac{1}{4}} \\
&\qquad \times |h|^p\left[ \E\left| D_x X_T^{t,x+k} \right|^{2p} \right]^{\nicefrac{1}{2}}  \longrightarrow 0 \ , \qquad\text{ if } |k| \to 0 \ ,
\end{split}
\end{equation} 
\begin{equation}
\begin{split}
\E \left|\Upsilon^{k}_{T,13}\right|^p &\lesssim \left( 1+ \E|X_T^{t,x}|^{4p} \right)^{\nicefrac{1}{4}} 
 \E \left( \left|D_xX_T^{t,x+k} - D_xX_T^{t,x} \right|^{4p} \right)^{\nicefrac{1}{4}} |h|^p\\
&\qquad \times \left[ \E\left\| D_x X_T^{t,x} \right\|^{2p} \right]^{\nicefrac{1}{2}} \longrightarrow 0 \ , \qquad\text{ if } |k| \to 0 \ ,
\end{split}
\end{equation} 
where we used Assumption \ref{ass:Phi2}, estimate \eqref{eq:estX}, continuity of the map $x \mapsto D_xX^{t,x}_T$ and the bound \eqref{eq:estDX}. \\
Finally, $\Upsilon^{k}_{T,2}$ can be shown to go to zero, as $|k| \to 0$, thanks to the second Fr\'echet differentiability of $x \mapsto X^{t,x}_T$ along with the bound  \eqref{eq:estX}. 
Indeed
\begin{equation*}
\begin{split}
\sup_{|h|=1}\E \left|\Upsilon^{k}_{T,2}\right|^p &\lesssim \left( 1+ \E|X_T^{t,x}|^{2p} \right)^{\nicefrac{1}{2}} 
 \left[\E \left( \frac{\left| D_xX^{t,x+k}_T - D_xX^{t,x}_T  - D^2_xX^{t,x}_Tk\right|}{|k|} \right)^{2p} \right]^{\nicefrac{1}{2}} \longrightarrow 0 \ , \quad\text{ if } |k| \to 0 \ .
\end{split}
\end{equation*} 
In the rest of the proof we only concentrate on $M^k_3$, being the more intricate term (it involves the derivatives $D_xZ$ for which the topology of the estimates is weaker).
The other terms can be treated in a similar, and simpler, way.\\
To lighten the notation we introduce the shorthand $\sM^k_3:= -|k| M^k_i(r)$.
\begin{equation}
\begin{split}
\sM_3^k= &\left[D_3G(r,X_r^{t,x+k}, Y_r^{t,x+k}, Z_r^{t,x+k}) - D_3G(r,X_r^{t,x+k}, Y_r^{t,x+k}, Z_r^{t,x})\right] D_xZ^{t,x+k}_rh \\
&\qquad + D^2_{3,3}G_r(t,x)\left(D_xZ^{t,x}_rk,D_xZ^{t,x}_rh\right) \\
&+ \left[ D_3G(r,X_r^{t,x+k}, Y_r^{t,x+k}, Z_r^{t,x}) - D_3G(r,X_r^{t,x+k}, Y_r^{t,x}, Z_r^{t,x}) \right] D_xZ^{t,x+k}_rh\\
&\qquad + D^2_{3,2}G_r(t,x)\left(D_xY^{t,x}_rk,D_xZ^{t,x}_rh\right)  \\
&+ \left[ D_3G(r,X_r^{t,x+k}, Y_r^{t,x}, Z_r^{t,x}) - D_3G(r,X_r^{t,x}, Y_r^{t,x}, Z_r^{t,x}) \right] D_xZ^{t,x+k}_rh\\
&\qquad + D^2_{1,3}G_r(t,x)\left(D_xX^{t,x}_rk,D_xZ^{t,x}_rh\right)  \\
=:&\, \sM^k_{31} + \sM^k_{32} + \sM^k_{33} \ .
\end{split}
\end{equation}
Thanks to the Fr\'echet differentiability of $D_3G$, the first term can be rewritten as
\begin{equation*}
\begin{split}
\sM_{31}^k&=  \int_0^1 \left[ D^2_{3,3} G(r,X_r^{t,x+k}, Y_r^{t,x+k}, \lambda Z_r^{t,x+k} + (1-\lambda)Z_r^{t,x}) - D^2_{3,3}G_r(t,x)\right] \\
&\qquad \qquad (Z_r^{t,x+k} - Z_r^{t,x}, D_xZ^{t,x+k}_rh) \ud \lambda + D^2_{3,3}G_r(t,x) \left(Z_r^{t,x+k} - Z_r^{t,x}, D_xZ^{t,x+k}_rh\right) \\
&-  D^2_{3,3}G_r(t,x) \left(D_xZ^{t,x}_r k, D_xZ^{t,x+k}_rh \right)  +  D^2_{3,3}G_r(t,x) \left(D_xZ^{t,x}_r k, D_xZ^{t,x+k}_rh - D_xZ^{t,x}_rh \right)
\end{split}
\end{equation*}
from which
\begin{equation*}
\begin{split}
\sM_{31}^k=& \int_0^1 \left[ D^2_{3,3} G(r,X_r^{t,x+k}, Y_r^{t,x+k}, \lambda Z_r^{t,x+k} + (1-\lambda)Z_r^{t,x}) - D^2_{3,3}G(r,X_r^{t,x+k}, Y_r^{t,x+k}, Z_r^{t,x})\right] \\ 
&\qquad \qquad (Z_r^{t,x+k} - Z_r^{t,x}, D_xZ^{t,x+k}_rh) \ud \lambda  \\
&+ \left[ D^2_{3,3} G(r,X_r^{t,x+k}, Y_r^{t,x+k}, Z_r^{t,x} ) - D^2_{3,3}G(r,X_r^{t,x+k}, Y_r^{t,x}, Z_r^{t,x})\right](Z_r^{t,x+k} - Z_r^{t,x}, D_xZ^{t,x+k}_rh) \\
&+ \left[ D^2_{3,3} G(r,X_r^{t,x+k}, Y_r^{t,x}, Z_r^{t,x} ) - D^2_{3,3}G_r(t,x)\right](Z_r^{t,x+k} - Z_r^{t,x}, D_xZ^{t,x+k}_rh) \\
&+  D^2_{3,3}G_r(t,x) \left(Z_r^{t,x+k} - Z_r^{t,x} - D_xZ^{t,x}_rk, D_xZ^{t,x+k}_rh\right)\\
&+  D^2_{3,3}G_r(t,x) \left(D_xZ^{t,x}_r k, D_xZ^{t,x+k}_rh - D_xZ^{t,x}_rh \right)  \\
=:& \sM^k_{311} + \sM^k_{312} + \sM^k_{313} +\sM^k_{314} + \sM^k_{315} \ .
\end{split}
\end{equation*}
The term $\sM^k_{311}$ has been already discussed in the main proof. 
For what concerns $\sM^k_{314}$ and $\sM^k_{315}$ we have
\begin{equation}
\begin{split}
\E &\left(\int_t^T |M^k_{314}(r)| \ud r \right)^p \lesssim \E \left( \int_t^T \left| Z_r^{t,x+k} - Z_r^{t,x} - D_xZ^{t,x}_rk \right|  \left|D_xZ^{t,x+k}_rh\right| \ud r \right)^p \\ 
&\lesssim \left[ \E\left( \int_t^T \frac{ \left| Z_r^{t,x+k} - Z_r^{t,x} - D_xZ^{t,x}_rk \right|^2}{|k|^2} \ud r \right)^{p} \right]^{\nicefrac{1}{2}} \left[\E \left( \int_t^T \left\| D_xZ^{t,x+k}_r\right\|^2 \ud r \right)^{p}\right]^{\nicefrac{1}{2}}|h|   \; \longrightarrow 0 \ , \qquad\text{ if } |k| \to 0 \ ,
\end{split}
\end{equation}
uniformly in $h$, $|h|=1$, thanks to the Fr\'echet differentiability of $x \mapsto Z^{t,x}$ and estimate \eqref{eq:estBSDE}. Finally
\begin{equation}
\begin{split}
\E &\left(\int_t^T |M^k_{315}(r)| \ud r \right)^p \lesssim \left[ \E\left( \int_t^T \left| D_xZ^{t,x+k}_rh - D_xZ^{t,x}_rh \right|^2 \ud r \right)^{p} \right]^{\nicefrac{1}{2}} \\
&\times  \left[\E \left( \int_t^T \left\| D_xZ^{t,x+k}_r\right\|^2 \ud r \right)^{p}\right]^{\nicefrac{1}{2}}   \; \longrightarrow 0 \ , \qquad\text{ if } |k| \to 0 \ ,
\end{split}
\end{equation}
thanks to the continuity of the map $x \mapsto D_xZ^{t,x}h$ from $E \to \sK_p$ given in Proposition \ref{p:diff1} and the estimate \eqref{eq:estBSDE}.

Coming back to $\sM^k_3$, the terms $\sM^k_{32}$ and $\sM^k_{33}$ can be treated in the same way as $\sM^k_{31}$ (in this cases there is no need for the identification theorem), taking advantage of the estimates \eqref{eq:est:bsde_Y_Z}, \eqref{eq:estBSDE} for $Y^{t,x},DY^{t,x}$ in $L^p(C([0,T];\R))$ and $L^p(C([0,T];E'))$, respectively. 
For what concerns the terms $\sM^k_1$ and $\sM^k_2$ the argument of the proof is exactly the same, due to the symmetry of the construction.

Summing up the above computations we finally have
\begin{equation*}
\sup_{|h|=1} \left[ \bE\left\vert \Upsilon^k_T\right\vert^p + 
\E \left(\int_t^T |M^k(r)| \ud r \right)^p \right] \longrightarrow 0 \ , \qquad\text{ if } |k| \to 0 \ ,
\end{equation*}
which is the required result.\\

\end{proof}  

\begin{proof}[Prof of Lemma \ref{step1.b}]
Let us prove the second part of the statement, concerning second order derivatives.
For $0\leq t\leq\tau\leq s\leq T$ we have
\begin{align*}
  \left\vert D_x^2X^n_\tau\left(k,h\right)\right.&\left.-D_x^2X_\tau(k,h)\right\vert\\
  &\lesssim\int_t^\tau\left\vert D^2B^n\left(r,X_r^n\right)\left(D_xX^n_rk,D_xX^n_rh\right)-D^2B\left(r,X_r\right)\left(D_xX_rk,D_xX_rh\right)\right\vert\ud r\\
                                                 &\phantom{\lesssim}+\int_t^\tau\left\vert DB^n\left(r,X^n_r\right)D_x^2X_r^n(k,h)-DB\left(r,X_r\right)D_x^2X_r(k,h)\right\vert\ud r\\
  &\phantom{\lesssim}=\int_t^\tau\left(\mathrm{A}^n_1(r)+\mathrm{A}^n_2(r)\right)\ud r\ 
\end{align*}
so that
\begin{align*}
\bE\sup_{\tau\in[t,s]}\left\vert D_x^2X^n_r(k,h)-D^2_xX_r(k,h)\right\vert^p\lesssim  \int_t^s\left(\bE\left[\mathrm{A}^n_1(r)\right]^p+\bE\left[\mathrm{A}^n_2(r)\right]^p\right)\ud r\ .
\end{align*}
Choose now $\epsilon>\max\left\{1-\frac{1}{\alpha p},0\right\}$ and set
\begin{equation*}
  \bar{\nu}=\frac{1}{\alpha p}+\epsilon\ ,\quad \nu=2\frac{1+\epsilon\alpha p}{1+\alpha p(\epsilon-1)}\ ;
\end{equation*}
$\bar{\nu}$ and $\nu$ will be used as exponents and are chosen to ensure that all the inequalities exploited below are correct for any choice of $p>1$ and $\alpha\in(0,1)$.
From the bound
\begin{align*}
  \mathrm{A}^n_1(r)&\leq\left\vert D^2B\left(r,J^nX^n_r\right)\left(J^nD_xX^n_rk,J^nD_xX^n_rh-J^nD_xX_rh\right)\right\vert\\
                   &\phantom{\lesssim}+\left\vert D^2B\left(r,J^nX^n_r\right)\left(J^nD_xX^n_rk-J^nD_xX_rk,J^nD_xX_rh\right)\right\vert\\
                   &\phantom{\lesssim}+\left\vert \left[D^2B\left(r,J^nX^n_r\right)-D^2B\left(r,X_r\right)\right]\left(J^nD_xX_rk,J^nD_xX_rh\right)\right\vert\\
                   &\phantom{\lesssim}+\left\vert D^2B\left(r,X_r\right)\left(J^nD_xX_rk-D_xX_rk,J^nD_xX_rh-D_xX_rh\right)\right\vert\\
                   &\phantom{\lesssim}+\left\vert D^2B\left(r,X_r\right)\left(D_xX_rk,J^nD_xX_rh-D_xX_rh\right)\right\vert\\
                   &\phantom{\lesssim}+\left\vert D^2B\left(r,X_r\right)\left(J^nD_xX_rk-D_xX_rk,D_xX_rh\right)\right\vert\\
                   &\lesssim\left(1+\sup_{r\in[t,T]}\left\vert X^n_r\right\vert^{m}\right)\sup_{r\in[t,T]}\left\vert D_xX_r^nk\right\vert\sup_{r\in[t,T]}\left\vert D_xX^n_rh-D_xX_rh\right\vert\\
                   &\phantom{\lesssim}+\left(1+\sup_{r\in[t,T]}\left\vert X^n_r\right\vert^{m}\right)\sup_{r\in[t,T]}\left\vert D_xX_rh\right\vert\sup_{r\in[t,T]}\left\vert D_xX^n_rk-D_xX_rk\right\vert\\
                   &\phantom{\lesssim}+\sup_{r\in[t,T]}\left\vert D_xX_rk\right\vert\sup_{r\in[t,T]}\left\vert D_xX_rh\right\vert\sup_{r\in[t,T]}\left\vert J^nX^n_r-X_r\right\vert^{\alpha}\\
                   &\phantom{\lesssim}+\left\vert D^2B\left(r,X_r\right)\left(J^nD_xX_rk-D_xX_rk,J^nD_xX_rh-D_xX_rh\right)\right\vert\\
                   &\phantom{\lesssim}+\left\vert D^2B\left(r,X_r\right)\left(D_xX_rk,J^nD_xX_rh-D_xX_rh\right)\right\vert\\
                   &\phantom{\lesssim}+\left\vert D^2B\left(r,X_r\right)\left(J^nD_xX_rk-D_xX_rk,D_xX_rh\right)\right\vert\ 
\end{align*}
we get
\begin{align*}
  \bE\left[\mathrm{A}^n_1(r)\right]^p&\lesssim \left[\bE\left(1+\sup_{r\in[t,T]}\left\vert X^n_r\right\vert^{3mp}\right)\right]^{\nicefrac{1}{3}}\left[\bE\sup_{r\in[t,T]}\left\Vert D_xX_r^n\right\Vert^{3p}\right]^{\nicefrac{1}{3}}\\
                                     &\phantom{\lesssim}\times\left(\left\vert k\right\vert\left[\bE\sup_{r\in[t,T]}\left\vert D_xX^n_rh-D_xX_rh\right\vert^{3p}\right]^{\nicefrac{1}{3}}+\left\vert h\right\vert\left[\bE\sup_{r\in[t,T]}\left\vert D_xX^n_rk-D_xX_rk\right\vert^{3p}\right]^{\nicefrac{1}{3}}\right)\\
                                     &\phantom{\lesssim}+\left\vert h\right\vert^p\left\vert k\right\vert^p\left[\bE\sup_{r\in[t,T]}\left\Vert D_xX_r\right\Vert^{\nu p}\right]^{\nicefrac{2}{\nu}}\left[\bE\sup_{r\in[t,T]}\left\vert J^nX^n_r-X_r\right\vert^{\bar{\nu}\alpha p}\right]^{\nicefrac{1}{\bar{\nu}}}\\
                   &\phantom{\lesssim}+\bE\left\vert D^2B\left(r,X_r\right)\left(J^nD_xX_rk-D_xX_rk,J^nD_xX_rh-D_xX_rh\right)\right\vert^p\\
                   &\phantom{\lesssim}+\bE\left\vert D^2B\left(r,X_r\right)\left(D_xX_rk,J^nD_xX_rh-D_xX_rh\right)\right\vert^p\\
                   &\phantom{\lesssim}+\bE\left\vert D^2B\left(r,X_r\right)\left(J^nD_xX_rk-D_xX_rk,D_xX_rh\right)\right\vert^p \ .
\end{align*}
Thanks to Lemmas \ref{step1.a}-\ref{step1.b}, \eqref{eq:estX} and \eqref{eq:estDX}, the first three terms goes to zero.
Concerning  the remaining three terms, we employ Assumption \ref{ass:dphi}. 
More precisely, we have
\begin{align*}
  \left[\left(1+\bE\sup_{r\in[t,T]}\left\vert X_r\right\vert^{3mp}\right)\bE\sup_{r\in[t,T]}\left\vert D_xX_rk\right\vert^{3p}\bE\sup_{r\in[t,T]}\left\vert D_xX_rh\right\vert^{3p}\right]^{\nicefrac{1}{3}},
\end{align*}
and recalling again \eqref{eq:estX} and \eqref{eq:estDX} we get a uniform bound in $r \in [0,T]$. 
We can then pass to the limit exploiting Vitali convergence theorem.  
Similarly
\begin{align*}
  \mathrm{A}^n_2(r)&\leq\left\vert \left[DB\left(r,J^nX^n_r\right)-DB\left(r,X_r\right)\right]J^nD_x^2X^n_r(k,h)\right\vert\\
                   &\phantom{\lesssim}+\left\vert DB\left(r,X_r\right)J^n\left[D_x^2X^n_r(k,h)-D_x^2X_r(k,h)\right]\right\vert\\
                   &\phantom{\lesssim}+\left\vert DB\left(r,X_r\right)\left[J^nD^2_xX_r(k,h)-D^2_xX_r(k,h)\right]\right\vert\\
                   &\lesssim\sup_{r\in[t,T]}\left\vert D^2_xX^n_r(k,h)\right\vert\left(1+\sup_{r\in[t,T]}\left\vert X_r^n\right\vert^m+\sup_{r\in[t,T]}\left\vert X_r\right\vert^m\right)\sup_{r\in[t,T]}\left\vert J^nX^n_r-X_r\right\vert\\
                   &\phantom{\lesssim}+\left\vert D^2_xX^n_r(k,h)-D^2_xX_r(k,h)\right\vert\\
  &\phantom{\lesssim}+\left\vert DB\left(r,X_r\right)\left[J^nD^2_xX_r(k,h)-D^2_xX_r(k,h)\right]\right\vert\ ,
\end{align*}
where we used the $C^{1}$-regularity of $x\mapsto DB(\cdot, x)$ along with the growth of $D^2B$ given in  Assumption \ref{ass:B}. 
Taking the expectation of the $p$-th power, the first and last term converge to $0$ for a.e. $r \in [0,T]$ as above. Therefore
\begin{equation*}
  \bE\sup_{\tau\in[t,s]}\left\vert D_x^2X^n_r(k,h)-D^2_xX_r(k,h)\right\vert^p\lesssim N^n(s) +\int_t^s\bE\sup_{\tau\in[t,r]}\left\vert D^2_xX^n_\tau(k,h)-D^2_xX_\tau(k,h)\right\vert^p \ud r
\end{equation*}
where $N^n(s)$ contains all the other terms and  $N^n(s) \to 0$ for every $s\in[t,T]$. 
The application of the Gronwall lemma concludes the proof.\\
\end{proof}

\begin{proof}[Proof of Lemma \ref{step4}]
%Recall that for every $(k,h)\in\sC$ the pair $\left(D^2_xY_s(k,h),D^2_xZ_s(k,h)\right)$ satisfies the BSDE
%\begin{align*}
%  D^2_x&Y_s(k,h)+\int_s^TD^2_xZ_r(k,h)\ud W(r)=\\
%  &-\int_s^TD^2_{1,1}G(r,X_r,Y_r,Z_r)\left(D_xX_rk,D_xX_rh\right)\ud r-\int_s^TD^2_{1,2}G(r,X_r,Y_r,Z_r)\left(D_xY_rk,D_xX_rh\right)\ud r\\
%  &-\int_s^TD^2_{1,3}G(r,X_r,Y_r,Z_r)\left(D_xZ_rk,D_xX_rh\right)\ud r-\int_s^TD_1G(r,X_r,Y_r,Z_r)D^2_xX_r(k,h)\ud r\\
%  &-\int_s^TD^2_{2,1}G(r,X_r,Y_r,Z_r)\left(D_xX_rk,D_xY_rh\right)\ud r-\int_s^TD^2_{2,2}G(r,X_r,Y_r,Z_r)\left(D_xY_rk,D_xY_rh\right)\ud r\\
%  &-\int_s^TD^2_{2,3}G(r,X_r,Y_r,Z_r)\left(D_xZ_rk,D_xY_rh\right)\ud r-\int_s^TD_2G(r,X_r,Y_r,Z_r)D^2_xY_r(k,h)\ud r\\
%  &-\int_s^TD^2_{3,1}G(r,X_r,Y_r,Z_r)\left(D_xX_rk,D_xZ_rh\right)\ud r-\int_s^TD^2_{3,2}G(r,X_r,Y_r,Z_r)\left(D_xY_rk,D_xZ_rh\right)\ud r\\
%  &-\int_s^TD^2_{3,3}G(r,X_r,Y_r,Z_r)\left(D_xZ_rk,D_xZ_rh\right)\ud r-\int_s^TD_3G(r,X_r,Y_r,Z_r)D^2_xZ(k,h)\ud r\\
%  &+D^2\Phi\left(X_T\right)\left(D_xX_Tk,D_xX_Th\right)+D\Phi(X_T)D^2_xX_T(k,h)
%\end{align*}
%and an analogous one with coefficients suitably modified is satisfied by the pair $\left(D^2_xY^n_t(k,h),D^2_xZ^n_t(k,h)\right)$. 
%Therefore we have that
Recalling the equation satisfied by $D^2_xY_s(k,h)$ (analogously by $D^2_xY_s^n(k,h)$) we set
\begin{equation*}
  \Delta^2Y^n_s(k,h)=D^2_xY^n_s(k,h)-D^2_xY_s(k,h)\ ,\quad \Delta^2Z^n_s(k,h)=D^2_xZ^n_s(k,h)-D^2_xZ_s(k,h)
\end{equation*}
to obtain
\begin{align}
\nonumber  &\Delta^2Y^n_s(k,h)+\int_s^T\Delta^2Z^n_r(k,h)\ud W_r=\\
\tag*{\ccd{1}}&-\int_s^T\left[D_1G^n\left(r,X^n_r,Y^n_r,Z^n_r\right)D^2_xX^n_r(k,h)-D_1G\left(r,X_r,Y_r,Z_r\right)D^2_xX_r(k,h)\right]\ud r\\
\tag*{\ccd{2}}&-\int_s^T\left[D^2_{1,1}G^n\left(r,X^n_r,Y^n_r,Z^n_r\right)\left(D_xX^n_rk,D_xX^n_rh\right)-D^2_{1,1}G\left(r,X_r,Y_r,Z_r\right)\left(D_xX_rk,D_xX_rh\right)\right]\ud r\\
\tag*{\ccd{3}}&-\int_s^T\left[D^2_{1,2}G^n\left(r,X^n_r,Y^n_r,Z^n_r\right)\left(D_xY^n_rk,D_xX^n_rh\right)-D^2_{1,2}G\left(r,X_r,Y_r,Z_r\right)\left(D_xY_rk,D_xX_rh\right)\right]\ud r\\
\tag*{\ccd{4}}&-\int_s^T\left[D^2_{1,3}G^n\left(r,X^n_r,Y^n_r,Z^n_r\right)\left(D_xZ^n_rk,D_xX^n_rh\right)-D^2_{1,3}G\left(r,X_r,Y_r,Z_r\right)\left(D_xZ_rk,D_xX_rh\right)\right]\ud r\\
\tag*{\ccd{5}}&-\int_s^T\left[D_2G^n\left(r,X^n_r,Y^n_r,Z^n_r\right)D^2_xY^n_r(k,h)-D_2G\left(r,X_r,Y_r,Z_r\right)D^2_xY_r(k,h)\right]\ud r\\
\tag*{\ccd{6}}&-\int_s^T\left[D^2_{2,1}G^n\left(r,X^n_r,Y^n_r,Z^n_r\right)\left(D_xX^n_rk,D_xY^n_rh\right)-D^2_{2,1}G\left(r,X_r,Y_r,Z_r\right)\left(D_xX_rk,D_xY_rh\right)\right]\ud r\\
\tag*{\ccd{7}}&-\int_s^T\left[D^2_{2,2}G^n\left(r,X^n_r,Y^n_r,Z^n_r\right)\left(D_xY^n_rk,D_xY^n_rh\right)-D^2_{2,2}G\left(r,X_r,Y_r,Z_r\right)\left(D_xY_rk,D_xY_rh\right)\right]\ud r\\
\tag*{\ccd{8}}&-\int_s^T\left[D^2_{2,3}G^n\left(r,X^n_r,Y^n_r,Z^n_r\right)\left(D_xZ^n_rk,D_xY^n_rh\right)-D^2_{2,3}G\left(r,X_r,Y_r,Z_r\right)\left(D_xZ_rk,D_xY_rh\right)\right]\ud r\\
\tag*{\ccd{9}}&-\int_s^t\left[D_3G^n\left(r,X^n_r,Y^n_r,Z^n_r\right)D^2_xZ^n_r(k,h)-D_3G\left(r,X_r,Y_r,Z_r\right)D^2_xZ_r(k,h)\right]\ud r\\
\tag*{\ccd{10}}&-\int_s^T\left[D^2_{3,1}G^n\left(r,X^n_r,Y^n_r,Z^n_r\right)\left(D_xX^n_rk,D_xZ^n_rh\right)-D^2_{3,1}G\left(r,X_r,Y_r,Z_r\right)\left(D_xX_rk,D_xZ_rh\right)\right]\ud r\\
\tag*{\ccd{11}}&-\int_s^T\left[D^2_{3,2}G^n\left(r,X^n_r,Y^n_r,Z^n_r\right)\left(D_xY^n_rk,D_xZ^n_rh\right)-D^2_{3,2}G\left(r,X_r,Y_r,Z_r\right)\left(D_xY_rk,D_xZ_rh\right)\right]\ud r\\
\tag*{\ccd{12}}&-\int_s^T\left[D^2_{3,3}G^n\left(r,X^n_r,Y^n_r,Z^n_r\right)\left(D_xZ^n_rk,D_xZ^n_rh\right)-D^2_{3,3}G\left(r,X_r,Y_r,Z_r\right)\left(D_xZ_rk,D_xZ_rh\right)\right]\ud r\\
\tag*{\ccd{13}}&+D^2\Phi^n\left(X^n_T\right)\left(D_xX^n_Tk,D_xX^n_Th\right)-D^2\Phi\left(X_T\right)\left(D_xX_Tk,D_xX_Th\right)\\
\tag*{\ccd{14}}&+D\Phi^n\left(X^n_T\right)D^2_xX^n_T(k,h)-D\Phi\left(X_T\right)D^2_xX_T(k,h)\ .
\end{align}
We rephrase the above BSDE as
\begin{align}
\nonumber &\Delta^2Y^n_s(k,h)+\int_s^T\Delta^2Z^n_r(k,h)\ud W_r=\\
\nonumber &\ccd{1}+\ccd{2}+\ccd{3}+\ccd{4}\\
\nonumber &-\int_s^T D_2G^n\left(r,X^n_r,Y^n_r,Z^n_r\right)\Delta^2Y^n_r(k,h)\ud r\\
\tag*{\ccd{5a}} &-\int_s^T \left[D_2G^n\left(r,X^n_r,Y^n_r,Z^n_r\right)-D_2G\left(r,X_r,Y_r,Z_r\right)\right]D^2_xY_r(k,h)\ud r\\
\nonumber &+\ccd{6}+\ccd{7}+\ccd{8}\\
\nonumber &-\int_s^TD_3G^n\left(r,X^n_r,Y^n_r,Z^n_r\right)\Delta^2Z^n_r(k,h)\ud r\\
\tag*{\ccd{9a}} &-\int_s^T \left[D_3G^n\left(r,X^n_r,Y^n_r,Z^n_r\right)-D_3G\left(r,X_r,Y_r,Z_r\right)\right]D^2_xZ_r(k,h)\ud r\\
\nonumber &+\ccd{10}+\ccd{11}+\ccd{12}+\ccd{13}+\ccd{14}\ ,
\end{align}
which is of the form
\begin{equation*}
  \Delta^2Y^n_s(k,h)+\int_s^T\Delta^2Z^n_r(k,h)\ud W_r=\bar\eta^n+\int_s^T\bar\alpha^n_r\Delta^2Y^n_r(k,h)\ud r+\int_s^T\bar\beta^n_r\ud r+\int_s^T\bar\gamma^n_r\Delta^2Z^n_r(k,h)\ud r
\end{equation*}
with
\begin{gather*}
  \bar\eta^n:=\ccd{13}+\ccd{14}\ ,\\
  \bar\alpha^n_r:=-D_2G^n\left(r,X^n_r,Y^n_r,Z^n_r\right)\ ,\\
  \bar\gamma^n_r:=-D_3G^n\left(r,X^n_r,Y^n_r,Z^n_r\right)\ ,\\
  \int_t^T\bar\beta^n_r\ud r=\ccd{1}+\ccd{2}+\ccd{3}+\ccd{4}+\ccd{5a}+\ccd{6}+\ccd{7}+\ccd{8}+\ccd{9a}+\ccd{10}+\ccd{11}+\ccd{12}\ . 
\end{gather*}
Defining $V$ as in (\ref{eq:Vn}), the equation being linear, we can apply the same strategy as in the proof of Lemma \ref{step3}. Hence it suffices to check that for some $p\ge 2$
\begin{equation*}
  \bE\left\vert \bar\eta^n\right\vert^p+\bE\left(\int_t^T\left\vert\bar\beta^n_r\right\vert\ud r\right)^p\xrightarrow{n\to\infty}0\ .
\end{equation*}
The convergence of $\bar\eta^n$ under Assumption \ref{ass:dphi} was proved in \cite{flandoli2016infinite} for $p=1$ and $\Phi\in C^{2,\alpha}_b$; the extension to $\Phi$ with polynomial growth relies on Theorem \ref{thm:X} and on the uniform integrability of $\Phi^n(X^n)$ in $L^p$. By H\"older inequality we also obtain convergence of $\bar\eta^n$ in $L^p(\Omega)$ for any $p\ge 2$.

We now show how to deal with some of the addends defining $\bar\beta^n$. The same technique can be used also for the remaining terms.
\begin{align*}
 \bE\left\vert\ccd{1}\right\vert^p&\leq \bE\left(\int_t^T\left\vert D_1G^n\left(r,X^n_r,Y^n_r,Z^n_r\right)\left[D^2_xX^n_r(k,h)-D^2_xX_r(k,h)\right]\right\vert\ud r\right)^p\\
 &+\bE\left(\int_t^T\left\vert \left[D_1G^n\left(r,X^n_r,Y^n_r,Z^n_r\right)-D_1G\left(r,X_r,Y_r,Z_r\right)\right]D^2_xX_r(k,h)\right\vert\ud r\right)^p\\
 &=\bE\left(\left[\overline{\mathrm{C}}_1^n\right]^p+\left[\overline{\mathrm{C}}_2^n\right]^p\right)\ .
\end{align*}
Thanks to the uniform bounds on $X^n$, $Y^n$, $Z^n$ and Assumption \ref{ass:G} we have
\begin{multline*}
  \left[\overline{\mathrm{C}}_1^n\right]^p\lesssim\left(1+\sup_{r\in[t,T]}\left\vert X_r^n\right\vert^{mp}\right)\sup_{r\in[t,T]}\left\vert \left[D^2_xX^n_r-D^2_xX_r\right](k,h)\right\vert^p\\ \times\left(1+\sup_{r\in[t,T]}\left\vert Y^n_r\right\vert^p+\left(\int_t^T\left\vert Z^n_r\right\vert^2\ud r\right)^{\nicefrac{p}{2}}\right)\ ,
\end{multline*}
hence $\bE\left[\overline{\mathrm{C}}_1^n\right]^p\to 0$ by H\"older's inequality and Lemma \ref{step1.b}. 
The convergence of $\overline{\mathrm{C}}_2^n$ is identical to the one of $\int\mathrm{C}_2^n$ in the proof of Lemma \ref{step3}. 
By using the shorthand
\begin{gather*}
  D^2_{ij}G_r(n):=D^2_{ij}G\left(r,J^nX^n_r,Y^n_r,Z^n_r\right)\ , 
  D^2_{ij}G_r(\cdot):=D^2_{ij}G\left(r,X_r,Y_r,Z_r\right)\ , 
\end{gather*}
for $i,j = 1,2,3$, for the term $\ccd{2}$ we get 
\begin{align*}
  \bE\left\vert\ccd{2}\right\vert^p&\lesssim \bE\left(\int_t^T\left\vert D^2_{1,1} G_r(n)\left(J^nD_xX^n_rk,J^nD_xX^nh\right)-D^2_{1,1}G_r(n)\left(J^nD_xX_rk,J^nD_xX_rh\right)\right\vert\ud r\right)^p\\
  &+\bE\left(\int_t^T\left\vert D^2_{1,1}G_r(n)\left(J^nD_xX_rk,J^nD_xX_rh\right)-D^2_{1,1}G_r(\cdot)\left(J^nD_xX_rk,J^nD_xX_rh\right)\right\vert\ud r\right)^p\\
  &+\bE\left(\int_t^T\left\vert D^2_{1,1}G_r(\cdot)\left(J^nD_xX_rk,J^nD_xX_rh\right)-D^2_{1,1}G_r(\cdot)\left(D_xX_rk,D_xX_rh\right)\right\vert\ud r\right)^p\\
  &=\bE\left(\left[\overline{\mathrm{F}}^n_{111}\right]^p+\left[\overline{\mathrm{F}}^n_{112}\right]^p+\left[\overline{\mathrm{F}}^n_{113}\right]^p\right)\ .
\end{align*}
$\overline{\mathrm{F}}_{112}^n$ can be studied again as $\int C^n_2$ in the proof of Lemma \ref{step3}, while
\begin{align*}
\left[\overline{\mathrm{F}}_{111}^n\right]^p&\leq\left(\int_t^T \vert D^2_{1,1}G_r(n)\big[\left(J^nD_xX^n_r,J^nD_xX^n_rh-J^nD_xX_rh\right)\right.\\
&\left.\phantom{aaaaaaaaa\int_t^T}+\left(J^nD_xX_r^nk-J^nD_xX_rk,J^nD_xX_rh\right)\big]\vert\ud r\right)^p\\
  &\lesssim\Bigg[\sup_{r\in[t,T]}\left\vert D_xX^n_rk\right\vert^p\sup_{r\in[t,T]}\left\vert D_xX^n_rh-D_xX_rh\right\vert^p\\
&\phantom{aaaaaaaaaaaaa}+\sup_{r\in[t,T]}\left\vert D_xX_rk\right\vert^p\sup_{r\in[t,T]}\left\vert D_xX^n_rh-D_xX_r^nh\right\vert^p\Bigg]\\
&\phantom{aaaaaaaaaaaaa}\times\left(1+\sup_{r\in[t,T]}\left\vert X_r^n\right\vert^{mp}\right)\left(1+\sup_{r\in[t,T]}\left\vert Y^n\right\vert^p+\left(\int_t^T\left\vert Z^n_r\right\vert^2\ud r\right)^{\nicefrac{p}{2}}\right)\ ,
\end{align*}
and
  \begin{align*}
    \left[\overline{\mathrm{F}}_{113}^n\right]^p&\leq\int_t^T\left\vert D^2_{1,1}G_r(\cdot)\left(J^nD_xX_rk-D_xX_rk,J^nD_xX_rh-D_xX_rh\right)\right\vert^p\ud r\\
    &\phantom{aaa}+\int_t^T\left\vert D^2_{1,1}G_r(\cdot)\left(D_xX_rk,J^nD_xX_rh-D_xX_rh\right)\right\vert^p\ud r\\
    &\phantom{aaa}+\int_t^T\left\vert D^2_{1,1}G_r(\cdot)\left(J^nD_xX_rk-D_xX_rk,D_xX_rh\right)\right\vert^p\ud r \ ;
  \end{align*}
the desired convergence then follows by taking expectation, applying H\"older's inequality and using Assumption \ref{ass:dphi}, the uniform bound (\ref{eq:estDXunif}) and the Vitali convergence theorem.

\begin{align*}
  \bE\left\vert\ccd{3}\right\vert^p&\lesssim \bE\left(\int_t^T\left\vert D^2_{12}G_r(n)\left(D_xY^n_rk,J^nD_xX^n_rh\right)-D^2_{12}G_r(\cdot)\left(D_xY_rk,J^nD_xX_rh\right)\right\vert\ud r\right)^p\\
  &\phantom{\lesssim}+\bE\left(\int_t^T\left\vert D^2_{12}G_r(\cdot)\left(D_xY_rk,J^nD_xX_rh\right)-D^2_{12}G_r(\cdot)\left(D_xY_rk,D_xX_rh\right)\right\vert\ud r\right)^p\\
  &=\bE\left(\left[\overline{\mathrm{F}}_{121}^n\right]^p+\left[\overline{\mathrm{F}}_{122}^n\right]^p\right)
\end{align*}
Choose again any $\epsilon>\max\left\{1-\frac{1}{\alpha p},0\right\}$ and set
\begin{equation*}
  \bar{\nu}=\frac{1}{\alpha p}+\epsilon\ ,\quad \nu=2\frac{1+\epsilon\alpha p}{1+\alpha p(\epsilon-1)}\ ;
\end{equation*}
then by H\"older character of the second derivatives of $G$
\begin{align*}
  \bE\left[\overline{\mathrm{F}}_{121}^n\right]^p&\lesssim\bE\left(\int_t^T\left\vert D^2_{12}G_r(n)\left(D_xY^n_rk,J^nD_xX^n_rh\right)-D^2_{12}G_r(n)\left(D_xY^n_rk,J^nD_xX_rh\right)\right\vert\ud r\right)^p\\
  &\phantom{aaaaa}+\bE\left(\int_t^T\left\vert D^2_{12}G_r(n)\left(D_xY^n_rk,J^nD_xX_rh\right)-D^2_{12}G_r(n)\left(D_xY_rk,J^nD_xX_rh\right)\right\vert\ud r\right)^p\\
  &\phantom{aaaaa}+\bE\left(\int_t^T\left\vert\left[D^2_{12}G_r(n)-D^2_{12}G_r(\cdot)\right]\left(D_xY_rk,J^nD_xX_rh\right)\right\vert\ud r\right)^p\\
  &\lesssim\left(1+\left[\bE\sup_{r\in[t,T]}\left\vert X_r^n\right\vert^{4mp}\right]^{\nicefrac{1}{4}}\right)\left(1+\left[\bE\sup_{r\in[t,T]}\left\vert Y_r^n\right\vert^{4p}\right]^{\nicefrac{1}{4}}\right)\\
  &\phantom{aaaaaaa}\times\left(\left[\bE\sup_{r\in[t,T]}\left\vert D_xY^n_rk\right\vert^4\right]^{\nicefrac{1}{4}}\left[\bE\sup_{r\in[t,T]}\left\vert D_xX^n_rh-D_xX_rh\right\vert^{4p}\right]^{\nicefrac{1}{4}}\right.\\
  &\phantom{aaaaa}+\left.\left[\bE\sup_{r\in[t,T]}\left\vert D_xX_rh\right\vert^4\right]^{\nicefrac{1}{4}}\left[\bE\sup_{r\in[t,T]}\left\vert D_xY^n_rk-D_xY_rk\right\vert^{4p}\right]^{\nicefrac{1}{4}}\right)\\
  &\phantom{aaaaa}+\left[\bE\sup_{r\in[t,T]}\left\vert D_xY_rk\right\vert^{\nu p}\right]^{\nicefrac{1}{\nu}}\left[\bE\sup_{r\in[t,T]}\left\vert D_xX_rh\right\vert^{\nu p}\right]^{\nicefrac{1}{\nu}}\\
  &\phantom{aaaaaaa}\times\left(\left[\bE\sup_{r\in[t,T]}\left\vert Y^n_r-Y_r\right\vert^{\bar{\nu}\alpha p}\right]^{\nicefrac{1}{\bar{\nu}}}+\left[\bE\int_t^T\left\vert J^nX^n_r-X_r\right\vert^{\bar{\nu}\alpha p}\ud r\right]^{\nicefrac{1}{\bar{\nu}}}\right)\\
  &\phantom{aaaaa}+\left[\bE\sup_{r\in[t,T]}\left\vert D_xY_rk\right\vert^{3p}\right]^{\nicefrac{1}{3}}\left[\bE\sup_{r\in[t,T]}\left\vert D_xX_rh\right\vert^{3p}\right]^{\nicefrac{1}{3}}\left[\bE\left(\int_t^T\left\vert Z^n_r-Z_r\right\vert^2\ud r\right)^{\nicefrac{3p}{2}}\right]^{\nicefrac{1}{3}}
\end{align*}
which goes to $0$ by Lemmas \ref{step1.a}, \ref{step1.b} and \ref{step2.a}, thanks to the choice of $\bar \nu$ and $\nu$.
The term $\overline{\mathrm{F}}_{122}^n$ can be controlled as before using Assumption \ref{ass:dphi}.\\
The estimates for $\ccd{4}$ are almost identical. %with $\left[\bE\left(\int_t^T\left\vert D_xZ^n_rh-D_xZ_rh\right\vert^2\ud r\right)^{2p}\right]^{\nicefrac{1}{4}} $ in place of
%\begin{equation*}
%  \left[\bE\sup_{r\in[t,T]}\left\vert D_xY^n_rk-D_xY_rk\right\vert^{4p}\right]^{\nicefrac{1}{4}}\ .
%\end{equation*}
Terms \ccd{5a} and \ccd{9a} are treated in the same way as $\beta^n_2$ and $\beta^n_3$ in Lemma \ref{step3}, respectively. 
We have already shown how to deal with \ccd{12} above, the remaining terms \ccd{6}, \ccd{7}, \ccd{8}, \ccd{10}, \ccd{11} are then simple adaptations of \ccd{1}, \ccd{2}, \ccd{3} and \ccd{12}.
\end{proof}

\section*{Acknowledgements}
C. Orrieri and G. Zanco are supported by the GNAMPA Project 2019 ``Ttrasporto ottimo per dinamiche con interazione''.
G. Zanco was partly funded by the Austrian Science Fund (FWF) project F 65.

\bigskip

\makeatletter
\providecommand\@dotsep{5}
\makeatother

\bibliography{mybib}
\bibliographystyle{plain}

\end{document}